\documentclass[english,11pt, a4paper]{article}
\usepackage{graphicx} 
\usepackage[T1]{fontenc}
\usepackage{bbm}

\usepackage{amsthm,amsmath,amsfonts,amssymb}
\usepackage{graphicx}
\usepackage{enumerate}
\usepackage{authblk}
\usepackage{cite}
\usepackage{mathtools, amsmath, amssymb, amsrefs, mathrsfs}
\usepackage{url}
\usepackage[inline, shortlabels]{enumitem}
\numberwithin{equation}{section}

\let\OLDthebibliography\thebibliography
\renewcommand\thebibliography[1]{
  \OLDthebibliography{#1}
  \setlength{\parskip}{0pt}
  \setlength{\itemsep}{0pt plus 0.3ex}
}


\addtolength{\textwidth}{2cm}
\addtolength{\hoffset}{-1cm}
\addtolength{\textheight}{2cm}
\addtolength{\voffset}{-1cm}
 
 \setlength{\parindent}{0em}
 \setlength{\parskip}{0.3em}

\linespread{1.05}

\DeclarePairedDelimiter\floor{\lfloor}{\rfloor}

\newcommand{\R}{\mathbb{R}}
\newcommand{\N}{\mathbb{N}}
\newcommand{\C}{\mathbb{C}}
\newcommand{\Z}{\mathbb{Z}}

\newcommand{\D}{\mathbb{D}}
\newcommand{\T}{\mathbb{T}}
\newcommand{\E}{\mathbb{E}}

\renewcommand{\d}{\mathrm{d}}

\renewcommand{\i}{\mathrm{i}}

\renewcommand{\Re}{\operatorname{Re}}
\renewcommand{\Im}{\operatorname{Im}}

\renewcommand{\P}{\mathbb{P}}

\def\t{\theta}

\let \ge \geqslant
\let \geq \geqslant
\let \le \leqslant
\let \leq \leqslant

\newcommand{\x}{\mathbf{x}}
\newcommand{\y}{\mathbf{y}}
\newcommand{\dd}{\mathbf{d}}
\newcommand{\g}{\mathbf{g}}
\newcommand{\f}{\mathbf{f}}
\newcommand{\h}{\mathbf{h}}

\newtheorem{thm}{Theorem}[section]
\newtheorem{prop}[thm]{Proposition}
\newtheorem{lemma}[thm]{Lemma}

\theoremstyle{remark}
\newtheorem*{remark}{Remark}

\begin{document}

\title{Planar Coulomb gas on a Jordan arc at any temperature}
\author{K. Courteaut\footnote{Courant Institute of Mathematical Sciences, New York University, kc5733@nyu.edu}, K. Johansson\footnote{Department of Mathematics, KTH Royal Institute of Technology, kurtj@kth.se}, and F. Viklund\footnote{Department of Mathematics, KTH Royal Institute of Technology, frejo@kth.se}}
\date{}

\maketitle
\begin{abstract}
We study a planar Coulomb gas confined to a sufficiently smooth Jordan arc $\gamma$ in the complex plane, at inverse temperature $\beta > 0$. Let \[\bar{Z}_{n}^\beta(\gamma) = Z_{n}^\beta(\gamma)/\left(2 \textrm{cap}(\gamma)\right)^{\beta n^2/2+(1-\beta/2)n}\] be the normalized partition function. We compute the free energy as the number of particles tends to infinity, including the constant term:
\[
\lim_{n\to \infty}\log \frac{\bar{Z}_{n}^\beta(\gamma)}{\bar{Z}_{n}^\beta([-1,1])} = \frac{1}{24}J^A(\gamma) +  \frac{1}{8} \left( \sqrt{ \frac{\beta}{2} } - \sqrt{ \frac{2}{\beta}} \right)^2  J^{F}(\gamma).
\]
Here $\bar{Z}_{n}^\beta([-1,1])$ is an explicit Selberg integral, $J^A(\gamma)$ is half the Loewner energy of a certain Jordan curve associated to $\gamma$ plus a covariance term, and $J^F(\gamma)$ is the Fekete energy, related to the zero temperature limit of the model.

We also prove an asymptotic formula for the Laplace transform of linear statistics for sufficiently regular test functions. As a consequence, the centered empirical measure converges to a Gaussian field with explicit asymptotic mean, and asymptotic variance given by the Dirichlet energy of the bounded harmonic extension of the test function outside of the arc.

A key tool in our analysis is the arc-Grunsky operator $B$ associated to $\gamma$, reminiscent to but different from the classical Grunsky operator. We derive several basic properties the arc-Grunsky operator, including an estimate analogous to the strengthened Grunsky inequality and the relation to the Dirichlet integral. In our proofs $J^A(\gamma)$ arises from the Fredholm determinant $\det(I+B)$ and a substantial part of the paper is devoted to relating this to the Loewner energy. 
\end{abstract}
\tableofcontents

\section{Introduction}\label{sec:intro}

Let $\gamma$ be a rectifiable Jordan arc connecting two distinct points $a$ and $b$ in the complex plane. Consider a (logarithmic) Coulomb gas of $n$ charged particles confined to $\gamma$, interacting via the 2D electrostatic potential energy. 
Their positions $z_1,\dots,z_n$ have a joint probability density given by
\begin{equation}\label{fn}
\frac{1}{Z_{n}^\beta(\gamma) n!} \exp \Big(-\beta \sum_{1\leq k < \ell \leq n} \log |z_k -z_\ell| \Big) 
\end{equation}
with respect to the product arc-length measure on $\gamma^n = \gamma\times \cdots \times \gamma$. 
The \emph{partition function} is
\begin{equation}\label{Zn}
Z_{n}^\beta(\gamma)=\frac 1{n!}\int_{\gamma^n}\exp \big(-\beta\sum_{1\le k < \ell \le n}\log|z_k-z_\ell|^{-1}) \prod_{k=1}^n|dz_k|,
\end{equation}
where $\beta > 0$ is the \emph{inverse temperature}.
By definition, $\log Z_{n}^\beta(\gamma)$ is the \emph{free energy}. When $\beta = 2$ we write simply $Z_{n}(\gamma)$. We regard $Z_n^\beta(\gamma)$ as a functional of the arc and are interested in its dependence on $\gamma$ as $n \to \infty$.

Given a function $g:\gamma\to\C$, consider the weighted partition function, 
\begin{equation}\label{Dn}
Z_{n}^\beta(\gamma)[e^g]=\frac 1{n!}\int_{\gamma^n} \exp\big(-\beta \sum_{1\le k < \ell \le n}\log|z_k-z_\ell|^{-1}+\sum_{k=1}^n g(z_k)\big) \prod_{k=1}^n|dz_k|,
\end{equation}
so that $Z_{n}^\beta(\gamma)=Z_{n}^\beta(\gamma)[1]$ and the ratio \begin{equation}\label{Pn}
    \frac{Z_{n}^\beta(\gamma)[e^g]}{Z_{n}^\beta(\gamma)} = \E_{n, \gamma}^\beta \left[\exp( \sum_{k=1}^n g(z_k)) \right]
    \end{equation}
is the Laplace functional of the empirical measure of the point process \eqref{fn} on the arc.

Our primary objective in this paper is to compute the asymptotics of the free energy and the Laplace functional, up to constant order as $n\to\infty$. In particular, we provide a description of how the expansion depends on the geometry of the arc. Our approach relies on a detailed analysis of the arc-Grunsky operator, which we introduce here, and its connection to geometric and potential-theoretic quantities, particularly the Loewner energy of a Jordan curve associated to the arc. These objects and their interplay seem interesting to study further. See also \cites{Joh1, Joh2, CouJoh, JohVik}.

\subsection{Main results}
\subsubsection*{Set-up and first statement}
Let $\gamma$ be a rectifiable Jordan arc with endpoints $-1$ and $1$. It is convenient to normalize the partition function and consider
\[
\bar{Z}_n^\beta(\gamma):=Z_n(\gamma)/(2\textrm{cap}(\gamma))^{\beta n^2/2 + (1-\beta /2)n},
\]
where $\textrm{cap}(\gamma)$ is the logarithmic capacity of $\gamma$. 

In order to state our results, we will associate a Jordan curve $\eta=\eta(\gamma)$ to the arc $\gamma$. We do this by ``opening'' $\gamma$ using the map $s(z)=z + \sqrt{z^2-1}$, where the branch cut is taken along $\gamma$ and $s(z) = 2z + o(1)$ as $z \to \infty$, an idea going back to Widom \cite{Wid69}, p. 206. Then $s$ maps the complement of $\gamma$ conformally onto the domain $D^*$ exterior to a unique Jordan curve $\eta=\eta(\gamma)$ passing through $-1$ and $1$. Note that $z \mapsto (z+1/z)/2$ maps $\eta$ onto $\gamma$ and that $\eta = 1/\eta$ as sets. Let $D$ be the bounded component of $\C \smallsetminus \eta$. Write $f: \mathbb{D} = \{|z|<1\} \to D$ and $g:\mathbb{D}^* = \{|z|>1\} \to D^*$ for the conformal maps normalized to fix $0$ and $\infty$, respectively, and satisfy $f'(0)>0$ and $g'(\infty) = \lim_{z \to \infty} g'(z) > 0$. 

The \emph{Loewner energy} of $\eta$ is defined by
\begin{align}\label{def:Loewner-energy}
I^L(\eta) = \mathcal{D}_{\mathbb{D}}(\log|f'|) + \mathcal{D}_{\mathbb{D}}(\log|g'|) + 4 \log |f'(0)|/|g'(\infty)|,
\end{align}
where \[\mathcal{D}_D(u)=\frac{1}{\pi}\int_D|\nabla u|^2 dz^2\] is the Dirichlet integral, see \cite{Wan}.
While formally defined for any Jordan curve, the Loewner energy (also known as the universal Liouville action) is finite if and only if the curve is a Weil-Petersson quasicircle \cite{TakTeo}, and it is minimized if and only if the curve is a circle. It has recently attracted significant attention in part due to its links to random conformal geometry and Teichm\"uller theory, and there are many different characterizations of Weil-Petersson quasicircles, see \cites{Wan, WangSurvey, TakTeo, Bishop, VW-GAFA, VW-PLMS, Joh2, JohVik}. 
Using the Loewner energy of the opened Jordan curve $\eta=\eta(\gamma)$, we define the following quantity associated to the arc $\gamma$, 
\begin{align}\label{J-arc-Loewner-energy}
J^A(\gamma) := \frac{1}{2}I^L(\eta) - 3 \log|h'(1)  h'(-1)|,
\end{align}
where $h = g^{-1}: D^* \to \mathbb{D}$. Like $I^L(\eta)$, $J^A(\gamma)$ is non-negative and finite if $\gamma$ is sufficiently regular, see Section~\ref{sect:Loewner-energies}.

The following is our first main result. 
\begin{thm}\label{thm:main-arc-loewner}Let $\beta=2$ and suppose that $\gamma$ is an analytic Jordan arc with endpoints $-1$ and $1$. Then if $I=[-1,1]$,
\[
\lim_{n \to \infty} \log \frac{\bar{Z}_{n}(\gamma)}{\bar{Z}_{n}(I)} = \frac{1}{24}J^A(\gamma),
\]
where $J^A(\gamma)$ is as in \eqref{J-arc-Loewner-energy}, and, as $n \to \infty$, 
\begin{equation}\label{Znunitint}
Z_n(I)=2^{1/12}e^{3\zeta'(-1)}n^{-1/4}(2\pi)^n2^{-n^2+o(1)}.
\end{equation}
\end{thm}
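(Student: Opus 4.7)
The strategy has three stages: reduce $Z_n(\gamma)$ to a product of orthogonal-polynomial norms via the $\beta=2$ determinantal structure; use Widom-type asymptotics to identify $\bar Z_n(\gamma)/\bar Z_n(I)$ in the limit with the Fredholm determinant $\det(I+B)$ of the arc-Grunsky operator; and finally identify $\log\det(I+B)$ with $\tfrac{1}{24}J^A(\gamma)$.

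\textbf{Stage 1 (determinantal reduction).} Since $\beta=2$, Andr\'eief's identity applied to the squared Vandermonde $\prod_{i<j}|z_i-z_j|^2=|\det(z_i^{j-1})|^2$ yields
\[
Z_n(\gamma) = \det\Big(\int_\gamma z^{j-1}\bar z^{k-1}|dz|\Big)_{1\le j,k\le n} = \prod_{k=0}^{n-1}h_k(\gamma),
\]
where $h_k(\gamma)=\|p_k\|^2_{L^2(\gamma,|dz|)}$ and $p_k$ is the monic orthogonal polynomial of degree $k$. For $\gamma=I=[-1,1]$, the $p_k$ are rescaled Legendre-type polynomials and the $h_k(I)$ are explicit; the resulting product combined with Barnes $G$-function asymptotics yields \eqref{Znunitint}. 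The task reduces to analysing the normalized product $\prod_{k=0}^{n-1}h_k(\gamma)/h_k(I)$.

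\textbf{Stage 2 (Fredholm determinant limit).} The map $s(z)=z+\sqrt{z^2-1}$ opens $\gamma$ into the Jordan curve $\eta$, which is invariant under $w\mapsto 1/w$. Widom's asymptotic theory of orthogonal polynomials on analytic arcs expresses $h_k(\gamma)$ as $\textrm{cap}(\gamma)^{2k+1}$ times a Szeg\H{o}-type correction whose logarithm is controlled by Fourier coefficients of $\log g'$, with $g:\D^*\to D^*$ the exterior uniformizer of $\eta$. These coefficients are precisely the matrix entries of the arc-Grunsky operator $B$ introduced later in the paper. Summing $\log(h_k(\gamma)/h_k(I))$ telescopes into traces of powers of $B$, and the tail collapses in the limit into a Fredholm determinant,
\[
\lim_{n\to\infty}\log\frac{\bar Z_n(\gamma)}{\bar Z_n(I)} = \log\det(I+B).
\]
Analyticity of $\gamma$ yields exponentially decaying singular values of $B$, so $B$ is trace class and the truncation error is $o(1)$.

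\textbf{Stage 3 (identification with the Loewner-energy expression).} This is the main obstacle and, per the abstract, occupies a substantial part of the paper. The model to follow is Takhtajan--Teo's identity $-\log\det(I-\mathcal{B}\mathcal{B}^*)=\tfrac{1}{12}I^L(\eta)$ for the classical Grunsky operator $\mathcal{B}$ of $\eta$. Because $\eta$ is $w\mapsto 1/w$ invariant, the arc-Grunsky operator $B$ should decompose under this involution so that its Fredholm determinant factors into a ``bulk'' piece producing $\tfrac{1}{2}\cdot\tfrac{1}{12}I^L(\eta)=\tfrac{1}{24}I^L(\eta)$ (the $\tfrac12$ reflecting passage from the double $\eta$ back to $\gamma$), and an ``endpoint'' piece, localized at the marked points $\pm 1$ where the arc-Grunsky operator departs essentially from the classical one, contributing $-\tfrac{3}{24}\log|h'(1)h'(-1)|$. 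The hard part is making this factorization sharp, in particular linking $\det(I+B)$ to $\det(I-\mathcal{B}\mathcal{B}^*)$ via the Dirichlet-energy characterization of Loewner energy and extracting the endpoint correction from the conformal geometry at $\pm 1$.
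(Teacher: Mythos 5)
Your outline diverges from the paper's actual route (the paper never uses the $\beta=2$ determinantal structure for the arc: it interpolates the gas between $[-1,1]$ and $\gamma$ in an auxiliary parameter $s$, controls Laplace transforms of linear statistics via a change of variables built from a deformed Pommerenke-type integral equation, and obtains the constant $\det(I+B)^{-1/2}$; the identification $-\log\det(I+B)=\tfrac1{12}J^A(\gamma)$ is then proved separately by variational formulas along a holomorphic motion of analytic arcs, using Douglas' formula, not via any relation to the classical Grunsky operator). A different route would be welcome, but as written there are genuine gaps. In Stage 2 you assert that Widom-type asymptotics of the norms $h_k(\gamma)$ yield $\lim_n\log\bigl(\bar Z_n(\gamma)/\bar Z_n(I)\bigr)=\log\det(I+B)$. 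What is actually needed is a second-order statement: $\log h_k(\gamma)=(2k+1)\log(2\,\mathrm{cap}(\gamma))+\log h_k(I)+\epsilon_k$ with $\epsilon_k$ summable and $\sum_k\epsilon_k$ identified with a Fredholm determinant. Classical Szeg\H{o}--Widom theory gives only the leading behaviour of $h_k$, not these summable corrections; producing them is essentially the whole analytic content of the theorem (this is what \cite{Joh1,Joh2} do in the closed-curve case), so the "telescoping into traces of powers of $B$" is not a step one can cite, it is the theorem. Moreover your identification of the corrections with "Fourier coefficients of $\log g'$" being "precisely the matrix entries of $B$" is unsupported and contradicts the paper: the arc-Grunsky coefficients are Chebyshev coefficients of $\log\bigl|\tfrac{z_e(s)-z_e(t)}{s-t}\bigr|$ in the equilibrium parametrization, and the authors state explicitly that no direct relation to the classical Grunsky data of $\eta$ (i.e.\ to $\log g'$) is known. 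Finally the claimed limit has the wrong constant and sign: the correct statement (Theorem~\ref{thm:main} at $\beta=2$, $g=0$) is $\lim_n\log\bigl(\bar Z_n(\gamma)/\bar Z_n(I)\bigr)=-\tfrac12\log\det(I+B)$, and since $-\log\det(I+B)=\tfrac1{12}J^A(\gamma)\ge0$, your version $\log\det(I+B)$ would be nonpositive while the target $\tfrac1{24}J^A(\gamma)$ is nonnegative.

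Stage 3 has the same difficulty in sharper form. You propose to factor $\det(I+B)$ under the involution $w\mapsto 1/w$ into a bulk piece given by Takhtajan--Teo's $-\log\det(I-GG^*)=\tfrac1{12}I^L(\eta)$ plus an endpoint correction at $\pm1$. The paper explicitly does not use \eqref{Grunsky-Loewner-energy-identity} and emphasizes that the relation between $B$ and the classical Grunsky operator of $\eta$ is only obtained \emph{a posteriori} from Theorem~\ref{thm:Grunsky-Loewner2-intro}; no such factorization of $B$ is known, and you give no mechanism for extracting the endpoint term $-3\log|h'(1)h'(-1)|$ beyond the assertion that it should be "localized" there. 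Your bookkeeping is also internally inconsistent: combined with your Stage 2 normalization, the bulk contribution to the limit should be $\tfrac1{48}I^L(\eta)$ (and the endpoint coefficient $-\tfrac18$ only matches after the missing factor $-\tfrac12$ from Stage 2 is restored), whereas you state $\tfrac1{24}I^L(\eta)$. So both the determinant limit and the identification with $J^A(\gamma)$ remain unproved in your proposal; the paper supplies them by, respectively, the interpolation/linear-statistics analysis of Sections~\ref{sec:mainthm} and the variational computation of $\tfrac{\d}{\d\alpha}\log\det(I+B_\alpha)$ versus $\tfrac{\d}{\d\alpha}I^L(\eta_\alpha)$ in Section~\ref{sec:var}.
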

This theorem will follow from a more general result stated below.

In our argument, $J^A(\gamma)$ first emerges as the logarithm of a Fredholm determinant, see \eqref{Grunskyexp}. Proving the identity involving the Loewner energy of the opened curve requires additional work, see Section~\ref{sec:Grunsky} and Section~\ref{sec:var}. The optimal regularity for Theorem~\ref{thm:main-arc-loewner} to hold remains an interesting open problem.

\begin{remark}
Let $\alpha$ be the  hyperbolic geodesic in the complement of $\gamma$ connecting $1$ and $-1$. Then $\gamma \cup \alpha$ is a Jordan curve which in general is different from the opened curve $\eta$. By definition $I^A(\gamma) := I^L(\gamma \cup \alpha)$ is the \emph{arc-Loewner energy} of $\gamma$, see \cite{Wan}. With the help of a result of Krusell we will explicitly relate $J^A$ and $I^A$ in Section~\ref{sect:Loewner-energies}.
\end{remark}
\subsubsection*{Statement for general $\beta$ and linear statistics}
Theorem~\ref{thm:main-arc-loewner} is a special case of a more general result which holds for any $\beta > 0$. Before stating it, we note that the partition function can be evaluated explicitly as a Selberg integral in the special case when $\gamma = I=[-1,1]$, 
 \begin{equation}\label{SelZ}
 \bar{Z}_{n}^\beta(I) =\frac{\Gamma(1+\beta n/2)}{\Gamma(1+\beta/2)} \prod_{j=0}^{n-1} \frac{(\Gamma(1+\beta j/2))^3}{\Gamma(2+(n+j-1)\beta/2)}.
 \end{equation}
 See \cite{Forrester-Selberg}.  When $\beta=2$, this formula can be further simplified using the Vandermonde determinant, Andréief's formula, and the orthogonality of Legendre polynomials on $[-1,1]$. This gives
\begin{align}\label{eq:Zn-11}
Z_n(I) &= \prod_{j=0}^{n-1} \Big(j+\frac{1}{2}\Big)^{-1} \Big( \frac{2^j(j!)^2}{(2j)!} \Big)^2 \nonumber \\
&= 2^{1/12}e^{3\zeta'(-1)}n^{-1/4}(2\pi)^n2^{-n^2+o(1)},
\end{align}
as $n\to \infty$ which is the form we used in Theorem~\ref{thm:main-arc-loewner}, see \cite{Wid}.

We now introduce the additional notation needed to state the theorem.
Let $\nu_e=\nu_{e,\gamma}$ be the electrostatic equilibrium measure on $\gamma$, equivalently, $\nu_e$ is harmonic measure in $\mathbb{C} \smallsetminus \gamma$ seen from $\infty$, see Section~\ref{sec:prel}. 
Let $z_e(t)$, $t\in I$, be the parametrization of $\gamma$ that maps the equilibrium parametrization of the interval $I$ to that of $\gamma$. More precisely,  
\begin{equation}\label{eqpar}
\nu_e(z_e[-1,t]) = \frac{1}{\pi} \int_{-1}^t \frac{\d s}{\sqrt{1-s^2}}. 
\end{equation}
 Note that $1/(\pi\sqrt{1-s^2})$ is the density of the equilibrium measure on $I$. We refer to $z_e$ the \textit{equilibrium parametrization} of $\gamma$. Write $\tau_e : \gamma \to I$ for the inverse of $z_e$.  
If $\gamma$ is sufficiently regular, $\tau_e$ is a real-valued, continuously differentiable function on $\gamma$ with $\tau'_e > 0$. 

Let 
\[
d_0 = -\frac{2}{\pi}\int_{-1}^1 \frac{\log|z_e'(t)|}{\sqrt{1-t^2}} dt, \quad m(z) = - \frac{1}{2}\log\left|\frac{1-\tau_e(z)^2}{1-z^2} \right| .
\]

We define the \emph{Fekete energy} of $\gamma$ by 
\begin{equation}\label{def:Fekete-energy}
J^F(\gamma) = \mathcal{D}_{\mathbb{C}\smallsetminus \gamma}\left(\log|\tau'_e| - m  \right) + 3\log| h'(-1)h'(1)| - 2(d_0 + \log(2\textrm{cap}(\gamma)),
\end{equation}
See Section~\ref{sec:mainthm}.
When $u$ is defined only on $\gamma$, we write $\mathcal{D}_{\C \smallsetminus \gamma}(u)$ for the Dirichlet integral of the bounded harmonic extension of $u$ to $\C \smallsetminus \gamma$.

We have the following generalization of Theorem~\ref{thm:main-arc-loewner}.
\begin{thm}\label{thm:main-all-beta}Let $\beta > 0$
and suppose that $\gamma$ is an analytic Jordan arc with endpoints $-1$ and $1$. Then if $I = [-1,1]$,
\[
\lim_{n\to \infty}\log \frac{ \bar{Z}_{n}^\beta(\gamma)}{\bar{Z}_{n}^\beta(I)} = \frac{1}{24}J^A(\gamma) +  \frac{1}{8} \left( \sqrt{ \frac{\beta}{2} } - \sqrt{ \frac{2}{\beta}} \right)^2  J^{F}(\gamma),
\]
where $\bar{{Z}}_{n}^\beta(I)$ is as in \eqref{SelZ}, $J^A(\gamma)$ is as in \eqref{J-arc-Loewner-energy}, and $J^F(\gamma)$ is as in \eqref{def:Fekete-energy}.
\end{thm}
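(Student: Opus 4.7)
The strategy is to reduce to the Coulomb gas on the reference interval $I$ via the equilibrium parametrization, apply the asymptotic expansion of the Laplace transform of linear (and bilinear) statistics on $I$, and identify the resulting $O(1)$ constants with $J^A$ and $J^F$.

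\textbf{Step 1: Equilibrium parametrization change of variables.} Setting $t_k = \tau_e(z_k)$ in the integral defining $Z_n^\beta(\gamma)$ and using that $z_e$ pushes forward the equilibrium measure of $I$ to $\nu_e$, one obtains
\[
\log\frac{\bar Z_n^\beta(\gamma)}{\bar Z_n^\beta(I)} = \log \mathbb{E}_{n,I}^\beta\!\Big[\exp\Big(\tfrac{\beta}{2}\sum_{k,\ell}K(t_k,t_\ell) + (1-\tfrac{\beta}{2})\sum_k \log|z_e'(t_k)|\Big)\Big] + R_n,
\]
where $K(s,t) = \log(|z_e(s)-z_e(t)|/|s-t|)$ is smooth on $I \times I$ with diagonal $K(t,t) = \log|z_e'(t)|$, $\mathbb{E}_{n,I}^\beta$ is the $\beta$-Jacobi expectation on $I$, and $R_n$ is an explicit deterministic remainder. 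The key point is that the leading $n^2$-order part of the right-hand side cancels exactly against the factor $(2\,\textrm{cap}(\gamma))^{\beta n^2/2}$ built into the normalization of $\bar Z_n^\beta$.

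\textbf{Step 2: Laplace-transform asymptotics on the interval.} Invoke the general asymptotic formula for Laplace transforms of linear and bilinear statistics under the $\beta$-Jacobi ensemble, which is the second main result announced in the abstract. For sufficiently regular test functions $f$, schematically
\[
\log \mathbb{E}_{n,I}^\beta\!\Big[\exp \sum_k f(t_k)\Big] = n\,\mu_I(f) + \bigl(\tfrac{1}{2}-\tfrac{1}{\beta}\bigr) M_I(f) + \tfrac{1}{2\beta}\,\mathcal{D}_{\C\smallsetminus I}(f) + o(1),
\]
with $\mu_I$ the equilibrium measure on $I$ and $M_I$ an explicit endpoint mean-correction functional, together with the analogous expansion for the bilinear statistic $\sum_{k,\ell}K(t_k,t_\ell)$. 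At $\beta=2$ the bilinear expectation admits a determinantal representation via orthogonal polynomials on $I$; after pushing forward through the opening map $s(z)=z+\sqrt{z^2-1}$, this identifies the relevant Fredholm determinant as $\det(I+B)$, with $B$ the arc-Grunsky operator.

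\textbf{Step 3: Algebraic assembly and geometric identification.} The leading $n^2$- and $n$-order contributions cancel once the capacity normalization and the endpoint mean-corrections are accounted for. The remaining $O(1)$ constant splits into two pieces. A $\beta$-independent piece equals $\log\det(I+B)$; the arc-Grunsky analysis of Sections~\ref{sec:Grunsky} and~\ref{sec:var} identifies it with $\tfrac{1}{12}J^A(\gamma)$ by relating $B$ to the classical Grunsky operator of the opened curve $\eta$ and then invoking the Dirichlet/Loewner-energy characterization. A $\beta$-dependent piece arises from combining $(1-\beta/2)^2$ from the squared linear statistic with $(1/2-1/\beta)$ from the mean correction and $1/\beta$ from the variance: this prefactor organizes into $\tfrac{1}{8}\bigl(\sqrt{\beta/2}-\sqrt{2/\beta}\bigr)^2$, multiplying precisely the Dirichlet-plus-boundary expression defining $J^F(\gamma)$ in \eqref{def:Fekete-energy}, with the constants $3\log|h'(\pm 1)|$, $d_0$, and $\log(2\,\textrm{cap}(\gamma))$ emerging from $M_I$ applied to $\log|z_e'|$ at the endpoints.

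\textbf{Main obstacle.} Two ingredients are the most delicate. First, the $o(1)$ Laplace-transform expansion of Step 2 must be proved for general $\beta > 0$ with test functions carrying square-root endpoint singularities (inherited from the equilibrium density $1/\pi\sqrt{1-t^2}$ and from $z_e'$ near $\pm 1$); here analyticity of $\gamma$ provides the needed regularity and the argument proceeds via loop equations on the Jacobi weight. Second, and deepest, is the purely geometric identification $\log\det(I+B)=\tfrac{1}{12}J^A(\gamma)$: this is the substantial content of Sections~\ref{sec:Grunsky} and~\ref{sec:var} and requires a careful comparison between the arc-Grunsky operator and the classical Grunsky operator of $\eta$, together with the Dirichlet-energy characterization of the Loewner energy.
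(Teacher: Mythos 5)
Your Step 1 matches the paper's starting point, and your anticipated structure of the answer (a $\beta$-independent Fredholm-determinant constant plus a $\beta$-dependent quadratic term) is correct. But Step 2 contains the real gap. After the change of variables the exponent is not a linear statistic: it is the two-body quantity $\tfrac{\beta}{2}\sum_{k,\ell}K(t_k,t_\ell)$, which is of order $n^2$ with fluctuations of order $n$, i.e.\ an order-one modification of the interaction itself. You cannot evaluate $\mathbb{E}_{n,I}^\beta[\exp(\cdot)]$ of such a tilt by ``invoking the general asymptotic formula for Laplace transforms of linear statistics'': that formula (the paper's Theorem~\ref{thm:relSz}, and your schematic expansion) is for fixed one-body test functions and does not apply to a non-perturbative two-body tilt, and in any case the paper's linear-statistics theorem concerns the gas on the arc, which is what one is trying to prove. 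Your fallback suggestions do not close this: at $\beta=2$ the asserted determinantal/orthogonal-polynomial representation of the bilinear expectation and its identification with $\det(I+B)$ is not substantiated, and for general $\beta$ ``loop equations on the Jacobi weight'' would have to handle a fixed (not $O(1/n)$) change of the pair interaction, which is precisely the hard analytic content and is not carried out. The paper's route is different: it interpolates the interaction with a parameter $s\in[0,1]$, writes $\log\big(Z_n^{\beta}(\gamma)/Z_n^{\beta}(I)\big)=\int_0^1 B_n'(s)\,ds$, and expresses $B_n'(s)$ through means and covariances of the linear statistics $X_k(\theta)=\sum_\mu\cos(k\theta_\mu)$ under the \emph{$s$-deformed} measures; it then proves the Laplace-transform asymptotics uniformly along the whole family (Theorem~\ref{thm:bound}) by the transport map $\theta_\mu\mapsto\theta_\mu+\tfrac1n H_s(\theta_\mu)$, where $H_s$ solves the deformed Pommerenke equation $-\tfrac1\beta\g=(I+sB)\h^s$, solvable thanks to the strengthened arc-Grunsky inequality, together with the localization estimates on $E_{n,s,K}$. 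Integrating $\mathrm{Tr}\big((I+sB)^{-1}B\big)$ in $s$ is what produces $\det(I+B)$ for all $\beta$; nothing in your outline produces this for $\beta\neq 2$.

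A secondary point: your proposed mechanism for the identity $-\log\det(I+B)=\tfrac1{12}J^A(\gamma)$ --- ``relating $B$ to the classical Grunsky operator of the opened curve $\eta$'' --- is not available; the paper states explicitly that no direct relation between the arc-Grunsky and classical Grunsky coefficients is known, and proves the identity instead by computing variations of $\log\det(I+B_\alpha)$ and of the Loewner energy along a holomorphic-motion family of analytic arcs (using Douglas' formula) and integrating. Since that identity is Theorem~\ref{thm:Grunsky-Loewner2-intro} of the paper you may cite it as a black box, but your sketch of how it would be proved is not a viable substitute. Finally, the passage from the Chebyshev-side constants to $J^F(\gamma)$ requires the identities $B\f={\bf m}$ and $|z_e'(\pm1)|=|h'(\pm1)|^{-2}$ (Lemma~\ref{lem:new_var_and_mean}), which your Step 3 assumes implicitly but does not supply.
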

The proof of Theorem~\ref{thm:main-all-beta} will be completed at the end of Section~\ref{sec:mainthm}.

Finally, we consider linear statistics of the particles on $\gamma$, for both real- and complex-valued test functions. We state here the real-valued case; the more general complex-valued case is given in Theorem \ref{thm:relSz}, which is stated using the Chebyshev coefficients of the test function and the arc-Grunsky operator introduced just below in Section~\ref{sect:arc-grunsky-intro}.

For any real $u \in C^2(\gamma)$, set 
\begin{align*}
A[u] &= \mathcal{D}_{\mathbb{C} \smallsetminus \gamma}(u), \\ M[u]& = \mathcal{D}_{\mathbb{C} \smallsetminus \gamma}(\log|\tau'_e| - m,u) + u(-1) + u(1) -2 \int_\gamma u(z) d\nu_e(z), 
\end{align*}
where $\mathcal{D}_{\mathbb{C} \smallsetminus \gamma}(u,v)$ denotes the Dirichlet inner product of the bounded harmonic extension of $u$ and $v$ to $\C \smallsetminus \gamma$.

\begin{thm}\label{thm:main-linear-statistics}
Let $\alpha, \beta > 0$ and suppose that $\gamma$ is a $C^{9+\alpha}$ Jordan arc with endpoints $-1$ and $1$. Let $u$ be a $C^{4+\alpha}$ real-valued function on $\gamma$. Then,
\[
\lim_{n \to \infty}\mathbb{E}_{n,\gamma}^\beta\left[\exp \left(\sum_{j=1}^n u(z_j) - n \int_\gamma u(z) d\nu_e(z) \right) \right] = \exp\left(\frac{1}{4\beta} A[u] + \frac{1}{2\beta}(\frac{\beta}{2}-1)M[u] \right). 
\]
\end{thm}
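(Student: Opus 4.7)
The plan is to derive Theorem~\ref{thm:main-linear-statistics} from a weighted partition-function asymptotic that is the natural companion to Theorem~\ref{thm:main-all-beta}. Since
\[
\mathbb{E}_{n,\gamma}^\beta\!\left[\exp\!\Big(\sum_{j=1}^n u(z_j) - n\!\int u\,d\nu_e\Big)\right] = e^{-n\int u\,d\nu_e}\,\frac{Z_n^\beta(\gamma)[e^u]}{Z_n^\beta(\gamma)},
\]
it suffices to establish
\[
\log \frac{Z_n^\beta(\gamma)[e^u]}{Z_n^\beta(\gamma)} - n\!\int u\,d\nu_e \;\longrightarrow\; \frac{1}{4\beta} A[u] + \frac{1}{2\beta}\Big(\frac{\beta}{2}-1\Big) M[u].
\]
Both $Z_n^\beta(\gamma)[e^u]$ and $Z_n^\beta(\gamma)$ should then be expanded by the same procedure so that the $n^2$ and $n$ order contributions cancel in the ratio, leaving the claimed Gaussian Laplace transform. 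Real-valuedness of $u$ lets us avoid certain complications present for complex $u$, but the Chebyshev/arc-Grunsky framework will apply uniformly.

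The first step is to transplant to $I=[-1,1]$ via the equilibrium parametrization $z_e: I \to \gamma$ of \eqref{eqpar}. This pulls $\gamma^n$ back to $I^n$, turns $\prod |dz_k|$ into $\prod |z_e'(t_k)|\,dt_k$, and yields the Fisher--Hartwig-type splitting
\[
\log|z_e(t)-z_e(s)|^{-1} = \log|t-s|^{-1} + K(t,s),
\]
where $K$ is a smooth kernel that reduces to $-\log|z_e'(t)|$ on the diagonal. After this change of variables, $Z_n^\beta(\gamma)[e^u]$ becomes a perturbation of the Selberg integral \eqref{SelZ} on $I$ with an effective one-body potential built from $u\circ z_e$ and $\log|z_e'|$ and an effective two-body perturbation $\beta K(t,s)$ of the log-interaction. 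Expanding in the Chebyshev basis of $I$ (equivalently, the Fourier basis on $[0,\pi]$ under $t=\cos\theta$), the quadratic form generated by $K$ is exactly that of the arc-Grunsky operator $B$ from Section~\ref{sect:arc-grunsky-intro}.

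The main analytic task, and what I expect to be the principal obstacle, is the asymptotic expansion of this transplanted integral to constant order, including the $\beta$-dependent linear correction. For $\beta=2$ this is accomplished by the Szeg\H{o}-type statement Theorem~\ref{thm:relSz}, with variance expressed as $\tfrac{1}{8}\langle (I-B^2)^{-1}\mathbf{a},\mathbf{a}\rangle$ acting on the Chebyshev coefficient vector $\mathbf{a}$ of $u\circ z_e$, and no mean correction. For general $\beta>0$ I would extend this via an interpolation or loop-equation argument on the transplanted ensemble. The $\beta$-dependent mean correction $\tfrac{1}{2\beta}(\beta/2-1)M[u]$ will arise from the Jacobian contribution $(1-\beta/2)\sum_j \log|z_e'(t_j)|$, the boundary effects at $t=\pm 1$ produced by the $(1-t^2)$-type weights in the Selberg density, and pairings against the diagonal of $K$; carefully tracking the boundary terms near $\pm 1$ together with the $\log|h'(\pm 1)|$ factors that carry the conformal data of $\gamma$ is the delicate part, since these ingredients are exactly what endow $M[u]$ with its geometric form via $\log|\tau_e'|-m$.

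The final step is to identify the limiting quadratic and linear forms with $A[u]$ and $M[u]$. The Chebyshev-side variance $\tfrac{1}{4\beta}\langle(I-B^2)^{-1}\mathbf{a},\mathbf{a}\rangle$ is matched to $\tfrac{1}{4\beta}\mathcal{D}_{\mathbb{C}\smallsetminus\gamma}(u)$ using the Dirichlet-integral identity for $B$ announced in the abstract and developed in Section~\ref{sec:Grunsky}: composing with the opening map $s$ converts the Chebyshev quadratic form on the arc into the Dirichlet energy of the bounded harmonic extension of $u$ to $\mathbb{C}\smallsetminus\gamma$. The mean side is matched to $M[u]$ by recognizing $\log|\tau_e'|$ and $m$ as the potential-theoretic ingredients already appearing in $d_0$ of \eqref{def:Fekete-energy}, and integrating by parts to rewrite the endpoint and equilibrium-measure pairings as the Dirichlet inner product $\mathcal{D}_{\mathbb{C}\smallsetminus\gamma}(\log|\tau_e'|-m,u)$ plus the explicit boundary/equilibrium terms in $M[u]$. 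The regularity $u\in C^{4+\alpha}$ and $\gamma\in C^{9+\alpha}$ is precisely what is needed to control the remainders from Chebyshev truncation and the Szeg\H{o}-type expansion uniformly to $o(1)$.
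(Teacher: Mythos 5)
Your overall route coincides with the paper's: write the Laplace functional as $Z_n^\beta(\gamma)[e^u]/Z_n^\beta(\gamma)$, transplant to $[-1,1]$ through the equilibrium parametrization so that the two-body perturbation becomes the arc-Grunsky quadratic form in the Chebyshev basis, prove a Szeg\H{o}-type limit theorem, and then identify the Chebyshev-side variance and mean with $A[u]$ and $M[u]$. The genuine gap is that the central analytic step is named but not supplied. Theorem~\ref{thm:relSz} is not a $\beta=2$ statement in need of an extension: it is stated and proved for all $\beta>0$, and its proof (Theorem~\ref{thm:bound}) is where essentially all the work lies --- the deformed Pommerenke-type integral equation \eqref{Intekv1}, whose solvability rests on the strengthened arc-Grunsky inequality $B\ge -\kappa I$ of Proposition~\ref{prop:stGrineq}, the change of variables $\theta_\mu\mapsto \theta_\mu+\tfrac1n H_s(\theta_\mu)$, the restriction to the high-probability set $E_{n,s,K}$ together with the $\ell^2$ deviation bound of Lemma~\ref{lem:sumtmu}, and a bootstrap from the a priori bound \eqref{expbound}. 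Gesturing at ``an interpolation or loop-equation argument'' does not replace this mechanism; without it your proposal contains no proof of the limit you need, for any $\beta$.

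A second, concrete error: you quote the limiting variance as $\tfrac18\langle (I-B^2)^{-1}\mathbf a,\mathbf a\rangle$, apparently importing the closed-curve Grunsky structure. For the arc the variance is $\tfrac{1}{4\beta}\,\g^t(I+B)^{-1}\g$, and this matters because the identification with $A[u]=\mathcal D_{\C\smallsetminus\gamma}(u)$ goes through Proposition~\ref{prop:Dir}, which is an identity for $(I+B)^{-1}$; no analogue for $(I-B^2)^{-1}$ is available (only a one-sided spectral bound is known for $B$), so with your operator the final identification would fail. Similarly, passing from the Chebyshev-side mean $\tfrac14\big(1-\tfrac2\beta\big)(\dd+\f)^t(I+B)^{-1}\g$ to $M[u]$ is not a routine integration by parts: it requires handling the non-$\ell^2$ vector $\f$, the identity $B\f=\mathbf m$ (which is precisely where the function $m$ enters), the evaluation $\f^t\g=u(1)+u(-1)-\int_\gamma u\,d\nu_e$, and the endpoint relation $|z_e'(\pm1)|=|h'(\pm1)|^{-2}$, i.e.\ the content of Lemma~\ref{lem:new_var_and_mean}, none of which appears in your sketch.
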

Theorem~\ref{thm:main-linear-statistics} is proved in Section~\ref{sec:mainthm}. 

We can interpret Theorem~\ref{thm:main-linear-statistics} as stating that the fluctuations of the empirical measure of \eqref{fn} converge in distribution to a Gaussian field on the arc, whose covariance is given by the Dirichlet inner product of the harmonic extensions of the test functions. The term $M[u]$ is a correction to the asymptotic mean that appears when $\beta\neq 2$. In the special case when $\gamma=[-1,1]$,
\[
M[u]=\bigg(\frac 2{\beta}-1\bigg)\int_{-1}^1u(t)\bigg[\frac 14\delta(t-1)+\frac 14\delta(t+1)-\frac 1{2\pi\sqrt{1-t^2}}\bigg]\,dt.
\]
This is exactly the same asymptotic correction to the mean that we get for $\rm{G}\beta\rm{E}$ scaled so that the asymptotic eigenvalue distribution is supported on the interval $[-1,1]$, see \cite{Joh98}.

\subsection{The arc-Grunsky operator}\label{sect:arc-grunsky-intro}
A key tool in our analysis is the \emph{arc-Grunsky operator}, which we define using the equilibrium parametrization $z_e(t)$. Recall that the Chebyshev polynomials are the orthogonal polynomials for the equilibrium measure on $[-1,1]$.
\begin{lemma}[The arc-Grunsky expansion]\label{lem:arcGrunsky}
Let $\alpha>0$ and let $\gamma$ be a $C^{3+\alpha}$ Jordan arc with endpoints $-1$ to $1$. Then there exist real-valued coefficients $a_{kl}$, $k,l\geq 1$, such that uniformly for $s,t \in I$,
\begin{equation}\label{Grunskyexp}
\log \Big| \frac{z_e(s)-z_e(t)}{s-t} \Big| = \log(2\mathrm{cap}(\gamma)) -2 \sum_{k,l\geq 1} a_{kl} T_k(s)T_l(t),
\end{equation}
where $T_k$ is the $k$:th Chebyshev polynomial of the first kind.
\end{lemma}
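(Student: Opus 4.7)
The plan is a direct Chebyshev/Fourier argument, not routed through the opened curve. Set
\[
L(s,t) := \log \left| \frac{z_e(s) - z_e(t)}{s-t} \right|,
\]
with $L(s,s) := \log|z_e'(s)|$ on the diagonal; the lemma is the statement that $L$ admits a uniformly convergent Chebyshev double expansion of the claimed form. The key observation is that under the substitution $s = \cos\theta_1$, $t = \cos\theta_2$ Chebyshev expansions become Fourier cosine series in $\theta$, and the constant term together with the vanishing of the coefficients with $k=0$ or $l=0$ (for $(k,l)\neq (0,0)$) will follow from the potential-theoretic fact that the logarithmic potential of the equilibrium measure is constant on its support.

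First one establishes the required regularity. Under the $C^{3+\alpha}$ hypothesis, boundary regularity of the conformal map from $\mathbb{D}^*$ onto $\mathbb{C} \setminus \gamma$ (Kellogg--Warschawski) yields $z_e \in C^{3+\alpha}([-1,1])$ with $z_e'$ bounded away from zero; the matching $1/\sqrt{\cdot}$ endpoint singularities of the equilibrium densities on $[-1,1]$ and on $\gamma$ are precisely what make the parametrization smooth up to $\pm 1$. Writing $(z_e(s)-z_e(t))/(s-t) = \int_0^1 z_e'(t+u(s-t))\,du$, this divided difference is a nonvanishing $C^{2+\alpha}$ function of $(s,t) \in [-1,1]^2$ (nonvanishing because $\gamma$ is a Jordan arc and $z_e' \neq 0$), so $L \in C^{2+\alpha}([-1,1]^2)$. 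Consequently $\tilde L(\theta_1,\theta_2) := L(\cos\theta_1,\cos\theta_2)$ is a $C^{2+\alpha}$ function on $[-\pi,\pi]^2$ that is even in each variable separately, so its two-dimensional Fourier series converges uniformly and consists purely of $\cos(k\theta_1)\cos(l\theta_2)$ terms. Using $T_k(\cos\theta) = \cos(k\theta)$ we obtain a uniformly convergent expansion
\[
L(s,t) = \sum_{k,l\geq 0} b_{kl}\, T_k(s)\, T_l(t)
\]
on $[-1,1]^2$, with real, symmetric coefficients $b_{kl} = b_{lk}$.

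It remains to identify the coefficients with $k=0$ or $l=0$. Pushing $d\nu_e$ forward by $z_e$ to the equilibrium measure $\widetilde\nu_e$ on $\gamma$, for fixed $t \in [-1,1]$ we have
\[
\int_{-1}^1 L(s,t)\, d\nu_e(s) = \int_\gamma \log|z - z_e(t)|\, d\widetilde\nu_e(z) - \int_{-1}^1 \log|s-t|\, d\nu_e(s) = \log\mathrm{cap}(\gamma) + \log 2,
\]
using the standard fact that $\int \log|z-z_0|\, d\nu(z) = \log\mathrm{cap}$ for $z_0$ in the support of the equilibrium measure $\nu$, together with $\mathrm{cap}([-1,1]) = 1/2$. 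Since this is independent of $t$, orthogonality of $\{T_l\}$ against $d\nu_e$ forces $b_{00} = \log(2\mathrm{cap}(\gamma))$ and $b_{0l} = 0$ for $l \geq 1$; by the $s\leftrightarrow t$ symmetry also $b_{k0}=0$ for $k \geq 1$. Setting $a_{kl} := -b_{kl}/2$ for $k,l \geq 1$ yields the lemma. The only nontrivial step is the regularity input ensuring uniform convergence; the rest is a standard orthogonality computation. Alternatively, the same expansion can be derived from the classical Grunsky expansion applied to $g:\mathbb{D}^* \to D^*$ via the factorization
\[
\frac{z_e(s)-z_e(t)}{s-t} = \frac{g(w_1)-g(w_2)}{w_1-w_2}\cdot\frac{g(w_1)g(w_2)-1}{w_1 w_2 - 1}\cdot\frac{w_1 w_2}{g(w_1)g(w_2)}
\]
on $w_j = e^{i\theta_j}$, which realizes the $a_{kl}$ explicitly as combinations of Grunsky-type coefficients of $g$ and explains the name \emph{arc-Grunsky}.
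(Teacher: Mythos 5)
Your main argument is correct and is essentially the paper's own proof: establish that $z_e\in C^{3+\alpha}([-1,1])$ with $z_e'$ nonvanishing (this is the paper's Lemma~\ref{lem:eqpar}, proved via the opened curve and Kellogg's theorem), expand $\log|(z_e(\cos\theta_1)-z_e(\cos\theta_2))/(\cos\theta_1-\cos\theta_2)|$ in a uniformly convergent double cosine series, and use Frostman's theorem on both $\gamma$ and $[-1,1]$ (with $\mathrm{cap}([-1,1])=1/2$) to show the $k=0$ or $l=0$ coefficients vanish and the constant term is $\log(2\,\mathrm{cap}(\gamma))$; your divided-difference representation just makes explicit the regularity step the paper states tersely.

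One caveat: your closing ``alternative'' derivation via the classical Grunsky expansion of $g:\mathbb{D}^*\to D^*$ does not work as stated. The factorization you write presupposes $z_e(\cos\theta)=u(g(e^{\i\theta}))$ with $u(w)=(w+1/w)/2$, i.e.\ that the equilibrium angle $\theta$ coincides with the boundary parametrization induced by $g$. This fails in general: it would force $h(\pm1)=\pm1$, whereas for the circular arc the paper computes $h(w)=w\sin(\alpha/2)-\i\cos(\alpha/2)$, so $h(1)=-\i e^{\i\alpha/2}\neq 1$. The correct relation between the equilibrium angle and the conformal parametrization of $\eta$ is the nontrivial boundary correspondence encoded by $\lambda(w)=h(w)/h(1/w)$ (cf.\ \eqref{eq:te(z)} and Lemma~\ref{lemma1section9}), and indeed the paper emphasizes that no direct relation between the arc-Grunsky and classical Grunsky coefficients is known. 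Since this is only an aside, your proof stands, but the remark should be deleted or corrected.
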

The proof will be given in Section \ref{sec:Grunsky}. We call $a_{kl}$, $k,l \geq 1$, the \textit{arc-Grunsky coefficients}, in analogy with the classical Grunsky coefficients which can also be defined from $\gamma$ using the normalized conformal map from the exterior of the unit disk. The arc-Grunsky coefficients are however different from the Grunsky coefficients and we do not know any direct relation between them. 

Acting by matrix multiplication, the semi-infinite matrix $B = (b_{kl})_{k,l\geq1}$, with $b_{kl} = \sqrt{kl}a_{kl}$ induces an operator $B: \ell^2(\Z_+) \rightarrow \ell^2(\Z_+)$, the \textit{arc-Grunsky operator}. It follows from the definition that $a_{kl} = a_{lk}$, and hence $B$ is a real, symmetric operator. Moreover, we prove in Section \ref{sec:Grunsky} that it satisfies an analog of the strengthened version of the Grunsky inequality which in the classical setting holds for (and characterizes) quasicircles. 
\begin{prop}[Strengthened arc-Grunsky inequality]\label{prop:stGrineq-intro}
Let $\alpha>0$ and let $\gamma$ be a $C^{5+\alpha}$ Jordan arc with endpoints $-1$ to $1$. Let $B$ be the arc-Grunsky operator associated to $\gamma$. Then there exists a constant $0 \leq \kappa<1$ so that
\begin{equation}\label{stGrineq-intro}
B \geq -\kappa I,
\end{equation}
that is, all eigenvalues of $B$ are greater or equal to $-\kappa$.
\end{prop}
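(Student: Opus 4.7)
The plan is to combine a Dirichlet-energy identity with compactness of $B$. More precisely, I aim to (i) show that $\langle (I+B)x, x\rangle$ equals a positive multiple of a Dirichlet integral, which gives $B \geq -I$ and, by a rigidity argument, excludes $-1$ from the spectrum, and (ii) use the smoothness of $\gamma$ to deduce that $B$ is compact. The spectral gap $B \geq -\kappa I$ with $\kappa < 1$ then follows from elementary spectral theory for compact self-adjoint operators.

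For (i), given a finitely supported real sequence $x = (x_k)_{k \geq 1}$, define the boundary function
\[
u(z_e(t)) := \sum_{k \geq 1} \frac{x_k}{\sqrt{k}}\, T_k(t), \qquad t \in [-1,1],
\]
and let $\tilde u$ denote its bounded harmonic extension to $\C \smallsetminus \gamma$. Expressing $\mathcal{D}_{\C \smallsetminus \gamma}(\tilde u)$ as a boundary integral via Green's formula, passing to $\eta$ through the opening map $s(z) = z + \sqrt{z^2-1}$, and substituting the arc-Grunsky expansion of Lemma~\ref{lem:arcGrunsky} for $\log|z_e(s) - z_e(t)|$, one finds---after collapsing the resulting sums with the orthogonality $\int T_k T_l\, d\nu_e = \tfrac{1}{2}\delta_{kl}$---an identity of the form
\[
\langle (I + B) x, x \rangle \;=\; c\, \mathcal{D}_{\C \smallsetminus \gamma}(\tilde u)
\]
for an explicit constant $c > 0$. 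In particular $I + B \geq 0$, and if $(I+B)x = 0$ then $\tilde u$ is constant on $\C \smallsetminus \gamma$; since $u$ has no $T_0$-coefficient this constant is $0$, hence $u \equiv 0$ on $\gamma$, and by orthogonality $x = 0$. So $-1$ is not an eigenvalue of $B$. For (ii), the $C^{5+\alpha}$ assumption implies that the kernel $K(s,t) = \log\big|\tfrac{z_e(s)-z_e(t)}{s-t}\big| - \log(2\,\mathrm{cap}(\gamma))$ extends smoothly across the diagonal (because $z_e$ is a $C^{5+\alpha}$ diffeomorphism with $z_e' \neq 0$ on $(-1,1)$), and standard estimates then give enough decay of its double Chebyshev coefficients $a_{kl}$ to conclude that $B = (\sqrt{kl}\,a_{kl})$ is Hilbert--Schmidt, hence compact. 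Combining: $\sigma(B) \subset [-1,\infty)$ is discrete away from $0$ and does not contain $-1$, so $\inf \sigma(B) > -1$, proving $B \geq -\kappa I$ for some $\kappa < 1$.

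The principal obstacle is the Dirichlet-energy identity in step (i). One must carefully track the two-sided boundary behavior of $\tilde u$ on $\gamma$---which under the opening map $s$ lifts to the two halves of $\eta$ with values matched by the reflection symmetry $w \mapsto 1/w$ of $\eta$---reconcile the Chebyshev expansion in the equilibrium parametrization of $\gamma$ with the trigonometric expansion on $\eta \simeq \mathbb{T}$, and control the square-root singularities of $z_e'$ and $\tau_e'$ at the endpoints $\pm 1$, where boundedness of $\tilde u$ is what uniquely determines the extension. Once the identity is in place, the remaining steps (exclusion of $-1$ from the spectrum and compactness of $B$) are short consequences of standard spectral theory and standard Chebyshev-coefficient decay estimates for smooth kernels.
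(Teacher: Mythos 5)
Your overall architecture (one-sided Grunsky inequality via potential theory, compactness from coefficient decay, exclusion of $-1$ as an eigenvalue, then spectral theory for compact self-adjoint operators) is the same as the paper's, but the key identity in your step (i) is wrong as stated. For $u(z_e(t))=\sum_k \tfrac{x_k}{\sqrt k}T_k(t)$, i.e.\ for the function whose rescaled Chebyshev coefficient vector is $x$, the correct statement is $x^t(I+B)^{-1}x=\mathcal D_{\C\smallsetminus\gamma}(u)$ (Proposition~\ref{prop:Dir}), not $\langle(I+B)x,x\rangle=c\,\mathcal D_{\C\smallsetminus\gamma}(\tilde u)$; if both held for all $x$ one would get $(I+B)^2=I$, i.e.\ $B=0$. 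What the Green's-formula/Grunsky-expansion computation actually produces is $\langle(I+B)x,x\rangle=2I_\gamma[\sigma_x]$, where $\sigma_x=f_x\,d\nu_e$ and $f_x(z_e(\cos\theta))=\sum_k\sqrt k\,x_k\cos(k\theta)$; equivalently it is the Dirichlet energy of the logarithmic potential of $\sigma_x$, whose boundary coefficient vector is $(I+B)x$, not $x$. This corrected identity still yields $B\ge -I$ (this is exactly how the paper proves Proposition~\ref{prop:Grineq}), so the positivity part of your plan survives, but not with the identity you wrote.

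The more serious gap is the rigidity step. Strict positivity of the quadratic form on finitely supported vectors does not exclude a kernel vector in $\ell^2$: a compact nonnegative operator can kill an $\ell^2$ vector with infinitely many nonzero entries while being strictly positive on the dense subspace of finite sequences. So you must run the energy-rigidity argument on the actual eigenvector $v$ with $(B+I)v=0$, and for that you need $\sigma_v$ (or the corresponding derivative/measure) to be a well-defined signed measure of finite energy. Since $v\in\ell^2$ only gives $\sum v_k^2<\infty$, the series $\sum_k\sqrt k\,v_k\cos(k\theta)$ need not define a function, let alone a measure. This is precisely where the $C^{5+\alpha}$ hypothesis enters in the paper: the eigenvalue equation $v_k=-\sum_l b_{kl}v_l$ together with the decay $|b_{kl}|\lesssim k^{-5/2-\epsilon}l^{-1/2-\epsilon}$ from Lemma~\ref{lem:Grunskydecay} gives $|v_k|\lesssim k^{-5/2-\epsilon}$, which makes the associated function $u$ with $u'(t)$ integrable (with a $1/\sqrt{1-t^2}$ bound), so that the neutral signed measure is well defined, its energy vanishes, and Lemma~\ref{thm:enpot} forces $v=0$. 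Your proposal uses the smoothness only for Hilbert--Schmidt compactness and omits this bootstrap, so as written the exclusion of $-1$ from the spectrum is not justified. Fixing the identity in (i) and adding the eigenvector-decay argument would bring your proof in line with the paper's.
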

It is natural to ask about the optimal regularity for Proposition~\ref{prop:stGrineq-intro}. The first step to prove Proposition~\ref{prop:stGrineq-intro} is the analog of the classical Grunsky inequality, i.e., \eqref{stGrineq-intro} with $\kappa=1$ which holds under weaker regularity assumptions. Our argument is based on potential theory and also gives a new proof of the classical Grunsky inequality, see Proposition~\ref{prop:Grineq}.

\begin{remark}We only have a one-sided estimate in \eqref{stGrineq-intro}, as opposed to the classical Grunsky inequality which gives both an upper and lower bound on the eigenvalues. We do not know whether a two-sided estimate  holds in the present situation. For the ordinary Grunsky operator the one-sided inequality implies the two-sided inequality since the eigenvalues come in pairs with opposite signs.
\end{remark}

The previous proposition ensures that $I+B$ is invertible if the arc $\gamma$ is sufficiently smooth. This fact is very important in our proofs, and a necessary condition for the asymptotic formulas that we obtain to be well-defined. Moreover, sufficient regularity also ensures that $B$ is trace-class in which case the Fredholm determinant $\det(I+B)$ is defined. In a similar manner as in \cites{Joh2, CouJoh, JohVik}, the $\beta$-independent constant term in the free energy expansion is expressed using this Fredholm determinant.  We have the following identity.
\begin{thm}\label{thm:Grunsky-Loewner2-intro}
    Let $\gamma$ be an analytic Jordan arc with endpoints $-1$ and $1$ and let $B$ be the arc-Grunsky operator associated to $\gamma$. Then,
\[
-\log \det(I+B) = \frac{1}{12}J^A(\gamma),
\]
where $J^A(\gamma)$ is as in \eqref{J-arc-Loewner-energy}.
 \end{thm}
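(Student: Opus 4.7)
The plan is to bridge the arc-Grunsky determinant and the Loewner energy through the classical Grunsky operator of the opened curve $\eta$, ultimately appealing to the Takhtajan--Teo identity $-\log\det(I - G^*G) = \tfrac{1}{12}I^L(\eta)$. The Widom map $s$ composed with $h = g^{-1}$ yields the normalized exterior conformal map $\psi = h\circ s:\mathbb{C}\setminus\gamma \to \mathbb{D}^*$; pushing the equilibrium measure forward to $\mathbb{T}$ produces the uniform measure, and the two boundary preimages of each $z\in\gamma$ are interchanged by an orientation-reversing involution $\sigma$ of $\mathbb{T}$ whose fixed points $h(\pm 1)$ are antipodal (by equality of harmonic measures on the two sides of $\gamma$). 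The involution $\sigma$ is the lift to $\mathbb{T}$ of the self-inversive symmetry $w\mapsto 1/w$ of $\eta$.

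The first concrete step is to substitute $\psi^{-1}(\zeta) = \tfrac{1}{2}(g(\zeta)+1/g(\zeta))$ into Lemma~\ref{lem:arcGrunsky}. Using the algebraic identity $\cos\phi - \cos\theta = \tfrac{1}{2}(e^{i\phi}-e^{i\theta})(1-e^{-i(\phi+\theta)})$, the arc-Grunsky kernel decomposes as a sum of two copies of the classical Grunsky kernel $\log|(g(\zeta_1)-g(\zeta_2))/(\zeta_1-\zeta_2)| - \log g'(\infty)$ of $g$, evaluated at $(e^{i\phi}, e^{i\theta})$ and $(e^{i\phi}, \sigma(e^{i\theta}))$, plus univariate remainders built from $\log|g(\zeta)/\zeta|$. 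Expanding in the Chebyshev basis on $[-1,1]$ — equivalently the $\sigma$-symmetric subspace of the Fourier basis on $\mathbb{T}$ — identifies $B$ as the compression of a real-symmetric operator built from the classical Grunsky matrix $G$ of $g$ to this symmetric subspace. Block diagonality under the $\sigma$-symmetry and the multiplicativity of Fredholm determinants should then produce an identity that relates $\log\det(I+B)$ to $\tfrac{1}{2}\log\det(I-G^*G)$ up to an explicit endpoint correction $C_\partial$ coming from the univariate remainders. Combining this with Takhtajan--Teo gives $-\log\det(I+B) = \tfrac{1}{24}I^L(\eta) - C_\partial$, so by the definition of $J^A(\gamma)$ the target identity reduces to $C_\partial = \tfrac{1}{4}\log|h'(1)h'(-1)|$.

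This last identification should follow from the Chebyshev expansion of $\log|g(\zeta)/\zeta|$, whose coefficients are controlled by the leading behavior of $g$ near the inversion-fixed points $\pm 1 \in \eta$, together with the identity $g'(h(\pm 1)) = 1/h'(\pm 1)$ that comes from differentiating $g\circ h = \id$. The main obstacle is this endpoint computation: the block decomposition of $\ell^2(\mathbb{Z}_+)$ under the $\sigma$-symmetry, extraction of the Chebyshev coefficients of the univariate remainders, and verification of the numerical constants $\tfrac{1}{24}$ and $\tfrac{1}{4}$ all require careful bookkeeping, especially since the two fixed points of $\sigma$ sit on the boundary between the two invariant arcs. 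As a sanity check, the base case $\gamma = [-1,1]$ gives $B = 0$, $\eta = \mathbb{T}$, $h(z) = z$, $h'(\pm 1) = 1$, and both sides of the claimed identity vanish. An alternative would be a variational argument, deforming $\gamma_t$ between $[-1,1]$ and $\gamma$ along a family of analytic arcs and showing that the two sides have matching first-order variations, integrating from the base case.
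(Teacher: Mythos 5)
Your route is genuinely different from the paper's, but it has a gap at its central step: the claim that, in the Chebyshev basis, $B$ becomes the compression of an operator built from the classical Grunsky matrix $G$ of $g$ to the $\sigma$-symmetric subspace, with block diagonality yielding $\log\det(I+B)=\tfrac12\log\det(I-G^*G)+C_\partial$. The Chebyshev modes in the \emph{equilibrium} parametrization do not correspond to Fourier cosine modes on $\mathbb{T}$: by \eqref{eq:te(z)} (see also Lemma~\ref{lemma1section9}), the equilibrium parameter of a point of $\gamma$ whose two conformal preimages are $e^{\i\alpha_+},e^{\i\alpha_-}$ is $\cos\theta=-\cos\frac{\alpha_--\alpha_+}{2}$, a nontrivial function of \emph{both} preimages; hence $T_k(\tau_e(\phi(e^{\i\alpha})))=\cos\big(k\,\theta(\alpha)\big)$ with $\theta(\alpha)$ a nontrivial welding-type reparametrization of the circle, not $\cos(k\alpha)$. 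So after your (correct) algebraic splitting of the kernel via $u(w)-u(w')=\tfrac12(w-w')(1-\tfrac1{ww'})$, the arc-kernel in the Chebyshev basis is the Grunsky-type kernel conjugated by the composition operator of this reparametrization; it is not a compression of $G$-data in the Fourier basis, no block structure is available, and in any case "compression plus multiplicativity of Fredholm determinants" is not a valid determinant identity unless genuine block-diagonality is proved. Your supporting claim that the fixed points $h(\pm1)$ of $\sigma$ are antipodal because the two sides of $\gamma$ carry equal harmonic measure is also false in general: $\omega(\gamma_+)\neq\omega(\gamma_-)$ generically, and the paper shows equality is precisely the condition $J^A(\gamma)=I^A(\gamma)$. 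Note finally that the authors state they know of no direct relation between the arc-Grunsky and classical Grunsky coefficients, and that the determinant relation $\det(I+B)=\det(I-GG^*)^{1/2}\,|h'(1)h'(-1)|^{1/4}$ implied by \eqref{Grunsky-Loewner-energy-identity} is obtained only \emph{a posteriori} from Theorem~\ref{thm:Grunsky-Loewner2-intro}; your proposed reduction is therefore essentially as hard as the theorem itself, and the endpoint constant would not drop out of the univariate remainders alone.

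The one-sentence "alternative" at the end of your proposal is in fact the paper's actual proof: $\gamma$ is embedded in a holomorphic motion of analytic arcs (Section~\ref{sect:interpolation-of-curves}), the variation of $\tfrac12 I^L(\eta_\alpha)$ is computed via Lemma~\ref{lem:dirichlet-integral} and Proposition~\ref{prop:var-dirichlet}, the variation of $\log\det(I+B_\alpha)$ is computed through Douglas' formula and a contour-deformation argument (Propositions~\ref{prop:var} and~\ref{prop:variation-of-Grunsky}), the boundary terms $\frac{\d}{\d\alpha}\log|h_\alpha'(\pm1)|$ produce exactly the $-3\log|h'(1)h'(-1)|$ in \eqref{J-arc-Loewner-energy}, and integrating from $\gamma_0=[-1,1]$, where both sides vanish (your correct sanity check), gives the identity. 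To make your write-up into a proof you would either need to develop that variational route in detail, or actually establish your operator identity relating $B$ to $G$ — which, as explained above, is unproven and whose stated justification is incorrect.
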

 Theorem~\ref{thm:Grunsky-Loewner2-intro} is proved in Section~\ref{sec:var}. Given the theorem, the asymptotics in Theorem~\ref{thm:main-all-beta} (and Theorem~\ref{thm:main-arc-loewner}) could be expressed using only the equilibrium parametrization, $z_e$. 
 
 Theorem~\ref{thm:Grunsky-Loewner2-intro} is parallel to the following fact about the classical Grunsky operator $G$ associated to a given Jordan curve $\eta$: if $\eta$ is sufficiently regular (a Weil-Petersson quasicircle), then
\begin{equation}\label{Grunsky-Loewner-energy-identity}
-\log\det (I - GG^*) = \frac{1}{12}I^L(\eta),
\end{equation}
where $I^L$ is the Loewner energy. See \cites{TakTeo, Wan}. We do not use \eqref{Grunsky-Loewner-energy-identity} in this paper besides noting that Theorem~\ref{thm:Grunsky-Loewner2-intro} combined with \eqref{J-arc-Loewner-energy} and \eqref{Grunsky-Loewner-energy-identity} \emph{a posteriori} provides a relation between the two determinants.

We prove Theorem~\ref{thm:Grunsky-Loewner2-intro} by computing appropriate variations of both sides and integrating. This is also how \eqref{Grunsky-Loewner-energy-identity} is proved, although our argument is somewhat different and based on Douglas' formula. With some simplifying modifications it gives an alternative proof of \eqref{Grunsky-Loewner-energy-identity}. The analyticity of $\gamma$ is used in the proof of Theorem~\ref{thm:Grunsky-Loewner2-intro}, which is also why it is assumed in Theorem~\ref{thm:main-arc-loewner} and Theorem~\ref{thm:main-all-beta}. With additional work this assumption could be relaxed, but the optimal condition remains unclear.

\subsection{Example: the circular arc}

The only case apart from an interval where everything can be computed explicitly is when $\gamma$ is a circular arc. Let $C_{\alpha}$, $0<\alpha<\pi$, be the circular arc on the unit circle from $e^{\rm{i}\alpha}$ to $e^{\rm{i}(2\pi-\alpha)}$, and let $\gamma_\alpha$ be the same circular arc, but with endpoints at $-1$ and $1$, center
$\rm{i}\cot\alpha$, and radius $(\sin\alpha)^{-1}$. They are related by the map $v(\zeta)=\cos\alpha+\rm{i}\zeta\sin\alpha$ which maps $\gamma_\alpha$ to $C_\alpha$.
Setting 
\[
\sqrt{z^2-1}=e^{\i\alpha/2}\bigg(e^{\i\alpha}\frac{z+1}{z-1}\Bigg)^{1/2}(z-1)
\]
with a branch cut along the positive real axis, we get $\sqrt{z^2-1}$ with a branch cut along $\gamma_\alpha$, and the exterior map is given by
\begin{equation}\label{psicircarc}
\psi(z)=\sin\frac{\alpha}2(z+\sqrt{z^2-1})-\i\cos\frac{\alpha}2.
\end{equation}
A computation using the formulas in Section \ref{sec:prel} gives the equilibrium parametrization
\[
z_e(t)=2t^2\cos^2(\alpha/2)-1+2\i t\cos(\alpha/2)\sqrt{1-t^2\cos^2(\alpha/2)},
\]
$-1\le t\le 1$. From this it can be seen that the arc-Grunsky coefficients for $C_\alpha$ satisfy $a_{2k,2l}=a_{2k,2l-1}=a_{2k-1,2l}=0$, whereas $a_{2k-1,2l-1}\neq 0$ in general and does not appear to be given by a simple formula.

Opening up $\gamma_\alpha$ we get a circle and it follows from \eqref{psicircarc} that
\[
h(w)=w\sin(\alpha/2)-\i\cos(\alpha/2).
\]
Since the Loewner energy of a circle is zero, we see from \eqref{J-arc-Loewner-energy}
that
\[
J^A(\gamma_\alpha)=-6\log\sin\frac{\alpha}2.
\]
Note that $\text{cap}(C_\alpha)=\cos(\alpha/2)$. Let $u(\theta)$ be a $C^{4+\alpha}$ real-valued function on $C_\alpha$. It follows from Theorem \ref{thm:main-arc-loewner}, \eqref{Znunitint} and Theorem \ref{thm:main-linear-statistics}  that
\[
Z_n(C_\alpha)=2^{1/12}e^{3\zeta'(-1)}n^{-1/4}(2\pi)^n2^{-n^2}\bigg(\cos\frac{\alpha}2\bigg)^{n^2}\bigg(\sin\frac{\alpha}2\bigg)^{-1/4} e^{\frac 18A[u]+o(1)}
\]
as $n\to\infty$. This formula was first proved in \cite{Wid} using a special identity and orthogonal polynomials, under the assumption that $u(\theta)=u(2\pi-\theta)$, and with an expression for $A[u]$ in terms of appropriate Fourier coefficients. This symmetry condition on $u$ was removed in \cite{DuiKoz} using a completely different approach, again with an expression for $A[u]$ in terms of appropriate Fourier coefficients. Relating the expression for $A[u]$ in \cite{DuiKoz}, Formula (22), to the form given here as a Dirichlet energy is a non-trivial computation.

\subsection{Discussion }

Planar Coulomb gases confined to one-dimensional sets have been studied primarily in the case of the unit circle and the real line, in part due to their connection to random matrix theory. For example, when $\gamma$ is the unit circle and $\beta=2$, \eqref{fn} gives the joint density of the eigenvalues of a Haar (uniform) $n\times n$ unitary matrix. For the real line, a confining potential has to be added into the Gibbs measure \eqref{fn} for integrability; for instance, if it is quadratic we recover the Gaussian $\beta$-Ensemble. There is a very extensive literature on CLTs of linear statistics and asymptotics of partition functions for Coulomb gases on the line in an external potential, confined to an interval or to the unit circle. We will not attempt to summarize these papers here, and refer instead to the recent book \cite{Serfaty2024} and the references therein.

More general curves and arcs in the plane have received much less attention. 
The case of a Jordan curve in the complex plane has been studied in \cites{Joh1, Joh2} for $\beta=2$ and \cites{CouJoh, WieZab} for general $\beta>0$. In \cite{CouJoh}, it was found that,
assuming sufficient regularity of $\eta$, the free energy asymptotics has the general form,
\begin{equation}\label{eq:expansion-Jordan-curve}
\lim_{n \to \infty}\log \frac{Z_n^\beta(\eta)/\textrm{cap}(\eta)^{\beta n^2/2 + (1-\beta/2)n}}{Z_n^\beta(\mathbb{T})} = \frac{1}{24}I^L(\eta) + \frac{1}{8}\left( \sqrt{\frac{\beta}{2}} - \sqrt{ \frac{2}{\beta}} \right)^2 I^F(\eta). 
\end{equation}
The geometry of $\eta$ is encoded by the Loewner energy $I^L(\eta)$ defined in \eqref{def:Loewner-energy} and the Fekete (-Pommerenke) energy, which for a Jordan curve is defined by \[I^F(\eta):=\mathcal{D}_{\mathbb{C}\smallsetminus \eta}\left(\log|h'|\right).\] Both quantities are minimized for a circle and finite if and only if the Jordan curve belongs to the class of Weil-Petersson quasicircles. The Fekete energy is so named because it appears in the expansion of the discriminant of the Jordan curve, which is naturally interpreted as the model we consider here at $\beta = +\infty$. We refer to the discussion and references in \cite{JohVik}, where a 2D gas confined to Jordan domain at $\beta=2$ was considered. In that case an exact formula holds, implying an expansion of similar form, but with constant term $-I^L/12$.  

Comparing \eqref{eq:expansion-Jordan-curve} with Theorem~\ref{thm:main-all-beta}, we see that that the expansions share an identical structure, even though the specific quantities involved differ. In the case of a Jordan curve, these quantities are defined directly in terms of the conformal map that sends the exterior of the unit disk to the exterior of the curve. For the arc, however, the corresponding quantities are connected to the arc's equilibrium measure and are only indirectly related to the conformal maps. They also involve covariance terms and interaction between the endpoints of the arc. Of course, in the case of a Jordan curve, the exterior conformal map does also provide the equilibrium parametrization, and it is this aspect that we generalize to the setting of the arc.

In \cite{CouJoh}, it was also established that for sufficiently regular Jordan curves $\eta$ and sufficiently smooth test functions $g$ we have the following limit:
\begin{align*}
&\lim_{n\to\infty} \E_{\eta^n}^\beta \left[ \exp \left( \sum_{\mu=1}^n g(z_\mu) - n \int_\eta g(z), d\nu_{eq}(z) \right) \right] \\
&= \exp \left(\frac{2}{\beta}  \mathbf{g}^t (I+K)^{-1}\mathbf{g}+2\left(1-\frac{2}{\beta}\right) \mathbf{d}^t(I+K)^{-1}\mathbf{g} + o(1)\right).
\end{align*}
Here $\nu_{eq}$ is the equilibrium measure of $\eta$, $\g$ is an infinite-dimensional vector containing the Fourier coefficients of $g$ and $K$ is an infinite-dimensional matrix constructed from the real and imaginary parts of $B^\eta$, the classical Grunsky operator associated with $\eta$ (see (1.5) and the following equations in \cite{CouJoh} for the precise definitions). Again we see very strong similarities with the results of this paper. By comparing with Theorem \ref{thm:relSz}, we see that the arc-Grunsky operator indeed plays the role of the classical Grunsky operator, or more precisely $K$, in the setting of open arcs. The main difference lies in the appearance of the column vector $\f$ in \eqref{Ag} which takes into account the endpoints of the arc. Such a vector does not appear in the equation above since the curve $\eta$ is a closed curve with no endpoints. It would be interesting to further study the arc-Grunsky operator and its relation to the geometry of the arc.

Let us finally note that in the case of a Jordan curve there is a direct relation between the Grunsky operator and the Neumann-Poincaré operator acting on real-valued functions on the curve. This connection does not appear to generalize to the case of the arc-Grunsky operator associated with a Jordan arc. The Neumann-Poincaré operator is given by 
\[
K_{\text{NP}}(u)(z)=\text{Re}\bigg(\frac 1{\pi\i}\text{p.v}\int_{\gamma}\frac{u(\zeta)}{\zeta-z}\,d\zeta\bigg).
\]
Consider the case when $\gamma$ is a circular arc. If we parametrize using the equilibrium parametrization and use the fact that $z_e\big(-(\cos(\alpha/2))^{-1}\cos(\theta/2)\big)=e^{-\i\theta}$, we see that
\[
K_{\text{NP}}(u)(z) \equiv \frac 1{2\pi}\int_{-1}^1u(z_e(t))\,dt,
\]
and consequently there appears to be no relation to the non-trivial arc-Grunsky operator.

\subsection*{Acknowledgements}  K. C. was supported by the Knut and Alice Wallenberg Foundation (KAW). K. J. was supported by the Knut and Alice Wallenberg Foundation (KAW), and the Swedish Research Council (VR). F. V. was supported by the Knut and Alice Wallenberg Foundation (KAW) and the Gustafsson Foundation. We are very grateful to Ellen Krusell for allowing us to include Proposition~\ref{prop:ellen}. We thank Yilin Wang for many discussions and useful suggestions. Thanks also to Kari Astala for correspondence regarding Section~\ref{sect:interpolation-of-arcs} and to Steffen Rohde and Paul Wiegmann for discussions.
\section{Preliminaries}\label{sec:prel}

\begin{table}[h]
    \centering
    \begin{tabular}{|c|c|}
    \hline
    Notation & Meaning \\
    \hline
    $\gamma$ & Jordan arc between $-1$ and $1$, the two sides are denoted $\gamma_{\pm}$ \\
    $\Omega^*$ & $\C \smallsetminus \gamma$ \\
        $ \psi$ & Exterior conformal map $\Omega^*\to \mathbb{D}^*$  \\
        $ \psi_{\pm}(z)$ & $\psi(z_\pm)$, where $z_\pm$ denotes the two prime ends corresponding to $\gamma_\pm$\\
    $\eta$ & Opened Jordan curve corresponding to Jordan arc $\gamma$ \\
        $D, D^*$ & Bounded and unbounded component of $\mathbb{C} \smallsetminus \eta$ \\
          $j$ &  Inversion map $z \mapsto 1/z$ \\

  $f$ &  Conformal map $f:\D \to D, f(0)=0, f'(0)>0$ \\
    $g$ & Conformal map $g: \D^* \to D^*, g(\infty) = \infty, g'(\infty) > 0$  \\
  $h = h_{e}$ & $h:=g^{-1}:  D^*  \to \mathbb{D}^*$ \\
    $h_{i} = j \circ h_{e} \circ j$ &  $h_i:=f^{-1}: D  \to \mathbb{D}$ \\

        \hline
    \end{tabular}
    \caption{Table summarizing notation}
    \label{tab:sample_two_columns}
\end{table}

\subsection{Equilibrium measure}
Let $K$ be a compact set in the complex plane $\mathbb{C}$. The \emph{logarithmic energy} of a probability measure $\mu$ on $K$ is defined by
\begin{equation}\label{Imu}
I_K[\mu]=\int_K\int_K\log|z-w|^{-1}\,d\mu(z)d\mu(w).
\end{equation}
There is a unique measure $\nu_e$, the \emph{equilibrium measure} on $K$, such that
\[
I_K[\nu_e]=\inf_{\mu}I_\gamma[\mu],
\]
where the infimum is over all probability measures on $K$. It is well-known that $\nu_e$ is harmonic measure seen from $\infty$, i.e., the hitting distribution of a planar Brownian motion started from $\infty$.
The \textit{capacity} of $K$ is given by
\[ \mathrm{cap}(K) = e^{- I_K[\nu_e]}. \]
If $I = [-1,1]$, then $\mathrm{cap}(I) = 1/2$.

\subsection{Equilibrium parametrization}
Recall that $z_e(t)$, $t\in[-1,1]$, is the equilibrium parametrization of the Jordan arc $\gamma$. We will need some regularity properties of $z_e$, which we will derive from those of its inverse $\tau_e \coloneqq z_e^{-1}$. Accordingly, we begin by finding a suitable expression for $\tau_e$.

Let $\nu_+(\zeta)$ and $\nu_-(\zeta)$ denote the normals of $\gamma$ at $\zeta$ taken from the $\pm$ sides (chosen arbitrarily, but fixed once chosen), and let $s$ be the arc-length parametrization of $\gamma$. Since $\gamma$ is continuously differentiable, the equilibrium measure $\nu_e$ of $\gamma$ is absolutely continuous with respect to $s$ and satisfies
\[ \d \nu_e(\zeta) = -\frac{1}{2\pi} \Big( \frac{\partial U}{\partial \nu_+}(\zeta) + \frac{\partial U}{\partial \nu_-}(\zeta) \Big) \d s(\zeta) \]
where $U$ is the equilibrium potential,
\[ U(z) = \int_\gamma \log|z-\zeta|^{-1} \d \nu_e(\zeta). \]
The Green's function with singularity at $\infty$ for $\Omega^* \coloneqq \C \smallsetminus \gamma$ can be defined as
\[ G_{\Omega^*}(z,\infty) = I_\gamma[\nu_e] -U(z), \quad z\in \C\smallsetminus \gamma. \]
It is also equal to 
\[ G_{\Omega^*}(z,\infty) = \log |\psi(z)| \]
where $\psi$ is the unique conformal map from $\Omega^*$ onto $\D^*$ fixing $\infty$ and with positive derivative there (meaning, $\lim_{z\to \infty} \psi(z)/z >0$). Thus
\begin{align*}
\d \nu_e(\zeta) &= \frac{1}{2\pi} \Big( \frac{\partial }{\partial \nu_+(\zeta)}+ \frac{\partial }{\partial \nu_-(\zeta)} \Big) G_{\Omega^*}(\zeta,\infty) \d s(\zeta) \\
&= \frac{1}{2\pi} \Big( \frac{\partial }{\partial \nu_+(\zeta)} + \frac{\partial }{\partial \nu_-(\zeta)} \Big) \log |\psi(\zeta)| \d s(\zeta).
\end{align*}
If $\tau(z) = \gamma'(s)/|\gamma'(s)|$ is the unit tangent at $\gamma(s)=z$ then $\nu_+ = \i \tau$, $\nu_- = -\i \tau$, and we see that
\begin{equation}\label{eq:nueq}
\d \nu_e = \frac{1}{2\pi}\Im \tau \Big( -\frac{\psi_+'}{\psi_+}+\frac{\psi_-'}{\psi_-} \Big) \d s
\end{equation}
where $\psi_+/\psi_-$ is the limit of $\psi$ from the left/right as $\gamma$ is traversed from $-1$ to $1$. This gives
\[ \int_a^w \d \nu_e(\zeta) = \frac{1}{2\pi} \Im \int_a^w \Big( -\frac{\psi_+'}{\psi_+}(\zeta)+\frac{\psi_-'}{\psi_-}(\zeta) \Big) \d\zeta = \frac{1}{2\pi}\big( \arg (\psi_-(w))- \arg(\psi_+(w)) \big). \]
so by definition of the equilibrium parametrization $z_e$ in \eqref{eqpar} and because $|\psi_+| = | \psi_-| = 1$,
\[ \frac{1}{\pi}\int_{-1}^t \frac{\d x}{\sqrt{1-x^2}} = \frac{1}{2\pi \i} \log \Big( \frac{\psi_-(z_e(t))}{\psi_+(z_e(t))} \Big) \]
where the imaginary part of the logarithm takes values in $[0,2\pi)$. We can now take the inverse $\tau_e$ of $z_e$ since in the definition \eqref{eqpar} the right-hand side is strictly increasing in $t$.
Consequently
\begin{equation}\label{eq:te(z)}
\tau_e(z) = -\cos \Big( \frac{1}{2\i} \log  \frac{\psi_-(z)}{\psi_+(z)} \Big), \quad z\in \gamma. 
\end{equation}
Now we can prove

\begin{lemma}\label{lem:eqpar}
Assume that $\gamma$ is a $C^{m+\alpha}$ Jordan arc with $m\geq 1$, $\alpha>0$. Then the equilibrium parametrization $z_e(t)$, $-1< t< 1$, is a $C^{m+\alpha}$ function and there exists a constant $c>0$ such that $|z_e'(t)|\geq c$ for all $t\in (-1,1)$.
\end{lemma}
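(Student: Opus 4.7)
The plan is to establish the regularity of $\tau_e = z_e^{-1}$ along $\gamma$ using formula \eqref{eq:te(z)}, and then invert. Parametrize $\gamma$ by arc length $t \in [0,L]$ with $\gamma(0) = -1$, $\gamma(L) = 1$, and set $F(t) = \tau_e(\gamma(t))$. The lemma will follow from showing $F \in C^{m+\alpha}([0,L])$ with $F'$ bounded away from $0$, since then $z_e = \gamma \circ F^{-1}$ is $C^{m+\alpha}$ and $|z_e'(t)| = 1/F'(F^{-1}(t)) \geq 1/\max F' > 0$.

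\emph{Opening the arc.} The map $s(z) = z + \sqrt{z^2-1}$ is analytic on $\Omega^*$ and extends continuously to each side of $\gamma$. Write $\eta = s(\gamma)$ and $h = g^{-1}$. Away from $\pm 1$, $\eta$ inherits $C^{m+\alpha}$ regularity from $\gamma$ since $s$ is analytic and locally injective. Near $-1$, factor $\sqrt{z^2-1} = \sqrt{z+1}\sqrt{z-1}$ and Taylor expand in arc length to obtain
\[
s(\gamma_\pm(t)) = -1 \pm \mathrm{i}\sqrt{2t}\,e^{\mathrm{i}\theta_0/2}\bigl(1 + O(t)\bigr),
\]
where $\theta_0$ is the tangent direction to $\gamma$ at $-1$. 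Setting $u = \pm\sqrt{t}$, both sides of $\gamma$ near $-1$ are mapped to $\eta$ by a $C^{m+\alpha}$ parametrization with nonvanishing tangent at $u = 0$, and the same analysis at $+1$ shows that $\eta$ is a $C^{m+\alpha}$ Jordan curve. By Kellogg--Warschawski, $h:\overline{D^*} \to \overline{\mathbb{D}^*}$ extends to a $C^{m+\alpha}$ diffeomorphism; consequently $\psi = h \circ s$ has $C^{m+\alpha}$ boundary values on each side of $\gamma$ in the natural parameters.

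\emph{Regularity of $F$.} Using \eqref{eq:te(z)} with continuous branches, $F(t) = -\cos\Phi(t)$ where $\Phi(t) = \tfrac{1}{2}(\arg\psi_-(\gamma(t)) - \arg\psi_+(\gamma(t)))$. Away from $\pm 1$, $\Phi$ is $C^{m+\alpha}$ in $t$, hence $F$ is too. Near $t = 0$, set $\Theta(u) := \arg h(\eta(u))$, which is $C^{m+\alpha}$ in a neighbourhood of $0$. Then $\Phi(t) = -\Theta_{\mathrm{odd}}(u(t))$ where $u(t) = \sqrt{2t}(1 + O(t))$ and $\Theta_{\mathrm{odd}}(u) = (\Theta(u) - \Theta(-u))/2$ is the odd part. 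Since $\cos$ is even, expanding yields
\[
F(t) = -\sum_{k \geq 0} \frac{(-1)^k}{(2k)!}\,\Theta_{\mathrm{odd}}(u(t))^{2k}.
\]
Each $\Theta_{\mathrm{odd}}^{2k}$ is an even function of $u$ vanishing to order $2k$ at $0$, so after substituting $u = u(t)$ and using $u(t)^2 = 2t(1 + O(t))^2$, each term becomes a $C^{m+\alpha}$ function of $t$; summing gives $F \in C^{m+\alpha}$ near $t = 0$, and symmetrically near $t = L$.

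\emph{Positivity and conclusion.} Differentiating $F(t) = -\cos(\pi\nu_e([-1,\gamma(t)]))$ yields $F'(t) = \pi\sin(\pi\nu_e([-1,\gamma(t)]))\,\rho_e(t)$, strictly positive on $(0,L)$. At $t = 0$ the leading-order computation from the square-root expansion of the equilibrium density gives $F'(0) = \pi^2 H(0)^2/2 > 0$, where $H(0)$ is proportional to the leading coefficient of that singularity; similarly at $t = L$. The inverse function theorem then gives $F^{-1} \in C^{m+\alpha}$, hence $z_e \in C^{m+\alpha}$, with the claimed lower bound on $|z_e'|$. The main obstacle is the endpoint analysis: the boundary values of $\psi$ individually are only $C^{m+\alpha}$ in $u = \sqrt{t}$, not in $t$, and it is precisely the cancellation between the odd part $\Theta_{\mathrm{odd}}$ in \eqref{eq:te(z)} and the evenness of $\cos$ that restores $t$-regularity. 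Tracking this cancellation carefully through the $\cos$ Taylor expansion is the delicate point.
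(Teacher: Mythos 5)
Your overall route is the same as the paper's: open the arc with $s(z)=z+\sqrt{z^2-1}$, get boundary regularity of the exterior map of $\eta$ from Kellogg--Warschawski, read off $\tau_e$ from \eqref{eq:te(z)}, show its derivative is positive via the equilibrium density, and invert. The difference is that the paper outsources the only genuinely delicate endpoint step to Widom \cite{Wid69}, Lemma 11.1 (if $\gamma\in C^{m+\alpha}$ then the opened curve $\eta\in C^{m+\alpha}$), and then gets positivity from the identity $\tau_e'/\sqrt{1-\tau_e^2}=\tfrac12(|\psi_+'|+|\psi_-'|)$ together with Kellogg, finishing with the inverse-function lemma from Garnett--Marshall; you instead try to do the endpoint analysis by hand, and that is where your argument has a genuine gap.

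Two concrete problems. First, the leading-order expansion $s(\gamma_\pm(t))=-1\pm \mathrm{i}\sqrt{2t}\,e^{\mathrm{i}\theta_0/2}(1+O(t))$ shows $\eta$ is a $C^1$ curve with nonvanishing tangent at $-1$, but it does not prove $u\mapsto s(\gamma(u^2))$ is $C^{m+\alpha}$ across $u=0$; that bookkeeping (division of $\gamma(t)+1$ by $t$, the square root, recomposition with $u^2$) is exactly the content of Widom's lemma and must either be carried out or cited. Second, and more seriously, the step ``each $\Theta_{\mathrm{odd}}(u(t))^{2k}$ is an even function of $u$ vanishing to order $2k$, hence a $C^{m+\alpha}$ function of $t$'' is false for general odd $C^{m+\alpha}$ functions: by the Whitney even-function phenomenon one only recovers roughly half the regularity in $t\asymp u^2$. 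For instance $\Theta_{\mathrm{odd}}(u)=u+u|u|^{3/2}$ is odd and $C^{2+1/2}$, but $\Theta_{\mathrm{odd}}(u)^2=u^2+2|u|^{7/2}+|u|^5$, i.e.\ $t+2t^{7/4}+t^{5/2}$ as a function of $t=u^2$, which is not $C^{2+1/2}$ at $t=0$; already the $k=1$ term of your cosine expansion fails. What saves the actual $\tau_e$ is structural: one needs $\Theta(u)=A(u^2)+u\,B(u^2)$ with $B$ itself suitably Hölder-regular in the variable $t$, which is again Widom-type information propagated through Kellogg, not a consequence of bare evenness. (Your endpoint positivity claim $F'(0)=\pi^2H(0)^2/2$ is also only asserted, though the idea --- square-root blow-up of the density cancelling the square-root vanishing of $\sin(\pi\nu_e)$ --- is the right one and matches the paper's formula $\d\nu_e=\tfrac1{2\pi}(|\psi_+'|+|\psi_-'|)\,\d s$.) So as written the proposal does not close the endpoint regularity; citing Widom as the paper does, or supplying the structural decomposition above, would repair it.
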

\begin{proof} 
We assume without loss of generality that $\gamma$ has endpoints $-1$ and $1$ and is oriented from $-1$ to $1$. We consider the opened Jordan curve $\eta$ as in Section~\ref{sec:intro}. It was proved in \cite{Wid69}, Lemma 11.1, that if $\gamma \in C^{m+\alpha}$ then $\eta \in C^{m+\alpha}$.

Note that, if $\eta$ is a Jordan curve through $-1$ and $1$ that contains $0$ in the interior, and is invariant under the map $w\mapsto 1/w$, then $u(\eta)$ is a Jordan arc from $-1$ to $1$. 

Now let $\psi$ and $h$ denote the conformal maps from the exterior of $\gamma$ and $\eta$ onto $\D^*$ fixing $\infty$ and with positive derivative at $\infty$. Denote by $\phi$ and $g$ their inverses. By Kellogg's theorem (see Proposition 4.3 in \cite{GarMar}) the extension of $g$ to $\partial \D$ is $C^{m+\alpha}$ and $|\phi'|>0$ on $\partial \D$. Thus $h$ is $C^{m+\alpha}$ on $\eta$ so both $\psi_+ = h \circ q_+$ and $\psi_- = h \circ q_-$, the limits of $\psi$ from the left and the right, are $C^{m+\alpha}$. By \eqref{eq:te(z)} this is also true for $\tau_e$. 

Next we show that $\tau_e'>0$. By Lemma II.4.4 in \cite{GarMar}, this and the fact that $\tau_e\in C^{m+\alpha}(\gamma)$ imply that $z_e\in C^{m+\alpha}([-1,1])$. 

By \eqref{eq:nueq}, the equilibrium measure is equal to
\[ \d \nu_e(z) = \frac{1}{2\pi} \Im \frac{\tau(z)}{|\tau(z)|} \Big( -\frac{\psi_+'(z)}{\psi_+(z)}+\frac{\psi_-'(z)}{\psi_-(z)} \Big) \d s(z).  \]
If $\psi_+(z) =  e^{\i x}$, 
\[ \frac{\tau(z)}{|\tau(z)|}\frac{\psi_+'(z)}{\psi_+(z)} = \frac{-\i e^{\i x}\phi'(e^{\i x})}{|\phi'(e^{\i x})|}\frac{\psi_+'(z)}{e^{\i x}} = -\i |\psi_+'(z)| \]
since $\phi'(e^{\i x})= 1/\psi_+'(z)$. If $\psi_-(z) =  e^{\i y}$, 
 \[\frac{\tau(z)}{|\tau(z)|}\frac{\psi_-'(z)}{\psi_-(z)} = \frac{\i e^{\i y}\phi'(e^{\i y})}{|\phi'(e^{\i y})|}\frac{\psi_-'(z)}{e^{\i y}} = \i |\psi_-'(z)|. \]
 Thus,
\[ \d \nu_e = \frac{1}{2\pi}(|\psi_+'|+|\psi_-'|)\d z.  \]
By \eqref{eq:te(z)}, we see that
\begin{align*}
\frac{\tau_e'(z)}{\sqrt{1-\tau_e(z)^2}} &= \frac{1}{2} ( |\psi_+'(z)|+|\psi_-'(z)|) \\
&= \frac{1}{2} ( |(h\circ q_+(z))'|+|(h\circ q_-(z))'|)
\end{align*}
which is strictly positive by Kellogg's theorem. This also shows that $|z_e'(t)| = 1/\tau_e'(z_e(t)) >c$ for some constant $c$.
\end{proof}
\subsection{Jordan curves and Loewner energies associated to a Jordan arc}\label{sect:Loewner-energies}
Let $\gamma$ be a rectifiable Jordan arc between $-1$ and $1$. We will associate two distinct Jordan curves to $\gamma$.

We first consider the Jordan curve $\eta$ obtained by opening $\gamma$.
Let $u(w) = \frac{1}{2}(w+\frac{1}{w})$ with inverse given by $u^{-1}(z)=:s(z) = z+ \sqrt{z^2-1}$ where $z\mapsto \sqrt{z^2-1}$ has a branch cut along $\gamma$ and behaves like $z$ near infinity. Write $s(z_\pm)$ for the limits of $s$ at the two prime ends $z_\pm$ at $z$ (corresponding to the two sides $\gamma_\pm$). Set
\[ q_+(z) = s(z_+), \quad q_-(z) = s(z_-), \quad z\in \gamma. \]
These two maps are used to ``open'' the arc $\gamma$: we set $\eta = \eta_- - \eta_+$ with $ \eta_- = q_-(\gamma)$ and $\eta_+=q_+(\gamma)$. Then $\eta$ is a Jordan curve such that if $j(z) = 1/z$, then (as sets) $j(\eta) = \eta$. Write $D^*$ for the unbounded component of $\C \smallsetminus \eta$. Then $s: \C \smallsetminus \gamma \to D^*$ is a conformal map fixing $\infty$ with derivative $2$ at $\infty$.

Let
\(h: D^* \to \D^* \)
be the unique conformal map from the exterior of $\eta$ onto the exterior of the unit disc, which maps $\infty$ to $\infty$ and has positive derivative at $\infty$. 

Define the following quantity associated to the arc $\gamma$ 
\[
J^A(\gamma) := \frac{1}{2}I^L(\eta) - 3 \log| h'(1)  h'(-1)|.
\]
The reason for introducing this particular quantity is that it appears in Theorems~\ref{thm:main-arc-loewner} and \ref{thm:main-all-beta}. Of course, some regularity is required for $J^A(\gamma)$ to be well-defined and finite. We will not consider the optimal regularity here and just note that $\gamma \in C^{3}$ certainly suffices. We show in Theorem~\ref{thm:Grunsky-Loewner2-intro} that if $\gamma$ is analytic, then
\begin{equation}\label{J-det}
J^A(\gamma)=-12\log \det(I+B),
\end{equation}
where $B$ is the arc-Grunsky operator for $\gamma$, see Section~\ref{sec:Grunsky} and Theorem~\ref{thm:main}. It is not obvious that $J^A(\gamma) \ge 0$ but we will prove this below.

There is another natural way to associate a Jordan curve and Loewner energy to $\gamma$. Let $\alpha$ be the hyperbolic geodesic in $\mathbb{C} \smallsetminus \gamma$ from $1$ to $-1$. (That is, $\alpha$ is the image of the imaginary axis under a conformal map taking $\mathbb{H} = \{z: \Im z >0\}$ onto $\C \smallsetminus \gamma$ with $0$ mapped to $1$ and $\infty$ mapped to $-1$.) Then we complete $\gamma$ using $\alpha$ to a Jordan curve $\gamma \cup \alpha$. Following Wang \cite{Wan} the \emph{arc-Loewner energy} of $\gamma$ is then defined by
\[
I^A(\gamma) := I^L(\gamma \cup \alpha).
\]
This definition is very natural from the point of view of the Loewner equation: the geodesic corresponds to the Loewner driving function being constant and the curve $\gamma \cup \alpha$ has the minimal Loewner energy among all Jordan curves containing $\gamma$ as a subarc, see \cite{Wan}. A result of Krusell relates the arc-Loewner energy to the Loewner energy of $\eta=\eta(\gamma)$.
\begin{prop}\label{prop:ellen}
    Given a Jordan arc $\gamma$ between $-1$ and $1$, let $\eta=\eta(\gamma)$ be the opened 
Jordan curve as above. Suppose that $\eta$ is smooth. Then,
    \begin{align}\label{eq:arc-energy-identity}
I^A(\gamma) = \frac{1}{2}I^L(\eta)+3\log|F'(1)F'(-1)|,\end{align}
    where $F$ is a conformal map from $\D$ onto $ D$, the bounded component of $\C\smallsetminus\eta$, fixing $-1$ and $1$. 
\end{prop}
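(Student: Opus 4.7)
The plan is to pull the loop $\gamma\cup\alpha$ through the opening map $s$, compare the resulting Dirichlet decomposition with that of $\eta$, and exploit the symmetry $\eta=j(\eta)$ to combine contributions from both sides. First I would use that $s:\mathbb{C}\setminus\gamma\to D^*$ is a conformal isomorphism to identify $\tilde\alpha := s(\alpha)$ as the hyperbolic geodesic in $D^*$ from $1$ to $-1$, so the components $\Omega_1,\Omega_2$ (with $\infty\in\Omega_2$) of $\mathbb{C}\setminus(\gamma\cup\alpha)$ correspond conformally under $s$ to the components $D^*_1,D^*_2$ (with $\infty\in D^*_2$) of $D^*\setminus\tilde\alpha$.

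Next, I would choose standard normalized conformal maps $\pi_1:\mathbb{D}\to\Omega_1$ and $\pi_2:\mathbb{D}^*\to\Omega_2$ (translating $\gamma\cup\alpha$ first if needed so that $0\in\Omega_1$), set $\tilde\pi_i = s\circ\pi_i$, and use the chain rule $\log|\pi_i'| = \log|\tilde\pi_i'| - \log|s'|\circ\pi_i$ to expand each Dirichlet integral in
\[
I^A(\gamma) = \mathcal{D}_{\mathbb{D}}(\log|\pi_1'|) + \mathcal{D}_{\mathbb{D}^*}(\log|\pi_2'|) + 4\log\frac{|\pi_1'(0)|}{|\pi_2'(\infty)|}.
\]
This produces three kinds of contributions: main terms $\mathcal{D}(\log|\tilde\pi_i'|)$, mixed cross terms, and (by conformal invariance) the intrinsic terms $\mathcal{D}_{\Omega_i}(\log|s'|)$. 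I would then show that the sum of the main terms over $i=1,2$ reassembles, together with the parallel computation for the dual opening $\tilde s := j\circ s:\mathbb{C}\setminus\gamma\to D$ (which is conformal thanks to $\eta=j(\eta)$ and sends $\alpha$ to the geodesic $j(\tilde\alpha)$ in $D$), into exactly $\tfrac12 I^L(\eta)$ after averaging the two decompositions.

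Finally, I would handle the remaining corrections (the cross Dirichlet terms, the $\mathcal{D}_{\Omega_i}(\log|s'|)$ pieces, and the offsets in the normalization constants $4\log(|\pi_i'|/|\tilde\pi_i'|)$) by converting them via Green's identity into boundary integrals, which localize at the branch points $\pm 1$ of $s$ since $\log|s'(z)| = -\tfrac12\log|2(z\mp 1)| + O(1)$ there. Using the local expansion $s(z)\approx \pm 1+\sqrt{2(z\mp 1)}$ together with the Kellogg regularity of $F$ at $\pm 1$ (which follows from smoothness of $\eta$), I would evaluate the total endpoint contribution as $3\log|F'(1)F'(-1)|$. The hard part will be this endpoint analysis, which requires a careful matching of several distinct boundary contributions against the square-root asymptotics of $s$ and a verification that the total coefficient of $\log|F'(\pm 1)|$ equals exactly $3$. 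The quantity $F$ (rather than $f$ or $g$) appears naturally because the relevant invariant at a corner is the conformal derivative with that point fixed; $|F'(1)F'(-1)|$ is well-defined because any M\"obius automorphism of $\mathbb{D}$ fixing $\pm 1$ has derivative-product $1$ at these points.
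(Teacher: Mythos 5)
Your outline has the right instinct---decompose through the opening map and push the corrections to the endpoints by Stokes---but the two central claims on which it rests are unjustified and, as stated, cannot hold. First, the pieces of your decomposition are individually divergent: the geodesic $\tilde\alpha$ meets the smooth curve $\eta$ orthogonally at $\pm1$, so $D_1^*,D_2^*$ (and their $j$-images) have corners of interior angle $\pi/2$ there, $\tilde\pi_i'$ has an inverse-square-root singularity at the preimages of $\pm1$, and $\mathcal{D}(\log|\tilde\pi_i'|)=+\infty$; likewise $\log|s'|\sim-\tfrac12\log|z\mp1|$ gives $\mathcal{D}_{\Omega_i}(\log|s'|)=+\infty$, and the cross terms diverge as well. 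You are splitting the finite quantities $\mathcal{D}(\log|\pi_i'|)$ into infinite pieces whose divergences cancel, so assigning the values $\tfrac12 I^L(\eta)$ and $3\log|F'(1)F'(-1)|$ to ``main terms'' and ``corrections'' is not meaningful without a regularization, and with one, the finite parts depend on how the divergent pieces are grouped---tracking exactly this is the whole content of the proposition. Second, even formally your main terms are energies of uniformizers of the geodesic-cut subdomains of $D^*$ and $D$, not of $D$ and $D^*$; already for $\gamma=[-1,1]$ they are maps onto right-angle lunes (with nonzero, indeed divergent, energy), while $\tfrac12 I^L(\mathbb{T})=0=I^A([-1,1])$ and $F$ can be taken to be the identity, so ``main terms $=\tfrac12 I^L(\eta)$ plus endpoint-local corrections'' fails in the base case; moreover $\mathcal{D}_{\Omega_i}(\log|s'|)$ and the cross terms are bulk quantities, not endpoint-local ones.

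The paper's proof (due to Krusell) follows the same general philosophy but makes three structural choices that your plan is missing and that are what make the argument go through. (i) It first applies the M\"obius involution $m(z)=(z+1)/(z-1)$, sending $\pm1$ to $\infty,0$; then the opening map becomes the square map, and both $I^A$ and $I^L(\eta)$ become pure Dirichlet integrals of $\log|\hat h'|$ over a slit plane and of $\log|\tilde h'|$ over half-planes (through-$\infty$ normalization), with no additive constants $4\log|f'(0)/g'(\infty)|$ to track and, crucially, with no geodesic-cut domains: the single slit-plane uniformizer $\hat h$ encodes $\gamma\cup\alpha$ because it maps $\R_-$ to the geodesic. (ii) The chain rule is written as $\sigma_{\hat h}\circ s=\sigma_s\circ\tilde h+\sigma_{\tilde h}-\sigma_s$ and the correction is kept grouped as $\nabla(\sigma_s\circ\tilde h-\sigma_s)\cdot\nabla(\sigma_s\circ\tilde h+2\sigma_{\tilde h}-\sigma_s)$, so that the logarithmic singularities of $\sigma_s$ at $0$ and $\infty$ cancel inside the first factor and every term is integrable. (iii) The evaluation is done on truncated annuli $A_r$ via Stokes' theorem and the curvature identity $k_s\,d\ell_s=(\partial_n\sigma_s+k)\,d\ell$, and only the explicit endpoint limits survive, yielding $6\pi(\sigma_{\tilde h}(0)-\sigma_{\tilde h}(\infty))$, i.e.\ the $3\log|F'(1)F'(-1)|$ term. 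To rescue your in-place approach you would need an analogous regrouping and truncation scheme (plus bookkeeping of the normalization constants and of the $2$-to-$1$ covering factor), at which point you would essentially be rederiving the paper's argument.
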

See Appendix~\ref{app:Ellen} for the proof.
We can use this result to prove the following.
\begin{prop}
Suppose $\eta = \eta(\gamma)$ is smooth. Then,
\[J^A(\gamma)   \ge I^A(\gamma) \ge 0\]
and $J^A(\gamma) = 0$ if and only if $\gamma = [-1,1]$. 

Moreover, $J^A(\gamma) = I^A(\gamma)$ if and only if $\omega(\gamma_+) = \omega(\gamma_-)$, where $\omega$ denotes harmonic measure seen from $\infty$ in $\C \smallsetminus \gamma$ and $\gamma_\pm$ are the two sides of $\gamma$.
\end{prop}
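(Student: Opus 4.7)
The plan is to express the difference $J^A(\gamma)-I^A(\gamma)$ using Proposition~\ref{prop:ellen} as
\[J^A(\gamma) - I^A(\gamma) = -3 \log|h'(1)h'(-1)F'(1)F'(-1)|,\]
and then bound this via a Möbius computation exploiting the $j$-symmetry of $\eta$. First I will observe that the composition $M := h \circ j \circ F : \D \to \D^*$ is a well-defined conformal bijection (using that $j$ swaps $D$ and $D^*$, since $0 \in D$ and $j(\eta)=\eta$), hence a Möbius transformation. Differentiating $M(z) = h(1/F(z))$ at $z = \pm 1$ where $F(\pm 1) = \pm 1$ gives $M'(\pm 1) = -h'(\pm 1)F'(\pm 1)$, so $|M'(1)M'(-1)|$ equals the quantity inside the logarithm above.

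The core step is then a Möbius derivative estimate. Writing $M(z) = 1/\Phi(z)$ with $\Phi: \D \to \D$ Möbius, unimodularity on the boundary gives $|M'(\pm 1)| = |\Phi'(\pm 1)|$. Using the standard parametrization $\Phi(z) = e^{\i\theta}(z - a)/(1-\bar a z)$ and expanding
\[|\Phi'(1)\Phi'(-1)| = \frac{(1-|a|^2)^2}{|1-\bar a^2|^2},\]
a short calculation with $a = re^{\i\phi}$ yields $|1 - \bar a^2|^2 - (1 - r^2)^2 = 4r^2\sin^2\phi \ge 0$. Hence $|\Phi'(1)\Phi'(-1)| \leq 1$ with equality iff $a \in \R$, which proves $J^A(\gamma) \ge I^A(\gamma)$; combined with $I^A(\gamma) = I^L(\gamma\cup\alpha) \ge 0$ (the standard non-negativity of Loewner energy) this gives the full chain $J^A(\gamma) \ge I^A(\gamma) \ge 0$.

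For the equality case I will verify that $a \in \R$ iff $\Phi(-1) = -\Phi(1)$, which via $\Phi(\pm 1) = 1/h(\pm 1) = \overline{h(\pm 1)}$ is equivalent to $h(-1) = -h(1)$. Since $\psi = h \circ s$ carries harmonic measure from $\infty$ to normalized arc length on $\partial\D$ and the images $\psi(\gamma_\pm)$ are precisely the two arcs joined at $h(\pm 1)$, this antipodal condition is exactly $\omega(\gamma_+) = \omega(\gamma_-)$. For the stronger claim that $J^A(\gamma) = 0$ iff $\gamma = [-1,1]$: the forward direction forces both $I^A = 0$ (so $\gamma \cup \alpha$ is a round circle, meaning $\gamma$ is an arc of a round circle through $\pm 1$) and $h(-1) = -h(1)$. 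For such $\gamma$, the opened curve $\eta$ is a $j$-invariant round circle as in the paper's Example for $\gamma_\alpha$, centered at some $\i c \in \i\R$ with radius $\sqrt{1+c^2}$; then $h$ is affine of the form $(z - \i c)/\sqrt{1+c^2}$, and $h(-1) = -h(1)$ simplifies to $c = 0$, forcing $\eta = \partial\D$ and hence $\gamma = [-1,1]$. The reverse direction is clear since $[-1,1]$ opens to $\partial\D$, giving $h = \id$ and $I^L(\eta) = 0$.

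The main obstacle I foresee is justifying that every round circular arc through $\pm 1$ opens to a round circle; the paper's Example establishes this directly for the family $\gamma_\alpha$, which together with the action of complex conjugation covers all round arcs through $\pm 1$, so in practice one only needs the stated Example plus a short symmetry remark.
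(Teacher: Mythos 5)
Your argument is correct, and its core coincides with the paper's: your M\"obius map $\Phi=(j\circ h\circ j)\circ F=f^{-1}\circ F$ is exactly the M\"obius factor $M$ that the paper introduces by writing $F=f\circ M$ with $a=F^{-1}(0)$, and your derivative estimate $|\Phi'(1)\Phi'(-1)|=(1-|a|^2)^2/|1-\bar a^2|^2\le 1$ with equality iff $a\in\R$ is the same computation as the paper's bound $\log|M'(1)M'(-1)|=2\log\frac{1-|a|^2}{|1-a^2|}\le 0$; combined with Proposition~\ref{prop:ellen} and $I^A\ge 0$ this yields $J^A\ge I^A\ge 0$ in both treatments. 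Where you genuinely diverge is in how the equality cases are finished. For $J^A=I^A$ you pass from ``$a$ real'' to the antipodality $h(-1)=-h(1)$ and then to equality of the arclengths of $h(\eta_\pm)$, i.e.\ to harmonic measure from $\infty$ directly; the paper instead identifies ``$a$ real'' with $\omega(0,\eta_+,D)=1/2$ via the half-circle $T_+=F^{-1}(\eta_+)$ and then transports this to $\infty$ using $j$ and $u(z)=(z+1/z)/2$. Your route is a bit more direct and avoids the interior-point harmonic-measure step. For $J^A=0$ you use that a round circular arc opens to a round circle (centre $\mathrm{i}c$, radius $\sqrt{1+c^2}$), so $h$ is affine and $h(-1)=-h(1)$ forces $c=0$; the paper instead reuses its harmonic-measure argument to force $\eta=\mathbb{T}$. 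Both are valid, and your reliance on the circular-arc Example (plus conjugation symmetry) for ``round arc opens to round circle'' is on par with the paper, which asserts the same fact without proof when it concludes that $\eta$ is a circle through $\pm 1$.
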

\begin{remark}
We do not know how do derive positivity from \eqref{J-det}; we only have information about one side of the spectrum of $B$ and this is not enough, see Section~\ref{sec:Grunsky}.
\end{remark}

\begin{proof}
By definition and the chain rule,
\[J^A(\gamma)=\frac{1}{2}I^L(\eta)-3\log|{h}'(1)h'(-1)|=\frac{1}{2}I^L(\eta)+3\log| g'(\zeta_+)   g'(\zeta_-)|,\]
where $\zeta_\pm =  {h}(\pm 1)$ and $ g =   h^{-1}$ so that $  g: \mathbb{D}^* \to D^*$ and fixes $\infty$ with positive derivative there.
Let $  f:\mathbb{D} \to D$ be the conformal map fixing $0$ with positive derivative there. Then $  f = j \circ   g \circ j$. Hence
\[
  g'(z) =  f'(1/z)\frac{1}{z^2}\frac{1}{  f(1/z)^2}  =   f'(1/z) \frac{  g(z)^2}{z^2},
\]
and since $  g(\zeta_{\pm}) = \pm 1$ we see that $| g'(\zeta_\pm)| = | f'(1/\zeta_\pm)|$. Therefore
\[
J^A(\gamma) = \frac{1}{2}I^L(  \eta) + 3\log |f'(1/\zeta_+) f'(1/\zeta_-)|
\]
Let $F$ be as in Proposition~\ref{prop:ellen}. Then we can write 
\[
F = f \circ M
\]
where $M(z) = \lambda (z-a)/(1-\overline{a}z)$ is a M\"obius transformation with $|\lambda|=1$ and $a = F^{-1}(0) \in \mathbb{D}$. Hence, 
\[\log|F'(1)F'(-1)| = \log|f'(1/\zeta_+)f'(1/\zeta_-)| + \log|M'(1)M'(-1)|.\]
On the other hand, 
\begin{equation}\label{M-negative}
\log |M'(1)M'(-1)| = 2 \log \frac{1-|a|^2}{|1-a^2|} \le 0.
\end{equation}
Using \eqref{eq:arc-energy-identity} and the fact that $I^A(\gamma) \ge 0$ we see therefore see that
\begin{align}\label{J-I}    
J^A(\gamma) = I^A(\gamma) - 3\log|M'(1)M'(-1)| \ge I^A(\gamma) \ge 0.
\end{align}

It is clear that $J^A([-1,1]) = 0$ since in this case $\eta = \mathbb{T}$, the unit circle. So suppose that $J^A(\gamma) = 0$. We want to show that this implies $\gamma = [-1,1]$. Using \eqref{J-I} we have that $I^A(\gamma)=3\log|M'(1)M'(-1)|$. But $I^A(\gamma) \ge 0$ so by \eqref{M-negative} we see that $I^A(\gamma) = 0$ and it follows that $\gamma$ is a circular arc from $-1$ to $1$. This in turn implies that $\eta$ is a circle passing through $-1$ and $1$. Write $\eta_+ = \eta \cap \{z : \Im z \ge 0 \}$ for the ``upper'' part of the circle $\eta$ and note that $T_+ = F^{-1}(\eta_+)$ is a half-circle with end points $-1$ and $1$. Next we use that $\log|M'(1)M'(-1)| = 0$ which by \eqref{M-negative} implies that $a=F^{-1}(0)$ is real. But then $\omega(a,T_+, \D) = 1/2$, where $\omega$ denotes harmonic measure. So by conformal invariance, $\omega(0, \eta_+, D)=1/2$ which is only possible of $\eta_+=T_+$ and $D = \D$ so that $\eta = \mathbb{T}$. We conclude that $\gamma = [-1,1]$. 

Finally, by \eqref{J-I}, $J^A(\gamma) = I^A(\gamma)$ if and only if $\log|M'(1)M'(-1)| = 0$ and arguing as above, this holds if and only if $a$ is real. But $a$ is real if and only if $\omega(0, \eta_+, D)=1/2$. Let $\eta_- = \eta \smallsetminus \eta_+$. Since $j(\eta) = \eta$ and $j(\eta_+) = \eta_-$, $\omega(\infty, \eta_-, D^*)=\omega(0, \eta_+, D)=1/2$. Since $w(z) = (z+1/z)/2$ maps $D^*$ conformally onto $\C \smallsetminus \gamma$ with $\eta_\pm$ mapped onto $\gamma_\pm$ fixing $\infty$, the last claim follows by conformal invariance. 
\end{proof}

\subsection{Interpolation of curves}\label{sect:interpolation-of-curves}\label{sect:interpolation-of-arcs}
We now discuss how to deform the reference arc $[-1,1]$ into a prescribed analytic Jordan arc so that all arcs in the interpolating family are analytic. We do this by embedding into a holomorphic motion. We also study the induced deformation of the Jordan curves obtained by opening the arc with the map $z \mapsto z + \sqrt{z^2-1}$. We will later use these deformations to integrate the variational formulas to prove Theorem~\ref{thm:Grunsky-Loewner2-intro} in Section~\ref{sec:var}.

Let $\gamma$ be an analytic Jordan arc in $\mathbb{C}$ with end points $-1$ and $1$. Let $\gamma_0 = [-1,1]$. We construct a family of analytic Jordan arcs $(\gamma_\alpha)$, which interpolates between $\gamma_0$ and $\gamma$. By definition, there exists an open set $N$ with $\gamma_0 \subset N$ and a conformal map $f$ defined on $N$ such that $f(\gamma_0) = \gamma$. We may assume that $N$ is simply connected and that $\partial N$ is smooth. Note that $f$ fixes $-1,1$. Using for instance the Beurling-Ahlfors extension (see Section~2.4 of \cite{AJKS}), there exists  a quasiconformal homeomorphism $F:\mathbb{C} \to \mathbb{C}$ agreeing with $f$ on $N$ and such that $F(z) \equiv z$ in a neighborhood of $\infty$. Let $\mu := \partial_{\bar z} F/\partial_z F$ be the corresponding Beltrami coefficient. Then  $\mu \equiv 0$ in $N$ and $\|\mu\|_\infty = \kappa < 1$. Let $\alpha \in \mathbb{D}$ and consider the unique quasiconformal homeomorphism $F_\alpha: \mathbb{C} \to \mathbb{C}$ which fixes $-1,1,\infty$ and satisfies the Beltrami equation,
\[
\partial_{\bar z} F_\alpha = \mu_\alpha \partial_{z}F_\alpha, \qquad \mu_\alpha = \alpha \mu.
\]
See Theorem~5.3.4 of \cite{AIM}.
In fact, if $r>1$ satisfies $r< 1/\kappa$, we may take $\alpha \in r\mathbb{D}$.
Since $\mu_\alpha \equiv 0$ in $N$, $F_\alpha$ is conformal on $N$. Define 
\begin{align}\label{def:gamma-alpha}
\gamma_\alpha := F_\alpha(\gamma_0),
\end{align}
which provides a \emph{holomorphic motion} of $[-1,1]$.

\begin{lemma}
For all $\alpha \in r\mathbb{D}$, $\gamma_\alpha=F_\alpha([-1,1])$ is an analytic Jordan arc with end points $-1,1$, $\gamma_0 = [-1,1]$ and $\gamma_1 = \gamma$. 
For each $z \in \mathbb{C}$, $ \alpha \mapsto F_\alpha(z)$ is holomorphic in $r\mathbb{D}$. For each $\alpha \in r \mathbb{D}$, $z \mapsto \partial_\alpha F_\alpha(z)$ is holomorphic in $N$. 
\end{lemma}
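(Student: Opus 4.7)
The plan is to invoke the Ahlfors--Bers theorem on holomorphic parameter dependence for the Beltrami equation together with the elementary fact that a quasiconformal map is conformal where its Beltrami coefficient vanishes. Since $\|\mu\|_\infty = \kappa$ and $r < 1/\kappa$, the family $\mu_\alpha = \alpha \mu$, $\alpha \in r\mathbb{D}$, is a holomorphic family of Beltrami coefficients with uniform norm bound $r\kappa < 1$. The Ahlfors--Bers theorem (see e.g.\ Theorem~5.3.4 and \S5.7 of \cite{AIM}) then produces a unique normalized solution $F_\alpha$ fixing $-1,1,\infty$, and asserts that $\alpha \mapsto F_\alpha(z)$ is holomorphic on $r\mathbb{D}$ for each fixed $z \in \mathbb{C}$. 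This is the third claim.

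For the arc structure, the vanishing $\mu_\alpha \equiv 0$ on $N$ forces $\partial_{\bar z} F_\alpha \equiv 0$ on $N$, so $F_\alpha$ is holomorphic on $N$; since $F_\alpha$ is a global homeomorphism (being quasiconformal) it is a conformal map on $N$. As $[-1,1] \subset N$ is a real-analytic arc and $F_\alpha$ is conformal on a neighborhood of it, $\gamma_\alpha = F_\alpha([-1,1])$ is an analytic Jordan arc whose endpoints are $F_\alpha(\pm 1) = \pm 1$ by the normalization.

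The identification of the extremal arcs follows from uniqueness in Ahlfors--Bers. When $\alpha = 0$ the Beltrami coefficient is zero, so the unique normalized solution is $F_0 = \mathrm{id}$ and $\gamma_0 = [-1,1]$. When $\alpha = 1$, note that $F$ itself has Beltrami coefficient $\mu$ and fixes $-1,1,\infty$ (the points $\pm 1$ because $F = f$ on $N \supset [-1,1]$ and $f$ fixes $\pm 1$, and $\infty$ because $F(z) \equiv z$ near $\infty$). Uniqueness then gives $F_1 = F$, whence $\gamma_1 = F([-1,1]) = f([-1,1]) = \gamma$.

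Finally, for the holomorphy of $z \mapsto \partial_\alpha F_\alpha(z)$ on $N$: from the two previous paragraphs, $(z,\alpha) \mapsto F_\alpha(z)$ is separately holomorphic on $N \times r\mathbb{D}$, hence jointly holomorphic by Hartogs' theorem. Writing the convergent power series expansion $F_\alpha(z) = \sum_{k \geq 0} c_k(z)\alpha^k$ with coefficients $c_k$ holomorphic on $N$, differentiation in $\alpha$ yields $\partial_\alpha F_\alpha(z) = \sum_{k \geq 1} k c_k(z)\alpha^{k-1}$, which is holomorphic in $z \in N$ for each $\alpha \in r\mathbb{D}$. The only non-trivial ingredient is the Ahlfors--Bers theorem; once it is invoked, everything reduces to uniqueness of normalized solutions and the regularity of holomorphic functions in several variables.
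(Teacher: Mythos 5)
Your proof is correct, and for the first two claims it runs along the same lines as the paper: the arc statement comes from the vanishing of $\mu_\alpha$ on $N$ (hence conformality of $F_\alpha$ there), the normalization at $-1,1,\infty$, and uniqueness in the measurable Riemann mapping theorem (your explicit check that $F_1=F$ is a detail the paper dismisses as ``immediate from the construction''); the holomorphy of $\alpha\mapsto F_\alpha(z)$ is exactly the Ahlfors--Bers parameter-dependence result that the paper also quotes. Where you genuinely diverge is the third claim. The paper invokes Ahlfors' first-order variational formula, which represents $\partial_\alpha F_\alpha(z)$ as an integral of $\mu$ against a rational kernel evaluated at $F_\alpha$; since $\mu$ vanishes on $N$ and $F_\alpha$ is holomorphic there, the formula is manifestly holomorphic in $z\in N$. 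You instead observe that $(z,\alpha)\mapsto F_\alpha(z)$ is separately holomorphic on $N\times r\mathbb{D}$, apply Hartogs' theorem to get joint holomorphy, and then differentiate the power series in $\alpha$ (the locally uniform convergence of the differentiated series follows from Cauchy estimates and local boundedness, or simply from the fact that $\partial_\alpha$ of a jointly holomorphic function is jointly holomorphic). Your route is softer and avoids quoting the variational formula altogether; the paper's route has the side benefit of producing an explicit expression for $\partial_\alpha F_\alpha$, though that expression is not used later, so nothing is lost by your argument. It is also in the spirit of the paper, which itself uses Hartogs' theorem in the proof of the subsequent lemma.
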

\begin{proof}
The first part follows immediately from the construction of $F_\alpha$ and \eqref{def:gamma-alpha}.
Holomorphicity of $\alpha \mapsto F_\alpha(z)$ follows from \cite[Corollary 5.7.5]{AIM}. For the last statement, we can use \cite[Theorem V.5]{Ahlfors} which implies that
\[
F_{\alpha+h}(z) = F_{\alpha}(z) + h\dot{F}_{\alpha}(z) + o(|h|), 
\]
where
\[
\dot{F}_{\alpha}(z) = -\frac{1}{\pi}\int_{\mathbb{C}} \mu(w) R(F_{\alpha}(w), F_{\alpha}(z))(\partial_w F_{\alpha}(w))^2d^2w
\]
and $R$ is a rational function. Since $\mu \equiv 0$ in $N$ (and has compact support) and $F_\alpha(z)$ is holomorphic in $N$ it follows that $\partial_\alpha F_\alpha(z)$ exists and is a holomorphic as a function of $z$ in $N$, as claimed. 
\end{proof}

We now study the family of Jordan curves $\eta_\alpha$ obtained by opening the Jordan arcs $\gamma_\alpha$. For each $\alpha$, let $s_\alpha(z) = z + \sqrt{z^2-1}$, where the branch cut of the square root is taken along $\gamma_\alpha$ and $s_\alpha(z) = 2z + O(1/z)$ near $\infty$. Then $s_\alpha$ fixes $-1,1, \infty$ and is a conformal map of $\hat{\mathbb{C}}\smallsetminus \gamma_\alpha$ onto a simply connected domain $D_\alpha^*$ bounded by a Jordan curve $\eta_\alpha$. 
 \begin{lemma}
For all $\alpha \in r\mathbb{D}$, $\eta_\alpha$ obtained by opening $\gamma_\alpha$ using the map $z \mapsto z+ \sqrt{z^2-1}$ is an analytic Jordan curve passing through $-1,1$ and $j(\eta_\alpha)  = \eta_\alpha$. Moreover, if $\alpha=a+ib$ and $f_\alpha: \mathbb{D} \to D_\alpha$ is the normalized conformal map, then for $z$ in a neighborhood of $\mathbb{T}$, $(a,b,z) \mapsto f_\alpha(z)$ is jointly smooth (actually (real-) analytic). The analogous statement holds for the exterior map $g_\alpha: \mathbb{D}^* \to D^*_\alpha$.
\end{lemma}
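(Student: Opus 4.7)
My approach is to first verify the three structural statements about $\eta_\alpha$ directly from the definition of $s_\alpha$, and then obtain joint regularity of $f_\alpha$ (and $g_\alpha$) by a quasiconformal interpolation argument combined with the Ahlfors-Bers measurable Riemann mapping theorem with holomorphic parameters and Hartogs' theorem on separately holomorphic functions.

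For the structural claims, since $\gamma_\alpha$ is an analytic Jordan arc through $\pm 1$ and the branch cut of $s_\alpha$ lies along $\gamma_\alpha$, the map $s_\alpha$ is conformal on $\hat{\mathbb{C}}\smallsetminus \gamma_\alpha$ and extends analytically across each interior point of $\gamma_\alpha$ from either side. The two sides of $\gamma_\alpha$ therefore map to two analytic arcs of $\eta_\alpha$. At the endpoints, the local expansion $s_\alpha(z)= \pm 1 + \sqrt{2(z\mp 1)} + O(z\mp 1)$ combined with an analytic parametrization of $\gamma_\alpha$ with nonzero tangent at $\pm 1$ yields an analytic local chart for $\eta_\alpha$ at $\pm 1$ (after the substitution $\sigma = \sqrt{z\mp 1}$), so $\eta_\alpha$ is an analytic Jordan curve through $\pm 1$. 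The symmetry $j(\eta_\alpha)=\eta_\alpha$ follows from the identity $s_{\alpha,+}(z)\,s_{\alpha,-}(z)=z^2-(z^2-1)=1$ on $\gamma_\alpha$, which identifies the two boundary arcs of $\eta_\alpha$ as $j$-images of each other.

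For the joint regularity claim, define the model quasiconformal map
\[
H_\alpha := s_\alpha \circ F_\alpha \circ s_0^{-1}\colon \mathbb{D}^*\to D_\alpha^*,
\]
where $s_0(z)=z+\sqrt{z^2-1}$ opens $[-1,1]$ onto $\mathbb{T}$. Since $s_0^{-1}$ and $s_\alpha$ are conformal, $H_\alpha$ has Beltrami coefficient $\tilde\mu_\alpha = (s_0^{-1})^\ast(\alpha\mu)$, which vanishes in the neighborhood $s_0^{-1}(N\smallsetminus[-1,1])$ of $\mathbb{T}$ in $\mathbb{D}^*$. The endpoint analysis above then shows $H_\alpha$ extends analytically to a full two-sided neighborhood of $\mathbb{T}$. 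Reflect $\tilde\mu_\alpha$ through $\mathbb{T}$ to get a Beltrami coefficient on $\hat{\mathbb{C}}$ of the form $\alpha\tilde\mu$ with $\|\alpha\tilde\mu\|_\infty < 1$ (possibly after shrinking $r$). Ahlfors-Bers with parameters produces quasiconformal homeomorphisms $\Phi_\alpha\colon\hat{\mathbb{C}}\to\hat{\mathbb{C}}$ with Beltrami coefficient $\alpha\tilde\mu$, preserving $\mathbb{D}^*$ setwise by the reflection symmetry, fixing $-1,1,\infty$, and with $\alpha\mapsto\Phi_\alpha(z)$ holomorphic for each fixed $z$. Then $g_\alpha^{(0)} := H_\alpha\circ\Phi_\alpha^{-1}$ is $1$-quasiconformal and hence conformal, $\mathbb{D}^*\to D_\alpha^*$, fixing $\infty$, and $g_\alpha(z) := g_\alpha^{(0)}(e^{\i\theta_\alpha}z)$ for an $\alpha$-dependent $\theta_\alpha\in\R$ chosen to enforce $g_\alpha'(\infty)>0$.

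On a common neighborhood $V$ of $\mathbb{T}$, both $H_\alpha$ and $\Phi_\alpha$ are conformal in $z$ and extend analytically across $\mathbb{T}$, and for each fixed $z\in V$ both are holomorphic in $\alpha$ (the former by the preceding lemma on $F_\alpha$, the latter by Ahlfors-Bers). Hartogs' theorem then yields joint holomorphy of $g_\alpha^{(0)}(z)$ on $r\mathbb{D}\times V$, and combining this with the real-analytic dependence of $\theta_\alpha$ on $(a,b)$ gives joint real-analyticity of $g_\alpha(z)$ in $(a,b,\Re z,\Im z)$. The identity $f_\alpha = j\circ g_\alpha\circ j$, which follows from $j(D_\alpha^*) = D_\alpha$, $j(\infty)=0$, and uniqueness of normalized Riemann maps, transfers the statement to $f_\alpha$ on $r\mathbb{D}\times j(V)$, a neighborhood of $\mathbb{T}$. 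The main obstacle I anticipate is the Ahlfors-Bers bookkeeping: verifying that the reflected Beltrami coefficient yields $\Phi_\alpha$ preserving $\mathbb{D}^*$ with a normalization that makes $g_\alpha$ well-defined and jointly regular; once this is settled, Hartogs closes the argument.
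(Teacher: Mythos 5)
Your first part (analyticity of $\eta_\alpha$, the points $\pm 1$, and the symmetry via $q_+q_-=1$) is fine and is a legitimate direct alternative to the paper's route, which instead glues $G_\alpha$ and $j\circ G_\alpha\circ j$ along $\mathbb{T}$ and uses quasiconformal removability; your endpoint chart argument is only sketched, but it is the standard opening-of-an-analytic-arc fact. The problem is in the regularity step, and it is not mere Ahlfors--Bers bookkeeping. When you reflect $\tilde\mu_\alpha=\alpha\tilde\mu$ through $\mathbb{T}$, the reflected coefficient on $\mathbb{D}$ is $\overline{\alpha\tilde\mu(1/\bar z)}\,z^2/\bar z^2$, i.e.\ it involves $\bar\alpha$; the global coefficient is therefore \emph{not} of the form $\alpha\tilde\mu$, it is only real-analytic (in fact real-linear) in $(a,b)$. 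Consequently $\alpha\mapsto\Phi_\alpha(z)$ is not holomorphic, only real-analytic in $(a,b)$, and your application of Hartogs' theorem to get joint \emph{holomorphy} of $g^{(0)}_\alpha(z)$ on $r\mathbb{D}\times V$ collapses; indeed joint holomorphy in $\alpha$ cannot hold for a $\mathbb{D}^*$-preserving normalization, which is precisely why the lemma only claims real-analyticity in $(a,b)$. A second, related error: a solution of the Beltrami equation with $\mathbb{T}$-symmetric coefficient normalized to fix $-1,1,\infty$ need not commute with inversion in $\mathbb{T}$ ($\infty$ is not on the fixed circle), so your $\Phi_\alpha$ need not preserve $\mathbb{D}^*$; to get a circle-preserving solution you must normalize compatibly with the symmetry (e.g.\ solve in $\mathbb{D}$ with positive derivative at $0$ and extend by reflection, as in the paper), and doing so reintroduces the $\bar\alpha$-dependence just described. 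If you try to avoid reflection by solving with coefficient $\alpha\tilde\mu\,1_{\mathbb{D}^*}$ only, $\Phi_\alpha$ is holomorphic in $\alpha$ but no longer maps $\mathbb{D}^*$ to itself, and you are back to uniformizing a moving domain.

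The paper's proof (following Rodin) is designed around exactly this obstruction: it first extends the maps, conformal near $\mathbb{T}$, to a holomorphic motion $\hat F_\alpha$ of $\hat{\mathbb{C}}$ via the extended $\lambda$-lemma (holomorphic in $\alpha$, identity near $0$), then corrects by a quasiconformal map $w_\alpha$ preserving $\mathbb{T}$, built from the reflected coefficient and hence only real-analytic in $(a,b)$, and finally writes $f_\alpha=\hat F_\alpha\circ w_\alpha^{-1}$; separate analyticity of this composition (holomorphic in $z$ near $\mathbb{T}$, real-analytic in $(a,b)$ for fixed $z\in\mathbb{T}$) is what feeds into the Hartogs-type conclusion of joint real-analyticity. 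To repair your argument you would have to replace the claimed holomorphy of $\Phi_\alpha$ in $\alpha$ by real-analytic dependence and then justify the passage from separate to joint real-analyticity for $g^{(0)}_\alpha=H_\alpha\circ\Phi_\alpha^{-1}$, which is essentially the Rodin argument the paper carries out.
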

\begin{proof}
Recall that we write $u(z) = \frac{1}{2}(z+\frac{1}{z})$ which is a conformal map of $\mathbb{D}^*$ onto $\hat{\mathbb{C}}\smallsetminus [-1,1]$. Let $F_\alpha$ be as above. Then $G_\alpha:=s_\alpha \circ F_\alpha \circ u$ is a quasiconformal map $\mathbb{D}^* \to D^*_\alpha$ and since $s_\alpha$ is conformal, the Beltrami coefficient is given by,
\begin{align}\label{eq:Beltrami-for-G}
\mu_{G_\alpha} = \left(\frac{\overline{u'}}{u'}\right) \cdot \mu_{F_\alpha}\circ u = \alpha \left(\frac{\overline{u'}}{u'}\right) \cdot \mu_{F}\circ u 
\end{align}
Recall that we write $j(z) = 1/z$. Then by our construction, $j \circ G_\alpha = G_\alpha \circ j$ on $\mathbb{T}$. Consider for $z \in \mathbb{D}$, $H_\alpha = j \circ G_\alpha \circ j$. Then $H_\alpha$ fixes $-1,1,0$ and is a quasiconformal map $\mathbb{D} \to D_\alpha$, the inner domain of $\eta_\alpha$. We have
\[
\mu_{H_\alpha} = \left(\frac{\overline{j'}}{j'}\right) \cdot \mu_{G_\alpha} \circ j.
\]
Define \[\widetilde F_\alpha = H_\alpha 1_{\mathbb{D}} + G_\alpha 1_{\mathbb{D}^*}\] and extend the definition to $\mathbb{T}$ by continuity which is possible since $G_\alpha = j \circ G_\alpha \circ j$ on $\mathbb{T}$. Then for each $\alpha$, $\widetilde F_\alpha$ is a homeomorphism of the plane fixing $-1,1,\infty$ (and $0$), and $\widetilde F_\alpha$ is quasiconformal away from $\eta_\alpha$. Since $\gamma_\alpha$ is smooth, it is easy to see that $\eta_\alpha$ is a $C^1$ curve, so it is in particular quasiconformally removable, see \cite{Jones-Smirnov}. It follows that $\widetilde F_\alpha $ is a quasiconformal homeomorphism of the whole plane and using \eqref{eq:Beltrami-for-G} we have that the Beltrami coefficient of $\widetilde F_\alpha$ is identically $0$ in a neighborhood of $\mathbb{T}$. Moreover, $\alpha \mapsto \widetilde F_\alpha$ is holomorphic by Corollary 5.7.5 of \cite{AIM} and we see that $\eta_\alpha$ is embedded in a holomorphic motion of (a neighborhood of) $\mathbb{T}$. 

It remains to prove the statement about the conformal map $f_\alpha(z)$. This follows essentially from the argument in \cite{Rodin}, with small modifications that we now describe. We consider a small annular neighborhood $N$ of $\mathbb{T}$ on which the maps $\widetilde{F}_\alpha$ are conformal. Then $\widetilde F_\alpha$ induces a holomorphic motion of $N$. It is easy to see that we can find $\epsilon > 0$ so that $B_\epsilon=\{|z| \le \epsilon\}$ does not intersect $\tilde F_\alpha(N)$ for any $\alpha \in \mathbb{D}$. As in \cite{Rodin} (but using the extended $\lambda$-lemma, \cite[Theorem 12.3.2]{AIM}) we can extend $\tilde{F}_\alpha \mid_N$ to a holomorphic motion $\hat{F}_\alpha$ of $\hat{\mathbb{C}}$ which equals the identity on $B_\epsilon$. Let $\nu_\alpha$ be the Beltrami coefficient of $\hat{F}_\alpha$ restricted to $\mathbb{D}$. Then $\nu_\alpha=0$ in $B_\epsilon$ and $N \cap \mathbb{D}$. As in \cite{Rodin}, we find a quasiconformal map $W_0^{\nu_\alpha} : \mathbb{D} \to \mathbb{D}$ with Beltrami coefficient $\nu_\alpha$, normalized so that $\frac{d}{dz}W_0^{\nu_\alpha}(0) > 0$. We can extend $\nu_\alpha$ to $\hat{\mathbb{C}}$ by reflecting in $\mathbb{T}$ so that $\nu_\alpha(z) = \overline{\nu_\alpha(1/\bar{z})}z^2/\bar{z}^2$. We therefore see that $W_0^{\nu_\alpha}$ is the restriction to $\mathbb{D}$ of a quasiconformal map $w_\alpha: \hat{\mathbb{C}} \to \hat{\mathbb{C}}$ which preserves $\mathbb{T}$ and which is analytic in a neighborhood of $\mathbb{T}$. Write $\alpha = a+ib$. Then the Beltrami coefficient of $w_\alpha$ depends real-analytically on $a,b$, and so it follows that for $z$ fixed, $\alpha \mapsto w_\alpha(z)$ is real analytic in $a, b$ (see \cite{Rodin}). In particular, this holds for $z\in \mathbb{T}$. By the real analytic implicit function theorem (\cite[Theorem 2.3.5]{KP92}),   $\alpha \mapsto w_\alpha^{-1}(z)$ is therefore also real analytic in $a, b$ for $z \in \mathbb{T}$. By uniqueness, we can write the normalized conformal map as
\[
f_\alpha(z) = \hat{F}_\alpha \circ w_\alpha^{-1}(z), \quad z \in \overline{\mathbb{D}}.
\]
Indeed, the right-hand side maps $\mathbb{D}$ onto $D_\alpha$ conformally (and extends continuosly to $\mathbb{T}$) and has positive derivative at $0$; recall that $\hat{F}_\alpha'(0) =1$.
Since $\hat{F}_\alpha(z)$ is analytic for $\alpha \in \mathbb{D}$ and for $z \in N$, and $w_\alpha^{-1}(z)$ is real analytic in $a,b$ and analytic in a neighborhood of $\mathbb{T}$ it follows that for $z \in \mathbb{T}$ fixed, $\alpha \mapsto f_\alpha(z)$ is real analytic in $a,b$. Hartogs' theorem on separate analyticity then implies that $(a,b,z) \mapsto f_\alpha(z)$ is jointly analytic.
 \end{proof}
\begin{remark}
 Given a general analytic Jordan curve $\eta$ separating $0$ from $\infty$, one may embed it into a holomorphic motion similarly as the arc by first extending the conformal map to a quasiconformal map, and deforming its Beltrami coefficient. It then follows directly that the extended conformal map depends analytically on $\alpha$, while one argues as above to see that the other conformal map depends real analytically on $a,b$.
    
\end{remark}

\section{Basic properties of the arc-Grunsky operator}\label{sec:Grunsky}

This section discusses the arc-Grunsky operator $B$. We will give sufficient conditions on the regularity of $\gamma$ for $B$ to be well-defined and trace class, so that the Fredholm determinant $\det(I+B)$ is well-defined, see Lemma~\ref{lem:arcGrunsky}  and Proposition~\ref{lem:trace-class}. We also prove analogs of the classical and strengthened Grunsky inequalities, see Proposition~\ref{prop:Grineq} and Proposition~\ref{prop:stGrineq}, as well as an important identity that relates the operator $(I+B)^{-1}$ to the Dirichlet energy of the harmonic extension of a given function on $\gamma$, see Proposition~\ref{prop:Dir}.  

\subsection{The arc-Grunsky coefficients and operator}

We assume that $\gamma$ is a $C^{3+\alpha}$ Jordan arc, $\alpha>0$. Recall that $z_e(t)$ is the equilibrium parametrization of $\gamma$ as in Section~\ref{sec:prel}. Our first goal is to prove Lemma~\ref{lem:arcGrunsky}, that is, to show that there exist real-valued coefficients $a_{kl}$, $k,l\geq 1$, such that uniformly for $s,t \in [-1,1]$,
\[\log \Big| \frac{z_e(s)-z_e(t)}{s-t} \Big| = \log(2\mathrm{cap}(\gamma)) -2 \sum_{k,l\geq 1} a_{kl} T_k(s)T_l(t),
\]
where $T_k$ is the $k$:th Chebyshev polynomial. Then by definition, $a_{kl}$ are the arc-Grunsky coefficients of $\gamma$. Given this
 we set $b_{kl} = \sqrt{kl}a_{kl}$ and let $B=(b_{kl})_{kl}$ be the corresponding operator, the arc-Grunsky operator, acting on $\ell(\mathbb{Z}_+)$ by matrix multiplication.

\begin{proof}[Proof of Lemma \ref{lem:arcGrunsky}]
We know from Lemma \ref{lem:eqpar} that the equilibrium parametrization $z_e$ is a $C^{3+\alpha}$ function on $[-1,1]$. It follows that 
\begin{equation}\label{zcosexp}
\log \Big|\frac{z_e(\cos\theta)-z_e(\cos\phi)}{\cos\theta-\cos\phi} \Big| = -2 \sum_{k,l\geq 0} a_{kl} \cos(k\theta)\cos(l\phi)
\end{equation}
for some coefficients $a_{kl}\in\R$, $k,l\geq 0$, and the series converges uniformly for $\theta, \phi \in [0,2\pi]$. It follows from Frostman's theorem that
\[ \int_\gamma \log |z-\zeta| \d\nu_e(\zeta) = \log(\mathrm{cap}(\gamma)) \]
for quasi-every $z\in\gamma$ (i.e.\ everywhere on $\gamma$ except on a set of capacity zero), and hence by \eqref{eqpar}
\begin{equation}\label{Fr1}
\frac{1}{\pi}\int_{-1}^1 \log|z_e(s)-z_e(t)| \frac{\d s}{\sqrt{1-s^2}} = \log(\mathrm{cap}(\gamma)),
\end{equation}
q.e. on $[-1,1]$. Moreover, Frostman's theorem gives
\begin{equation}\label{Fr2}
\frac{1}{\pi} \int_{-1}^1 \log |s-t| \frac{\d s}{\sqrt{1-s^2}} = -\log 2,
\end{equation}
for $t\in[-1,1]$ since the capacity of $[-1,1]$ is $1/2$. The coefficients in \eqref{zcosexp} are given by
\begin{equation}\label{aklformula}
a_{kl} = -\frac{1}{2\pi^2} \int_0^\pi\int_0^\pi \log \big| \frac{z_e(\cos\theta)-z_e(\cos\phi)}{\cos\theta-\cos\phi} \big| \cos(k\theta)\cos(l\phi) \d\theta\d\phi.
\end{equation}
It follows from \eqref{Fr1} and \eqref{Fr2} that
\[ \frac{1}{\pi}\int_0^\pi \log \big| \frac{z_e(\cos\theta)-z_e(\cos\phi)}{\cos\theta-\cos\phi}\big| \d\theta = \log(2\mathrm{cap}(\gamma)), \]
for $\phi\in[0,\pi]$. From this identity we see that
\begin{align*}
a_{0l} &= -\frac{1}{2\pi^2} \int_0^\pi\int_0^\pi \log \big| \frac{z(\cos\theta)-z(\cos\phi)}{\cos\theta-\cos\phi} \big| \cos(l\phi) \d\theta\d\phi \\
&= -\frac{\log(2\mathrm{cap}(\gamma))}{2\pi} \int_0^\pi \cos(l\phi) \d\phi,
\end{align*}
which gives $a_{0l} = 0$ if $l\geq 1$. By symmetry, $a_{l0} = 0$ if $l\geq 1$. Furthermore, $a_{00} = -\frac{1}{2}\log(2\mathrm{cap}(\gamma))$. Inserting this into \eqref{zcosexp} gives \eqref{Grunskyexp}.\end{proof}
As one may expect, regularity properties of the arc $\gamma$ translate into fast decay of the arc-Grunsky coefficients $a_{kl}$ as $k$ and $l$ become large. This will be important in our proofs. We will repeatedly use the following lemma.
\begin{lemma}\label{lem:Grunskydecay}
Assume that $\gamma$ is a $C^{m+\alpha}$ Jordan arc with $m\geq 2$, $\alpha>0$. Given non-negative integers $p,q$ satisfying $p+q<m$, there exists a constant $A$ such that the arc-Grunsky coefficients satisfy
\begin{equation}\label{Grunskydecay}
|a_{kl}| \leq \frac{A}{k^{p+\alpha/2}l^{q+\alpha/2}},
\end{equation}
for all $k,l \geq 1$.

\end{lemma}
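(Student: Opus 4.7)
The strategy is to read \eqref{aklformula} as computing (essentially) the double cosine Fourier coefficients of
\[
F(\theta,\phi):=\log\Big|\frac{z_e(\cos\theta)-z_e(\cos\phi)}{\cos\theta-\cos\phi}\Big|,
\]
and to extract decay of $a_{kl}$ from smoothness of $F$ via integration by parts followed by a Hölder shift argument. The preliminary (and main technical) step is to show that $F$ extends to a $C^{m-1+\alpha}$ function on $\mathbb{R}^2$ that is $2\pi$-periodic and even in each variable. For this, set
\[
g(\theta,\phi):=\int_0^1 z_e'\bigl((1-s)\cos\phi+s\cos\theta\bigr)\,ds,
\]
so that $z_e(\cos\theta)-z_e(\cos\phi)=(\cos\theta-\cos\phi)\,g(\theta,\phi)$ by the mean-value identity. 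Lemma~\ref{lem:eqpar} gives $z_e\in C^{m+\alpha}([-1,1])$ with $|z_e'|\geq c>0$, so $g\in C^{m-1+\alpha}(\mathbb{R}^2)$. Moreover $g$ is nowhere zero: when $\cos\theta=\cos\phi$ it equals $z_e'(\cos\theta)$, and otherwise the injectivity of the Jordan-arc parametrization $z_e$ forces $z_e(\cos\theta)\neq z_e(\cos\phi)$. By compactness $|g|\geq c'>0$ uniformly, so $F=\log|g|\in C^{m-1+\alpha}(\mathbb{R}^2)$; evenness in each variable and $2\pi$-periodicity are inherited from $\cos$.

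Next, I integrate by parts $p$ times in $\theta$ and $q$ times in $\phi$ in \eqref{aklformula}, using the even $2\pi$-periodic extension of $F$ to $[-\pi,\pi]^2$ so that all boundary terms vanish. This yields
\[
a_{kl}=\frac{c_{p,q}}{k^{p}\,l^{q}}\int_{-\pi}^{\pi}\!\!\int_{-\pi}^{\pi}(\partial_\theta^p\partial_\phi^q F)(\theta,\phi)\,T_1(k\theta)\,T_2(l\phi)\,d\theta\,d\phi,
\]
where $T_1,T_2\in\{\cos,\sin\}$ are determined by the parities of $p$ and $q$ and $c_{p,q}$ is a nonzero constant. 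Since the hypothesis $p+q<m$ gives $p+q\leq m-1$, the integrand $H:=\partial_\theta^p\partial_\phi^q F$ lies in $C^\alpha(\mathbb{R}^2)$ with uniform Hölder seminorm.

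Finally, I apply the shift trick to extract the remaining $\alpha$ of decay symmetrically. The substitution $\theta\mapsto\theta+\pi/k$ flips the sign of $T_1(k\theta)$, so averaging against the unshifted integral gives
\[
2\int H\,T_1(k\theta)T_2(l\phi)\,d\theta\,d\phi=\int\bigl[H(\theta,\phi)-H(\theta-\pi/k,\phi)\bigr]\,T_1(k\theta)T_2(l\phi)\,d\theta\,d\phi,
\]
which is $O(k^{-\alpha})$ uniformly in $l$ by Hölder continuity of $H$. The analogous shift in $\phi$ gives an $O(l^{-\alpha})$ bound; combining via $\min(k^{-\alpha},l^{-\alpha})\leq (kl)^{-\alpha/2}$ and multiplying by the $k^{-p}l^{-q}$ prefactor from the integration by parts yields the claimed estimate $|a_{kl}|\leq A/(k^{p+\alpha/2}l^{q+\alpha/2})$. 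After the smoothness step, everything reduces to standard Fourier analysis on the torus; the only delicate point is therefore the factorization of the divided difference producing the required regularity and non-vanishing of $g$.
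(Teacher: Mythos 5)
Your proof is correct and follows essentially the same route as the paper's (which is only sketched): view $a_{kl}$ as double Fourier coefficients of $\log\bigl|(z_e(\cos\theta)-z_e(\cos\phi))/(\cos\theta-\cos\phi)\bigr|$ over a full period, integrate by parts $p+q\le m-1$ times, and extract the remaining H\"older gain by the half-period shift; your divided-difference factorization $z_e(\cos\theta)-z_e(\cos\phi)=(\cos\theta-\cos\phi)\,g(\theta,\phi)$ simply makes explicit the $C^{m-1+\alpha}$ regularity and uniform non-vanishing of the kernel that the paper leaves implicit. The only cosmetic difference is that you obtain the gains $k^{-\alpha}$ and $l^{-\alpha}$ separately and combine them via $\min(k^{-\alpha},l^{-\alpha})\le (kl)^{-\alpha/2}$, whereas the paper assumes $k\ge l$, places the full $\alpha$ on $k$, and then redistributes using $k^{-p-\alpha}l^{-q}\le k^{-p-\alpha/2}l^{-q-\alpha/2}$.
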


\begin{proof}
It follows from \eqref{Grunskyexp} that
\begin{equation*}
a_{kl}=-\frac 1{8\pi^2}\int_{-\pi}^\pi\int_{-\pi}^\pi\log\left|\frac{z_e(\cos\theta_1)-z_e(\cos\theta_2)}{\cos\theta_1-\cos\theta_2}\right|\cos k\theta_1\cos l\theta_2\,d\theta_1 d\theta_2,
\end{equation*}
so the Grunsky coefficients can be thought of as Fourier coefficients. Assume that $k\ge l$. An integration by parts argument shows that
\begin{equation*}
|a_{kl}|\le \frac A{k^{p+\alpha}l^q}\le \frac{A}{k^{p+\alpha/2}l^{q+\alpha/2}}
\end{equation*}
provided $p+q<m$.
The case $k<l$ is analogous.
\end{proof}
\begin{prop}\label{lem:trace-class}
Suppose $\gamma$ is a $C^{3+ \alpha}$ Jordan arc, where $\alpha >0$. Then the arc-Grunsky operator $B$ is trace class. In particular, the Fredholm determinant $\det(I+B)$ is well defined. 
\end{prop}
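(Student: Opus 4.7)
The plan is to establish trace class of $B$ using the decay estimates from Lemma~\ref{lem:Grunskydecay}, together with an integral-kernel factorization. First, applying Lemma~\ref{lem:Grunskydecay} with $m=3$ and $p=q=1$ gives $|a_{kl}| \le A/(k^{1+\alpha/2}l^{1+\alpha/2})$, so $|b_{kl}| = \sqrt{kl}\,|a_{kl}| \le A/(k^{1/2+\alpha/2}l^{1/2+\alpha/2})$, which immediately yields the Hilbert--Schmidt bound $\|B\|_{\mathrm{HS}}^2 \le A^2 \sum_{k,l} 1/(k^{1+\alpha}l^{1+\alpha}) < \infty$. I would also record the sharper \emph{asymmetric} bound visible in the proof of Lemma~\ref{lem:Grunskydecay}: for $k \ge l$, taking $p=2,q=0$ gives $|a_{kl}| \le A/k^{2+\alpha}$, hence $|b_{kl}| \le A\sqrt{l}/k^{3/2+\alpha}$, and symmetrically for $l\ge k$ using $a_{kl}=a_{lk}$.

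To upgrade from Hilbert--Schmidt to trace class, I would identify $\ell^2(\mathbb{Z}_+)$ with the mean-zero subspace of $L^2([0,\pi])$ through the orthonormal basis $\{\sqrt{2/\pi}\cos(k\theta)\}_{k\ge 1}$. Under this identification, the symmetric matrix $(a_{kl})$ represents the integral operator with kernel
\[
K_0(\theta,\phi) \;=\; -\tfrac12 \log\Big|\tfrac{z_e(\cos\theta)-z_e(\cos\phi)}{\cos\theta-\cos\phi}\Big| + \tfrac12 \log(2\mathrm{cap}(\gamma)).
\]
By Lemma~\ref{lem:eqpar}, $z_e \in C^{3+\alpha}$ with $|z_e'|$ bounded below, so by Hadamard's lemma $K_0$ is $C^{2+\alpha}$ jointly on $[0,\pi]^2$. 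The arc-Grunsky operator $B$ then corresponds to applying the fractional derivatives $(-\partial_\theta^2)^{1/4}(-\partial_\phi^2)^{1/4}$ to $K_0$, which costs at most one derivative in total, so its kernel lies in $C^{1+\alpha-\epsilon}([0,\pi]^2)$ for any small $\epsilon>0$. This regularity suffices for trace class by a classical kernel-smoothness criterion (e.g., Brislawn, or Gohberg--Krein). I would then conclude that $B$ is trace class and that $\det(I+B)$ is well defined as a Fredholm determinant in the usual sense.

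The hard part will be carrying out the fractional-derivative step precisely at this threshold regularity. A direct row/column estimate, bounding the trace norm by $\sum_l \|Be_l\|$, only gives $\sum_l O(l^{-1/2-\alpha})$, which converges for $\alpha>1/2$ but not in general; thus establishing trace class under the full assumption $\alpha>0$ really does require either the kernel-smoothness argument above or a more delicate factorization $B = B_1 B_2$ into Hilbert--Schmidt operators built by splitting $B$ into upper and lower triangular parts and pairing entries with appropriate diagonal weights taken from the asymmetric decay bounds.
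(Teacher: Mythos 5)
Your route is genuinely different from the paper's, and much heavier. The paper's own proof is two lines: Lemma~\ref{lem:Grunskydecay} with $p=q=1$ gives $|a_{kk}|\le A k^{-2-\alpha}$, hence $|b_{kk}|=k|a_{kk}|\le A k^{-1-\alpha}$, which is summable, and the paper concludes trace class from this diagonal decay. Your preliminary estimates are all correct (the Hilbert--Schmidt bound, the asymmetric bound for $k\ge l$, and the computation showing that the bound $\|B\|_1\le\sum_l\|Be_l\|$ together with the entrywise estimates only converges when $\alpha>1/2$), and the $\alpha>1/2$ barrier you identify is a real subtlety that the paper's terse argument does not dwell on. But as submitted, your text is a plan whose central step is missing, not a proof.

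Concretely, the gap is the step you yourself flag as the hard part: the assertion that the kernel of $(-\partial_\theta^2)^{1/4}(-\partial_\phi^2)^{1/4}K_0$ lies in $C^{1+\alpha-\epsilon}$ is exactly what has to be proved, and it requires genuine work -- the half-derivatives are nonlocal, so one must pass to the even $2\pi$-periodic extension (legitimate here because $K_0$ is a function of $\cos\theta,\cos\phi$, so the extension keeps the regularity) and then control mixed H\"older regularity in the two variables simultaneously, not just ``one derivative in total.'' In addition, the criterion you cite is misattributed: Brislawn's theorem is a formula for the trace of an operator already known to be trace class; it is not a sufficiency criterion. What your strategy needs is the classical singular-value estimate $s_n(T)=O(n^{-s-1/2})$ for integral operators on an interval whose kernel is H\"older of order $s$ in one variable uniformly in the other (Gohberg--Krein, or Birman--Solomyak type bounds), which yields trace class once $s>1/2$. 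Finally, your proposed fallback -- a factorization $B=B_1B_2$ into Hilbert--Schmidt pieces obtained by triangular splitting with diagonal weights, using only the entrywise bounds of Lemma~\ref{lem:Grunskydecay} -- does not escape the problem: row/column or dyadic-block estimates built from those same entrywise bounds reproduce the $\alpha>1/2$ barrier, so that alternative would fail as stated. To make your approach complete for all $\alpha>0$ you must actually carry out the kernel-regularity step and then invoke the H\"older-exponent-$>1/2$ criterion; the paper instead settles the matter directly from the coefficient decay of Lemma~\ref{lem:Grunskydecay}.
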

\begin{proof}
    By Lemma~\ref{lem:Grunskydecay} with $p=q=1$, we see that $|a_{kk}| \le A/k^{2+\alpha}$ so that $|b_{kk}| = k|a_{kk}| \le A/k^{1+\alpha}$ which is summable. This implies the claim.
    \end{proof}
    The Fredholm determinant $\det(I+B)$ will be very important in our results.
\subsection{Deformed Pommerenke equation}

In the proof of Proposition~\ref{prop:Dir} below and in several other instances we will need to consider the solution $H_s$ to an integral equation closely related to the arc-Grunsky operator. Pommerenke introduced a similar equation in the context of the classical Grunsky operator when analyzing the distribution of Fekete points on a Jordan curve, see \cites{Pom67, pom69}. That equation also played an important role in \cites{Joh2, CouJoh, JohVik}. In the present setting we will need to consider a ``deformed'' version corresponding to an interpolation of partition functions, besides the change of domain. 

Let $s \in [0,1]$ be a parameter and $G$ is given. Consider the following equation,
\begin{equation}\label{Intekv1:prel}
G(\omega) = \frac{\beta s}{\pi} \Re  p.v.\int_0^\pi \frac{H_s(\theta)z_e'(\cos \theta)\sin \theta}{z_e(\cos\theta)-z_e(\cos \omega)} \d\theta + p.v.\frac{\beta(1-s)}{\pi} \int_0^\pi \frac{H_s(\theta)\sin\theta}{\cos\theta-\cos\omega}\d\theta
\end{equation}
on $[0,\pi]$. If $s=1$, this is the equation
\[ g(z_e(x)) = \frac{\beta}{\pi} \Re p.v. \int_{-1}^1 \frac{h(z_e(t))z_e'(t)}{z_e(t)-z_e(x)}\d t\]
or
\begin{equation}\label{Intekv2:prel}
g(z) = \frac{\beta}{\pi} \Re p.v. \int_\gamma \frac{h(\zeta)}{\zeta-z} \d\zeta,
\end{equation}
which, apart from a factor 2, is the same integral equation as the one considered by Pommereke \cite{Pom} and also in \cites{CouJoh, JohVik} in the case when $\gamma$ is a closed Jordan curve. Here $h(z_e(\cos\theta)) = H_1(\theta)$.

From now on we will assume that $\mathrm{cap}(\gamma)= 1/2$ which we can do without loss of generality by simply rescaling the curve. We can rewrite \eqref{Intekv1:prel} further by using \eqref{Grunskyexp}, which gives
\begin{equation}\label{Grunskyexp2:prel}
\log \big| \frac{z_e(\cos\theta)-z_e(\cos\omega)}{\cos\theta-\cos\omega} \big| = -2\sum_{k,l\geq 1} a_{kl} \cos (k\theta)\cos(l\omega)
\end{equation}
since $T_k(\cos\theta) = \cos(k\theta)$. If we differentiate \eqref{Grunskyexp2:prel} with respect to $\theta$ we obtain
\begin{equation}\label{DGrexp:prel}
\frac{z_e'(\cos\theta)\sin\theta}{z_e(\cos\theta)-z_e(\cos\omega)}-\frac{\sin\theta}{\cos\theta-\cos\omega} = -2 \sum_{k,l\geq 1} ka_{kl}\sin(k\theta)\cos(l\omega).
\end{equation}
Inserting this into \eqref{Intekv1:prel} gives
\begin{equation}\label{Intekv3:prel}
G(\omega) = - \frac{2\beta s}{\pi} \sum_{k,l\geq 1} k a_{kl}\big( \int_0^\pi H_s(\theta) \sin(k\theta) \d\theta\big) \cos(l\omega) + \frac{\beta}{\pi} p.v. \int_0^\pi \frac{H_s(\theta)\sin\theta}{\cos\theta-\cos\omega} \d\theta.
\end{equation}
Let $h_k^s = \frac{2}{\pi} \int_0^\pi H_s(\theta) \sin(k\theta) \d\theta$ be the $k$th Fourier sine coefficient of $H_s$ so that
\[H_s(\theta) = \sum_{k\geq 1} h_k^s \sin(k\theta). \]
The conjugate function of $H_s(\theta)$ as a function of $[-\pi,\pi]$ is
\[ \widetilde{H}_s(\omega) = -\sum_{k\geq 1} h_k^s\cos(k\omega) = \frac{1}{2\pi} p.v. \int_{-\pi}^\pi \cot \big(\frac{\omega-\theta}{2}\big) H_s(\theta) \d\theta. \]
We see that
\begin{align}\label{Conjcomp}
&\frac{1}{\pi} p.v. \int_0^\pi \frac{H_s(\theta)\sin\theta}{\cos\theta-\cos\omega} \d\theta = -\frac{1}{2\pi} p.v. \int_0^\pi H_s(\theta) \big(\cot\big(\frac{\theta-\omega}{2}\big)+\cot\big(\frac{\theta+\omega}{2}\big) \big) \d\theta \nonumber\\
&= \frac{1}{2\pi} p.v. \int_{-\pi}^\pi H_s(\theta) \cot\big(\frac{\omega-\theta}{2}\big) \d\theta = \widetilde{H}_s(\omega).
\end{align}
Consequently, \eqref{Intekv3:prel} can be written
\begin{equation}\label{Intekv4:prel}
G(\omega) = -\beta s \sum_{k,l\geq 1} ka_{kl}h_k^s\cos(l\omega) + \beta\widetilde{H}_s(\omega).
\end{equation}
Next, consider the infinite column vectors
\begin{equation}\label{xtheta:prel}
\x_\theta = \big( \frac{1}{\sqrt{k}}\cos(k\theta) \big)_{k\geq 1},\quad \y_\theta = \big( \frac{1}{\sqrt{k}}\sin(k\theta) \big)_{k\geq 1}.
\end{equation}
Then
\[ G(\theta) = \x_\theta^t\g, \quad \widetilde{H}_s(\theta) = -\x_\theta^t\h^s,\]
where $\h^s = (\sqrt{k}h_k^s)_{k\geq 1}$.
We see that \eqref{Intekv4:prel} can be written 
\[ \x_\theta^t\g = -\beta s \x_\theta^t B \h^s -\beta \x_\theta^t\h^s\]
where $B$ is the arc-Grunsky operator and $\g$ is as in \eqref{gvec:prel}. Hence the integral equation can be written 
\begin{equation}\label{Intekv5:prel}
-\frac{1}{\beta} \g = (I+sB)\h^s.
\end{equation}
We will use the strengthened arc-Grunsky ineqality proved in the next section to show that \eqref{Intekv5:prel} has a unique solution in $\ell^2(\mathbb{Z}_+)$ if $\gamma$ is sufficiently regular.

\subsection{The arc-Grunsky inequalities}
The classical Grunsky inequality can be interpreted as the contraction property of the corresponding operator acting on  $\ell^2(\mathbb{Z}_+)$. For the arc-Grunsky operator we only have a one-sided bound.
\begin{prop}\label{prop:Grineq}
Assume that $\gamma$ is $C^{3+\alpha}, \alpha > 0$, and let $B$ be the arc-Grunsky operator associated to $\gamma$. Then,
\begin{equation}\label{Grineq}
B \geq -\mathrm{I},
\end{equation}
i.e., all eigenvalues of $B$ are greater or equal to $-1$.
\end{prop}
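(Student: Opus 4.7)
The plan is to derive the inequality $I + B \geq 0$ from the classical potential-theoretic fact that the logarithmic energy of a compactly supported signed measure with total mass zero is non-negative: indeed $I[\nu] = (2\pi)^{-1}\int_\mathbb{C}|\nabla U^\nu|^2\,dA$, where the logarithmic potential $U^\nu(z) = \int \log|z-w|^{-1}\,d\nu(w)$ decays at infinity when $\int d\nu = 0$. The strategy is to construct, from any real sequence $\mathbf{x} \in \ell^2(\Z_+)$, a signed measure on $\gamma$ of zero total mass whose energy is exactly $2\,\mathbf{x}^t(I+B)\mathbf{x}$.

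To this end, given a finitely supported real sequence $\mathbf{x} = (x_k)_{k\geq 1}$, define the signed measure on $[-1,1]$
\[
d\mu(s) = \Bigl(2\sum_{k \geq 1} \sqrt{k}\, x_k\, T_k(s)\Bigr)\frac{ds}{\pi\sqrt{1-s^2}}.
\]
By the orthogonality of the Chebyshev polynomials under the equilibrium measure $d\nu_e = ds/(\pi\sqrt{1-s^2})$, $\mu$ has zero total mass and Chebyshev moments $\hat{\mu}(k) := \int T_k\, d\mu = \sqrt{k}\, x_k$. The pushforward $\mu_\gamma := (z_e)_* \mu$ is then a signed measure on $\gamma$ of zero total mass, so the general fact above gives $I_\gamma[\mu_\gamma] \geq 0$.

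The core of the proof is then to identify $I_\gamma[\mu_\gamma]$ with the desired quadratic form. Changing variables via $z_e$,
\[
I_\gamma[\mu_\gamma] - I[\mu] = \iint \log\Bigl|\frac{z_e(s)-z_e(t)}{s-t}\Bigr|^{-1} d\mu(s)\,d\mu(t) = 2\sum_{k,l \geq 1} a_{kl}\,\hat\mu(k)\,\hat\mu(l) = 2\,\mathbf{x}^t B \mathbf{x},
\]
where the middle equality uses the arc-Grunsky expansion of Lemma \ref{lem:arcGrunsky} together with $\int d\mu = 0$. For $I[\mu]$ itself, the classical Chebyshev expansion $-\log|s-t| = \log 2 + 2\sum_{k\geq 1} T_k(s) T_k(t)/k$ and zero total mass give $I[\mu] = 2\sum_k \hat\mu(k)^2/k = 2\|\mathbf{x}\|^2$. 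Combining yields $I_\gamma[\mu_\gamma] = 2\,\mathbf{x}^t(I+B)\mathbf{x} \geq 0$ for finitely supported $\mathbf{x}$, and the inequality extends to all of $\ell^2(\Z_+)$ by continuity of the quadratic form, since $B$ is Hilbert--Schmidt (and in particular bounded) by the decay estimate in Lemma \ref{lem:Grunskydecay} with $p=q=1$.

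No step is a serious obstacle here; the identification $\hat\mu(k) = \sqrt{k}\, x_k$ is really the whole idea. The interchange of sum and integral in the first computation is justified by the uniform convergence asserted in Lemma \ref{lem:arcGrunsky}, and for polynomial $\mu$ one can alternatively invoke the Chebyshev expansion of the logarithmic potential $U^\mu(s) = 2\sum_k T_k(s)\hat\mu(k)/k$, which is then a finite sum. The same strategy, with $z_e$ replaced by the normalized exterior conformal map of a Jordan curve, yields a new proof of the classical Grunsky inequality in the closed-curve setting.
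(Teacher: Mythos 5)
Your proof is correct and is essentially the paper's argument: both construct a zero-mass signed measure on $\gamma$ whose density in the equilibrium parametrization is a Chebyshev polynomial times $d\nu_e$, apply positivity of the logarithmic energy for zero-mass measures, and identify the energy with $2\,\mathbf{x}^t(I+B)\mathbf{x}$ via the arc-Grunsky expansion. The only difference is bookkeeping: the paper reaches the quadratic form by integrating by parts and using the differentiated Grunsky expansion together with the conjugate-function (Hilbert transform) identity, whereas you expand the kernel $\log|z_e(s)-z_e(t)|^{-1}$ directly using Lemma~\ref{lem:arcGrunsky} and the classical Chebyshev expansion of $-\log|s-t|$, which is a slightly more streamlined route to the same identity.
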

For the proof we will use a classical result from potential theory. Let $\sigma$ be a signed measure on the Jordan arc $\gamma$. The $1$-Riesz potential of $\sigma$ is defined by
\[ U_1(z;\sigma) = \int_\gamma \frac{\d\sigma(\omega)}{|z-\omega|},\quad z\in \C\smallsetminus\gamma. \]
We have the following result, see \cite{Lan}.
\begin{lemma}\label{thm:enpot}
If $\sigma(\gamma) = 0$ then
\begin{equation}\label{Isigma}
I_\gamma[\sigma] = \frac{1}{2\pi} \int_\C U_1(x+iy;\sigma)^2\d x\d y
\end{equation}
so $I_\gamma[\sigma]\geq 0$, with equality if and only if $\sigma \equiv 0$.
\end{lemma}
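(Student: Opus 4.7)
The plan is a Fourier-theoretic proof that identifies both sides with the common quantity $\int_{\mathbb{R}^2}|\hat\sigma(\xi)|^2/|\xi|^2\,d\xi$, using that in $\mathbb{R}^2$ the convolution $(1/|\cdot|)\ast(1/|\cdot|)$ equals $2\pi\log(1/|\cdot|)$ modulo a constant, which is the Riesz composition identity at the boundary of the standard range. First I would reduce to the case that $\sigma$ has a smooth, compactly supported density on (a neighborhood of) $\gamma$, by mollification preserving the neutrality $\sigma(\gamma)=0$, and pass to the limit at the end using lower semicontinuity of the energy together with uniform control of $U_1$ at infinity. In the smooth case, the far-field expansion $U_1(z;\sigma)=\sigma(\gamma)/|z|+O(|z|^{-2})$ combined with $\sigma(\gamma)=0$ gives $U_1(z;\sigma)=O(|z|^{-2})$ as $|z|\to\infty$, so $U_1\in L^2(\mathbb{R}^2)$; similarly the logarithmic potential $V(z;\sigma):=\int\log(1/|z-w|)\,d\sigma(w)$ is $O(|z|^{-1})$ at infinity, so $I_\gamma[\sigma]=\int V\,d\sigma$ is absolutely convergent.

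\textbf{Fourier computation.} The two (distributional) Fourier identities on $\mathbb{R}^2$
\[
\widehat{\tfrac{1}{|\cdot|}}(\xi)=\frac{2\pi}{|\xi|},\qquad \widehat{\log\tfrac{1}{|\cdot|}}(\xi)=\frac{2\pi}{|\xi|^2}
\]
are the needed inputs, the second modulo an additive multiple of $\delta_0$ which is harmless here because $\hat\sigma(0)=\sigma(\gamma)=0$. Smoothness of $\hat\sigma$ together with its simple zero at the origin make $|\hat\sigma|^2/|\xi|^2$ integrable near $\xi=0$, and rapid decay of $\hat\sigma$ (from smoothness of $\sigma$) handles infinity. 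Plancherel applied to $U_1=(1/|\cdot|)\ast\sigma$, and Parseval applied to $I_\gamma[\sigma]=\langle(\log1/|\cdot|)\ast\sigma,\sigma\rangle$, then yield
\[
\int_{\mathbb{R}^2}U_1(z)^2\,dz=c_1\int\frac{|\hat\sigma(\xi)|^2}{|\xi|^2}\,d\xi,\qquad I_\gamma[\sigma]=c_2\int\frac{|\hat\sigma(\xi)|^2}{|\xi|^2}\,d\xi,
\]
with constants related by $c_1=4\pi c_2$, giving $I_\gamma[\sigma]=(2\pi)^{-1}\int U_1^2$.

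\textbf{Non-degeneracy and main obstacle.} Positivity is immediate from the $L^2$ representation. If $I_\gamma[\sigma]=0$ then $U_1\equiv 0$; from $\hat U_1=(2\pi/|\xi|)\hat\sigma$ we conclude $\hat\sigma=0$ for $\xi\neq 0$, and together with $\hat\sigma(0)=0$ this forces $\sigma\equiv 0$. The principal technical obstacle is justifying the distributional Fourier calculus: neither $1/|x|$ nor $\log(1/|x|)$ lies in a standard $L^p$ space on $\mathbb{R}^2$, and each carries a $\xi=0$ singularity in its Fourier transform; the neutrality assumption $\sigma(\gamma)=0$ is exactly what renders these singularities inert and makes the pairings legitimate. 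A Fourier-free alternative would be to regularize the Riesz kernel as $(|x|+\epsilon)^{-1}$, verify the regularized Riesz composition identity $(1/|\cdot|)\ast(1/|\cdot|)=2\pi\log(1/|\cdot|)+c_\epsilon$, pair against $\sigma\otimes\sigma$ so that the (divergent) constant $c_\epsilon$ is annihilated by neutrality, and then let $\epsilon\downarrow 0$; this is essentially the route taken in \cite{Lan}.
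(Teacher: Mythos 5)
The paper does not actually prove this lemma: it is quoted from Landkof \cite{Lan}, so there is no internal proof to compare against. Your Fourier/Riesz-composition argument is essentially the classical proof behind that citation, and it is correct in outline: neutrality kills both the $\delta_0$ ambiguity in $\widehat{\log(1/|\cdot|)}$ and the divergent constant in the regularized composition $(1/|\cdot|)\ast(1/|\cdot|)$, the far-field expansions you state are right, and the equality case follows from the $L^2$ representation exactly as you say.

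Two points need repair. First, a constant slip: with the convention $\widehat{f}(\xi)=\int f(x)e^{-ix\cdot\xi}dx$ (for which $\widehat{1/|\cdot|}=2\pi/|\xi|$ and $\widehat{\log(1/|\cdot|)}=2\pi/|\xi|^2$ hold), Plancherel carries a factor $(2\pi)^{-2}$, giving $\int U_1^2=\int|\widehat{\sigma}|^2|\xi|^{-2}d\xi$ and $I_\gamma[\sigma]=\tfrac{1}{2\pi}\int|\widehat{\sigma}|^2|\xi|^{-2}d\xi$, i.e.\ $c_1=2\pi c_2$, not $4\pi c_2$; your stated relation would yield $I_\gamma[\sigma]=\tfrac{1}{4\pi}\int U_1^2$, contradicting the conclusion you then write down. (A convention-free check: $\bigl(\widehat{1/|\cdot|}\bigr)^2=4\pi^2|\xi|^{-2}=2\pi\,\widehat{\log(1/|\cdot|)}$, so $\int U_1^2=2\pi I_\gamma[\sigma]$ directly.) Second, the reduction step as described is too weak: lower semicontinuity of the energy only gives a one-sided bound and cannot by itself upgrade the identity from the mollified measures $\sigma_\epsilon=\sigma\ast\phi_\epsilon$ to $\sigma$, nor does it cover the equality case. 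The clean fix is to stay on the Fourier side: choose $\phi$ with $0\le\widehat{\phi}\le 1$ (e.g.\ a Gaussian), note $U_1(\cdot;\sigma_\epsilon)=U_1(\cdot;\sigma)\ast\phi_\epsilon$ and $\widehat{\sigma_\epsilon}=\widehat{\sigma}\,\widehat{\phi_\epsilon}$, so both $I_\gamma[\sigma_\epsilon]$ and $\tfrac1{2\pi}\int U_1(\cdot;\sigma_\epsilon)^2$ equal $\tfrac{1}{2\pi}\int|\widehat{\sigma}|^2|\widehat{\phi_\epsilon}|^2|\xi|^{-2}d\xi$, and monotone/dominated convergence as $\epsilon\downarrow 0$ yields the identity for $\sigma$ itself (both sides possibly $+\infty$ simultaneously), after which the equality case reads off as you indicated. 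With these two adjustments your argument is a complete and standard proof of the lemma.
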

\begin{proof}[Proof of Proposition \ref{prop:Grineq}]
Let $f\in L^1(\nu_e)$ satisfy 
\begin{equation}\label{meanf}
\int_\gamma f\d\nu_e = 0,
\end{equation}
and let $\sigma$ be the signed measure $\d\sigma = f\d\nu_e$. It follows from \eqref{Isigma} that 
\[ \int_\gamma\int_\gamma f(z)f(w) \log |z-w|^{-1} \d\nu_e(z)\d\nu_e(w) \geq 0. \]
Inserting the equilibrium parametrization into this formula gives the inequality
\begin{equation}\label{fineq1}
\int_{-1}^1\int_{-1}^1 f(z_e(s))f(z_e(t)) \log |z_e(s)-z_e(t)| \frac{\d s}{\sqrt{1-s^2}}\frac{\d t}{\sqrt{1-t^2}} \leq 0.
\end{equation}
Choose
\[ f(z_e(\cos\theta)) = \sum_{m=1}^M mf_m\cos(m\theta). \]
Then,
\[ \int_\gamma f \d\nu_e = \frac{1}{\pi}\int_{-1}^1 f(z(t)) \frac{\d t}{\sqrt{1-t^2}} = \frac{1}{\pi} \int_0^\pi f(z_e(\cos\theta))\d\theta = 0, \]
so \eqref{meanf} is satisfied. From \eqref{fineq1} we get
\begin{equation}\label{fineq2}
\frac{1}{\pi^2}\int_0^\pi\int_0^\pi f(z_e(\cos\theta))f(z_e(\cos\phi)) \log |z_e(\cos\theta)-z_e(\cos\phi)|\d\theta\d\phi \leq 0.
\end{equation}
Let
\[ F(\omega) = \int_0^\omega f(z_e(\cos\theta))\d\theta = \sum_{m=1}^M f_m\sin(m\omega), \]
and note that $F(0) = F(\pi) =0$. An integration by parts shows that \eqref{fineq2} can be written as
\[ \frac{1}{\pi^2}\int_0^\pi\int_0^\pi F(\theta)F'(\phi) \frac{z_e'(\cos\theta)\sin\theta}{z_e(\cos\theta)-z_e(\cos\phi)}\d\theta\d\phi\leq 0. \]
Using \eqref{Grunskyexp}, 
\begin{equation}\label{Grunskyexp2:1}
\log \big| \frac{z_e(\cos\theta)-z_e(\cos\omega)}{\cos\theta-\cos\omega} \big| = -2\sum_{k,l\geq 1} a_{kl} \cos (k\theta)\cos(l\omega)
\end{equation}
since $T_k(\cos\theta) = \cos(k\theta)$. Differentiating \eqref{Grunskyexp2:1} with respect to $\theta$ gives
\begin{equation}\label{DGrexp:1}
\frac{z_e'(\cos\theta)\sin\theta}{z_e(\cos\theta)-z_e(\cos\omega)}-\frac{\sin\theta}{\cos\theta-\cos\omega} = -2 \sum_{k,l\geq 1} ka_{kl}\sin(k\theta)\cos(l\omega).
\end{equation}
Hence, we use \eqref{DGrexp:1} to obtain inequality
\begin{multline*}
    -\frac{2}{\pi^2} \sum_{k,l\geq 1} k a_{kl} \big( \int_0^\pi F(\theta)\sin(k\theta) \d\theta \big) \big( \int_0^\pi F'(\theta)\cos(l\theta)\d\theta \big) \\ \leq -\frac{1}{\pi^2} \int_0^\pi\int_0^\pi F(\theta)F'(\phi)\frac{\sin\theta}{\cos\theta-\cos\phi}\d\theta\d\phi.
\end{multline*}  
We recognize the Hilbert transform of $F$:
\begin{align}\label{Conjcomp:1}
\frac{1}{\pi} p.v. \int_0^\pi \frac{F(\theta)\sin\theta}{\cos\theta-\cos\omega} \d\theta & = -\frac{1}{2\pi} p.v. \int_0^\pi F(\theta) \big(\cot\big(\frac{\theta-\omega}{2}\big)+\cot\big(\frac{\theta+\omega}{2}\big) \big) \d\theta \nonumber\\
&= \frac{1}{2\pi} p.v. \int_{-\pi}^\pi F(\theta) \cot\big(\frac{\omega-\theta}{2}\big) \d\theta = :\widetilde{F}(\omega).
\end{align}

Using \eqref{Conjcomp:1}, it then follows that (see also Section~\ref{sec:interpolation-partition-function})
\[-\frac{1}{2}\sum_{k,l=1}^M kla_{kl} f_kf_l \leq \frac{1}{\pi} \int_0^\pi F'(\phi)\widetilde{F}(\phi) \d\phi = \frac{1}{2} \sum_{k=1}^M kf_k^2. \]
Let $x_k = f_k/\sqrt{k}$. The last inequality can then be written
\[ \sum_{k,l=1}^M (\delta_{kl}+b_{kl})x_kx_l \geq 0. \]
Since $M$ and $(x_k)_{k=1}^M$ are arbitrary this proves that $B\geq -I$.
\end{proof}
\begin{remark}
The method to prove Proposition~\ref{prop:Grineq} can be used to give a new proof of the classical Grunsky inequality.
\end{remark}
Next, we turn to the proof of the strengthened arc-Grunsky inequality.

\begin{prop}\label{prop:stGrineq}
Assume that $\gamma$ is $C^{5+\alpha}, \alpha > 0$, and that $B$ is the arc-Grunsky operator associated to $\gamma$. Then there exists a constant $0 \leq \kappa<1$ so that
\begin{equation}\label{stGrineq}
B \geq -\kappa I,
\end{equation}
i.e., all eigenvalues of $B$ are greater or equal to $-\kappa$.
\end{prop}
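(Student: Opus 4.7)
The plan is to combine Proposition~\ref{prop:Grineq} with compactness of $B$. Since $B$ is self-adjoint (because $a_{kl}=a_{lk}$) and, by Proposition~\ref{lem:trace-class}, trace class (hence compact), the spectral theorem gives $\sigma(B) = \{\lambda_n\}\cup\{0\}$ with $\lambda_n\to 0$, and Proposition~\ref{prop:Grineq} forces $\lambda_n \geq -1$. Setting $\kappa := -\min(\inf_n \lambda_n,\, 0) \in [0,1]$, one has $B\geq -\kappa I$; the assertion $\kappa<1$ is therefore \emph{equivalent} to $\ker(I+B)=\{0\}$. The strengthened $C^{5+\alpha}$ regularity enters precisely here, through the improved Grunsky decay of Lemma~\ref{lem:Grunskydecay}.

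I would argue by contradiction. Suppose $x\in\ell^2(\Z_+)\setminus\{0\}$ satisfies $(I+B)x=0$. The first step is a regularity bootstrap: Lemma~\ref{lem:Grunskydecay} with $p=q=2$ (which requires $m\geq 5$) gives $|b_{kl}| \leq A(kl)^{-3/2-\alpha/2}$, whence Cauchy--Schwarz applied to $x_k = -(Bx)_k = -\sum_l b_{kl} x_l$ yields $|x_k| \leq C k^{-3/2-\alpha/2}$. Setting $f_k := x_k/\sqrt{k}$, the function $F(\omega) := \sum_{k\geq 1} f_k \sin(k\omega)$ then lies in $H^{3/2}(0,\pi) \subset C^1$, vanishes at $0$ and $\pi$, and its derivative $f(z_e(\cos\theta)) := F'(\theta) = \sum_{k\geq 1} k f_k \cos(k\theta)$ is a continuous function on $[0,\pi]$ with $\int_0^\pi f(z_e(\cos\theta))\,d\theta = 0$. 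Hence $d\sigma := f\, d\nu_e$ is a genuine signed measure on $\gamma$ with $\sigma(\gamma) = 0$.

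The second step is to run the computation in the proof of Proposition~\ref{prop:Grineq} in this infinite setting. The enhanced decay makes every sum and integral absolutely convergent and justifies all interchanges, so via \eqref{Grunskyexp2:1}, \eqref{DGrexp:1}, and the Hilbert-transform identity \eqref{Conjcomp:1}, the hypothesis $(I+B)x=0$ translates into
\begin{equation*}
0 \;=\; \langle (I+B)x,\, x\rangle_{\ell^2} \;=\; \sum_{k,l\geq 1}(\delta_{kl}+b_{kl})x_k x_l \;=\; 2\, I_\gamma[\sigma].
\end{equation*}
The equality case of Lemma~\ref{thm:enpot} then forces $\sigma \equiv 0$, and since $\nu_e$ has a positive density on the interior of $\gamma$ (by the computation in the proof of Lemma~\ref{lem:eqpar}), we conclude that $f \equiv 0$, hence $f_k = 0$ for all $k \geq 1$, and finally $x = 0$, contradicting $x \neq 0$. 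Thus $-1$ is not an eigenvalue of $B$, and the desired strict bound $B \geq -\kappa I$ with $\kappa < 1$ follows.

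The main obstacle is the bootstrap in the second paragraph: an arbitrary $\ell^2$ eigenvector produces only $f \in H^{-1/2}$, a distribution whose logarithmic energy has no clear meaning, so Lemma~\ref{thm:enpot} cannot be applied directly. The two extra derivatives beyond Proposition~\ref{prop:Grineq}'s hypothesis are spent entirely on iterating $x = -Bx$ against the improved Grunsky decay to promote $f$ to a genuine continuous function on $\gamma$; once this is achieved, the potential-theoretic rigidity of Lemma~\ref{thm:enpot} closes the argument immediately.
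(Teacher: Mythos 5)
Your proposal is correct and takes essentially the same route as the paper: compactness of $B$ together with Proposition~\ref{prop:Grineq} reduces everything to showing $-1$ is not an eigenvalue, the decay of Lemma~\ref{lem:Grunskydecay} bootstraps a putative eigenvector into a continuous, mean-zero density on $\gamma$, and the energy rigidity of Lemma~\ref{thm:enpot} plus the strict positivity of the equilibrium density forces that density, and hence the eigenvector, to vanish. The differences are only cosmetic---you identify $\langle (I+B)x,x\rangle$ directly with $2I_\gamma[\sigma]$ for the measure $f\,d\nu_e$ and use exponents $(p,q)=(2,2)$, whereas the paper passes through a principal-value integral equation and an arclength density with $(p,q)=(3,1)$---and your aside that $H^{3/2}\subset C^1$ is not literally true but also not needed, since absolute summability of $k|f_k|$ already gives $F\in C^1$.
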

\begin{proof}
Since we assume that $\gamma\in C^{5+\alpha}$, it follows from Lemma~\ref{lem:Grunskydecay} with $p=3$, $q=1$, that 
\begin{equation}\label{bklest2}
|b_{kl}| \leq \frac{A}{k^{5/2+\epsilon}l^{1/2+\epsilon}}
\end{equation}
where $\epsilon = \alpha/2$. Thus $\sum_{k,l\geq 1} b_{kl}^2<\infty$, that is, $B$ is a Hilbert-Schmidt operator on $\ell^2(\Z_+)$. Hence, it is a compact operator and has a discrete bounded spectrum. Let $\lambda_{min} = \min_j \lambda_j$, and let $\mathbf{v}\in \ell^2(\Z_+)$ be the corresponding eigenvector with $\| \mathbf{v} \|_2 =1$. It follows from the arc-Grunsky inequality \eqref{Grineq} that $\lambda_{min} \geq -1$. Assume that $\lambda_{min} = -1$ so that
\begin{equation}\label{Bv}
(B+I)\mathbf{v} = 0.
\end{equation}
This implies that
\[ v_k = -\sum_{l\geq 1}  b_{kl}v_l,\]
and hence, by \eqref{bklest2} and the Cauchy-Schwarz inequality,
\begin{equation}\label{vkest}
|v_k| \leq \big( \sum_{l\geq 1} b_{kl}^2 \big)^{1/2} \| v \|_2 \leq \frac{A}{k^{5/2+\epsilon}}.
\end{equation}
Define the function $V$ by
\[ V(\theta) = \sum_{k\geq 1} v_k \sin(k\theta), \quad \theta\in[-\pi,\pi].  \]
Then the argument that leads from \eqref{Intekv4:prel} to \eqref{Intekv5:prel} above gives

\begin{equation}\label{veqn}
0 = -\sum_{k,l\geq 1} k a_{kl} v_k \cos(l\omega) + \widetilde{V}(\theta). \
\end{equation}
Define the function $v$ on $\gamma$ by
\[ v(z(\cos\theta)) = V(\theta). \]
Then, the argument leading from \eqref{Intekv1:prel} with $\beta= s = 1$ to \eqref{Intekv4:prel} shows that \eqref{veqn} gives
\[ \frac{1}{\pi} \Re p.v. \int_0^\pi \frac{V(\theta) z_e'(\cos\theta) \sin\theta}{z_e(\cos\theta)-z_e(\cos\omega)}\d\theta =0, \]
i.e.
\begin{equation}\label{veqn2}
\frac{1}{\pi} \Re p.v. \int_{-1}^1 \frac{v(z_e(t))z_e'(t)}{z_e(t)-z_e(s)}\d t =0
\end{equation}
for $s\in[-1,1]$. Write $u(t) = v(z_e(t))$ so that $u(\cos\theta) = V(\theta)$, and hence
\begin{equation}\label{uprimeexp}
-u'(\cos\theta)\sin\theta = V'(\theta) = \sum_{k\geq 1} kv_k \cos(k\theta),
\end{equation}
which can be written
\[ u'(t) = -\frac{1}{\sqrt{1-t^2}} \sum_{k\geq 1} k v_kT_k(t), \]
$t\in[-1,1]$. It follows from \eqref{vkest} that
\begin{equation}\label{uprime}
|u'(\theta)| \leq \frac{C}{\sqrt{1-t^2}},\quad t\in[-1,1].
\end{equation}
It follows from \eqref{veqn2} that
\begin{align*}
0 &= \Re p.v. \int_{-1}^1 u(t) \frac{\d}{\d t} \log (z_e(t)-z_e(s)) \d t\\
&= -\Re p.v. \int_{-1}^1 u'(t) \log (z_e(t)-z_e(s)) \d t \\
&= p.v. \int_{-1}^1 u'(t) \log |z_e(t)-z_e(s)|^{-1} \d t 
\end{align*}
for $s\in[-1,1]$, since $u(-1)=u(1)=0$. Thus
\begin{equation}\label{ueqn}
\int_{-1}^1\int_{-1}^1 u'(s)u'(t) \log|z_e(s)-z_e(t)|^{-1} \d s\d t=0.
\end{equation}
Define the function $f(z)$ on $\gamma$ by
\[ f(z(t)) = \frac{u'(t)}{|z_e'(t)|}, \quad t\in[-1,1].  \]
It follows from \eqref{uprime} that
\[ \int_\gamma |f(z)| |\d z| = \int_{-1}^1 |u'(t)| \d t \leq C \int_{-1}^1 \frac{\d t}{\sqrt{1-t^2}} = C\pi. \]
Consequently, $\d\sigma(z) = f(z)|\d z|$ gives a well-defined signed measure on $\gamma$, and
\[ \int_\gamma \d\sigma(z) = \int_\gamma f(z)| \d z| = \int_{-1}^1u'(t) \d t = u(1)-u(-1) =0. \]
It follows from \eqref{ueqn} and the definitions of $f$ and $\sigma$ that
\[ I[\sigma] = \int_\gamma\int_\gamma \log |z-w|^{-1} \d\sigma(z)\d\sigma(w) =0. \]
By Lemma~\ref{thm:enpot}, this implies that $\sigma=0$, i.e., $f=0$. But then $u'=0$ and by \eqref{uprimeexp}, $\mathbf{v} =0$, which contradicts $\lambda_{min} =-1$. Thus, $\kappa \coloneqq |\min(\lambda_{min},0)| < 1$ and $B\geq -\kappa I$.

\end{proof}

\subsection{The arc-Grunsky operator and the Dirichlet energy}
The following proposition is used in many places in the present paper. A similar result holds for the classical Grunsky operator, see \cites{CouJoh, JohVik}. Let $u:\gamma \rightarrow \C$ be a given sufficiently regular function. Let

\begin{equation}\label{gzt:prel}
u_k = \frac{2}{\pi}\int_{-1}^1 u(z_e(t))T_k(t) \frac{dt}{1-t^2}, \quad k\geq 1,
\end{equation}

be Chebyshev coefficients of $u\circ z_e$ and write
\begin{equation}\label{gvec:prel}
{\mathbf u} = (\sqrt{k} u_k)_{k\geq1}.
\end{equation}
\begin{prop}\label{prop:Dir}
Assume that $u$ is a real-valued $C^{1+\alpha}$ function defined on a $C^{5+\alpha}$ Jordan arc $\gamma$. Then
\begin{equation}\label{Dir}
{\mathbf u}^t(I+B)^{-1}{\mathbf u} = \mathcal{D}_{\C \smallsetminus \gamma}(u),
\end{equation}
where $\g$ is as in \eqref{gvec:prel}.
\end{prop}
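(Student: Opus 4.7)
The plan is to exploit the arc-Grunsky integral equation (Section~3.2 with $s=\beta=1$) to transform the quadratic form $\mathbf{u}^t(I+B)^{-1}\mathbf{u}$ into a boundary pairing for a Cauchy-type integral $F$, to identify $\Re F$ as the bounded harmonic extension of $u$, and then to evaluate the Dirichlet energy via a Stokes calculation that reduces back to a diagonal sum in Chebyshev coefficients. Since $u$ and $u+c$ have the same harmonic extension up to an additive constant and $\mathbf{u}$ only captures the nonconstant Chebyshev modes, one may assume the zero mode of $u\circ z_e$ vanishes. By Proposition~\ref{prop:stGrineq}, $I+B$ is invertible on $\ell^2(\Z_+)$, so set $\h=-(I+B)^{-1}\mathbf{u}$, $h_k=(\h)_k/\sqrt{k}$, and $H(\theta)=\sum_{k\geq1}h_k\sin(k\theta)$. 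Running the derivation of the deformed Pommerenke equation in reverse, $(I+B)\h=-\mathbf{u}$ is equivalent to
\[
u(z_e(\cos\omega))=\frac{1}{\pi}\Re\,\mathrm{p.v.}\int_0^\pi \frac{H(\theta)\,z_e'(\cos\theta)\sin\theta}{z_e(\cos\theta)-z_e(\cos\omega)}\,d\theta,
\]
which, after the change of variables $t=\cos\theta$ and setting $v(z_e(t))=H(\arccos t)$, is the relation $u(z_0)=\tfrac{1}{\pi}\Re\,\mathrm{p.v.}\!\int_\gamma v(\zeta)(\zeta-z_0)^{-1}\,d\zeta$ for $z_0\in\gamma$.

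Next I would introduce the Cauchy transform $F(z)=\tfrac{1}{\pi}\int_\gamma v(\zeta)(\zeta-z)^{-1}\,d\zeta$ for $z\in\C\smallsetminus\gamma$. By Plemelj--Sokhotsky, $F_+-F_-=2iv$ and $(F_++F_-)/2$ equals the principal-value integral, so taking real parts and using the equation above, $\Re F_+=\Re F_-=u$ on $\gamma$; hence $\Re F$ is the bounded harmonic extension of $u$. Since $F$ is holomorphic with $F(z)=O(1/z)$ at infinity, and $\partial_{\bar z}(\bar F F')=|F'|^2$, Stokes' theorem in complex form gives
\[
\mathcal{D}_{\C\smallsetminus\gamma}(u)=\frac{1}{\pi}\int_{\C\smallsetminus\gamma}|F'|^2\,dA=\frac{1}{2\pi i}\int_\gamma\bigl(\bar F_- F'_--\bar F_+ F'_+\bigr)dz.
\]
Differentiating the jump tangentially yields $(F'_+-F'_-)dz=2i\,dv$; substituting this together with $F_-=F_+-2iv$, integrating by parts (using $V(\pm1)=0$, which follows from the sine expansion of $H$), and using $\bar F_++F_-=2u-2iv$, the boundary integral collapses to $-(2/\pi)\int_{-1}^1 u(z_e(t))V'(t)\,dt$. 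Finally, expanding $u\circ z_e(\cos\omega)=\sum_{k\geq1}u_k\cos(k\omega)$ and $V'(\cos\omega)=-H'(\omega)/\sin\omega$ with $H'(\omega)=\sum_{k\geq1}kh_k\cos(k\omega)$, the orthogonality of $\{\cos(k\omega)\}$ on $[0,\pi]$ gives $(\pi/2)\sum_{k\geq1}ku_kh_k=(\pi/2)\mathbf{u}^t\h$. Combining, $\mathcal{D}_{\C\smallsetminus\gamma}(u)=-\mathbf{u}^t\h=\mathbf{u}^t(I+B)^{-1}\mathbf{u}$, as claimed.

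The main obstacle is the analytic justification of the Plemelj identity and the Stokes integration near the endpoints $\pm1$ of $\gamma$, where the Jacobian $\sin\theta$ degenerates and the densities can be singular \emph{a priori}. The $C^{5+\alpha}$ regularity of $\gamma$ via Lemma~\ref{lem:eqpar} and Lemma~\ref{lem:Grunskydecay}, together with the $C^{1+\alpha}$ regularity of $u$, should yield sufficient decay of the $h_k$ (so that $H$ is at least $C^{1+\delta}$ with $H(0)=H(\pi)=0$) to justify the principal-value and integration-by-parts manipulations and to ensure that the boundary terms at $\pm1$ vanish.
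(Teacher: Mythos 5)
Your route is sound, and in substance it is a holomorphic repackaging of the paper's own argument: the paper also solves the Pommerenke-type equation $(I+B)\h=-\mathbf{u}$, realizes the harmonic extension of $u$ as a single-layer potential $2\beta\,\mathcal{S}(\partial h/\partial\tau)$, and computes the Dirichlet integral by Green's theorem via the jump of the normal derivatives; you instead take the Cauchy transform $F$ of the density $v$, use Sokhotski--Plemelj to get $\Re F_\pm=u$, and evaluate $\frac1\pi\int|F'|^2$ by complex Stokes. The real part of your $F$ coincides with the paper's single-layer extension (an integration by parts along $\gamma$ using $v(\pm1)=0$ converts one into the other), so the difference is presentational; what your version buys is a cleaner identification of the extension and of the boundary pairing, while the endpoint and regularity caveats you flag are exactly what the paper controls through the coefficient decay of Lemma~\ref{lem:Grunskydecay} (cf.\ Lemma~\ref{lem:Hreg}), and they are indeed available here since $h_k=O(k^{-1-\alpha})$ makes $v$ H\"older with H\"older-vanishing at $\pm1$, so the Cauchy integral is continuous up to the arc including the endpoints.

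One concrete item to repair: your sign bookkeeping is internally inconsistent, with two slips that cancel. Under the convention in which $F_+-F_-=2iv$ (``$+$'' the left side of $\gamma$ oriented from $-1$ to $1$), the slit is a hole of $\C\smallsetminus\gamma$, so its boundary is traversed with the $+$ side forward and the $-$ side backward; the Stokes identity is $\mathcal{D}_{\C\smallsetminus\gamma}(u)=\frac{1}{2\pi i}\int_\gamma(\bar F_+F_+'-\bar F_-F_-')\,dz$, the opposite of what you wrote, and the boundary integral then collapses to $+\frac2\pi\int_{-1}^1 u(z_e(t))V'(t)\,dt$. Moreover, since $V(\cos\omega)=H(\omega)$, one has $\int_{-1}^1 u(z_e(t))V'(t)\,dt=-\int_0^\pi U(\omega)H'(\omega)\,d\omega=-\frac{\pi}{2}\,\mathbf{u}^t\h$, not $+\frac{\pi}{2}\,\mathbf{u}^t\h$; the two corrections together give $\mathcal{D}=-\mathbf{u}^t\h=\mathbf{u}^t(I+B)^{-1}\mathbf{u}$ as desired. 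A quick test with $\gamma=[-1,1]$, $u=T_1$, where $v(t)=-\sqrt{1-t^2}$ and $F(z)=z-\sqrt{z^2-1}$, shows your displayed orientation yields $-1$ while the true Dirichlet energy is $1$. With the orientation fixed (and the endpoint estimates supplied as above), your proof goes through.
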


\begin{proof}
The proof is analogous to that of Proposition 1.2 in \cite{CouJoh}. See also Section~2 of \cite{JohVik}. Consider the integral equation \eqref{Intekv2:prel} written using Chebyshev coefficients. Using the strengthened arc-Grunsky inequality Proposition~\ref{prop:stGrineq} we see that a unique solution exists: \[
(I+B)^{-1}{\mathbf u}=-\beta \h.
\]
Write $U(\theta)=u(z_e(\cos\theta))$ and $H(\theta)=H_1(\theta)=h(z_e(\cos\theta))$, where $h$ is the solution to \eqref{Intekv2:prel}. Recall the definitions \eqref{xtheta:prel}.
Then we can write $U(\theta)=\x_\theta^t{\mathbf u}$,
$H(\theta)=\y_\theta^t\h$.
If we let $J=(\frac 1k\delta_{jk})$, then
\[
\frac 2{\pi}\int_0^\pi\x_\theta\x_\theta^t\,d\theta=J
\]
and we see that
\begin{align*}
{\mathbf u}^t(I+B)^{-1}{\mathbf u}&=-\beta{\mathbf u}^t\h\\
& =-\beta{\mathbf u}^tJJ^{-1}\h\\
& =-\frac{2\beta}\pi\int_0^\pi {\mathbf u}^t\x_\theta\x_\theta^tJ^{-1}\h\,d\theta\\
&=-\frac{2\beta}\pi\int_0^\pi U(\theta)H'(\theta)\,d\theta.
\end{align*}
Next recall the definition  of the single-layer potential of the function $f$ on $\gamma$,
\[
\mathcal{S}(f)(z)=\frac 1{2\pi}\int_\gamma\log|\zeta-z|^{-1}f(\zeta)\,|d\zeta|.
\]
If the curve $\gamma$ is directed from $a$ to $b$ with complex tangent $\tau(z)$ at $z\in\gamma$, then $\nu_+(z)=\i\tau(z)$ and $\nu_-(z)=-\i\tau(z)$ are the two normal directions to $\gamma$ at $z$.
We then have the formula
\begin{equation}\label{fS}
f(z)=-\Big(\frac{\partial \mathcal{S}(f)}{\partial\nu_+}(z)+\frac{\partial \mathcal{S}(f)}{\partial\nu_-}(z)\Big)
\end{equation}
for $z\in\gamma$. As in the proof of Proposition 1.2 in \cite{CouJoh} it follows from \eqref{Intekv2:prel}, and an integration by parts argument using $h(\pm 1)=0$, that
\[
u(z)=2\beta \mathcal{S}(\frac{\partial h}{\partial \tau})(z),
\]
for $z\in\gamma$, and since the single-layer potential is continuous across $\gamma$ and harmonic in $\C\smallsetminus\gamma$, this gives a harmonic extension $u_e$ of $u$ to $\C\smallsetminus\gamma$.
It follows from \eqref{fS} that
\[
\frac{\partial u}{\partial\nu_+}(z)+\frac{\partial u}{\partial\nu_-}(z)=-2\beta\frac{\partial h}{\partial \tau}(z),
\]
for $z\in\gamma$. We see that $\zeta(\theta)=z_e(\cos\theta)$, $0\le\theta\le\pi$, parametrizes $-\gamma$, and consequently
\[
H'(\theta)=\frac{d}{d\theta}h(\zeta(\theta))=-|\zeta'(\theta)|\frac{\partial h}{\partial \tau}(\zeta(\theta)).
\]
Thus,
\begin{align*}
{\mathbf u}^t(I+B)^{-1}{\mathbf u}&=-\frac{2\beta}\pi\int_0^\pi U(\theta)H'(\theta)\,d\theta\\
& =\frac{2\beta}\pi\int_0^\pi u(\zeta(\theta))\frac{\partial h}{\partial \tau}(\zeta(\theta))|\zeta'(\theta)|\,d\theta\\
&=\frac{2\beta}\pi\int_\gamma u(\zeta)\frac{\partial h}{\partial \tau}(\zeta)\,|d\zeta|\\
& =-\frac 1{\pi}\int_\gamma u(\zeta)\Big(\frac{\partial u}{\partial\nu_+}(\zeta)+\frac{\partial u}{\partial\nu_-}(\zeta)\Big)\,|d\zeta|
\\
&=\frac 1{\pi}\int_\C|\nabla u_e|^2\,dxdy,
\end{align*}
where the last equality follows from Green's theorem applied to the domain $\C\smallsetminus\gamma$.\end{proof}

\section{Variational formulas and proof of Theorem~\ref{thm:Grunsky-Loewner2-intro}}\label{sec:var}

In this section we prove the variational formulas which will allow us to connect the Fredholm determinant involving the arc-Grunsky operator, see Proposition~\ref{lem:trace-class}, with the Loewner energy of the opened Jordan curve, see Section~\ref{sect:Loewner-energies}.

Recall that
\[ \psi: \C\smallsetminus \gamma \to \D^*, \quad h: D\to \D^* \]
are the unique conformal maps from the exterior of $\gamma$ and $\eta$ onto the exterior of the unit disc, which map $\infty$ to $\infty$ and have positive derivative at $\infty$. Set \[\varphi = \psi^{-1}, \quad g = h^{-1}.\] Recall the definition of $q_\pm$ from Section~\ref{sect:Loewner-energies}.

The function $\psi$ has limit $\psi_+$ and $\psi_-$ as we approach $\gamma$ from the left and right sides and 
 \[\psi_+ = h\circ q_+, \quad \psi_- = h \circ q_-.\]

\subsection{Variation of the Dirichlet integrals}
Let $(\eta_\alpha)_{\alpha \in [0,1]}$ be a family of analytic Jordan curves as in Section~\ref{sect:interpolation-of-curves}. We do not assume $\eta$ to necessarily be symmetric with respect to reflection in $\mathbb{T}$. We stress that we here restrict attention to real $\alpha$ and $\partial_\alpha$ means partial derivative with respect to $\alpha$. See \cites{WieZab, TakTeo} for related computations.

We often omit the $\alpha$-subscript when the meaning is clear from context. 

\begin{lemma}\label{lem:dirichlet-integral}
Suppose $f=u+iv$ is analytic in a neighborhood of $\mathbb{D}$. Then 
\begin{align}\label{eq:dirichlet-energy-interior}
 \mathcal{D}_\mathbb{D}(u)=\frac{1}{\pi}\int_0^{2\pi} u(e^{i\theta})\partial_\theta v(e^{i\theta}) d\theta.
\end{align}
Suppose $f=u+iv$ is analytic in a neighborhood of $\mathbb{D}^*$. Then 
\begin{align}\label{eq:dirichlet-energy-exterior}
 \mathcal{D}_\mathbb{D^*}(u)=-\frac{1}{\pi}\int_0^{2\pi} u(e^{i\theta})\partial_\theta v(e^{i\theta}) d\theta.
\end{align}
\begin{proof}
The formulas follow by performing an integration by parts and applying the Cauchy-Riemann equations.
\end{proof}

\end{lemma}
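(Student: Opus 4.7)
The plan is to derive both identities from a single ingredient: since $u = \operatorname{Re} f$ is harmonic, $|\nabla u|^2 = \nabla \cdot (u \nabla u)$, so the divergence theorem expresses the Dirichlet integral as a boundary flux, which the Cauchy--Riemann equations then convert into a tangential derivative of the harmonic conjugate $v$.

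For the interior statement, I would apply the divergence theorem on $\mathbb{D}$ to obtain
\begin{equation*}
\int_\mathbb{D} |\nabla u|^2\, dxdy = \int_0^{2\pi} u(e^{i\theta})\, \partial_r u(e^{i\theta})\, d\theta,
\end{equation*}
where $\partial_r$ denotes the outward (radial) normal derivative on $\mathbb{T}$. The Cauchy--Riemann equations in polar form give $r \partial_r u = \partial_\theta v$, so at $r=1$ we have $\partial_r u = \partial_\theta v$, and dividing by $\pi$ produces \eqref{eq:dirichlet-energy-interior}.

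For the exterior statement, the outward normal to $\mathbb{D}^*$ on $\mathbb{T}$ is the inward radial direction, so the boundary term on $\mathbb{T}$ contributes $-\int_0^{2\pi} u\, \partial_r u\, d\theta$ after applying the divergence theorem to the annulus $\{1<|z|<R\}$. Since $f$ is analytic in a neighborhood of $\mathbb{D}^*$ (including $\infty$), near infinity $f(z) = c_0 + O(1/z)$, and hence $u$ is bounded while $\partial_r u = O(1/R^2)$; the boundary term on $\{|z|=R\}$ is therefore $O(1/R)$ and vanishes as $R \to \infty$. Applying $\partial_r u = \partial_\theta v$ at $r=1$ as before yields \eqref{eq:dirichlet-energy-exterior}.

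This is a routine computation with no real obstacle; the only point requiring a brief justification is the vanishing of the boundary contribution at infinity in the exterior case, which as indicated follows immediately from the Laurent expansion of $f$ at $\infty$.
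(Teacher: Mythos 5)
Your argument is correct and is essentially the paper's proof: the "integration by parts" the authors invoke is exactly your application of the divergence theorem to $|\nabla u|^2=\nabla\cdot(u\nabla u)$ for harmonic $u$, followed by the polar Cauchy--Riemann relation $\partial_r u=\partial_\theta v$ on $\mathbb{T}$, with the outer boundary term in the exterior case killed by the Laurent expansion of $f$ at $\infty$ just as you say. Nothing further is needed.
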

The following lemma is proved by direct computation.
\begin{lemma}\label{lem:curvature}
Let $\theta \mapsto \eta(\theta)$ be a $C^3$ Jordan curve. 
If $\kappa(z)$ denotes the curvature of $\gamma$ at $z$, then
\[
\Im \frac{\gamma''(\theta)}{\gamma'(\theta)} = |\gamma'(\theta)|\kappa(\gamma(\theta)).
\]
In particular, if $\gamma(\theta) = f(e^{i\theta})$ where $f: \mathbb{D} \to D$ is a conformal map, then
\[
\Re e^{i\theta}\frac{f''(e^{i\theta})}{f'(e^{i\theta})} + 1 = |f'(e^{i\theta})| \kappa(f(e^{i\theta})). 
\]
Moreover,
\begin{equation}\label{schwarzian-curvature}
-\Im(e^{2 i \theta} Sf(e^{i\theta})) = |f'(e^{i\theta})|^2 \frac{d}{ds}\kappa(s),
\end{equation}
where $Sf$ is the Schwarzian derivative. 
\end{lemma}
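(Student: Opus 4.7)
The lemma consists of three essentially computational identities, which I would establish in the order they are stated.

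For the first identity, I would invoke the standard formula for the signed curvature of a $C^2$ parametrized planar curve, $\kappa(\gamma(\theta)) = |\gamma'(\theta)|^{-3}\Im(\overline{\gamma'(\theta)}\gamma''(\theta))$, and rewrite
\[
\Im\frac{\gamma''(\theta)}{\gamma'(\theta)} = \frac{\Im(\gamma''(\theta)\overline{\gamma'(\theta)})}{|\gamma'(\theta)|^2} = |\gamma'(\theta)|\kappa(\gamma(\theta)),
\]
which is the claim.

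For the second identity, I would apply the chain rule to $\gamma(\theta) = f(e^{i\theta})$ to obtain $\gamma'(\theta) = ie^{i\theta}f'(e^{i\theta})$ and $\gamma''(\theta) = -e^{2i\theta}f''(e^{i\theta}) - e^{i\theta}f'(e^{i\theta})$. A short division yields $\gamma''/\gamma' = ie^{i\theta}(f''/f') + i$, whose imaginary part is $\Re(e^{i\theta}f''/f') + 1$, and since $|\gamma'| = |f'|$ the first part of the lemma gives exactly the claimed formula.

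For the Schwarzian identity, I would differentiate the Step~2 identity in $\theta$. Introducing the shorthand $Q(\theta) := e^{i\theta}f''(e^{i\theta})/f'(e^{i\theta})$, the algebraic fact $(f''/f')'(z) = Sf(z) + \tfrac{1}{2}(f''/f')^2(z)$ gives $Q'(\theta) = iQ + ie^{2i\theta}Sf(e^{i\theta}) + \tfrac{i}{2}Q^2$, while $|f'|^2 = f'\overline{f'}$ produces $\partial_\theta|f'| = -|f'|\Im Q$. Substituting these together with $\partial_\theta\kappa = |f'|\,d\kappa/ds$ into the derivative of both sides of $\Re Q + 1 = |f'|\kappa$, and then using Step~2 once more to replace $|f'|\kappa$ on the right-hand side by $\Re Q + 1$, all $\Im Q$-contributions cancel and the remaining identity is precisely $-\Im(e^{2i\theta}Sf) = |f'|^2 \, d\kappa/ds$.

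The entire argument is routine and presents no real obstacle; the only point worth double-checking is the bookkeeping in the last step, where one must use $\Im(Q^2) = 2\Re Q\,\Im Q$ to see that the two sides actually balance after the substitution $|f'|\kappa = \Re Q + 1$ is made. A small sign slip there is the only way the computation can fail.
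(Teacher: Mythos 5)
Your computation is correct, and it is precisely the ``direct computation'' that the paper invokes without writing out: the signed-curvature formula $\kappa=\Im(\overline{\gamma'}\gamma'')/|\gamma'|^3$ for the first identity, the chain rule giving $\gamma''/\gamma'=i e^{i\theta}f''/f'+i$ for the second, and differentiation of $\Re Q+1=|f'|\kappa$ in $\theta$ together with $(f''/f')'=Sf+\tfrac12(f''/f')^2$, $\partial_\theta|f'|=-|f'|\Im Q$, $\partial_\theta\kappa=|f'|\,d\kappa/ds$ and $\Im(Q^2)=2\Re Q\,\Im Q$ for the Schwarzian identity, after which the $\Im Q$ terms cancel exactly as you say. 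No gaps; this matches the paper's (implicit) proof.
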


Recall that we write $f_\alpha:\mathbb{D} \to D_\alpha$, $g_\alpha : \D^* \to D_\alpha^*$ for the interior and exterior map, respectively. Define
\begin{align}\label{def:rho}
\rho_i(z) = \rho_{\alpha, i}(z) =\frac{1}{2\pi}\Re \frac{\partial_\alpha f_\alpha(z)}{zf_\alpha'(z)}, \qquad  \rho_e(z) = \rho_{\alpha, e}(z)  = \frac{1}{2\pi}\Re \frac{\partial_\alpha g_\alpha(z)}{zg_\alpha'(z)}.
\end{align}
The $1/2\pi$-factor is included to match the usual notation for the Loewner-Kufarev equation. Geometrically, $2\pi|f'(z)|\rho(z)$ gives the normal component (relative to the curve) of $\partial_\alpha f(z)$ (at a fixed $z$) and similarly for $g$. In general, we do not have $\rho > 0$ which means the deformation is not necessarily monotone as in the setting of Loewner-Kufarev evolution. 
\begin{lemma}\label{lem:conformal-radii}
    We have
    \[
    \frac{\d}{\d \alpha} \log|f_\alpha'(0)| = \int_{\mathbb{T}}\rho_i(z)|dz|, \qquad \frac{\d}{\d \alpha} \log|g_\alpha'(\infty)| = \int_{\mathbb{T}}\rho_e(z)|dz|.
    \]
\end{lemma}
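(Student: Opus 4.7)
The plan is to exhibit, in each case, a holomorphic function whose value at the distinguished base point equals the left-hand side and whose real part on $\mathbb{T}$ equals $2\pi\rho_i$ (respectively, $2\pi\rho_e$). The mean value property for holomorphic functions then immediately closes the argument.

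For the interior case, set
\[
F(z) := \frac{\partial_\alpha f_\alpha(z)}{z\, f'_\alpha(z)}, \qquad z \in \mathbb{D}\setminus\{0\}.
\]
Since $f_\alpha(0) = 0$ for every $\alpha$ in the deformation, $\partial_\alpha f_\alpha(0) = 0$, so $\partial_\alpha f_\alpha(z)/z$ is holomorphic at the origin; combined with the non-vanishing of $f'_\alpha$ on $\overline{\mathbb{D}}$, this extends $F$ to a holomorphic function on a neighborhood of $\overline{\mathbb{D}}$. Taylor expansion at $0$ gives
\[
F(0) = \frac{\partial_\alpha f'_\alpha(0)}{f'_\alpha(0)} = \partial_\alpha \log f'_\alpha(0),
\]
which is real because $f'_\alpha(0) > 0$ for all $\alpha$. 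The mean value property then yields $F(0) = (2\pi)^{-1}\int_0^{2\pi} F(e^{i\theta})\, d\theta$; taking real parts of both sides and comparing with the definition \eqref{def:rho} of $\rho_i$ gives the interior identity.

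The exterior case is entirely analogous with $G(z) := \partial_\alpha g_\alpha(z)/(z\, g'_\alpha(z))$ on $\mathbb{D}^*$. From the Laurent expansions $g_\alpha(z) = g'_\alpha(\infty) z + a_0 + O(1/z)$ and $g'_\alpha(z) = g'_\alpha(\infty) + O(1/z^2)$ near $\infty$, one verifies that $G$ extends holomorphically across $\infty$ with
\[
G(\infty) = \frac{\partial_\alpha g'_\alpha(\infty)}{g'_\alpha(\infty)} = \partial_\alpha \log g'_\alpha(\infty),
\]
again real. The mean value property in the form $G(\infty) = (2\pi)^{-1}\int_0^{2\pi} G(e^{i\theta})\, d\theta$, read off from the Laurent expansion at $\infty$, then yields the exterior identity.

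The only technical point, rather than a genuine obstacle, is justifying the continuous extension of $F$ and $G$ to $\mathbb{T}$ together with the interchange of $\partial_\alpha$ and the boundary integral; both are supplied by the joint real-analyticity of $(a,b,z) \mapsto f_\alpha(z)$ and $g_\alpha(z)$ in a neighborhood of $\mathbb{T}$ established in the preceding lemma. Once the ``right'' holomorphic functions $F$ and $G$ have been identified, no substantive difficulty remains.
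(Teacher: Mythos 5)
Your proof is correct and is essentially the paper's argument: both evaluate the circle average of the holomorphic function $\partial_\alpha f_\alpha/(z f_\alpha')$ (resp.\ $\partial_\alpha g_\alpha/(z g_\alpha')$) by holomorphy, using the normalizations $f_\alpha(0)=0$, $f_\alpha'(0)>0$ (resp.\ the Laurent expansion at $\infty$). The paper phrases this as an integration by parts followed by a residue computation at $0$, while you invoke the mean value property directly; the content is the same.
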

\begin{proof}
Using integration by parts,
\begin{align*}
\int_{\mathbb{T}}\rho_i(z)|dz| & = \frac{1}{2\pi}\Re \int_{\mathbb{T}} \frac{\partial_\alpha f(z)}{zf'(z) }  \frac{dz}{iz} \\
& = \frac{1}{2\pi}\Re \int_{\mathbb{T}} \partial_z \left(\frac{\partial_\alpha f(z)}{f''(z) } \right)  \frac{dz}{iz} \\
& =  \frac{1}{2\pi}\Re \int_{\mathbb{T}} \left(\frac{\partial_\alpha f'(z)}{f'(z) }  - \frac{\partial_\alpha f(z)f''(z)}{f'(z)^2 }\right)  \frac{dz}{iz} \\
& =  \frac{\d}{\d \alpha} \log|f'(0)|,
\end{align*}
where the last step follows from a residue computation using the normalization of $f$ at $0$. The proof for $\log|g'(\infty)|$ is similar.
\end{proof}
\begin{prop}\label{prop:var-dirichlet}
Let $(\eta_\alpha)_{\alpha \in [0,1]}$ be a family of analytic Jordan curves as in Section~\ref{sect:interpolation-of-curves}. 
Then for all $\alpha \in (0,1)$,
\[
\frac{\d}{\d \alpha} \mathcal{D}_\mathbb{D}(\log|f_\alpha'|) = 4\int_{\mathbb{T}}\left( \Re z^2 S f_\alpha(z) + \left(|f_\alpha'(z)|\kappa(f_\alpha(z))\right)^2 -1\right) \rho_{\alpha,i}(z)|dz|
\]
and
\[
\frac{\d}{\d \alpha} \mathcal{D}(\log|g_\alpha'|) = -4\int_{\mathbb{T}}\left( \Re z^2 S g_\alpha(z) + \left(|g_\alpha'(z)|\kappa(g_\alpha(z))\right)^2 -1\right) \rho_{\alpha,e}(z)|dz|,
\]
where $S$ denotes the Schwarzian derivative and $\kappa$ curvature and $\rho_{\alpha, i}, \rho_{\alpha,e}$ are as in \eqref{def:rho}. 
\end{prop}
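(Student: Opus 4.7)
The plan is to prove the formula for $f_\alpha$; the $g_\alpha$ case will follow along the same lines, with an overall minus sign coming from Lemma~\ref{lem:dirichlet-integral}. Write $\phi_\alpha = \log f_\alpha' = u_\alpha + iv_\alpha$, analytic in a neighborhood of $\overline{\mathbb{D}}$ since $\eta_\alpha$ is analytic, so that $u_\alpha = \log|f_\alpha'|$ and $v_\alpha = \arg f_\alpha'$ are single-valued and periodic on $\mathbb{T}$. First I would apply Lemma~\ref{lem:dirichlet-integral} and differentiate in $\alpha$, integrating by parts using periodicity. Setting $T_\alpha(z) = zf_\alpha''(z)/f_\alpha'(z)$ and $\Phi_\alpha(z) = \partial_\alpha f_\alpha'(z)/f_\alpha'(z)$, one has $\partial_\theta \phi_\alpha = iT_\alpha$ on $\mathbb{T}$ and $\partial_\alpha \phi_\alpha = \Phi_\alpha$. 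Since $T_\alpha(0) = 0$, the mean value property gives $\int_\mathbb{T}\Phi_\alpha T_\alpha\,d\theta = 0$, which symmetrizes the integrand and yields
\[
\frac{d}{d\alpha}\mathcal{D}_{\mathbb{D}}(u_\alpha) = \frac{2}{\pi}\int_\mathbb{T}\Re\Phi_\alpha\cdot\Re T_\alpha\,|dz|.
\]

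Next, I would introduce $\sigma_\alpha(z) = \partial_\alpha f_\alpha(z)/(zf_\alpha'(z))$, which is holomorphic in a neighborhood of $\overline{\mathbb{D}}$ (using $f_\alpha(0) = 0$), with $\sigma_\alpha(0) = \partial_\alpha f_\alpha'(0)/f_\alpha'(0) \in \mathbb{R}$ (since $f_\alpha'(0) > 0$) and $\Re\sigma_\alpha|_\mathbb{T} = 2\pi\rho_{\alpha,i}$ by \eqref{def:rho}. A direct calculation gives $\Phi_\alpha = z\sigma_\alpha' + \sigma_\alpha(1 + T_\alpha)$. Writing $A = \Re\sigma_\alpha$ and $B = \Im\sigma_\alpha$ on $\mathbb{T}$, using $iz\sigma_\alpha' = \partial_\theta\sigma_\alpha$ together with the curvature identity $\Re T = |f'|\kappa - 1$ from Lemma~\ref{lem:curvature}, I would substitute and integrate $\int B'\Re T\,d\theta$ by parts. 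Combining with the identity $\partial_\theta T = iT + iz^2 Sf + \tfrac{i}{2}T^2$ on $\mathbb{T}$ (which follows from $(f''/f')' = Sf + \tfrac{1}{2}(f''/f')^2$) and the elementary identity $\Im T\cdot \Re T = \tfrac{1}{2}\Im T^2$, the $T^2$ terms cancel and one is left with
\[
\int_\mathbb{T}\Re\Phi_\alpha \Re T_\alpha\,|dz| = \int_0^{2\pi} B\,\Im G\,d\theta + \int_0^{2\pi} A\bigl(\Re T + (\Re T)^2\bigr)d\theta,
\]
where $G = T_\alpha + z^2 Sf_\alpha$.

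The decisive step is the identity $\int_0^{2\pi} B\,\Im G\,d\theta = \int_0^{2\pi} A\,\Re G\,d\theta$, valid whenever $G$ is holomorphic in a neighborhood of $\overline{\mathbb{D}}$ with $G(0) = 0$ and $\sigma_\alpha(0) \in \mathbb{R}$. This follows quickly from the mean value property applied to $\sigma_\alpha G$ and a Parseval-type computation for $\sigma_\alpha\overline{G}$ on $\mathbb{T}$ using Taylor coefficients. In our case $G(0) = T_\alpha(0) = 0$, so the identity applies. Combined with the algebraic fact $2\Re T + (\Re T)^2 = (1 + \Re T)^2 - 1 = (|f_\alpha'|\kappa)^2 - 1$ and $A = 2\pi\rho_{\alpha,i}$, this delivers the formula stated in the proposition. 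For $g_\alpha$ the argument is identical: $\sigma_\alpha^{e}(z) = \partial_\alpha g_\alpha(z)/(zg_\alpha'(z))$ is holomorphic in a neighborhood of $\overline{\mathbb{D}^*}$ (including $\infty$, by the normalization $g_\alpha'(\infty) > 0$, which makes $\sigma_\alpha^{e}(\infty)$ real), and the overall minus sign comes from the exterior version of Lemma~\ref{lem:dirichlet-integral}. I expect the main obstacle to be keeping the algebra organized: the $T^2$ cancellation, the $\partial_\theta T$ formula involving the Schwarzian, and the vanishing $G(0)=0$ all need to conspire simultaneously to produce the clean final form.
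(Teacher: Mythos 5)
Your argument is correct and is essentially the paper's proof: the same boundary formula from Lemma~\ref{lem:dirichlet-integral}, the same vector field $\sigma_\alpha=H_i=\partial_\alpha f_\alpha/(zf_\alpha')$, the same Schwarzian and curvature identities from Lemma~\ref{lem:curvature}, and the same mean-value-at-the-origin trick that the paper implements as a residue computation (there exploiting $H(0)G(0)=0$ with $G=(z^2U')'$). The only difference is bookkeeping: you symmetrize first via $\int_{\mathbb{T}}\Phi_\alpha T_\alpha\,d\theta=0$ and cancel the $T^2$ terms with a second application of the same trick to $\sigma_\alpha G$, whereas the paper keeps the term $I_1=\int_{\mathbb{T}}|U'|^2\Re H\,|dz|$ and completes the square at the end.
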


\begin{proof}
Write $U(z) = U_\alpha(z) = \log f_\alpha'(z)$ and $U = u+iv$. We drop the $\alpha$-subscript from now on. Let
\begin{align}\label{eq:H}
H_i(z) =  \frac{\partial_\alpha f(z)}{zf'(z)}.
\end{align}
Then $H_i$ is analytic in $\mathbb{D}$ and $2\pi\rho_i = \Re H_i$. 
We drop the $i$-index from now on. 
By \eqref{eq:dirichlet-energy-interior},
\[
\frac{\d}{\d \alpha} \mathcal{D}(\log|f'|)  = \frac{1}{\pi}\frac{\d}{\d \alpha}\int_0^{2\pi}u(e^{i\theta}) \partial_\theta v(e^{i\theta}) d\theta.
\]
Set 
\[
I:= \frac{\d}{\d \alpha} \int_0^{2\pi}u(e^{i\theta}) \partial_\theta v(e^{i\theta}) d\theta.
\]
It follows from \eqref{eq:H} that
\[
\partial_\alpha U = \frac{\partial_\alpha f'}{f'} = \frac{\partial_z(z f' H)}{f'} = zHU'+(zH)'.
\]
Hence,
\begin{align*}
I & =  - \Im \int_0^{2\pi} \partial_\alpha U(e^{i\theta})\partial_\theta \overline{U(e^{i\theta})} d\theta \\
& = \Im \int_{\mathbb{T}}U' \overline{\left(zHU'+(zH)' \right)} dz =: I_1+I_2,
\end{align*}
where
\[
I_1 = \Im \int_{\mathbb{T}} |U'|^2\overline{zH} dz. 
\]
Next, we compute
\begin{align*}
I_2 & = \Im \int_{\mathbb{T}} U' \, \overline{(zH)'}  dz\\
& = -\Im \int_0^{2\pi}   e^{2i\theta} U' \, \partial_\theta \overline{ (zH)}   d\theta \\
& = \Im \int_{\mathbb{T}}   (z^2U')'  \, \overline{zH}  dz \\
& = \Re \int_{\mathbb{T}} (z^2U')' \, \overline{H}  |dz|
\end{align*}
where we used integration by parts for the third step.
Note that $H(z)$ and $G(z):=(z^2U'(z))'$ are both analytic in $\mathbb{D}$ and $H(0)G(0)=0$. Therefore, a residue computation shows that
\[
I_2 = 2 \int_{\mathbb{T}} \Re (z^2U')' \, \Re H  |dz|.
\]
We can write 
\[
z^2U'' = z^2Sf(z) + \frac{1}{2}(zU')^2
\]
so
\[
(z^2U')' =z^2U'' +  2zU'  = z^2S f(z) + \frac{1}{2}(zU')^2 +  2zU'.
\]
It follows that
\[
2\Re (z^2U')' = \Re(2z^2Sf(z) + (zU')^2 +  4zU').
\]
We now consider $I= I_1+I_2$. Since $|U'|^2=|zU'|^2$, completing the square,
\begin{align*}
2\Re (z^2U')' + |U'|^2 &= \Re(2z^2Sf(z)) + 2(\Re zU')^2+ 4 \Re zU' \\
& = \Re(2z^2Sf(z)) + 2(\Re zU' + 1)^2-2.
\end{align*}
By Lemma~\ref{lem:curvature},
\[
\Re zU'+1 = |f'(z)| \kappa(f(z)),
\]
so using also that $\Re H = 2\pi \rho$ we get
\[
I = 4\pi \int_{\mathbb{T}} \left( \Re(z^2Sf(z)) + \left(|f'(z)| \kappa(f(z)) \right)^2 - 1 \right)\rho(z) |dz|,
\]
as claimed. The computation for $\mathcal{D}_{\mathbb{D}^*}(\log|g'|)$ is similar, starting from \eqref{eq:dirichlet-energy-exterior}.
\end{proof}
\subsection{Variation of the arc-Grunsky determinant and proof of Theorem~\ref{thm:Grunsky-Loewner2-intro}}
We now turn to computing the variation of the arc-Grunsky Fredholm determinant. For the classical Grunsky operator related computations go back at least to work of Schiffer, see, e.g., \cite{Schiffer-Hawley}. More recently, variations in the setting of Weil-Petersson quasicircles were obtained in \cite{TakTeo}. In contrast to those works, we do not have access to representations in terms of Fredholm eigenvalues or to in terms of an operator acting on a space of analytic functions. We instead follow a different route based on Douglas' formula. 

In this section we will continue to omit the $\alpha$-subscript when it is clear from the context, and write $B=B_\alpha$, $\gamma = \gamma_\alpha$, and $h= h_\alpha$ etc. Recall that for a given Jordan arc $\gamma$ between $-1$ and $1$, $\eta$ denotes the  opened Jordan curve with complementary domains $ D,  D^*$ and $h = h_e:   D^* \to \mathbb{D}^*$ and $h_i :  D \to \mathbb{D}$ are the exterior and interior uniformizing maps, respectively.

\begin{prop}\label{prop:variation-of-Grunsky} Let $(\gamma_\alpha)_{\alpha \in [0,1]}$ be an analytic family of Jordan arcs between $-1$ and $1$ and let $(\eta_\alpha)_{\alpha \in [0,1]}$ be the corresponding family of opened Jordan curves as in Section~\ref{sect:interpolation-of-arcs}. Let $B_\alpha$ be the arc-Grunsky operator for $\gamma_\alpha$. Then for all $\alpha \in (0,1)$,
\begin{equation}
    \frac{\d}{\d\alpha} \log \det (I+B_\alpha) = \frac{1}{24}(I_e -I_i) + \frac{1}{4}  \frac{\d}{\d\alpha} \log|h'_\alpha(1)h'_\alpha(-1)|,
\end{equation}
where,
\[
I_i =  4\int_{\mathbb{T}} \left(\Re z^2 S f_\alpha(z) + \left(|f_\alpha'(z)|\kappa(f_\alpha(z))\right)^2 \right)\rho_{\alpha, i}(z)|dz|
\]
and
\[
I_e =  4\int_{\mathbb{T}} \left(\Re z^2 S g_\alpha(z) + \left(|g_\alpha'(z)|\kappa(g_\alpha(z))\right)^2 \right)\rho_{\alpha, e}(z)|dz|,
\]
where $S$ denotes the Schwarzian derivative and $\kappa$ curvature and $\rho_{\alpha, i}, \rho_{\alpha,e}$ are as in \eqref{def:rho}. 
\end{prop}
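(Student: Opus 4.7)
My plan.

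By Jacobi's formula, since $B_\alpha$ is trace class (Proposition~\ref{lem:trace-class}), $I+B_\alpha$ is invertible (Proposition~\ref{prop:stGrineq}), and $\alpha \mapsto B_\alpha$ has smooth matrix entries with uniform decay (the holomorphic motion of Section~\ref{sect:interpolation-of-arcs} combined with Lemma~\ref{lem:Grunskydecay}), we have
\[
\frac{d}{d\alpha}\log\det(I+B_\alpha) \;=\; \mathrm{tr}\bigl((I+B_\alpha)^{-1}\dot B_\alpha\bigr).
\]
Differentiating the arc-Grunsky expansion (Lemma~\ref{lem:arcGrunsky}) in $\alpha$ expresses $\dot a_{kl}$ as the Chebyshev coefficients of
\[
\Psi_\alpha(\theta,\phi) \;=\; \partial_\alpha\log\bigl|z_e^\alpha(\cos\theta)-z_e^\alpha(\cos\phi)\bigr| - \partial_\alpha\log\bigl(2\,\mathrm{cap}(\gamma_\alpha)\bigr).
\]
The boundary motion $\partial_\alpha z_e^\alpha$ is then rewritten in terms of $\partial_\alpha g_\alpha$ and $\partial_\alpha f_\alpha$ using the parametrization identity $z_e^\alpha(\cos\theta) = u(g_\alpha(e^{i\theta})) = u(f_\alpha(e^{-i\theta}))$, with $u(w)=(w+1/w)/2$ and the symmetry $j(\eta_\alpha)=\eta_\alpha$; this is how the densities $\rho_{\alpha,e}$ and $\rho_{\alpha,i}$ of the statement enter.

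To evaluate the trace, I would invoke Proposition~\ref{prop:Dir}, which identifies $(I+B)^{-1}$ in the Chebyshev basis with the Gram matrix of the Dirichlet pairing $\mathcal{D}_{\C\smallsetminus\gamma}(\cdot,\cdot)$ of bounded harmonic extensions. Substituting the explicit expression for $\dot B$ recasts $\mathrm{tr}((I+B)^{-1}\dot B)$ as a Dirichlet-type bilinear energy of the harmonic extension of $\Psi_\alpha$ across $\gamma_\alpha$. The conformal invariance of the Dirichlet energy, applied to the opening map $s_\alpha:\C\smallsetminus\gamma_\alpha \to D_\alpha^*$, transports this to $D_\alpha^*$; pulling back by $g_\alpha:\mathbb{D}^*\to D_\alpha^*$ (and using the $j$-symmetry to simultaneously bring in the interior map $f_\alpha:\mathbb{D}\to D_\alpha$) produces boundary integrals on $\mathbb{T}$. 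This is where Douglas' formula (advertised in the introduction) enters: it supplies the double-integral representation on $\mathbb{T}$ of the transplanted Dirichlet energies which matches the double-integral structure of the trace on the Chebyshev side.

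The remaining step is an integration by parts on $\mathbb{T}$, which via Lemma~\ref{lem:curvature} converts the resulting boundary integrals into integrals involving $\Re z^2 Sg_\alpha$, $\Re z^2 Sf_\alpha$, and the squared curvatures $(|g_\alpha'|\kappa)^2, (|f_\alpha'|\kappa)^2$ weighted by $\rho_{\alpha,e}$ and $\rho_{\alpha,i}$; the $-1$ contributions (parallel to those in Proposition~\ref{prop:var-dirichlet}) cancel between the interior and exterior contributions using Lemma~\ref{lem:conformal-radii}, producing exactly $I_e$ and $I_i$. I expect the main obstacle to be the clean identification of the endpoint terms $\tfrac14 \tfrac{d}{d\alpha}\log|h'_\alpha(1)h'_\alpha(-1)|$. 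In the closed-curve analogue (Takhtajan--Teo, Wang) no such correction appears, since $\eta$ has no boundary and the integration by parts yields no boundary contribution. Here, however, the endpoints $\pm 1$ of $\gamma$ are singular points of the equilibrium parametrization (density $\sim (1-t^2)^{-1/2}$) and coincide with the fixed points of $h_\alpha$ on $\mathbb{T}$, so the integration by parts produces boundary terms localized at $\pm 1$. Verifying that these reorganize precisely into $\tfrac14 \tfrac{d}{d\alpha}\log|h'_\alpha(1)h'_\alpha(-1)|$ requires a careful local analysis of $g_\alpha$ near its fixed points on $\mathbb{T}$ matched against the local behavior of $z_e^\alpha$ near $\pm 1$, and constitutes the principal technical difficulty.
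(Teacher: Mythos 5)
Your outline follows the paper's general strategy (Jacobi's formula, Proposition~\ref{prop:Dir} plus conformal invariance and Douglas' formula to turn $\mathrm{tr}\bigl((I+B)^{-1}\dot B\bigr)$ into boundary double integrals, then curvature/Schwarzian identities to reach $\rho_{\alpha,i},\rho_{\alpha,e}$), but it has two genuine gaps. First, the parametrization identity you rely on, $z_e^\alpha(\cos\theta)=u(g_\alpha(e^{i\theta}))=u(f_\alpha(e^{-i\theta}))$, is false in general: by \eqref{eq:te(z)} and Lemma~\ref{lemma1section9} the equilibrium parametrization is governed by $\lambda(w)=h(w)/h(1/w)$, not by $h$ alone, because the two sides $\gamma_\pm$ carry unequal harmonic measure (equality would force $j\circ g\circ j=g$, whereas in fact $j\circ g\circ j=f$). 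Consequently $\partial_\alpha z_e$ cannot simply be rewritten as $\partial_\alpha(u\circ g_\alpha)$ or $\partial_\alpha(u\circ f_\alpha)$; only the \emph{normal} components of the two boundary motions agree, which is exactly the content of Lemma~\ref{lem:normal-variation} and is the nontrivial step needed before the densities $\rho_{\alpha,i},\rho_{\alpha,e}$ of \eqref{def:rho} can legitimately appear. Using the full (false) identity would equate tangential components as well and produce incorrect variational densities.

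Second, and more seriously, the endpoint contribution $\tfrac14\frac{\d}{\d\alpha}\log|h_\alpha'(1)h_\alpha'(-1)|$ is precisely the new feature of this proposition, and your proposal does not derive it: you attribute it to boundary terms from an integration by parts near the singular equilibrium density at $\pm1$ and explicitly defer the verification. In the paper this term comes out of an exact residue computation, not an endpoint-singularity analysis: after moving the Douglas double integral to $\eta\times\eta$ one splits the kernel $h'(w_1)h'(w_2)/(h(w_1)-h(w_2))^2$ into the two-point Schwarzian plus $1/(w_1-w_2)^2$; the Schwarzian part collapses by contour deformation to $\Re\frac{1}{6\pi i}\int_\eta V\,Sh\,dw$, while the Cauchy-kernel part reduces, via the symmetry $V(1/w)=-w^2V(w)$ and $V(\pm1)=0$, to residues at the double poles $w=\pm1$, giving $\tfrac14\Re\bigl(V'(1)+V'(-1)\bigr)=\tfrac14\frac{\d}{\d\alpha}\log|h_\alpha'(1)h_\alpha'(-1)|$ (Proposition~\ref{prop:var}). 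Without this computation, or an equivalent one, your argument establishes at best the bulk terms $I_e-I_i$ and leaves the constant-order endpoint term unproven; also, the "$-1$" terms you propose to cancel via Lemma~\ref{lem:conformal-radii} do not actually arise in this proposition (they belong to the variation of the Loewner energy in Proposition~\ref{prop:var-dirichlet} and are only combined with the conformal radii when assembling Theorem~\ref{thm:Grunsky-Loewner2-intro}).
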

Given this proposition we can complete the proof of Theorem~\ref{thm:Grunsky-Loewner2-intro}.
\begin{proof}[Proof of Theorem~\ref{thm:Grunsky-Loewner2-intro} assuming Proposition~\ref{prop:variation-of-Grunsky}]
Let $\gamma$ be an analytic Jordan arc between $-1$ and $1$. We embed $\gamma$ in a family of analytic arcs $(\gamma_\alpha)_{\alpha \in [0,1]}$ so that $\gamma_0 = [-1,1]$ and $\gamma_1 = \gamma$, with associated opened Jordan curves $\eta_\alpha$. Let $B_\alpha$ be the arc-Grunsky operator for $\gamma_\alpha$. Then by Lemma~\ref{lem:conformal-radii} and Proposition~\ref{prop:var-dirichlet} 
\begin{align*}
\frac{\d}{\d\alpha}I^L(\eta_\alpha)  = & \frac{\d}{\d\alpha}\left(\mathcal{D}_{\D}(\log|f_\alpha'|)+(\mathcal{D}_{\D}(\log|g_\alpha'|) + 4 \log|f'_\alpha(0)/g'_\alpha(\infty)|\right) \\ 
 =  \, & 4\int_{\mathbb{T}} \left(\Re z^2 S f_\alpha(z) + \left(|f_\alpha'(z)|\kappa(f_\alpha(z))\right)^2 \right)\rho_i(z)|dz| \\ &- 4\int_{\mathbb{T}} \left(\Re z^2 S g_\alpha(z) + \left(|g_\alpha'(z)|\kappa(g_\alpha(z))\right)^2 \right)\rho_e(z)|dz|\\
 =: & \, I_i - I_e.
\end{align*}
On the other hand, by Proposition~\ref{prop:variation-of-Grunsky},
\[
-12\frac{\d}{\d\alpha} \log \det (I + B_\alpha) =\frac{1}{2}(I_i-I_e) - 3 \frac{\d}{\d\alpha}\log|h_\alpha'(1)h_\alpha'(-1)|.
\]
Recall that $J^A(\gamma_\alpha) = \frac{1}{2}I^L(\eta_\alpha) - 3 \log|h_\alpha'(1)h'_\alpha(-1)|$.
Hence, since $J^A([-1,1]) = 0 =  \log \det(I+B_0) $, by integration,
\[
-12 \log \det (I+B) = J^A(\gamma),
\]
and we are done.
\end{proof}

Let $\gamma_\alpha$ and $\eta_\alpha$ be as in Proposition~\ref{prop:variation-of-Grunsky}.
Let $\alpha \in [0,1]$ and define
\begin{equation}\label{def:lambda}
\lambda(w) = \lambda_\alpha(w)= \frac{h(w)}{h(1/w)}.
\end{equation}
Then $\lambda$ is holomorphic in a neighborhood of $\eta_\alpha$. The first step in the proof of Proposition~\ref{prop:variation-of-Grunsky} is the following lemma.
\begin{prop}\label{prop:var}
   Let $(\gamma_\alpha)_{\alpha \in [0,1]}$ be an analytic family of Jordan arcs between $-1$ and $1$ and let $(\eta_\alpha)_{\alpha \in [0,1]}$ be the corresponding family of opened Jordan curves as in Section~\ref{sect:interpolation-of-arcs}. Let $B_\alpha$ be the arc-Grunsky operator for $\gamma_\alpha$. Then for all $\alpha \in (0,1)$,
\begin{equation}\label{eq:V'}
    \frac{\d}{\d\alpha} \log \det (I+B_\alpha) = \Re \frac{1}{6\pi \i} \int_{\eta_\alpha} V(w)(Sh)(w) \d w + \frac{1}{4} \Re (V'(1)+V'(-1))
\end{equation}
where for $\lambda$ as in \eqref{def:lambda}
\[V(w) = \frac{\partial_\alpha \lambda(w)}{\lambda'(w)}\]
and $S$ denotes the Schwarzian derivative.
Moreover,
\begin{equation}\label{eq:V'2}
V'(\pm 1) = \frac{\partial_\alpha h'(\pm 1)}{h'(\pm 1)}-\frac{\partial_\alpha h(\pm 1)}{h(\pm 1)},    
\end{equation}
and
\begin{equation}\label{eq:intS}
\int_{\eta_\alpha} V(w) (Sh)(w) \d w = \int_{\gamma_\alpha} \partial_\alpha z_e (\tau_e(\zeta)) \big(S\psi_{-}-S\psi_{+} \big)(\zeta) \d \zeta.   
\end{equation}

\end{prop}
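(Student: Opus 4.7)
We prove the three assertions in order.

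\textbf{Endpoint values and change of variables.} Since $w \mapsto 1/w$ fixes $\pm 1$ we have $\lambda(\pm 1) = 1$ and $\partial_\alpha \lambda(\pm 1) = 0$, so $V(\pm 1) = 0$. L'H\^opital then gives $V'(\pm 1) = (\partial_\alpha \lambda)'(\pm 1)/\lambda'(\pm 1)$, and a direct chain-rule computation from $\lambda(w) = h(w)/h(1/w)$ simplifies this to \eqref{eq:V'2}. For \eqref{eq:intS}, split $\eta = \eta_+ \cup \eta_-$ with $\eta_\pm = q_\pm(\gamma)$ and use $q_- = 1/q_+$ on $\gamma$. Since $w\mapsto 1/w$ is M\"obius we have $Sq_+ = Sq_-$, and the Schwarzian chain rule applied to $\psi_\pm = h \circ q_\pm$ gives
\[Sh(q_\pm(\zeta))\,q_\pm'(\zeta)^2 = S\psi_\pm(\zeta) - Sq_+(\zeta).\]
A short calculation from $\lambda(1/w)\lambda(w) = 1$ shows $V(q_+)/q_+' = V(q_-)/q_-'$; call this common value $W(\zeta)$. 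Combining the integrals over $\eta_\pm$ with opposite orientations cancels the $Sq_+$-terms and yields $\int_\eta V\cdot Sh\,dw = \int_\gamma W(S\psi_+ - S\psi_-)d\zeta$. The identification $W = -\partial_\alpha z_e \circ \tau_e$ then follows from the defining relation \eqref{eqpar} and the proof of Lemma~\ref{lem:eqpar}: the function $\lambda^\alpha(q_+^\alpha(z_e^\alpha(t)))$ equals $\psi_+^\alpha/\psi_-^\alpha$ along the equilibrium parametrization, which is $\alpha$-independent, so differentiating in $\alpha$ at fixed $t$ and applying the chain rule identifies $W$.

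\textbf{The variational formula \eqref{eq:V'}.} We begin from $\frac{d}{d\alpha}\log\det(I+B_\alpha) = \mathrm{tr}\bigl((I+B_\alpha)^{-1}\partial_\alpha B_\alpha\bigr)$. By Lemma~\ref{lem:arcGrunsky}, $\partial_\alpha a_{kl}$ is an explicit Fourier-Chebyshev coefficient of $\Re[(\partial_\alpha z_e(\cos\theta) - \partial_\alpha z_e(\cos\phi))/(z_e(\cos\theta) - z_e(\cos\phi))]$. Inserting this into the trace expansion and using the Pommerenke-type integral equation \eqref{Intekv5:prel} that characterizes $(I+B)^{-1}$, the trace becomes a bilinear form in $\partial_\alpha z_e$ which, by Proposition~\ref{prop:Dir} in polarized form, can be expressed as a Dirichlet energy of harmonic extensions in $\mathbb{C}\setminus\gamma$. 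Green's theorem collapses this 2D energy to a boundary integral on $\gamma$; the Sokhotski-Plemelj jump relations for the Cauchy kernel, combined with the chain rule for $\psi_\pm = h\circ q_\pm$, then produce the Schwarzian $Sh$ pulled back to $\gamma$. Applying \eqref{eq:intS} converts the resulting $\gamma$-integral into the stated $\eta$-integral, while the boundary contributions at $\pm 1$ yield $\tfrac{1}{4}(V'(1)+V'(-1))$.

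\textbf{Main obstacle.} The principal challenge is converting the abstract trace $\mathrm{tr}((I+B)^{-1}\partial_\alpha B)$ into a concrete boundary integral featuring $Sh$. The Schwarzian must emerge from the transformation behavior of the Cauchy kernel under the opening maps $q_\pm$, and the endpoint contributions at $\pm 1$ giving $V'(\pm 1)$ require delicate analysis because of the Cauchy-kernel singularity there. A careful bookkeeping of all boundary and jump terms across $\gamma$ is needed to arrive at the clean final formula.
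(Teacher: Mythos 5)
Your treatment of \eqref{eq:V'2} and \eqref{eq:intS} is essentially correct and follows the same route as the paper: $\partial_\alpha\lambda(\pm1)=0$ gives $V(\pm1)=0$ and hence $V'(\pm1)=(\partial_\alpha\lambda)'(\pm1)/\lambda'(\pm1)$; and your splitting of $\eta$ into $\eta_\pm$, the Schwarzian chain rule for $\psi_\pm=h\circ q_\pm$, the identity $V(q_+)/q_+'=V(q_-)/q_-'$ (from $\lambda(1/w)\lambda(w)=1$), and the identification of this common value with $-\partial_\alpha z_e\circ\tau_e$ via the $\alpha$-independence of $\tau_e\circ z_e$ reproduce the content of Lemmas~\ref{lemma1section9} and \ref{lemma2section9}. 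Your remark that $Sq_+=Sq_-$ follows from $q_-=j\circ q_+$ with $j$ M\"obius is a slightly cleaner way to get the cancellation than the paper's ``a computation shows''.

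The main formula \eqref{eq:V'}, however, is not proved: there is a genuine gap exactly where the analytic content lies. Starting from $\mathrm{Tr}\big[(I+B)^{-1}\partial_\alpha B\big]$ and invoking Proposition~\ref{prop:Dir} in polarized form is fine, but the step ``Green's theorem collapses this 2D energy to a boundary integral on $\gamma$; the Sokhotski--Plemelj jump relations \dots then produce the Schwarzian $Sh$'' is not a workable mechanism as stated, and neither the constant $\tfrac{1}{6\pi\i}$ nor the endpoint term $\tfrac14\Re(V'(1)+V'(-1))$ is derived. The Schwarzian cannot be produced by jump relations acting on a single boundary integral; it arises from a genuinely two-variable representation. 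In the paper one writes $\partial_\alpha B$ as an integral of rank-one operators, applies Proposition~\ref{prop:Dir}, transfers to $\mathbb{D}^*$ by conformal invariance, and --- crucially --- uses Douglas' formula to turn the Dirichlet pairing into a double boundary integral with kernel $(z_1-z_2)^{-2}$ (Lemma~\ref{lemma3section9}); after substituting $z_i=h(w_i)$, the kernel $h'(w_1)h'(w_2)/(h(w_1)-h(w_2))^2$ is split into the two-point Schwarzian $Sh(w_1,w_2)$ plus $(w_1-w_2)^{-2}$, and contour deformation with residue computations (using analyticity of $f(u(w_1),u(w_2))$ near $\eta\times\eta$, the limit $Sh(w_1,w_2)\to\tfrac16 Sh(w_1)$ on the diagonal, and the symmetry $w^2V(1/w)=-V(w)$, whose poles at $\pm1$ give the factor $\tfrac14$) yields the two terms of \eqref{eq:V'}. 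Collapsing the Dirichlet energy to a single integral over $\gamma$ by Green's theorem would require the normal derivatives of the harmonic extensions, which are not available; Douglas' formula (or an equivalent two-point boundary representation) is precisely what circumvents this, and without it the central step of the proposition is asserted rather than proved --- as your own ``main obstacle'' paragraph in effect concedes.
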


We begin by establishing several lemmas needed for the proof of Proposition~\ref{prop:var}. 

\begin{lemma}\label{lemma1section9}
    Let $T(w) \coloneqq \tau_e(u(w))$ for $w\in \eta$. Then,
    \[V(w) = \frac{\partial_\alpha T(w)}{T'(w)}, \quad \partial_\alpha z_e(T(w)) = \frac{1}{2} (\frac{1}{w^2}-1)V(w)\]
for all $w\in \eta$.
\end{lemma}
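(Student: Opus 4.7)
The plan is to start from the identity $z_e(T(w)) = u(w)$, which is immediate from $T(w) = \tau_e(u(w))$ and $z_e = \tau_e^{-1}$. Differentiating in $w$ gives $z_e'(T(w)) T'(w) = u'(w) = \tfrac{1}{2}(1 - 1/w^2)$, while differentiating in $\alpha$ (treating $w$ as a fixed complex number, using that all relevant functions extend analytically past $\eta$, and that $u$ is $\alpha$-independent) gives
\[
(\partial_\alpha z_e)(T(w)) = -z_e'(T(w))\,\partial_\alpha T(w).
\]
Combining the two yields
\[
(\partial_\alpha z_e)(T(w)) = \tfrac{1}{2}\!\left(\tfrac{1}{w^2}-1\right) \frac{\partial_\alpha T(w)}{T'(w)},
\]
so the second formula in the lemma follows directly from the first.

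To prove the first identity $V(w) = \partial_\alpha T(w)/T'(w)$, I would invoke formula \eqref{eq:te(z)}. The key geometric input is that for $w \in \eta$, the two points $w$ and $1/w$ are precisely the $s$-images of the two prime ends $z_\pm$ of $\gamma$ at the common point $u(w)$: indeed $u(1/w) = u(w)$, and by $j(\eta) = \eta$ together with the fact that $u$ is $2$-to-$1$ from $\eta$ onto $\gamma$, the point $1/w$ must be the other element of $\{q_+(u(w)), q_-(u(w))\}$. Therefore $\{\psi_+(u(w)), \psi_-(u(w))\} = \{h(w), h(1/w)\}$, and $\psi_-/\psi_+$ equals $\lambda(w)^{\pm 1}$ depending on orientation. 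Since $\cos$ is even, substituting into \eqref{eq:te(z)} yields, with a local branch of the logarithm,
\[
T(w) = -\cos\!\left(\tfrac{1}{2\i}\log \lambda(w)\right),
\]
independently of the sign ambiguity.

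With this representation in hand, setting $L(w) = \tfrac{1}{2\i}\log\lambda(w)$, both $T'(w) = \sin L(w) \cdot L'(w)$ and $\partial_\alpha T(w) = \sin L(w) \cdot \partial_\alpha L(w)$, so the factors of $\sin L$ cancel to give
\[
\frac{\partial_\alpha T(w)}{T'(w)} = \frac{\partial_\alpha L(w)}{L'(w)} = \frac{\partial_\alpha\lambda(w)/\lambda(w)}{\lambda'(w)/\lambda(w)} = \frac{\partial_\alpha\lambda(w)}{\lambda'(w)} = V(w).
\]
The main subtlety is the identification $\{\psi_+(z), \psi_-(z)\} = \{h(w), h(1/w)\}$, which requires tracking the branches of the opening map $s$ at the two prime ends of $\gamma$ and choosing a consistent local branch of $\log\lambda$; the resulting formula for $T(w)$ is insensitive to these choices thanks to the parity of $\cos$ and the $w\leftrightarrow 1/w$ symmetry.
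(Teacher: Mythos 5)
Your proposal is correct and follows essentially the same route as the paper: both identities come from the representation $\tau_e = -\cos\bigl(\tfrac{1}{2\i}\log(\psi_-/\psi_+)\bigr)$, the identification $\{\psi_\pm(u(w))\} = \{h(w), h(1/w)\}$ so that $\psi_-/\psi_+ \circ u = \lambda^{\pm1}$ (the sign ambiguity being harmless, as you note via the parity of $\cos$ and as the paper handles via the invariance of $\partial_\alpha c/c'$ under $c \mapsto 1/c$), and differentiation of $z_e(T(w)) = u(w)$ in $\alpha$ and in $w$ for the second formula. No gaps.
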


\begin{proof}
By definition of $u$ and $q_\pm$,
\begin{equation*}
    \psi_-(u(w)) = h(q_-\circ u(w)) = \begin{cases}
        &h(w)\ \mathrm{if}\ w\in \eta_- \\
        &h(1/w)\ \mathrm{if}\ w\in \eta_+
    \end{cases}
\end{equation*}
and \begin{equation*}
    \psi_+(u(w)) = h(q_+\circ u(w)) = \begin{cases}
        &h(1/w)\ \mathrm{if}\ w\in \eta_- \\
        &h(w)\ \mathrm{if}\ w\in \eta_+
    \end{cases}
\end{equation*}
so we see that
\begin{equation*}
    c(w) \coloneqq \frac{\psi_-(u(w))}{\psi_+(u(w))} = \begin{cases}
        &\frac{h(w)}{h(1/w)}\ \mathrm{if}\ w\in \eta_- \\
        &\frac{h(1/w)}{h(w)}\ \mathrm{if}\ w\in \eta_+.
    \end{cases}
\end{equation*}
Recall that $\tau_e$ denotes the inverse of the equilibrium parametrization and that in the beginning of Section \ref{sec:prel} we showed that
 \begin{equation}\label{eq:tesec9}
     \tau_e(\zeta) = - \cos \big( \frac{1}{2\i} \log \frac{\psi_-(\zeta)}{\psi_+(\zeta)} \big),\quad \zeta\in \gamma.
 \end{equation}
It follows from \eqref{eq:tesec9} that
\[T(w) = -\frac{1}{2}\big(\log c(w)^{1/2}+(\log c(w)^{1/2})^{-1} \big)\]
which in turn gives
\begin{equation}\label{eq:Vformula}
\frac{\partial_\alpha T(w)}{T'(w)} = \frac{\partial_\alpha c(w)}{c'(w)} = \frac{\partial_\alpha h(w) h(1/w)-h(w)\partial_\alpha h(1/w)}{h'(w)h(1/w)+h(w)h'(1/w)w^{-2}} = V(w).
\end{equation}
For the second identity, note that
\[ z_e(T(w)) = z_e(\tau_e(u(w))) = u(w)\]
and differentiating gives
\[ \partial_\alpha z_e(T(w)) = -u'(w) \frac{\partial_\alpha T(w)}{T'(w)} = \frac{1}{2} (\frac{1}{w^2}-1)V(w). \]
\end{proof}

\begin{lemma}\label{lemma2section9}
For $\zeta \in \gamma\smallsetminus \{-1,1\}$,
\begin{equation}
    \lim_{z\to \zeta\pm} \frac{V(u^{-1}(z))}{(u^{-1})'(z)} = \frac{V(q_\pm(\zeta))}{q_\pm'(\zeta)} = -\partial_\alpha z_e(\tau_e(\zeta))
\end{equation}
where $z\to \zeta\pm$ denotes the limit to (the prime end) $\zeta\in \gamma$ from the $\pm$ sides of $\gamma$.
\end{lemma}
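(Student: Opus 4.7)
The plan is straightforward given the preceding lemma (\verb|lemma1section9|). The first equality in the statement is a pure continuity assertion. On $\gamma\smallsetminus\{-1,1\}$ the map $s(z)=z+\sqrt{z^2-1}$, which is the branch of $u^{-1}$ sending $\C\smallsetminus\gamma$ onto $D^*$, extends continuously (in fact analytically) across each side of $\gamma$ to the two prime ends, and by construction $s(z_\pm)=q_\pm(\zeta)$ with $q_\pm'\neq 0$; this is Kellogg's theorem applied to the analytic curve $\eta$, combined with the fact that $s=g\circ\psi$. Moreover, $V=\partial_\alpha\lambda/\lambda'$ is built from $h_\alpha$ and $w\mapsto h_\alpha(1/w)$, and by the holomorphic-motion construction of Section~\ref{sect:interpolation-of-arcs} together with Kellogg's theorem, $h_\alpha$ extends analytically up to $\eta_\alpha\smallsetminus\{-1,1\}$ with smooth joint dependence on $\alpha$. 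Thus both $V$ and $(u^{-1})'$ extend continuously to each side of $\gamma$, and the limits $\lim_{z\to\zeta\pm}$ are obtained by direct substitution, yielding $V(q_\pm(\zeta))/q_\pm'(\zeta)$.

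For the second equality I would apply the preceding lemma, which states that on $\eta$
\[
\partial_\alpha z_e(T(w))=\tfrac{1}{2}\Big(\tfrac{1}{w^2}-1\Big)V(w),\qquad T(w)=\tau_e(u(w)).
\]
Specializing to $w=q_\pm(\zeta)$ and using $u(q_\pm(\zeta))=\zeta$ gives $T(q_\pm(\zeta))=\tau_e(\zeta)$, so the left-hand side becomes exactly $\partial_\alpha z_e(\tau_e(\zeta))$. It only remains to identify the prefactor $\tfrac{1}{2}(1/w^2-1)$ at $w=q_\pm(\zeta)$ with $-1/q_\pm'(\zeta)$. Differentiating the identity $u\circ q_\pm=\mathrm{id}$ on $\gamma$ gives $q_\pm'(\zeta)=1/u'(q_\pm(\zeta))$, and since $u'(w)=(1-1/w^2)/2$ we obtain
\[
-\frac{1}{q_\pm'(\zeta)}=-u'(q_\pm(\zeta))=\frac{1}{2}\Big(\frac{1}{q_\pm(\zeta)^2}-1\Big),
\]
which is precisely the factor multiplying $V(q_\pm(\zeta))$ above, completing the proof.

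There is no substantial obstacle once the preceding lemma is in hand: the statement reduces to elementary algebra combined with standard boundary regularity of conformal maps onto analytic Jordan curves. The only mildly delicate point is justifying that $V$ and $V/(u^{-1})'$ extend continuously up to each side of $\gamma$ with well-defined $\alpha$-derivatives, but this follows from Kellogg's theorem applied on $\eta_\alpha$ together with the (real-)analytic dependence of $h_\alpha$ on $\alpha$ established in Section~\ref{sect:interpolation-of-arcs}.
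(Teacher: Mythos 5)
Your proof is correct and follows essentially the same route as the paper: take boundary values of $u^{-1}$ and $(u^{-1})'$ from the two sides of $\gamma$, then apply the identity $\partial_\alpha z_e(T(w))=\tfrac12(1/w^2-1)V(w)$ from the preceding lemma at $w=q_\pm(\zeta)$. The only cosmetic difference is that you identify the prefactor via $q_\pm'=1/u'(q_\pm)$ while the paper substitutes the explicit formulas $q_\pm(\zeta)=\zeta\pm\sqrt{\zeta^2-1}$, $q_\pm'(\zeta)=\pm q_\pm(\zeta)/\sqrt{\zeta^2-1}$; these are equivalent computations.
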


\begin{proof}
We have that
\[\lim_{z\to \zeta\pm} u^{-1}(z) = q_\pm(\zeta)\] 
where $q_\pm(\zeta) = \zeta\pm \sqrt{\zeta^2-1}$ and 
\[\lim_{z\to \zeta\pm} (u^{-1})'(z) = q_\pm'(\zeta)= \pm \frac{q_\pm (\zeta)}{\sqrt{\zeta^2-1}}.\]
By Lemma \ref{lemma1section9}
\[V(w) = - \frac{2w}{w-w^{-1}} \partial_\alpha z_e(\tau_e(u(w)))\]
and hence, for $\zeta \in \gamma\smallsetminus \{-1,1\}$,
\[ \lim_{z\to \zeta\pm} V(u^{-1}(z)) = -\frac{2q_\pm(\zeta)}{q_\pm(\zeta)-q_\pm^{-1}(\zeta)}\partial_\alpha z_e(\tau_e(\zeta)) = \mp \frac{q_\pm (\zeta)}{\sqrt{\zeta^2-1}} \partial_\alpha z_e(\tau_e(\zeta)) .  \]
\end{proof}

Thus, $\partial_\alpha z_e(\tau_e(\zeta))$ has an analytic extension to a neighborhood of $\gamma$, except possibly at $\pm 1$. The key formula used to prove Proposition~\ref{prop:var} is provided by the following lemma.

\begin{lemma}\label{lemma3section9}
Let 
\[f(\zeta_1,\zeta_2) =  \frac{\partial_\alpha z_e(\tau_e(\zeta_1))-\partial_\alpha z_e(\tau_e(\zeta_2)) }{\zeta_1-\zeta_2}\]
and
$F(\zeta_1,\zeta_2) = f(\zeta_1,\zeta_1)+f(\zeta_2,\zeta_2)-2f(\zeta_1, \zeta_2)$ for $\zeta_1, \zeta_2\in \gamma$. Then for $\phi = \psi^{-1} : \D^* \to \mathbb{C} \smallsetminus \gamma$,
\begin{equation}\label{eq:tr(i+b)0}
  \mathrm{Tr} \Big[(I+B)^{-1} \frac{\d B}{\d \alpha} \Big] = \Re \frac{1}{(2\pi \i)^2} \int_\T  \int_\T  \frac{F(\phi(z_1),\phi(z_2))}{(z_1-z_2)^2} \d z_1\d z_2.  
\end{equation}
\end{lemma}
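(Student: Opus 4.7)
The plan combines the arc-Grunsky expansion with Proposition~\ref{prop:Dir} and the transport of Douglas' formula to $\C\smallsetminus\gamma$ via the conformal map $\phi$. First, I would differentiate the expansion \eqref{Grunskyexp} with respect to $\alpha$ at fixed $s,t \in I$. Since $\log|s-t|$ is independent of $\alpha$, the left-hand side produces $\Re f(z_e(s), z_e(t))$. Setting $\zeta_j = z_e(s_j)$, writing $U_k(\zeta) := T_k(\tau_e(\zeta))/\sqrt{k}$, $c := \frac{\d}{\d\alpha}\log(2\mathrm{cap}(\gamma))$, and using the symmetry $a_{kl}=a_{lk}$ together with $b_{kl}=\sqrt{kl}\,a_{kl}$, the result rearranges into the bilinear identity
\begin{equation}\label{eq:key-plan}
\sum_{k,l\geq 1}\frac{\d b_{kl}}{\d\alpha}\,U_l(\zeta_1)U_k(\zeta_2) = -\tfrac{1}{2}\Re f(\zeta_1,\zeta_2)+\tfrac{c}{2}.
\end{equation}
The crucial observation is that $U_k$ is the function on $\gamma$ corresponding to the standard basis vector $e_k \in \ell^2$ under $\mathbf{u}\mapsto (\sqrt{k}u_k)_k$, so by Proposition~\ref{prop:Dir} and polarization, $[(I+B)^{-1}]_{lk} = \mathcal{D}_{\C\smallsetminus\gamma}(U_l,U_k)$.

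Next, by conformal invariance and Douglas' formula, which in the paper's normalization $\mathcal{D}_D(u) = \pi^{-1}\int_D|\nabla u|^2\,\d A$ reads $\mathcal{D}_\D(u,v) = (2\pi^2)^{-1}\int_\T\int_\T (u_1-u_2)(v_1-v_2)|z_1-z_2|^{-2}|\d z_1||\d z_2|$, the Dirichlet inner product admits the boundary representation
\[
\mathcal{D}_{\C\smallsetminus\gamma}(U_l,U_k) = \frac{1}{2\pi^2}\int_\T\int_\T\frac{(U_l\circ\phi(z_1)-U_l\circ\phi(z_2))(U_k\circ\phi(z_1)-U_k\circ\phi(z_2))}{|z_1-z_2|^2}|\d z_1||\d z_2|.
\]
Substituting into $\mathrm{Tr}[(I+B)^{-1}\d B/\d\alpha] = \sum_{k,l}(\d b_{kl}/\d\alpha)[(I+B)^{-1}]_{lk}$, interchanging the summation and double integral, and expanding the product of differences into four terms $U_l(\phi(z_i))U_k(\phi(z_j))$, applying \eqref{eq:key-plan} to each term makes the constants $c/2$ cancel; using the symmetry $f(\zeta_1,\zeta_2)=f(\zeta_2,\zeta_1)$ the four contributions collapse to exactly $-\tfrac{1}{2}\Re F(\phi(z_1),\phi(z_2))$. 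Hence
\[
\mathrm{Tr}\Bigl[(I+B)^{-1}\frac{\d B}{\d\alpha}\Bigr] = -\frac{1}{4\pi^2}\int_\T\int_\T \frac{\Re F(\phi(z_1),\phi(z_2))}{|z_1-z_2|^2}|\d z_1||\d z_2|.
\]

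Finally, a direct computation shows $\d z_1\,\d z_2/(z_1-z_2)^2 = |\d z_1||\d z_2|/|z_1-z_2|^2$ on $\T\times\T$, so with $(2\pi\i)^{-2} = -(4\pi^2)^{-1}$ this is precisely $\Re(2\pi\i)^{-2}\int_\T\int_\T F/(z_1-z_2)^2\,\d z_1\,\d z_2$, establishing \eqref{eq:tr(i+b)0}. The main technical obstacle is justifying the interchange of summation and integration; this rests on $B$ being trace-class (Proposition~\ref{lem:trace-class}), the decay of the arc-Grunsky coefficients from Lemma~\ref{lem:Grunskydecay}, and the fact that the integrand $F(\phi(z_1),\phi(z_2))/|z_1-z_2|^2$ is bounded on $\T\times\T$. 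The latter uses Lemma~\ref{lemma2section9} (analytic extension of $\partial_\alpha z_e\circ\tau_e$ across $\gamma$ away from $\pm 1$) to get $F(\phi(z_1),\phi(z_2)) = O(|z_1-z_2|^2)$ near the diagonal, together with $F(\zeta,\zeta)=0$ at the points where $\phi(z_1) = \phi(z_2)$ (the two sides of the same point of $\gamma$).
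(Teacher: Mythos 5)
Your proposal is correct and follows essentially the same route as the paper: differentiate the arc-Grunsky expansion in $\alpha$, identify the matrix entries of $(I+B)^{-1}$ as Dirichlet inner products via Proposition~\ref{prop:Dir}, transport to $\mathbb{T}\times\mathbb{T}$ by conformal invariance and Douglas' formula, and recombine to produce $F$ together with the measure identity $\d z_1\,\d z_2/(z_1-z_2)^2=|\d z_1||\d z_2|/|z_1-z_2|^2$. The only difference is organizational: the paper represents $\d B/\d\alpha$ through the regularized rank-one vectors $\mathbf{a}_r(s)$ and takes $r\to 1$, while you polarize Proposition~\ref{prop:Dir} directly on the basis functions $U_k$ and sum, which amounts to the same computation with the same interchange-of-limits issues to justify.
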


\begin{proof}
From the arc-Grunsky expansion,
\[ \log \Big|\frac{z_e(s)-z_e(t)}{s-t}  \Big| = \log(2\mathrm{cap}(\gamma)) -2\sum_{k,l\geq 1} a_{kl} T_k(s) T_l(t) \]
and $b_{kl} = \sqrt{kl}a_{kl}$, we get
\[\frac{\d b_{kl}}{d \alpha} = -2 \Re \int_{-1}^1\int_{-1}^1 \frac{\partial_\alpha z_e(s)-\partial_\alpha z_e(t)}{z_e(s)-z_e(t)} \frac{\sqrt{k}T_k(s)}{\pi\sqrt{1-s^2}}\frac{\sqrt{l}T_l(t)}{\pi \sqrt{1-t^2}} \d s \d t. \]
For $0<r<1$ we define the column vector
\[\mathbf{a}_r(s) = \Big( \frac{\sqrt{k}T_k(s)}{\pi\sqrt{1-s^2}} r^k \Big)_{k\geq1}\]
so that 
\[\frac{\d B}{\d \alpha} = -\lim_{r \to 1} 2 \int_{-1}^1\int_{-1}^1 \mathbf{a}_r(s) \mathbf{a}_r(t)^t f(z_e(s), z_e(t)) \d s \d t.\]
Hence
\begin{align}\label{eq:tr(i+b)}
    \mathrm{Tr} \Big[(I+B)^{-1} \frac{\d B}{\d \alpha} \Big] &= -\lim_{r \to 1} 2 \int_{-1}^1\int_{-1}^1  \mathrm{Tr}(I+B)^{-1} \mathbf{a}_r(s) \mathbf{a}_r(t)^t f(z_e(s), z_e(t)) \d s \d t\nonumber \\
    &= -\lim_{r \to 1} 2 \int_{-1}^1\int_{-1}^1  \mathbf{a}_r(t)^t(I+B)^{-1} \mathbf{a}_r(s)  f(z_e(s), z_e(t)) \d s \d t.
\end{align}
For each $s\in[-1,1]$, we define the function $g_s: \gamma\mapsto \R$ by
\[ g_s(z_e(t)) = \sum_{k\geq 1} \frac{T_k(s)}{\pi\sqrt{1-s^2}}r^k T_k(t), \quad t\in[-1,1] \]
and let $\mathscr{G}_s$ be the bounded harmonic extension of $g_s$ to $\C\smallsetminus \gamma$. Then, by Proposition \ref{prop:Dir},
\[ \mathbf{a}_r(t)^t(I+B)^{-1} \mathbf{a}_r(s) = \frac{1}{\pi} \int_\C \nabla \mathscr{G}_s \cdot \nabla \mathscr{G}_t \d x \d y. \]
Let $\mathcal{G}_s$ be the harmonic extension of $g_s(\phi(z))$ to $\D^*$. By the conformal invariance of the Dirichlet energy,
\[\frac{1}{\pi} \int_\C \nabla \mathscr{G}_s \cdot \nabla \mathscr{G}_t \d x \d y = \frac{1}{\pi} \int_{\D^*} \nabla \mathcal{G}_s \cdot \nabla \mathcal{G}_t \d x \d y \]
and by Douglas' formula,
\begin{align*}
&\frac{1}{\pi} \int_{\D^*} \nabla \mathcal{G}_s \cdot \nabla \mathcal{G}_t \d x \d y \\ &= \frac{1}{8\pi^2} \int_0^{2\pi} \int_0^{2\pi} \frac{(g_s(\phi(e^{\i \theta}))-g_s(\phi(e^{\i\omega})))(g_t(\phi(e^{\i \theta}))-g_t(\phi(e^{\i\omega})))}{\sin^2\frac{\theta-\omega}{2}} \d\theta \d \omega.    
\end{align*}
Inserting this into \eqref{eq:tr(i+b)} and changing the order of integration gives integrals of the form
\begin{multline*}
\Re \int_{-1}^1\int_{-1}^1 f(z_e(s), z_e(t)) g_s(\phi(e^{\i x}))g_t(\phi(e^{\i y})) \d s \d t = \Re \int_{-1}^1\int_{-1}^1 f(z_e(s), z_e(t)) \\ \cdot \sum_{j,k\geq 1} r^{j+k} T_j(s)T_j(\tau_e(\phi(e^{\i x})) T_k(t)T_k(\tau_e(\phi(e^{\i y})) \frac{\d s}{\pi \sqrt{1-s^2}} \frac{\d t}{\pi \sqrt{1-t^2}}.
\end{multline*}
So by using that $\Re f(z_e(s),z_e(t)) = -2\sum_{l,m\geq 1} \frac{\d}{\d\alpha}a_{lm} T_l(s)T_m(t)+\frac{\d }{\d \alpha} \log (2\mathrm{cap}(\gamma))$ the integral becomes
\[-\frac{1}{2} \sum_{j,k\geq 1} \frac{\d a_{jk}}{\d \alpha} r^{j+k} T_j(\tau_e(\phi(e^{\i x})))T_k(\tau_e(\phi(e^{\i y}))) \]
which approaches 
\[ \frac{1}{4} \Re f(\phi(e^{\i x}), \phi(e^{\i y}))- \frac{1}{4}\frac{\d }{\d \alpha} \log (2\mathrm{cap}(\gamma))\]
as $r\to 1-$. Thus, \eqref{eq:tr(i+b)} becomes 
\begin{align*}
\mathrm{Tr} \Big[(I+B)^{-1} \frac{\d B}{\d \alpha} \Big] &= -\Re \frac{1}{16\pi^2} \int_0^{2\pi} \int_0^{2\pi}\frac{F(\phi(e^{\i \theta}),\phi(e^{\i\omega}))}{\sin^2\frac{\theta-\omega}{2}} \d\theta\d\omega \\ 
 &= -\Re \frac{1}{4\pi^2} \int_0^{2\pi} \int_0^{2\pi}\frac{F(\phi(e^{\i \theta}),\phi(e^{\i\omega}))}{|e^{\i \theta}-e^{\i \omega}|^2} \d\theta\d\omega
\end{align*}
which is equivalent to \eqref{eq:tr(i+b)0}.

\end{proof}

We are now in position to prove Proposition~\ref{prop:var}. We will use the two-point Schwarzian, which for a function $f$ holomorphic at $z_1,z_2$ is defined by
\[Sf(z_1,z_2) = \frac{f'(z_1)f'(z_2)}{(f(z_1)-f(z_2))^2}-\frac{1}{(z_1-z_2)^2}.\]
Note that $\lim_{z_2\to z_1} Sf(z_1,z_2) = \frac{1}{6} Sf(z_1)$.

\begin{proof}[Proof of Proposition~\ref{prop:var}]
In \eqref{eq:tr(i+b)0} we make the change of variables $z_1 = h(w_1)$, $z_2= h(z_2)$, which gives
\begin{equation}\label{eq:tr1}
\mathrm{Tr} \Big[ (I+B)^{-1}\frac{\d B}{\d \alpha} \Big] = \Re \frac{1}{(2\pi \i)^2} \int_{\eta} \int_{\eta}F(u(w_1),u(w_2)) \frac{h'(w_1)h'(w_2)}{(h(w_1)-h(w_2))^2} \d w_1\d w_2
\end{equation}
since $\phi\circ h = u$. From Lemma \ref{lemma1section9} we obtain
\begin{equation}\label{eq:fu}
   f(u(w_1), u(w_2)) = \frac{\partial_\alpha z_e(T(w_1))-\partial_\alpha z_e(T(w_2))}{u(w_1)-u(w_2)} = \frac{(\frac{1}{w_1^{2}}-1)V(w_1)-(\frac{1}{w_2^2}-1)V(w_2)}{(w_1-w_2)(1-\frac{1}{w_1w_2})}. 
\end{equation}
For $1-\epsilon < r_1 < r_2$, with $\epsilon >0$ small we let $\eta_{r_i} = g(C_{r_i})$, where $C_{r_i}$ is the circle $|z| = r_i$. Then $\eta_{r_1}$ is a Jordan curve strictly inside the Jordan curve $\eta_{r_2}$. Equation \eqref{eq:fu} shows that the integrand in \eqref{eq:tr1} is analytic in a neighborhood of $\eta\times \eta$ so we can deform the contours of integration in \eqref{eq:tr1} to obtain
\begin{align*}\label{eq:tr1}
\mathrm{Tr} \Big[ (I+B)^{-1}\frac{\d B}{\d \alpha} \Big] &= \Re \frac{1}{(2\pi \i)^2} \int_{\eta_{r_1}} \int_{\eta_{r_2}} F(u(w_1),u(w_2)) \frac{h'(w_1)h'(w_2)}{(h(w_1)-h(w_2))^2} \d w_1\d w_2. \\
&= I_1 + I_2 
\end{align*}
where 
\begin{align*}
&I_1 = \Re \frac{1}{(2\pi \i)^2} \int_{\eta_{r_1}} \int_{\eta_{r_2}} F(u(w_1),u(w_2)) S h(w_1,w_2) \d w_1\d w_2 \\
&I_2 = \Re \frac{1}{(2\pi \i)^2} \int_{\eta_{r_1}} \int_{\eta_{r_2}} \frac{F(u(w_1),u(w_2))}{(w_1-w_2)^2} \d w_1\d w_2 
\end{align*}
and $S$ denotes the two-point Schwarzian derivative. $Sh(w_1,w_2)$ is analytic as a function of $w_1$ and $w_2$ in $D^*$, the exterior of $\eta$, and has no pole at infinity so we see that
\[ I_1 = -2\Re \frac{1}{(2\pi \i)^2} \int_{\eta_{r_1}} \int_{\eta_{r_2}} \frac{(\frac{1}{w_1^{2}}-1)V(w_1)-(\frac{1}{w_2^2}-1)V(w_2)}{(w_1-w_2)(1-\frac{1}{w_1w_2})} S h(w_1,w_2) \d w_1\d w_2 \]
where we used \eqref{eq:fu}. Choose $r_1=1$, $r_2=r>1$. We see that
\[ \frac{1}{2\pi \i} \int_{\eta_r} \frac{w_2(\frac{1}{w_1^2}-1)V(w_1)}{(w_1-w_2)(w_2-\frac{1}{w_1})} S h(w_1,w_2) \d w_2 = 0 \]
since we have no pole in $w_2$ outside $\eta_r$. Note that $w_1$, $1/w_1 \in \eta$, which is inside $\eta_r$. Hence,
\[ I_1 = \Re \frac{2}{(2\pi \i)} \int_{\eta_r} \frac{w_2(\frac{1}{w_2^2}-1)V(w_2)}{w_2-\frac{1}{w_2}} \frac{1}{6} S h(w_2) \d w_2 = \Re \frac{1}{6\pi\i} \int_{\eta} V(w) S h(w) \d w. \]
Next we consider $I_2$. Since $\int_{\eta_{r_1}} (w_1-w_2)^{-2}\d w_1 =0$, we see that
\begin{align*}
I_2 &= - \Re \frac{2}{(2\pi\i)^2} \int_{\eta} \int_{\eta_r} \frac{(\frac{1}{w_1^{2}}-1)V(w_1)-(\frac{1}{w_2^2}-1)V(w_2)}{(w_1-w_2)^3(1-\frac{1}{w_1w_2})} \d w_1 \d w_2 \\
&= \Re \frac{2}{(2\pi\i)^2} \int_{\eta} \int_{\eta_r} \frac{w_1(\frac{1}{w_2^2}-1)V(w_2)}{(w_1-w_2)^3(w_1-\frac{1}{w_2})} \d w_1 \d w_2.
\end{align*}
In the last integral we can compute the $w_1$-integral using Cauchy's integral formula to see that
\[ \frac{1}{2\pi\i} \int_{\eta} \frac{w_1}{(w_1-w_2)^3(w_1-\frac{1}{w_2})} \d w_1 = -\frac{1}{2} \frac{\d^2}{\d w_1^2} \frac{w_1w_2}{1-w_1w_2} \Big|_{w_1=w_2} = - \frac{w_2^2}{(w_2^2-1)^3}, \]
since $\frac{1}{w^2}$ lies outside $\eta$. Hence, 
\begin{align*}
I_2 &= 2 \Re \frac{1}{2\pi\i } \int_{\eta_r} \frac{V(w)}{(w^2-1)^2} \d w = 2 \Re \frac{1}{2\pi\i } \int_{\eta_r'} \frac{V(1/w)}{(\frac{1}{w^2}-1)^2} \frac{\d w}{w^2} \\
&= -2 \Re \frac{1}{2\pi\i } \int_{\eta_r'} \frac{V(w)}{(w-1)^2(w+1)^2} \d w = -I_2+\frac{1}{2}\Re(V'(1)+V'(-1)),
\end{align*}
where $\eta_r'$ is the positively oriented image of $\eta_r$ under the map $w\mapsto 1/w$. In the last line we used that $V(1/w)/w^2 = -V(w)$ by \eqref{eq:Vformula}, and then used Cauchy's integral formula. Thus we see that
\[I_2 = \frac{1}{4} \Re (V'(1)+V'(-1)) \]
which gives \eqref{eq:V'}. To prove \eqref{eq:V'2} we write
\[V(w) = \frac{\partial_\alpha h(w) h(\frac{1}{w})-h(w)\partial_\alpha h(\frac{1}{w})}{h'(w)h(\frac{1}{w})+h(w)h'(\frac{1}{w})\frac{1}{w^2}} = \frac{p_1(w)}{p_2(w)} \]
and note that $p_1(\pm 1) = 0$ (and hence $V(\pm 1) =0$ since $p_2(\pm 1) \neq 0$). From this we see that
\[ V'(\pm 1) = \frac{p_1'(\pm 1)}{p_2(\pm 1)} = \frac{\partial_\alpha h'(\pm 1)}{h'(\pm 1)}-\frac{\partial_\alpha h(\pm 1)}{h(\pm 1)}. \]
To prove \eqref{eq:intS}, note that
\begin{align*}
&\int_{\eta} V(w) Sh(w) \d w = \int_{\eta_-}V(w) Sh(w) \d w-\int_{\eta_+}V(w) Sh(w) \d w \\
&= \int_{\gamma}V(q_-(\zeta)) q_-'(\zeta) Sh(q_-(\zeta)) \d\zeta-\int_{\gamma}V(q_+(\zeta)) q_+'(\zeta) Sh(q_+(\zeta)) \d\zeta \\
&= -\int_{\gamma} \partial_\alpha z_e(\tau_e(\zeta)) ( S\psi_-(\zeta)-Sq_-(\zeta)) \d\zeta  + \int_{\gamma} \partial_\alpha z_e(\tau_e(\zeta)) ( S\psi_+(\zeta)-Sq_+(\zeta)) \d\zeta,
\end{align*}
where we used Lemma \ref{lemma2section9} and the chain rule for the Schwarzian derivative. A computation shows that $Sq_+-Sq_- = 0$, which proves \eqref{eq:intS}.
\end{proof}
To prove Proposition~\ref{prop:variation-of-Grunsky} we will now rewrite the integral appearing in Proposition~\ref{prop:var}:
\[
\Re \frac{1}{6\pi \i} \int_{\eta_\alpha} V(w)(Sh)(w) \d w, \quad V(w) = \partial_\alpha \lambda(w)/\lambda'(w).
\]
The main step is to express the vector field $V(w)$ in terms of the interior/exterior conformal maps $ f$ and $g$. For this we will use the following lemma.  
\begin{lemma}\label{lem:normal-variation}
    Consider a family of Jordan curves or Jordan arcs $(\gamma_\alpha)$ as in Section~\ref{sect:interpolation-of-arcs} for real $\alpha$ in a neighborhood of $\alpha_0$.
 Suppose we are given two $C^2-$diffeomorphic local parametrizations by (a subinterval of) the unit circle, $f_\alpha(e^{it}), t \in (t_0 -\delta, t_0+\delta)$ and $g_\alpha(e^{is}), s \in (s_0 -\delta, s_0+\delta)$ so that $f_\alpha(e^{it_0}) = g_\alpha(e^{is_0})$. Suppose further that for each such $t$,  $\alpha \mapsto f_\alpha, f^{-1}_\alpha, g_\alpha, g_\alpha^{-1}$ are smooth in a neighborhood of $\alpha_0$. Write $w_0 = f_{\alpha_0}(e^{it_0}) = g_{\alpha_0}(e^{is_0})$. Then
\[
\Re \left(\overline{\nu (w_0)}\partial_\alpha f_{\alpha_0}(e^{it_0}) \right)=\Re \left(\overline{\nu (w_0)}\partial_\alpha g_{\alpha_0}(e^{is_0}) \right) 
\]
\end{lemma}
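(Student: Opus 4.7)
The underlying idea is that two parametrizations of the same curve differ only by a reparametrization, whose derivative in $\alpha$ is tangential and hence invisible to the normal projection.

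More precisely, the plan is as follows. Since $f_\alpha$ and $g_\alpha$ locally parametrize the same curve $\gamma_\alpha$ near $w_0$ and each is a $C^2$-diffeomorphism onto a neighborhood of $w_0$ in $\gamma_\alpha$, we can define, for $(t,\alpha)$ in a neighborhood of $(t_0,\alpha_0)$,
\[
s(t,\alpha) = \frac{1}{i}\log\bigl(g_\alpha^{-1}(f_\alpha(e^{it}))\bigr),
\]
choosing the branch so that $s(t_0,\alpha_0) = s_0$. By the hypothesis that $\alpha \mapsto f_\alpha, g_\alpha, g_\alpha^{-1}$ are smooth in $\alpha$ together with smoothness in the boundary variable, $s$ is smooth in $(t,\alpha)$, and by construction
\begin{equation}\label{eq:repar}
f_\alpha(e^{it}) = g_\alpha(e^{is(t,\alpha)}).
\end{equation}

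Next, differentiating \eqref{eq:repar} with respect to $\alpha$ at $(t_0,\alpha_0)$ gives
\[
\partial_\alpha f_{\alpha_0}(e^{it_0}) = \partial_\alpha g_{\alpha_0}(e^{is_0}) + i e^{is_0} g_{\alpha_0}'(e^{is_0})\,\partial_\alpha s(t_0,\alpha_0).
\]
The vector $T := i e^{is_0} g_{\alpha_0}'(e^{is_0})$ is (a nonzero real multiple of) the tangent vector to $\gamma_{\alpha_0}$ at $w_0$. Since $\nu(w_0)$ is by assumption a normal to $\gamma_{\alpha_0}$ at $w_0$, we have $\Re\bigl(\overline{\nu(w_0)}\,T\bigr) = 0$, and hence, since $\partial_\alpha s(t_0,\alpha_0)$ is real,
\[
\Re\bigl(\overline{\nu(w_0)}\,\partial_\alpha f_{\alpha_0}(e^{it_0})\bigr) - \Re\bigl(\overline{\nu(w_0)}\,\partial_\alpha g_{\alpha_0}(e^{is_0})\bigr) = \partial_\alpha s(t_0,\alpha_0)\,\Re\bigl(\overline{\nu(w_0)}\,T\bigr) = 0,
\]
which is the desired identity.

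The only mildly subtle step is justifying that $s(t,\alpha)$ is well-defined and smooth; this is purely a matter of unpacking the standing hypothesis that $f_\alpha, g_\alpha$ and their inverses depend smoothly on $\alpha$ (and are $C^2$-diffeomorphisms in the boundary variable), combined with the implicit function theorem applied to the equation $f_\alpha(e^{it}) = g_\alpha(e^{is})$ near $(t_0,s_0,\alpha_0)$. Once this is established, the computation is an immediate consequence of the chain rule and the orthogonality of the tangent and normal directions.
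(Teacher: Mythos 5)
Your proposal is correct and follows essentially the same route as the paper: you introduce the reparametrization $s(t,\alpha)$ (the paper's $h_\alpha(t)$) via $f_\alpha(e^{it}) = g_\alpha(e^{is(t,\alpha)})$, differentiate in $\alpha$, and observe that the difference $\partial_\alpha f - \partial_\alpha g$ is a real multiple of the tangent vector $ie^{is_0}g_{\alpha_0}'(e^{is_0})$, hence annihilated by the normal projection. The only cosmetic difference is that you construct the reparametrization explicitly through $g_\alpha^{-1}\circ f_\alpha$ and invoke the implicit function theorem, where the paper simply asserts its existence and $C^1$ dependence on $\alpha$ from the standing hypotheses.
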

\begin{proof} There exists a real-valued function $h_\alpha$ with $h_\alpha(t_0) = s_0$ such that for each $\alpha$, $f_\alpha(e^{it}) = g_\alpha(e^{ih_\alpha(t)})$. By our assumptions on $f_\alpha, g_\alpha$ we see that $h_\alpha$ is $C^1$ with respect to $\alpha$. Differentiating  with respect to $\alpha$,
\[
\partial_\alpha f_\alpha(e^{it_0})-\partial_\alpha g_\alpha(e^{is_0}) = g'_\alpha(e^{is_0})ie^{is_0}\partial_\alpha h_\alpha(t_0).
\]
On the other hand $\nu(w_0) = e^{is_0}g'_\alpha(e^{is_0})/|g'_\alpha(e^{is_0})|$, which implies that
\[
\Re \overline{\nu(w_0)}\left(\partial_\alpha f_\alpha(e^{it_0} )-\partial_\alpha g_\alpha(e^{is_0})\right) = 0.
\]
\end{proof}

We are now ready to prove Proposition~\ref{prop:variation-of-Grunsky}.
\begin{proof}[Proof of Proposition~\ref{prop:variation-of-Grunsky}] Write $h_i=f^{-1}, h_e=h=g^{-1}$ and let 
\[
I := \Re \frac{1}{i} \int_{\eta} V(w)(Sh_e(w) - S h_i(w)) dw.
\]
We claim that 
\[
\frac{\d}{\d \alpha} \log \det(I+B_\alpha) = \frac{1}{12\pi}I + \frac{1}{4}\Re (V'(1) + V'(-1)).
\]
Indeed, this follows easily from \eqref{eq:V'} using the fact that $h_i=j \circ h_e \circ j$ combined with the transformation rule for the Schwarzian. Let $\nu$ be the unit (complex) normal vector in the outward-pointing direction. Then we may rewrite $I$ as
\[
I = \Re \int_{\eta} \overline{\nu(w)}V(w) \nu(w)^2(Sh_e(w) - S h_i(w))|dw|.
\]
By Lemma~\ref{lem:curvature},   
\[
\Im \nu(w)^2(Sh_e(w) - S h_i(w)) \equiv 0
\] on $\eta$, so 
\[
I = \int_{\eta} \Re (\overline{\nu(w)} V(w) )\Re \left( \nu(w)^2(Sh_e(w) - S h_i(w)) \right)|dw|.
\]
Next, let $\kappa(w)$ denote the curvature of $\eta$ at the point $w$ as in \eqref{lem:curvature}. If we define
\[
J_e := \int_{\eta}  \Re (\overline{\nu(w)} V(w) )\Re \left( \nu(w)^2(Sh_e - \kappa(w)^2) \right)|dw|.
\]
and $J_i$ in the same way, by replacing $h_e$ by $h_i$, we can write
\[
I=J_e - J_i.
\]
Recall that $V(w) = \partial_\alpha \lambda_\alpha(w)/\lambda_\alpha'(w)$, where $ \lambda_\alpha(w)=  h(w)/h(1/w)$ which maps $\eta = \eta_\alpha$ onto $\mathbb{T}$. We want to relate $\Re (\overline{\nu(w)}V(w))$ appearing in the definition of $J_e$ and $J_i$ to similar expressions involving the conformal maps $f$ and $g$. To do this we show that $\lambda_\alpha(w)$ locally has a holomorphic inverse which (locally) provides an analytic parametrization of $\eta_\alpha$ which is smooth in $\alpha$ and to which we can apply Lemma~\ref{lem:normal-variation}.

Fix $\alpha_0$ and consider $w_0 \in  \eta_{\alpha_0}$. 
We first claim that $\lambda_{\alpha_0}$ is holomorphic and $\lambda'_{\alpha_0}(w) \neq 0$ in a neighborhood $N$ of $w_0$. The holomorphicity is clear from the definition since $h_{\alpha}$ is a conformal map defined in a neighborhood of $\eta_{\alpha_0}$. 
Then since the equilibrium measure has a positive density everywhere on $\gamma$,
\[
\frac{\d}{\d t} \arg \lambda_{\alpha_0}(w(t)) = \Im \frac{\lambda_{\alpha_0}'(w(t))w'(t)}{\lambda_{\alpha_0}(w(t))}> 0. 
\]
It follows that $\lambda_{\alpha_0}'(w) \neq 0$ for all $w \in \eta_{\alpha_0}$, so the claim follows. By the argument principle, we may assume that $\lambda_{\alpha_0}$ has an inverse $\mu_{\alpha_0}(z)$ in $N$. Moreover, there exists $\delta >0$ such that $B(w_0, 2\delta) \subset N$ and
\begin{equation}\label{jan20}
\mu_{\alpha_0}(z) = \frac{1}{2\pi i}\int_{|w-w_0| = \delta}\frac{w\lambda'_{\alpha_0}(w)}{\lambda_{\alpha_0}(w)-z} dw
\end{equation}
from which we see that $\mu_{\alpha_0}$ is holomorphic in a neighborhood of $e^{it_0}=\lambda_{\alpha_0}(w_0) \in \mathbb{T}$.
Indeed, the integrand has a simple pole at $w=w(z)$ such that $\lambda_{\alpha_0}(w)=z$ and the residue equals $w(z)$. It follows that if $\epsilon >0$ is small enough, then $\mu_{\alpha_0}(e^{it}), t\in (t_0-\epsilon, t_0 +\epsilon),$ provides an analytic parametrization of a subarc of $\eta_{\alpha_0}$ containing $w_0$. Using the smoothness in $\alpha$, taking $\alpha$ in a small enough neighborhood of $\alpha_0$, we similarly obtain analytic parametrizations $\mu_\alpha(e^{it}), t\in (t_0-\epsilon, t_0 +\epsilon),$ of subarcs of $\eta_\alpha$ by replacing $\alpha_0$ by $\alpha$ in \eqref{jan20}. It also follows from \eqref{jan20} that $\alpha \mapsto \mu_\alpha(e^{it})$ is smooth in a neighborhood of $\alpha$. 

We now apply Lemma~\ref{lem:normal-variation} which shows that
\begin{align}\label{jan21.1}
\Re \left(\overline{\nu (w)}(\partial_\alpha \mu_{\alpha})(\lambda_\alpha(w)) \right) = \Re \left(\overline{\nu (w)}(\partial_\alpha f)(h_i(w)) \right) = \Re \left(\overline{\nu (w)}(\partial_\alpha g)(h_e(w)) \right).
\end{align}
On the other hand, differentiating $\mu \circ \lambda(w) = w$ gives
\[
 (\partial_\alpha \mu_\alpha)(\lambda_\alpha(w)) = -\frac{\partial_\alpha \lambda_\alpha(w)}{\lambda_\alpha'(w)}.
\]
Plugging this into \eqref{jan21.1} implies
\[
\Re\left( \overline{\nu(w)} \frac{\partial_\alpha \lambda(w)}{\lambda'(w)}\right) = - \Re \left(\overline{\nu (w)}(\partial_\alpha f)(h_i(w)) \right) = -\Re \left(\overline{\nu (w)}(\partial_\alpha g)(h_e(w)) \right).
\]
Combining this with the definition of $J_e, J_i$ and a change of coordinates (using the transformation rule for the Schwarzian), we get
\[
J_e = \int_{\mathbb{T}}\Re\left( \frac{\partial_\alpha  g(z)}{z  g'(z)}\right)\left(\Re(z^2 S   g(z)) + |  g'(z)|^2\kappa(  g(z))^2\right)|dz|
\]
and
\[
J_i= \int_{\mathbb{T}}\Re\left( \frac{\partial_\alpha   f(z)}{z  f'(z)}\right)\left(\Re(z^2 S f(z)) + | f'(z)|^2\kappa(  f(z))^2\right)|dz|.
\]
Recalling the definitions of $\rho_i, \rho_e$, this proves the proposition.
\end{proof}

\section{Proof of Theorem~\ref{thm:main-all-beta} and \ref{thm:main-linear-statistics}}\label{sec:mainthm}

Our main goal in this section will be to prove Theorem \ref{thm:main} below, from which Theorem~\ref{thm:main-all-beta} and \ref{thm:main-linear-statistics} are derived. The proof will be completed only at the end of the section.

\subsection{Set-up and statements}
Let $g:\gamma \rightarrow \C$ be a given function, which will serve as the test function in the linear statistic (note that in this section we are \textbf{not} denoting the exterior conformal map by $g$). Recall that $z_e(t)$, $t\in[-1,1]$, is the equilibrium parametrization of $\gamma$. Let
\begin{equation}\label{gzt}
g_k = \frac{2}{\pi} \int_{-1}^1 g(z_e(t))T_k(t) \frac{\d t}{\sqrt{1-t^2}}, \quad k\geq 1, \quad g_0 = \frac{1}{\pi} \int_{-1}^1 g(z_e(t)) \frac{\d t}{\sqrt{1-t^2}},
\end{equation}
\begin{equation}\label{logzprime}
-d_k = \frac{2}{\pi} \int_{-1}^1 \log |z_e'(t)|T_k(t) \frac{\d t}{\sqrt{1-t^2}}, \quad k\geq 1
\end{equation}
be the Chebyshev coefficients of $g\circ z_e$ and $\log |z_e'|$. Introduce the infinite column vectors
\begin{equation}\label{gvec}
\g = (\sqrt{k} g_k)_{k\geq1},
\end{equation}
\begin{equation}\label{dvec}
\dd = (\sqrt{k} d_k)_{k\geq 1},
\end{equation}
and
\begin{equation}\label{fvec}
\f = \big(\frac{2}{\sqrt{k}} \mathbbm{1}_{k\ even} \big)_{k\geq 1}.
\end{equation} 
Observe that if $g$ and $\gamma$ are sufficiently regular, then $\g, \dd \in \ell^2(\Z_+)$, but $\f \notin \ell^2(\Z_+)$. Finally, write
\begin{equation}\label{gbetavec}
\g_\beta = \g + \big(\frac{\beta}{2}-1 \big)\dd.
\end{equation}

\begin{thm}\label{thm:main}
Let $\gamma$ be a $C^{11+\alpha}$ simple arc and $g$ a complex-valued $C^{4+\alpha}$ function on $\gamma$. Then, as $n\to\infty$,
\begin{multline}\label{Dnasy}
\bar{Z}_{n}^{\beta}(\gamma)[e^g] = \bar{Z}_{n}^{\beta}([-1,1])\det(I+B)^{-1/2}  \exp \Big( n g_0 + \frac{1}{4\beta}\g_\beta^t(I+B)^{-1}\g_\beta \\ + \frac{1}{2\beta} \big(\frac{\beta}{2}-1\big)\f^t(I+B)^{-1}\g_\beta 
-\frac{1}{4\beta} \big(\frac{\beta}{2}-1\big)^2 \f^t (I+B)^{-1} B\f +o(1) \Big).
\end{multline}

\end{thm}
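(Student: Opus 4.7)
My approach is to implement a one-parameter interpolation in the Coulomb interaction that connects $\bar Z^\beta_n([-1,1])$ (at parameter $s=0$) with $\bar Z^\beta_n(\gamma)[e^g]$ (at $s=1$), and extract the stated asymptotics by tracking the $s$-derivative of the log-partition function to order $o(1)$. After pulling the integral back to $[-1,1]^n$ through the equilibrium parametrization $z_e$, I would apply Lemma~\ref{lem:arcGrunsky} to split the arc-interaction into the interval interaction plus an analytic bilinear kernel encoded by the arc-Grunsky coefficients $\{a_{kl}\}$. Scaling these coefficients by $s$, and simultaneously interpolating the external field (which combines $g\circ z_e$, the Jacobian $\log|z_e'|$ that appears because we are changing variables, and the endpoint-sensitive weight produced by the $(1-\beta/2)n$-term in the normalization), gives an interpolating partition function $\bar Z_n^\beta(s;g)$ whose $s=0$ value is the explicit Selberg integral $\bar Z^\beta_n([-1,1])$ and whose $s=1$ value is the left-hand side of \eqref{Dnasy}.

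The $s$-derivative $\partial_s \log \bar Z_n^\beta(s;g)$ is the expectation under the interpolated Gibbs measure of a bilinear statistic in the pair interaction plus a linear statistic in the field. I would analyze this using a Ward-identity / loop-equation strategy adapted to the interval with the perturbed two-body interaction: the empirical measure concentrates on the equilibrium measure of $[-1,1]$ to leading order (producing the term $n g_0$), and the Gaussian fluctuations give an $O(1)$ contribution that is exactly determined by the solution $H_s$ of the deformed Pommerenke equation, which in the spectral form \eqref{Intekv5:prel} reads $(I+sB)\h^s = -\g_\beta/\beta$ plus an endpoint term proportional to $(\beta/2-1)\f$. Proposition~\ref{prop:stGrineq} guarantees that $I+sB$ is invertible uniformly for $s\in[0,1]$, so $\h^s$ is well-defined and the relevant bilinear forms are finite. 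I expect the $s$-derivative to take the schematic form $n g_0 + \tfrac{1}{2\beta}\g_\beta^t(I+sB)^{-1}\g_\beta + (\text{endpoint/cross terms in }\f) - \tfrac12 \operatorname{Tr}[(I+sB)^{-1}B] + o(1)$, where the trace comes from the Jacobian of the interpolation in the Gaussian fluctuation.

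Integrating from $s=0$ to $1$ assembles everything: the trace integrates via Jacobi's formula to $-\tfrac12 \log\det(I+B)$, producing the $\det(I+B)^{-1/2}$ prefactor; and the derivative identity $\partial_s(I+sB)^{-1}=-(I+sB)^{-1}B(I+sB)^{-1}$ lets one check that $\int_0^1 \g_\beta^t(I+sB)^{-1}\g_\beta\,ds$ reassembles into $\tfrac{1}{4\beta}\g_\beta^t(I+B)^{-1}\g_\beta$ together with the two $\f$-terms coming from the endpoint log-singularity $1/\sqrt{1-t^2}$ (whose Chebyshev spectrum is exactly $\f$). The Chebyshev decay guaranteed by Lemma~\ref{lem:Grunskydecay} and the regularity of $g$ make the bilinear forms with $\f$ (which is not in $\ell^2$) well-defined, because $(I+B)^{-1}B\f$ and $(I+B)^{-1}\g_\beta$ have fast enough decay when they pair with $\f$.

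The main obstacle will be the uniform $o(1)$ error control in the variational step, i.e., proving a quantitative CLT for smooth linear statistics of the interpolated $\beta$-ensemble on $[-1,1]$ with the $sB$-perturbed long-range interaction, with an error that is summable in $s\in[0,1]$. This is exactly where the strong regularity hypotheses ($\gamma\in C^{11+\alpha}$, $g\in C^{4+\alpha}$) enter: they give enough Chebyshev decay for the resolvent and loop-equation estimates to apply along the interpolation, and let Proposition~\ref{prop:Dir} identify the limiting quadratic form as a Dirichlet energy (which is what yields Theorem~\ref{thm:main-linear-statistics} by taking $u$ real). The overall strategy parallels the closed-curve analysis of \cite{CouJoh} and the 2D Jordan-domain analysis of \cite{JohVik}, with the arc-Grunsky operator and its strengthened inequality taking over the roles of the classical Grunsky operator and the $\f$-vector capturing the new endpoint contribution that is absent in the closed-curve setting.
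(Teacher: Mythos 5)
Your overall architecture coincides with the paper's: interpolate in $s$ by scaling the arc-Grunsky kernel and the Jacobian weight $\log|z_e'|$, control the $s$-derivative of the free energy through a CLT for linear statistics of the interpolated ensemble, solve the deformed Pommerenke equation $(I+sB)\h^s=-\g/\beta$ using Proposition~\ref{prop:stGrineq}, identify the limiting quadratic form via Proposition~\ref{prop:Dir}, and integrate in $s$ so that the trace term produces $\det(I+B)^{-1/2}$. The differences lie in how the two hard steps are executed. First, the paper does not interpolate the weighted partition function directly: it factors $\bar Z_n^\beta(\gamma)[e^g]=\bar Z_n^\beta(\gamma)\,\E_{n,\gamma}^{\beta,1}[e^{\sum_\mu G(\theta_\mu)}]$ as in \eqref{Dnfor}, interpolates only the field-free ratio, and obtains the limits of the moments $\E_n^s[X_kX_l]$ and $\E_n^s[X_k]$ appearing in $B_n'(s)$ from the general Laplace-transform result (Theorem~\ref{thm:bound}) applied to the test functions $\cos k\theta$; the a priori uniform bound \eqref{expbound}, proved via the auxiliary functions $G_J$, is what gives $k$-uniform moment bounds and hence dominated convergence both in the $(k,l)$-sums and in the $s$-integral. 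Second, and this is the crux, the fluctuation theorem is proved not by loop equations but by the explicit change of variables $\theta_\mu\mapsto\theta_\mu+\tfrac1n H_s(\theta_\mu)$, concentration on the set $E_{n,s,K}$ via potential-theoretic energy estimates (Lemmas~\ref{lem:Ensk}, \ref{lem:tmutozero}, \ref{lem:sumtmu}, which bound $\sum_\mu t_\mu^2$ for the deviations from the Fekete angles), careful Taylor expansion of $R_n^s, T_n^s, U_n^s, V_n^s$, and a bootstrap from \eqref{expbound}. Your proposal defers exactly this step to a Ward-identity/loop-equation strategy; that route is not obviously doomed, but it is unexecuted and would have to confront the hard edges at $\pm1$, the non-translation-invariant perturbed two-body kernel, and the uniformity in $s$ and in the Fourier index that the paper extracts from the bootstrap — naming the technique does not supply these estimates, so this is the real gap in the plan.

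Two details are also stated imprecisely. The spectral form \eqref{Intekv5} of the integral equation has only $\g$ on the right-hand side; the vectors $\dd$ and $\f$ do not enter the equation but arise from the mean correction, e.g.\ the term $\frac1\pi\int_0^\pi H_s(\theta)\cot\theta\,\d\theta=\sum_{n\ge1}h_{2n}=\tfrac12\f^t\h^s$ produced by the $\log\sin\theta$ weight after the change of variables. And $\int_0^1\g^t(I+sB)^{-1}\g\,\d s$ does not by itself reassemble into $\g^t(I+B)^{-1}\g$; the integration works because the $s$-dependent prefactors make the integrand an exact derivative, as in the paper's identity for $\frac{\d}{\d s}\bigl(s^2\dd^t(I+sB)^{-1}\dd+2s\f^t(I+sB)^{-1}\dd-s\f^t(I+sB)^{-1}B\f\bigr)$, and this bookkeeping is what generates the precise $\f$-terms in \eqref{Dnasy}.
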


Thus we see that as $n\to \infty$,
\begin{align*}
  &\log \frac{Z_{n}^\beta(\gamma)}{Z_{n}^\beta([-1,1])} =  \left(\frac{\beta}{2} n^2+(1-\frac{\beta}{2})n \right)\log (2\mathrm{cap}(\gamma)) - \frac{1}{2}\log \det(I+B) \\
  &+ \frac{1}{4\beta} \big(\frac{\beta}{2}-1\big)^2 \left( \dd^t (1+B)^{-1} \dd + 2 \f^t (1+B)^{-1} \dd - \f^t(1+B)^{-1} B \f \right) + o(1).  
\end{align*}

Combined with Theorem \ref{thm:Grunsky-Loewner2-intro} and Lemma \ref{lem:new_var_and_mean}, this gives Theorem \ref{thm:main-all-beta}.

Next let $\E_{n,\gamma}^\beta[\cdot]$ denote expectation with respect to \eqref{fn}. Another consequence of Theorem \ref{thm:main} is the following limit theorem for the Laplace transform of a linear statistic (for arcs that are $C^{11+\alpha}$).

\begin{thm}\label{thm:relSz}
Let $\gamma$ be a $C^{9+\alpha}$ simple arc and $g$ a complex-valued $C^{4+\alpha}$ function on $\gamma$. Then, 
\begin{equation}\label{relSz}
\lim_{n\to\infty} \E_{n,\gamma}^\beta \big[ \exp \big( \sum_\mu g(z_\mu) -n \int_\gamma g(z) \d \nu_e(z)\big) \big] = \exp \big( A_\beta[g] \big)
\end{equation}
where
\begin{equation}\label{Ag}
A_\beta[g] = \frac{1}{4\beta} \g^t(I+B)^{-1}\g+\frac{1}{4}\big(1-\frac{2}{\beta}\big) (\dd+\f)^t(I+B)^{-1}\g.
\end{equation}

\end{thm}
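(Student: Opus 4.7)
The plan is to derive Theorem~\ref{thm:relSz} as a direct consequence of Theorem~\ref{thm:main} by taking a ratio of two instances of the expansion \eqref{Dnasy}. Apply Theorem~\ref{thm:main} once with test function $g$, and once with $g \equiv 0$; in the latter case one has $g_0=0$ and $\g=\mathbf{0}$, so $\g_\beta=(\beta/2-1)\dd$. Upon dividing, the common prefactor $\bar{Z}_n^\beta([-1,1])\det(I+B)^{-1/2}$ cancels, and so does the $g$-independent term $-\tfrac{1}{4\beta}(\beta/2-1)^2\f^t(I+B)^{-1}B\f$. By \eqref{Pn}, the quotient equals $\E_{n,\gamma}^\beta[\exp(\sum_\mu g(z_\mu))]$, and the left-hand side of \eqref{relSz} is obtained by further multiplication by $e^{-n\int_\gamma g\,d\nu_e}$.

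Writing $\g_\beta = \g + (\beta/2-1)\dd$ and using symmetry of $(I+B)^{-1}$ (which exists on $\ell^2(\Z_+)$ by Proposition~\ref{prop:stGrineq}), the quadratic-in-$\g_\beta$ contribution splits as
\[
\g_\beta^t(I+B)^{-1}\g_\beta - (\tfrac{\beta}{2}-1)^2\dd^t(I+B)^{-1}\dd = \g^t(I+B)^{-1}\g + 2(\tfrac{\beta}{2}-1)\dd^t(I+B)^{-1}\g,
\]
and the cross term reduces to $\f^t(I+B)^{-1}\g_\beta-(\tfrac{\beta}{2}-1)\f^t(I+B)^{-1}\dd = \f^t(I+B)^{-1}\g$. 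The surviving linear term $ng_0$ is identified via the defining relation \eqref{eqpar} of the equilibrium parametrization: by \eqref{gzt}, $g_0 = \tfrac{1}{\pi}\int_{-1}^1 g(z_e(t))\,dt/\sqrt{1-t^2} = \int_\gamma g\,d\nu_e$, and so cancels the centering. Using $\frac{1}{2\beta}(\tfrac{\beta}{2}-1)=\tfrac{1}{4}(1-2/\beta)$, the collected exponent matches $A_\beta[g]$ in \eqref{Ag} exactly.

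The step I expect to be the main obstacle is the regularity assumption: Theorem~\ref{thm:main} is stated under $\gamma\in C^{11+\alpha}$, whereas Theorem~\ref{thm:relSz} only assumes $C^{9+\alpha}$. Closing this gap presumably relies on the observation that, in the ratio, the $g$-independent contributions to \eqref{Dnasy} involving the highest-order derivatives of $z_e$ (through $\dd$ alone) cancel, so only the $g$-dependent quadratic forms $\g^t(I+B)^{-1}\g$ and $(\dd+\f)^t(I+B)^{-1}\g$ need to converge. Their absolute convergence is controlled by Lemma~\ref{lem:Grunskydecay} applied both to $\gamma$ (giving decay of $b_{kl}$) and to $g\circ z_e$ (giving decay of $g_k$ for $g\in C^{4+\alpha}$), combined with the $\ell^2$-boundedness of $(I+B)^{-1}$ from Proposition~\ref{prop:stGrineq}. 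The subtlety with the non-$\ell^2$ vector $\f$ is resolved because $\f$ only pairs with $(I+B)^{-1}\g$, which decays rapidly enough to make the pairing well-defined.
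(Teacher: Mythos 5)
Your algebraic reduction is correct as far as it goes: dividing two instances of \eqref{Dnasy} (with $g$ and with $g\equiv 0$), using symmetry of $(I+B)^{-1}$ and the identity $g_0=\int_\gamma g\,d\nu_e$ from \eqref{eqpar} and \eqref{gzt}, does produce exactly $A_\beta[g]$ in \eqref{Ag}. The problem is the logical direction. In the paper, Theorem~\ref{thm:relSz} is \emph{not} a corollary of Theorem~\ref{thm:main}; it is the $s=1$ case of Theorem~\ref{thm:bound}, which is proved independently in Sections~\ref{sec:domain}--\ref{sec:proof} (deformed Pommerenke equation, change of variables $\theta_\mu\mapsto\theta_\mu+\tfrac1n H_s(\theta_\mu)$, restriction to $E_{n,s,K}$, and the $\ell^2$ control of deviations from the Fekete angles). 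Theorem~\ref{thm:main} is then obtained \emph{from} Theorem~\ref{thm:bound}: the uniform bound \eqref{expbound} and the limit \eqref{relSz2} at every $s\in[0,1]$ are what make the interpolation argument for \eqref{partquot} work, and \eqref{Dnasy} is finally assembled by combining \eqref{partquot} with \eqref{relSz2} at $s=1$, i.e.\ with the very statement you are trying to prove. So deriving Theorem~\ref{thm:relSz} from Theorem~\ref{thm:main} is circular unless you supply an independent proof of Theorem~\ref{thm:main}, which your proposal does not.

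The regularity issue is also not resolved by your cancellation remark. Theorem~\ref{thm:main} is stated for $C^{11+\alpha}$ arcs, and you would have to invoke it (twice) for a $\gamma$ that is only assumed $C^{9+\alpha}$; one cannot apply a theorem outside its hypotheses on the grounds that the offending terms cancel in the ratio afterwards. The $C^{11+\alpha}$ assumption in the paper is not there to make the limiting quadratic forms finite (for that, $C^{9+\alpha}$ plus Lemma~\ref{lem:Grunskydecay} and Proposition~\ref{prop:stGrineq} would indeed suffice, as you say); it is needed for the summability \eqref{aklest} of $(kl)^{4+\epsilon}|a_{kl}|$ used to dominate $B_n'(s)$ in the proof of \eqref{partquot}, i.e.\ for the validity of the expansion \eqref{Dnasy} itself. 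Consequently your argument, even granting Theorem~\ref{thm:main} as an external input, only yields Theorem~\ref{thm:relSz} under $C^{11+\alpha}$, not $C^{9+\alpha}$. The intended proof avoids both problems by working directly with Theorem~\ref{thm:bound}: for $g$ with $\int_\gamma g\,d\nu_e=0$ one sets $s=1$ in \eqref{relSz2} and checks $A_1[g]=A_\beta[g]$, which requires only the $C^{9+\alpha}$ hypotheses.
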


The proof of Theorem \ref{thm:main} is based on a modified version of Theorem \ref{thm:relSz}. The proof of this modified version, which includes Theorem \ref{thm:relSz} as a special case, will be given in Section \ref{sec:proof}.

As a consequence of Theorem \ref{thm:relSz}, we see that the linear statistic $\sum_\mu g(z_\mu) -n \int_\gamma g(z) \d \nu_e(z)$ converges to a Gaussian random variable with mean $\frac{1}{4}\big(1-\frac{2}{\beta}\big) (\dd+\f)^t(I+B)^{-1}\g$ and variance $\frac{1}{4\beta} \g^t(I+B)^{-1}\g$. 

In the case of real test functions, we can obtain an alternative expression for the limiting mean and variance of the linear statistic, in terms of the Dirichlet integral of the harmonic extension of the test function. This is the content of the next lemma, which combined with Theorem \ref{thm:relSz} gives Theorem \ref{thm:main-linear-statistics}.

\begin{lemma}\label{lem:new_var_and_mean}
Let $\gamma$ be a $C^{5+\alpha}$ Jordan arc with endpoints $-1$ and $1$ and let $g : \gamma \to \mathbb{R}$ be a $C^{4+\alpha}$ function. Set 
\[
d(w) = \log|\tau_e'(w)|, \quad w \in \gamma
\]
and
\[
m(w) = - \frac{1}{2} \log \left|\frac{1-w^2}{1-\tau_e(w)^2} \right|, \quad w \in \gamma.
\]
   Define 
    \begin{align}
    A[g] &= \mathcal{D}_{\C \smallsetminus \gamma}(g) \\
    M[g] &= \mathcal{D}_{\C \smallsetminus \gamma}(d-m,g) +g(1) +g(-1) - g_0 \\
    J^F(\gamma) &=  \mathcal{D}_{\C \smallsetminus \gamma}(d-m) +  3 \log|h'(1)h'(-1)| - 2(d_0 + \log(2\textrm{cap}(\gamma)).
    \end{align}
    Then,
    \begin{align}
    A[g] &= \g^t (1+B)^{-1} \g \\
    M[g] &= \dd^t (1+B)^{-1} \g + \f^t (1+B)^{-1} \g \\
    J^F(\gamma) &= \dd^t (1+B)^{-1} \dd + 2 \f^t (1+B)^{-1} \dd - \f^t(1+B)^{-1} B \f,
    \end{align}
     where $\g, \dd,$ and $ \f$ are as in \eqref{gvec}, \eqref{dvec}, and \eqref{fvec}. Finally,
     \[
     |z_e'(\pm 1)| =  |h'(\pm 1)|^{-2}.
     \]
\end{lemma}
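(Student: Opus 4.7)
The overall strategy is to identify explicit Chebyshev expansions of the relevant functions on $\gamma$ (composed with the equilibrium parametrization $z_e$) and to reduce each identity to Proposition~\ref{prop:Dir} followed by algebraic manipulations involving $B$. The identity $A[g] = \g^t(I+B)^{-1}\g$ is immediate from Proposition~\ref{prop:Dir} applied to $u = g$.

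The key preliminary step is to identify the Chebyshev vector of $m \circ z_e$. Evaluating the arc-Grunsky expansion of Lemma~\ref{lem:arcGrunsky} at $s = 1$ and at $s = -1$ and using $T_k(\pm 1) = (\pm 1)^k$ produces Chebyshev expansions of $\log |(1 \mp z_e(t))/(1 \mp t)|$. Averaging these yields
\[
-m(z_e(t)) = \tfrac{1}{2}\log \Bigl| \frac{1 - z_e(t)^2}{1-t^2} \Bigr| = \log(2\mathrm{cap}(\gamma)) - 2 \sum_{l \geq 1} \Bigl( \sum_{k \text{ even}} a_{kl} \Bigr) T_l(t).
\]
Multiplying the coefficient at $T_l$ by $\sqrt{l}$ and comparing with $(B\f)_l = 2\sqrt{l}\sum_{k \text{ even}} a_{kl}$ shows that the Chebyshev vector of $m \circ z_e$ equals $B\f$. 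Consequently the Chebyshev vector of $(d-m) \circ z_e$ is $\dd - B\f$.

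For the $M[g]$ identity, the bilinear version of Proposition~\ref{prop:Dir} gives $\mathcal{D}_{\C \smallsetminus \gamma}(d-m, g) = (\dd - B\f)^t (I+B)^{-1}\g$, which using $-B(I+B)^{-1} = (I+B)^{-1} - I$ equals $\dd^t(I+B)^{-1}\g + \f^t(I+B)^{-1}\g - \f^t\g$. A direct calculation from the Chebyshev expansion of $g \circ z_e$ at $t = \pm 1$ gives $\f^t \g = g(1) + g(-1) - 2 g_0$, which cancels against the endpoint terms in $M[g]$. For the $J^F$ identity, expand $\mathcal{D}_{\C \smallsetminus \gamma}(d-m) = (\dd - B\f)^t(I+B)^{-1}(\dd - B\f)$ and apply $B(I+B)^{-1}B = B - I + (I+B)^{-1}$ to reduce the claim to
\[
2 \dd^t \f - \f^t B \f = 3 \log|h'(1)h'(-1)| - 2 d_0 - 2 \log(2\mathrm{cap}(\gamma)).
\]
The quantity $\dd^t \f = 2\sum_{k \text{ even}} d_k$ is computed from the Chebyshev expansion of $\log|z_e'|$ at $t = \pm 1$ combined with the last claim of the lemma, and $\f^t B\f = 4\sum_{k,l \text{ both even}} a_{kl}$ is extracted from the four endpoint values of the arc-Grunsky expansion at $s, t \in \{-1,1\}$; the identity $\log|(1 - z_e(-1))/(1-(-1))| = 0$ decouples the parities of $k$ and $l$ and isolates the even--even contribution.

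The final claim $|z_e'(\pm 1)| = |h'(\pm 1)|^{-2}$ is proved by asymptotic analysis near the endpoint. The identity $\tau_e'(z)/\sqrt{1-\tau_e(z)^2} = (|\psi_+'(z)|+|\psi_-'(z)|)/2$ established in the proof of Lemma~\ref{lem:eqpar} is analyzed as $z = z_e(t) \to 1$ with $t = 1 - \epsilon$: the left side behaves like $|z_e'(1)|^{-1}/\sqrt{2\epsilon}$. On the right, writing $\psi_\pm = h \circ q_\pm$ with $q_\pm(z) - 1 \sim \pm \sqrt{2(z-1)}$ and $|z-1| \sim |z_e'(1)|\epsilon$, one obtains $(|\psi_+'| + |\psi_-'|)/2 \sim |h'(1)|/\sqrt{2|z-1|}$. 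Matching the two asymptotics yields $|z_e'(1)|^{-1/2} = |h'(1)|$; the case $z = -1$ is identical. The main technical obstacle is bookkeeping the even/odd parities in $\f^t B\f$ and correctly applying the operator identities coupling $B$ with $(I+B)^{-1}$; once the Chebyshev vector of $m \circ z_e$ is identified as $B\f$, the rest is routine algebra.
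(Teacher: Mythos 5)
Your argument for the three bilinear identities follows the same route as the paper: $A[g]$ is Proposition~\ref{prop:Dir} verbatim, and the key step in both treatments is the identification of the Chebyshev vector of $m\circ z_e$ with $B\f$ via endpoint evaluations of the arc-Grunsky expansion of Lemma~\ref{lem:arcGrunsky}, after which $\f$ (which is not in $\ell^2$) is only ever paired with decaying vectors through $B(I+B)^{-1}=I-(I+B)^{-1}$. Your organization of the $J^F$ identity is slightly different and arguably cleaner: instead of the paper's term-by-term evaluation of $\f^t(I+B)^{-1}\dd$ and $\f^t(I+B)^{-1}B\f$, you reduce the whole claim to the finite quantity $2\dd^t\f-\f^tB\f$ and compute it from the four corner evaluations of the Grunsky expansion at $s,t\in\{-1,1\}$; I checked that this reduction is exactly what the operator algebra gives and that the corner values (using $|z_e'(\pm1)|=|h'(\pm1)|^{-2}$ and the vanishing of $\log|(z_e(1)-z_e(-1))/2|$) produce $3\log|h'(1)h'(-1)|-2d_0-2\log(2\mathrm{cap}(\gamma))$, as required. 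The genuinely different part is the last claim: the paper computes $|z_e'(\pm1)|$ exactly through $T(w)=-\cos\bigl(\tfrac{1}{2\i}\log\lambda(w)\bigr)$ with $\lambda=h(1/w)/h(w)$ and the identity $|\lambda'(1)|=2|h'(1)|$, whereas you match asymptotics in the density identity $\tau_e'/\sqrt{1-\tau_e^2}=\tfrac12(|\psi_+'|+|\psi_-'|)$ from the proof of Lemma~\ref{lem:eqpar} as $z\to 1$ along the arc. Your matching is correct (both sides blow up like $(\text{const})/\sqrt{\epsilon}$ and comparing constants gives $|h'(1)|=|z_e'(1)|^{-1/2}$), and it uses only ingredients already established; it does require, as does the paper's computation, that $z_e'$ extends continuously and nonvanishing to the endpoints and that $h'$ is continuous there, which the regularity hypotheses supply via Kellogg's theorem.

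Two small points of bookkeeping. First, when you invoke $B(I+B)^{-1}B=B-I+(I+B)^{-1}$ you should keep one factor $B\f=\mathbf{m}$ grouped, since expanding fully produces the individually divergent expressions $\f^t\f$ and $\f^t(I+B)^{-1}\f$; the net identity you state is correct, but the safe manipulation is $\f^tB(I+B)^{-1}(B\f)=\f^t(B\f)-\f^t(I+B)^{-1}(B\f)$ with $B\f\in\ell^2$. Second, with the normalization \eqref{gzt} one indeed gets $\f^t\g=g(1)+g(-1)-2g_0$, as you say; this is the value consistent with the form of $M[u]$ in Theorem~\ref{thm:main-linear-statistics} (where the mean enters as $-2\int_\gamma u\,d\nu_e$), so the cancellation you claim is exact against those endpoint terms, while the constant-coefficient bookkeeping in the displayed statement of the lemma uses the other convention for $g_0$. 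Neither point affects the structure or validity of your argument.
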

\begin{proof}
The identity for $A[g]$ follows directly from Proposition~\ref{prop:Dir}.
Consider the expression for $M[g]$ on the Chebyshev side. Let $w\in \gamma$. We can write $d(w) = -\log|z'_e(\tau_e(w))|$. Then \[d(z_e(t)) = -\log|z'_e(t)| = \frac{d_0}{2} + \sum_{k=1}^\infty d_k T_k(t).\] Using this, we can argue similarly as in Proposition~\ref{prop:Dir} to see that
\begin{align}\label{M1}
\dd^t (1+B)^{-1} \g = \mathcal{D}_{\mathbb{C} \smallsetminus \gamma }(d, g)
\end{align}
and
\begin{align}\label{M2}
\dd^t (1+B)^{-1} \dd = \mathcal{D}_{\mathbb{C} \smallsetminus \gamma }(d).
\end{align}
We cannot immediately apply the same reasoning to the terms involving $\f$ since this is not an element of $\ell^2$. However, we have
\begin{align}\label{M4}
\f^t(I+B)^{-1}\g = \f^t \g + \f^t((I+B)^{-1}-I) \g = \f^t \g - \f^t B (I+B)^{-1} \g.
\end{align}
Since $T_n(\pm 1) = \pm 1$, we see that
\begin{align} \nonumber
\f^t \g & =  \sum_{k=1}^\infty(1+(-1)^k)g_k \\\nonumber
& = \frac{g_0}{2} + \sum_{k =1}^\infty T_k(1) g_k + \frac{g_0}{2} + \sum_{k =1}^\infty T_k(-1) g_k - g_0\\ 
& = g(1) + g(-1) - g_0. \label{M33}
\end{align}
We next claim that we can write 
\begin{align}\label{M5}
     B \f = {\bf m},
\end{align} 
where ${\bf m} = (\sqrt{k} m_k)_{ \ge 1}$ are the rescaled Chebyshev coefficients of the function
\[
m(z_e(t)) = \frac{m_0}{2} +\sum_{k = 1}^\infty m_kT_k(t).
\]
Indeed, since $z_e(\pm 1) = \pm 1$, and using the arc-Grunsky expansion we find
\begin{align*}
m(z_e(t)) &= - \frac{1}{2} \log \left|\frac{z_e(t) - z_e(1)}{t-1} \right|- \frac{1}{2} \log \left|\frac{z_e(t) - z_e(-1)}{t+1} \right| \\ 
& = -\log(2\textrm{cap}(\gamma)) + \sum_{k,l = 1}^\infty a_{kl}(T_k(t)T_l(1) + T_k(t)T_l(-1) ) \\
& =  -\log(2\textrm{cap}(\gamma)) + 2\sum_{k = 1}^\infty (\sum_{l = 1}^\infty a_{kl}1_{l \textrm{ even}})T_k(t)  \\ & = -\log(2\textrm{cap}(\gamma))  + \sum_{k =1}^\infty( \sum_{l =1}^\infty \sqrt{l} a_{kl} f_l)T_k(t),
\end{align*}
which proves the claim. Combining this with \eqref{M33} it therefore follows from \eqref{M4} that
\begin{align}\label{M6}
\f^t(I+B)^{-1}\g = g(1) + g(-1) + g_0 - \mathcal{D}_{\mathbb{C} \smallsetminus \gamma}(m,g).
\end{align}
Similarly, we see that
\begin{align}\label{M7}
\f^t(I+B)^{-1}\dd = d(1) + d(-1) + d_0 - \mathcal{D}_{\mathbb{C} \smallsetminus \gamma}(m,d)
\end{align}
and
\begin{align}\label{M8}
\f^t(I+B)^{-1}(B \f) = \f^t(I+B)^{-1}{\bf m} =  m(1) + m(-1) + m_0 - \mathcal{D}_{\mathbb{C} \smallsetminus \gamma}(m).
\end{align}
On the other hand, from the arc-Grunsky expansion, we have $m_0 = - 2 \log(2\textrm{cap}(\gamma))$ and from the definition,
\[
m(1) + m(-1) = -\frac{1}{2}\log|z'_e(-1)z'_e(1)| = \frac{1}{2}(d(1) + d(-1)).
\]
We conclude that
\begin{align*}
M[g] & = \mathcal{D}_{\mathbb{C} \smallsetminus \gamma}(d,g) - \mathcal{D}_{\mathbb{C} \smallsetminus \gamma}(m,g) + g(1) + g(-1) + g_0 \\ 
& = \mathcal{D}_{\mathbb{C} \smallsetminus \gamma}(d-m,g)+ g(1) + g(-1) + g_0,
\end{align*}
and 
\begin{align*}
J^F(\gamma)  = &\mathcal{D}_{\mathbb{C} \smallsetminus \gamma}(d) +  2(d(1) + d(-1) + d_0)- 2 \mathcal{D}_{\mathbb{C} \smallsetminus \gamma}(m,d) + \mathcal{D}_{\mathbb{C} \smallsetminus \gamma}(m) \\ &- (m(1) + m(-1) + m_0) \\ 
 = &\mathcal{D}_{\mathbb{C} \smallsetminus \gamma}(d-m)-\frac{3}{2}\log|z'_e(-1)z'_e(1)| -2(d_0 + \log (2\textrm{cap}(\gamma)).
\end{align*}
It remains to prove that 
  \[
     |z_e'(\pm 1)| =  |h'(\pm 1)|^{-2}.
     \]
     We write $u(w) = (w+1/w)/2$. For $w \in \eta$, let $\lambda(w) = h(1/w)/h(w)$ and define
     \begin{align}\label{eq:T}
     T(w) = \tau_e(u(w))= -\cos \left(\frac{1}{2i} \log \lambda(w)\right).
     \end{align}
     Then tracing along $\eta_+$ from $-1$ to $1$, $T$ traces $[-1,1]$ from $-1$ to $1$. Note that $z'_e(T(w)) = u'(w)/T'(w)$ if $w \in \eta_+, w \neq \pm 1$. Using the formula for $u$, taking limits along $\eta$,
     \[
     \lim_{w \to 1} | z'_e(T(w))|^2 = \lim_{w \to 1} \left|\frac{w-1}{T'(w)}\right|^2.
     \]
     By \eqref{eq:T} we have
     \[
     T'(w)^2 = -\frac{\lambda'(w)^2}{4\lambda(2)^2} \frac{1 - \cos(\frac{1}{i} \log \lambda(w))}{2} = \frac{\lambda'(w)^2}{16 \lambda(w)^3}(\lambda(w) - 1)^2.
     \]
     Hence,
     \[\lim_{w \to 1} | z'_e(T(w))|^2 = \frac{16}{|\lambda'(1)|^4}.\]
     On the other hand, we may compute explicitly that $|\lambda'(1)| = 2|h'(1)|$ which gives
     \[
     |z'_e(1)| = |h'(1)|^{-2},
     \]
     as claimed.
     The derivative at $-1$ is handled in a similar manner.
     \end{proof}

\subsection{Outline of the proof}
Given the equilibrium parametrization of the arc, and the general facts that we derived about the arc-Grunsky operator in Section \ref{sec:Grunsky}, the general strategy for the proof is similar to that of \cites{Joh1, CouJoh}. 

The first step to analyze the asymptotics of $Z_{n}^{\beta}(\gamma) [e^g]$ is to introduce the equilibrium parametrization $z_e$ of the Jordan arc $\gamma$ by making the change of variables $z_\mu = z_e(\cos(\theta_\mu))$ in \eqref{Pn}. This gives a new probability measure $d\P_{n,\gamma}^{\beta}$ on $[0,2\pi]^n$ that describes the gas on the Jordan arc. We then define a family of probability measures $\P_{n,\gamma}^{\beta,s}$ on $[0,\pi]^n$, for $s\in[0,1]$, that interpolate between $\P_{n,\gamma}^{\beta} = \P_{n,\gamma}^{\beta,1}$ and $\P_{n,\gamma}^{\beta, 0}$, the measure which corresponds to the gas on $[-1,1]$. We let $Z_{n}^{\beta,s}(\gamma)$ denote the corresponding partition functions. 

To obtain the asymptotic partition function for the Jordan arc we can now compare it to the one for $[-1,1]$ by writing
\[ \log\ Z_{n}^{\beta}(\gamma)/ Z_{n}^{\beta,0}(\gamma) = \int_0^1 B_n'(s) \d s, \quad B_n(s) = \log\ Z_{n}^{\beta,s}(\gamma). \]
After some computations,
\[ B_n'(s) = -\beta \sum_{k,l \geq 1} a_{kl} \E_n^s[X_k(\theta)X_l(\theta)] -(1-\beta/2) \sum_{k\geq 1} \frac{1}{\sqrt{k}}d_k \E_n^s[X_k(\theta)] \]
where $(a_{kl})$ are the Grunsky coefficients, $(d_k)$ are the Chebyshev coefficients of $\log |z_e'|$ defined in \eqref{logzprime}, and $X_k(\theta) = \sum_{\mu=1}^n \cos k\theta_\mu$. The limit in $n$ of $Z_{n}^{\beta}(\gamma)$ will then follow from the limit of the mean and covariances of the linear statistics $X_k(\theta)$, for any $s\in[0,1]$ and $k\in \N$, together with some uniform bounds. These will follow in turn from the more general formula for the asymptotic Laplace transform of linear statistics of $\P_{n,\gamma}^{\beta,s}$ given in Theorem \ref{thm:bound}.

To study the Laplace transform we make a change of variables $\omega_\mu = \theta_\mu +\frac{1}{n}H_s(\theta_\mu)$ where $H_s$ solves the specific integral equation \eqref{Intekv1}. For $s=1$, we can write $H_1(\theta) = h(z_e(\cos(\theta))$ where $h$ is the solution to 
\[ g(z) = \frac{\beta}{\pi} \Re p.v. \int_\gamma \frac{h(\zeta)}{\zeta-z} \d\zeta.\]
After the change of variables, the linear statistic becomes
\[ \sum_{\mu} G(\omega_\mu) = \sum_\mu G(\theta_\mu) + \frac{1}{n} \sum_\mu G'(\theta_\mu)H_s(\theta_\mu) + O(\frac{1}{n})  \]
where $G(\theta) = g(z_e(\cos(\theta))$. In Section \ref{sec:domain}, Lemma \ref{lem:Ensk}, we define a set $E_{n,s,K}$ whose probability tends to one exponentially fast and on which the normalized counting measure $\frac{1}{n}\sum_\mu \delta_{\cos \theta_\mu}$ converges weakly to the equilibrium measure on $[-1,1]$. Therefore
\[ \sum_\mu G(\theta_\mu) \approx \frac{n}{\pi} \int_{-1}^1 g(z_e(t)) \frac{\d t}{\sqrt{1-t^2}} = ng_0 \]
and 
\[ \frac{1}{n} \sum_\mu G'(\theta_\mu)H_s(\theta_\mu) \approx \frac{1}{\pi} \int_{-1}^1 g'(z_e(t))h_s(z_e(t)) \frac{\d t}{\sqrt{1-t^2}} =  \frac{1}{2\beta} \mathbf{g}^t(I+sB)^{-1}\mathbf{g} \] 
which gives the asymptotic variance. Taylor expanding up to first or second order the other terms in the exponential in $\E_{n,\gamma}^{\beta,s}[e^g]$ after the change of variables gives the asymptotic mean minus half the variance, and other terms that cancel each other out. In order to show this cancellation we need to know more precisely how close $\frac{1}{n}\sum_\mu \delta_{\cos \theta_\mu}$ is to the equilibrium measure. We obtain bounds in $l_2$ on the deviations from the Fekete points for any sequence $\theta_\mu \in E_{n,s,K}$ in Lemma \ref{lem:sumtmu}. We first obtain an upper bound on the remainder term, and thus on the Laplace transform (see \eqref{expbound}). We then use this upper bound to bootstrap the argument and show that the remainder term actually goes to zero.

The fact that the integral equation has a solution $H_s$ is not obvious. But we can rewrite the integral equation by using the arc-Grunsky operator so that in terms of Chebyshev coefficients it takes the form
\begin{equation}
-\frac{1}{\beta} \g = (I+sB)\h^s
\end{equation}
where $\h^s = (\sqrt{k} h_k^s)_{k\geq1}$ and $h_k^s$ is the $k$th sine Fourier coefficient of $H_s$. In Section \ref{sec:Grunsky} we showed that if the curve is sufficiently smooth then $B\geq -\kappa I$ so that $I+sB$ is invertible for all $s\in[0,1]$. This gives a solution to the integral equation in terms of its Chebyshev coefficients.

\subsection{Interpolation of the partition function}\label{sec:interpolation-partition-function}
From now on we assume without loss of generality that $g$ is real valued, see Lemma 2.2 in \cite{CouJoh}. 
In order to analyze the asymptotics of $Z_{n}^{\beta}(\gamma) [e^g]$, we introduce the equilibrium parametrization $z_\mu = z_e(x_\mu)$, $x_\mu\in[-1,1]$ into the integral \eqref{Dn}. This gives, after some manipulation,
\begin{multline*}
Z_{n}^{\beta}(\gamma)[e^g] = \frac{1}{n!} \int_{[-1,1]^n} \exp \Big( \frac{\beta}{2} \sum_{\mu\neq \nu} \log |x_\mu-x_\nu| + \frac{\beta}{2} \sum_{\mu,\nu} \log \big| \frac{z_e(x_\mu)-z_e(x_\nu)}{x_\mu-x_\nu} \big| \Big)\\
 \cdot\exp\Big( \sum_\mu g(z_e(x_\mu)) + (1-\beta/2)\sum_\mu \log |z_e'(x_\mu)| \Big) \prod_\mu \d x_\mu.
\end{multline*}

We modify this expression by introducing an extra parameter $s\in [0,1]$. Let
\begin{multline}\label{Dn2}
 Z_{n}^{\beta,s}(\gamma) [e^g] = \frac{1}{n!} \int_{[-1,1]^n} \exp \Big( \frac{\beta}{2} \sum_{\mu\neq \nu} \log |x_\mu-x_\nu| + \frac{\beta s}{2} \sum_{\mu,\nu} \log \big| \frac{z_e(x_\mu)-z_e(x_\nu)}{x_\mu-x_\nu} \big| \Big) \\
\cdot \exp\Big( \sum_\mu g(z_e(x_\mu)) + (1-\beta/2)s\sum_\mu \log |z_e'(x_\mu)| \Big) \prod_\mu \d x_\mu.
\end{multline}
Note that $Z_{n}^{\beta,1}(\gamma)[e^g] = Z_{n}^{\beta}(\gamma)[e^g]$ and $Z_{n}^{\beta,0}(\gamma)[e^g] = Z_{n}^{\beta}([-1,1])[e^g]$ so the expression in \eqref{Dn2} interpolates between the case of the general Jordan arc $\gamma$ and the case of the interval $[-1,1]$. We rewrite \eqref{Dn2} further by making the change of variables $x_\mu = \cos \theta_\mu$, $\theta_\mu \in [0,\pi]$. This gives
\begin{equation}\label{Dn3}
Z_{n}^{\beta,s}(\gamma)[e^g] = \int_{H_n} \exp \Big( \frac{\beta}{2} F_n^s(\theta) + \sum_\mu (G(\theta_\mu)+ (1-\beta/2) s L(\theta_\mu)+\log (\sin(\theta_\mu)) ) \Big) \prod_\mu \d\theta_\mu
\end{equation}
where $H_n = \{ \theta \in [0,\pi]^n: 0\leq \theta_1<\theta_2<\cdots <\theta_n \leq \pi \}$, and
\begin{equation}\label{Fns}
F_n^s(\theta) = \sum_{\mu\neq \nu} \log |\cos \theta_\mu-\cos \theta_\nu| + s \sum_{\mu,\nu} \log \big| \frac{z_e(\cos \theta_\mu)-z_e(\cos \theta_\nu)}{\cos \theta_\mu-\cos \theta_\nu} \big|,
\end{equation}
\begin{equation}\label{L}
L(\theta) = \log | z_e'(\cos \theta) |,
\end{equation}
and
\begin{equation}\label{G}
G(\theta) = g(z_e(\cos \theta)).
\end{equation}
In \eqref{Dn3} we will make the change of variables $\theta_\mu\rightarrow \theta_\mu+\frac{1}{n}H_s(\theta_\mu)$ with an appropriate function $H_s(\theta)$. It turns out that the correct choice is to let $H_s$ be the solution to the integral equation discussed in Section~\ref{sec:prel}:
\begin{equation}\label{Intekv1}
G(\omega) = \frac{\beta s}{\pi} \Re  p.v.\int_0^\pi \frac{H_s(\theta)z_e'(\cos \theta)\sin \theta}{z_e(\cos\theta)-z_e(\cos \omega)} \d\theta + p.v.\frac{\beta(1-s)}{\pi} \int_0^\pi \frac{H_s(\theta)\sin\theta}{\cos\theta-\cos\omega}\d\theta
\end{equation}
 on $[0,\pi]$. 

Recall \eqref{Intekv4:prel}, 
\begin{equation}\label{Intekv4}
G(\omega) = -\beta s \sum_{k,l\geq 1} ka_{kl}h_k^s\cos(l\omega) + \beta\widetilde{H}_s(\omega),
\end{equation}
where $\widetilde{H}_s$ is the conjugate function.
Next, we recall the infinite column vectors
\begin{equation}\label{xtheta}
\x_\theta = \big( \frac{1}{\sqrt{k}}\cos(k\theta) \big)_{k\geq 1},\quad \y_\theta = \big( \frac{1}{\sqrt{k}}\sin(k\theta) \big)_{k\geq 1}.
\end{equation}
Then
\[ G(\theta) = \x_\theta^t\g, \quad \widetilde{H}_s(\theta) = -\x_\theta^t\h^s,\]
where $\h^s = (\sqrt{k}h_k^s)_{k\geq 1}$. We see that \eqref{Intekv4} can be written 
\[ \x_\theta^t\g = -\beta s \x_\theta^t B \h^s -\beta \x_\theta^t\h^s\]
where $B$ is the arc-Grunsky operator and $\g$ is defined in \eqref{gvec}. Thus, in terms of Fourier coefficients, the integral equation takes the form
\begin{equation}\label{Intekv5}
-\frac{1}{\beta} \g = (I+sB)\h^s.
\end{equation}
This equation can now be solved for $\h^s$ since by the strengthened Grunsky inequality \eqref{stGrineq}, $I+sB$ is invertible for $s\in[0,1]$. This in turn gives us the solution $H_s$ to the integral equation \eqref{Intekv1}. The next lemma shows that if $G$ is $C^{4+\alpha}$, then so is $H_s$. We define the $(4,\alpha)$-norm of functions $F$ on $[0,\pi]$ by
\[ \|F\|_{4,\alpha} = \sup_{0\leq \theta_1,\theta_2 \leq \pi} \frac{|F^{(4)}(\theta_1)-F^{(4)}(\theta_2)|}{|\theta_1-\theta_2|^\alpha}+\sum_{j=0}^4 \big\| F^{(j)} \big\|_\infty. \]

\begin{lemma}\label{lem:Hreg}
Define $G(\theta)$ by \eqref{G}, $g_k$ by \eqref{gzt}, and $\g$ by \eqref{gvec}. Let
\begin{equation}\label{hsol}
\h^s = -\frac{1}{\beta} (I+sB)^{-1}\g,
\end{equation}
and 
\begin{equation}\label{Hsol}
H_s(\theta) = \y_\theta^t\h^s.
\end{equation}
Then $\h^s$ and $H_s$ satisfy \eqref{Intekv4}. Moreover, if $g$ is $C^{l+\alpha}$ on the $C^{l+5+\alpha}$ curve $\gamma$, $l\geq 1$, $\alpha>0$, then $H_s$ is $C^{l+\alpha}$ on $[0,\pi]$ and there is a constant $C$ such that
\begin{equation}\label{Hreg}
\| H_s\|_{4,\alpha} \leq C (A(1-\kappa)^{-1}+1)\|G\|_{4,\alpha},
\end{equation}
where $\kappa<1$ is the constant in \eqref{stGrineq} and $A$ is the constant in \eqref{Grunskydecay}. 

\end{lemma}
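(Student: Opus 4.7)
The plan is to use the strengthened arc-Grunsky inequality (Proposition~\ref{prop:stGrineq}) to invert $I+sB$ on $\ell^2(\mathbb{Z}_+)$, verify the integral equation by running the derivation in Section~\ref{sec:prel} in reverse, and then deduce regularity from decay of the coefficients $h_k^s$ together with Lemma~\ref{lem:Grunskydecay}.

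For the first two claims, Proposition~\ref{prop:stGrineq} gives $\|(I+sB)^{-1}\|_{\ell^2\to\ell^2}\leq (1-\kappa)^{-1}$ uniformly in $s\in[0,1]$. By Lemma~\ref{lem:eqpar}, $z_e\in C^{l+5+\alpha}([-1,1])$, so the composition $G(\theta)=g(z_e(\cos\theta))$ lies in $C^{l+\alpha}([0,\pi])$, and standard Chebyshev/Fourier estimates give $\g\in\ell^2$ with $\|\g\|_2\leq C\|G\|_{4,\alpha}$. Hence $\h^s$ defined by \eqref{hsol} is well-defined in $\ell^2$, with $\|\h^s\|_2\leq C\|G\|_{4,\alpha}/(\beta(1-\kappa))$, and the series \eqref{Hsol} converges in $L^2([0,\pi])$. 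Contracting the identity $(I+sB)\h^s=-\g/\beta$ against $\x_\omega$ and using the expansions \eqref{xtheta:prel}--\eqref{Intekv5:prel} in reverse then recovers \eqref{Intekv4}.

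For regularity of $H_s$, I will work with the componentwise identity
\[
h^s_k = -\frac{g_k}{\beta} - s\sum_{l\geq 1} l\,a_{kl}\,h^s_l,
\]
equivalent to $(I+sB)\h^s=-\g/\beta$. Applying Cauchy--Schwarz and Lemma~\ref{lem:Grunskydecay} with non-negative integers $p,q$ satisfying $p+q<l+5$, with $q$ taken large enough that $\sum_l l^{2-2q-\alpha}<\infty$, one obtains
\[
\Big|\sum_{l\geq 1} l\,a_{kl}\,h^s_l\Big| \leq \frac{CA}{k^{p+\alpha/2}}\,\|\h^s\|_2.
\]
Choosing $p$ as large as permitted and combining with $|g_k|\leq C\|G\|_{4,\alpha}k^{-4-\alpha}$ (integration by parts, using $G\in C^{4+\alpha}$), I expect to obtain sufficient decay of $h_k^s$ to differentiate $H_s$ termwise $l$ times and obtain an $\alpha$-Hölder continuous top derivative via a Zygmund-type argument. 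Tracking constants then yields the prefactor $C(A(1-\kappa)^{-1}+1)$ in \eqref{Hreg}.

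\emph{The main difficulty} is going from coefficient decay to the classical Hölder norm $\|\cdot\|_{4,\alpha}$ with $\alpha\in(0,1)$, since the pointwise bound $|h^s_k|=O(k^{-4-\alpha})$ is on the borderline of implying $H_s\in C^{4+\alpha}$. If the direct coefficient argument is not sharp enough, the fallback is to use \eqref{DGrexp:prel} to rewrite \eqref{Intekv1} as $\beta\widetilde{H}_s(\omega)=G(\omega)-\beta s\,K[H_s](\omega)$, where $K$ is the integral operator with kernel $-\tfrac{2}{\pi}\sum_{k,l\geq 1}k a_{kl}\sin(k\theta)\cos(l\omega)$. By Lemma~\ref{lem:Grunskydecay}, $K$ maps $L^2$ boundedly into $C^{l+\alpha}$ with norm controlled by $A$, and the classical fact that the conjugate function operator is bounded on $C^{l+\alpha}$ for $\alpha\in(0,1)$ then upgrades $\widetilde{H}_s\in C^{l+\alpha}$ to $H_s\in C^{l+\alpha}$ with the quantitative bound \eqref{Hreg}.
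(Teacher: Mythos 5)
Your set-up (invertibility of $I+sB$ via the strengthened arc-Grunsky inequality, the $\ell^2$ bound $\|\h^s\|_2\le \beta^{-1}(1-\kappa)^{-1}\|\g\|_2$, and recovering \eqref{Intekv4} by reversing the Chebyshev/Fourier computation) matches the paper. However, your primary route for the regularity statement has a genuine gap, not just a borderline issue: the best decay you can extract for the coefficients is $|h^s_k|=O(k^{-l-\alpha})$, because the term $-g_k/\beta$ in the componentwise identity carries no more information than $G\in C^{l+\alpha}$. Coefficient decay of order $k^{-l-\alpha}$ gives at most $C^{l-1,\alpha}$ (termwise differentiation $l$ times produces coefficients $O(k^{-\alpha})$, and a series like $\sum_k k^{-\alpha}\sin k\theta$ is not even bounded near $\theta=0$); to run the termwise/Zygmund argument for the top derivative you would need $O(k^{-l-1-\alpha})$. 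So no choice of $p,q$ in Lemma~\ref{lem:Grunskydecay} can rescue the direct approach: the obstruction sits entirely in the $\g$-part of $\h^s$, and Hölder regularity of that part is not a statement about decay at all.

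Your fallback is the correct fix, and it is essentially the paper's own proof. The paper writes $\h^s=-\beta^{-1}\g-sB\h^s$, i.e.
\[
H_s(\theta)=-\tfrac{1}{\beta}\widetilde{G}(\theta)-s\,\y_\theta^{t}B\h^s ,
\]
handles the first term by Privalov's theorem (boundedness of conjugation on $C^{l+\alpha}$, $0<\alpha<1$), and handles only the correction term by coefficient decay: using \eqref{Grunskydecay} with $q=2$, $p=l+2$ (admissible since $\gamma\in C^{l+5+\alpha}$) together with $\sqrt{k}|h_k^s|\le\|\h^s\|_2\le C(1-\kappa)^{-1}\|G\|_{l,\alpha}$, one gets $|(B\h^s)_j|\le CA(1-\kappa)^{-1}\|G\|_{l,\alpha}\,j^{-(l+3/2+\alpha/2)}$, so $\y_\theta^tB\h^s$ has an absolutely convergent $(l+\alpha)$-weighted coefficient sum and lies in $C^{l+\alpha}$ with norm $\le CA(1-\kappa)^{-1}\|G\|_{l,\alpha}$; adding the Privalov bound $\|\widetilde G\|_{l,\alpha}\le C\|G\|_{l,\alpha}$ yields exactly the constant $C(A(1-\kappa)^{-1}+1)$ in \eqref{Hreg}. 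Your version (apply conjugation once more to $\widetilde H_s=\beta^{-1}G-sK[H_s]$, using that $K$ maps $L^2$ into $C^{l+\alpha}$ with norm controlled by $A$, and that $H_s=-\widetilde{\widetilde{H_s}}$ since $H_s$ has zero mean) is the same argument up to taking one conjugate of the equation; it is correct provided you make the parameter choice in Lemma~\ref{lem:Grunskydecay} explicit (e.g.\ $p=2$, $q=l+2$ for the $L^2\to C^{l+\alpha}$ bound on $K$). I would simply promote the fallback to the proof and delete the coefficient-only route.
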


\begin{proof}
The first statement follows from the computations above.
From \eqref{Grunskydecay} we get the estimate
\begin{equation}\label{bklest}
|b_{kl}| \leq \frac{A}{k^{p-1/2+\epsilon}l^{q-1/2+\epsilon}}
\end{equation}
if $p+q<m$, $\epsilon=\alpha/2$ and $k,l\geq 1$. It follows from \eqref{stGrineq} and \eqref{hsol} that 
\begin{equation}\label{hgest}
\| \h^s\|_2 \leq \frac{1}{\beta} (1-\kappa)^{-1} \| \g \|_2
\end{equation}
for $0\leq s\leq 1$. On the other hand,
\[ \h^s = -\frac{1}{\beta}\g -sB\h^s \]
so by \eqref{Hsol},
\[ H_s(\theta) = -\frac{1}{\beta} \widetilde{G}(\theta) -s\y_\theta^t B\h^s. \]
By Privalov's theorem there is a constant $C$ such that the conjugate function $\| \widetilde{G} \|_{l,\alpha} \leq C \| G \|_{l,\alpha}$, and from \eqref{hgest} it follows that
\[ \sqrt{k} |h_k^s | \leq \| \h^s \|_2 \leq \frac{1}{\beta} (1-\kappa)^{-1} \|\g \|_2  \leq C(1-\kappa)^{-1} \| G \|_{l,\alpha}, \]
since $|g_k| \leq \| G\|_{l,\alpha} k^{-l-\alpha}$, $k\geq 1$. We can now use \eqref{bklest} to get the estimate
\[ |(B\h^s)_j | = \big| \sum_{k\geq 1} b_{jk} \sqrt{k}h_k^s \big| \leq C(1-\kappa)^{-1} \| G \|_{l,\alpha} \frac{A}{j^{p-1/2+\epsilon}} \sum_{k\geq 1} \frac{1}{k^{q-1/2+\epsilon}}. \]
Take $q=2$, $p=l+2$, i.e. $m\geq l+5$. It follows that $\y_\theta^t B\h^s$ is a $C^{l+\alpha}$ function and 
\[ \|\y_\theta^tBh^s \|_{l,\alpha} \leq CA(1-\kappa)^{-1} \| G \|_{l,\alpha},  \]
which proves \eqref{Hreg}. 

\end{proof}

We will now formulate a theorem that we will use to prove Theorem~\ref{thm:main}. Recall the formula \eqref{Dn3} for $Z_{n}^{\beta,s}(\gamma)[e^g]$. Let $\E_{n,\gamma}^{\beta,s}$ denote expectation with respect to the probability density proportional to 
\[ \exp \Big( \frac{\beta}{2}F_n^s(\theta) + \sum_\mu \big( (1-\beta/2)sL(\theta_\mu)+\log(\sin\theta_\mu) \big) \Big) \]
on $H_n = \{ \theta\in [0,\pi]^n: 0\leq \theta_1 < \cdots < \theta_n \leq \pi \}$. In analogy with \eqref{Ag} we define
\begin{equation}\label{Asg}
A_s[g] = \frac{1}{4\beta} \g^t (I+sB)^{-1}\g +\frac{1}{4} \big(1-\frac{2}{\beta}\big) (s\dd+\f)^t(I+sB)^{-1}\g
\end{equation}
so that $A[g] = A_1[g]$.
In the following $C$ will denote a constant that is independent of the Jordan arc $\gamma$ and the function $g$ defined on $\gamma$, but it can depend on $\beta$. A constant that depends on $\gamma$, $\beta$ and the norm $\|G\|_{4,\alpha}$ of $G$ will be denoted by $C(\gamma,G)$. The precise value of these constants can vary.

We can now state the theorem that we will need.

\begin{thm}\label{thm:bound}
Assume that $\gamma$ is a $C^{9+\alpha}$ Jordan arc, $g\in C^{4+\alpha}$ for some $\alpha>0$, and
$\int_\gamma g\d\nu_e =0$, where $\nu_e$ is the equilibrium measure on $\gamma$. There is a constant $C(\gamma,G)$ such that
\begin{equation}\label{expbound}
\E_{n,\gamma}^{\beta,s}[e^{\sum_\mu G(\theta_\mu)}] \leq C(\gamma,G)
\end{equation}
for $n\geq 1$, $s\in[0,1]$, where $G(\theta) = g(z_e(\cos\theta))$. Also, for each $s\in[0,1]$,
\begin{equation}\label{relSz2}
\lim_{n\to\infty} \E_{n,\gamma}^{\beta,s}[e^{\sum_\mu G(\theta_\mu)}] = e^{A_s[g]}
\end{equation}

\end{thm}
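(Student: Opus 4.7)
My plan is to adapt the change-of-variables approach of \cite{Joh1, CouJoh} to the present arc setting, exploiting the arc-Grunsky operator from Section~\ref{sec:Grunsky}. The essential analytic input is the unique solvability of the integral equation \eqref{Intekv1}, uniformly in $s \in [0,1]$: the strengthened arc-Grunsky inequality (Proposition~\ref{prop:stGrineq}) gives $I + sB$ invertible with a uniform spectral gap, and Lemma~\ref{lem:Hreg} produces a $C^{4+\alpha}$ sine-series solution $H_s$ with $\|H_s\|_{4,\alpha} \leq C(\gamma,G)$ uniformly in $s$. Since $\int_\gamma g\, d\nu_e = 0$ we have $g_0 = 0$, so no trivial $e^{ng_0}$ factor needs to be subtracted. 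I would then substitute $\theta_\mu = \omega_\mu + \tfrac{1}{n} H_s(\omega_\mu)$ into the numerator of $\E_{n,\gamma}^{\beta,s}[e^{\sum_\mu G(\theta_\mu)}] = Z_n^{\beta,s}(\gamma)[e^g]/Z_n^{\beta,s}(\gamma)[1]$; since $H_s(0)=H_s(\pi)=0$, this is a diffeomorphism of $[0,\pi]$ for $n$ large.

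The next step is to expand every ingredient in powers of $1/n$: the two-body interaction $F_n^s$, the weights $(1-\beta/2)sL + \log\sin$, the displaced linear statistic $\sum_\mu G(\theta_\mu)$, and the Jacobian $\prod_\mu(1+H_s'(\omega_\mu)/n)$. The key algebraic identity is that the $1/n$ correction coming from $F_n^s$ is exactly the Riemann sum for the right-hand side of \eqref{Intekv1} evaluated at the $\omega_\mu$; by the integral equation itself, this cancels the linear statistic $\sum_\mu G(\omega_\mu)$, up to errors governed by the discrepancy between the empirical measure of $\cos\omega_\mu$ and the Chebyshev equilibrium measure on $[-1,1]$. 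The second-order contributions --- the $1/n^2$ expansion of $F_n^s$, the square of the $1/n$ term, and the Jacobian --- reassemble via \eqref{Grunskyexp} and \eqref{Intekv5} into the quadratic form $\tfrac{1}{4\beta}\g^t(I+sB)^{-1}\g$, while the $1/n$ corrections from $L$ and $\log\sin$ weighted by $(1-\beta/2)$ produce the mean contribution $\tfrac{1}{4}(1-2/\beta)(s\dd+\f)^t(I+sB)^{-1}\g$; the vector $\f$ enters through the endpoint singularity of $\log\sin\theta$ and is the arc-specific feature absent in the closed-curve setting of \cite{CouJoh}.

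To rigorously control the remainders, I would introduce a high-probability concentration set $E_{n,s,K}$ on which the empirical Chebyshev moments $\tfrac{1}{n}\sum_\mu T_k(\cos\omega_\mu)$ are small in a suitable weighted $\ell^2$ sense, with $\P_{n,\gamma}^{\beta,s}(E_{n,s,K}^c) \leq e^{-cn}$. This is a standard log-gas large-deviations estimate for the $s$-interpolated measure with smooth weight, uniform in $s \in [0,1]$ thanks to Proposition~\ref{prop:stGrineq}. On $E_{n,s,K}$ the Taylor remainders are deterministically $o(1)$. The main obstacle, and the technical heart of the argument, is that some remainders appear \emph{inside} the exponential as random linear statistics not controlled pointwise on $E_{n,s,K}^c$. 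I would resolve this by a bootstrap, following \cite[Sec.~4--5]{CouJoh}: first, use H\"older together with the exponential smallness of $\P(E_{n,s,K}^c)$ and the crude bound $|\sum_\mu G(\theta_\mu)| \leq n\|G\|_\infty$ to derive the uniform upper bound \eqref{expbound} with an $n$-independent constant $C(\gamma,G)$ (this already handles the first assertion of the theorem, and applies equally to $tG$ for $t$ in a complex neighborhood of $[0,1]$); second, reinject this a priori bound to estimate the exponential of the remainder in $L^p$ for some $p>1$, and upgrade the one-sided bound to the equality \eqref{relSz2}. The uniformity in $s$ throughout is secured by the strengthened arc-Grunsky inequality, which gives uniform invertibility of $I+sB$ and hence uniform control on $\|H_s\|_{4,\alpha}$ and on the limiting quadratic form.
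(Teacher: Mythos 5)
Your plan follows essentially the same route as the paper: the change of variables $\theta_\mu \mapsto \theta_\mu + \tfrac{1}{n}H_s(\theta_\mu)$ with $H_s$ solving the deformed Pommerenke equation (uniformly solvable in $s$ thanks to the strengthened arc-Grunsky inequality and Lemma~\ref{lem:Hreg}), Taylor expansion of the interaction, weights and Jacobian, restriction to a good set $E_{n,s,K}$ of exponentially small complement on which the $\ell^2$ deviations from the Fekete angles (equivalently the empirical Chebyshev moments) are controlled, the resulting $n$-independent bound \eqref{expbound}, and a bootstrap reinjecting \eqref{expbound} to kill the remainders and obtain \eqref{relSz2}, with $\f$ indeed entering via the $\cot\theta$ term from $\log\sin\theta$. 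One small correction: on $E_{n,s,K}$ the remainder terms (the paper's $R_n^s$, $T_n^s$) are only deterministically $O(1)$, not $o(1)$ as you assert; their vanishing holds only in expectation and is precisely what the bootstrap step (the paper's Lemma~\ref{lem:termstozero}) must supply, so that step is needed on the good set as well, not merely to handle its complement.
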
 

Note that this theorem gives directly Theorem \eqref{thm:relSz}. It will be proved in Section \ref{sec:proof} based on preliminary estimates developed in Section \ref{sec:domain}. We assume that $\mathrm{cap}(\gamma)=1/2$ and that $g_0 = 0$ in \eqref{Dnasy}. Then, by \eqref{Dn3},
\begin{equation}\label{Dnfor}
\frac{Z_{n}^{\beta}(\gamma)[e^g]}{Z_{n}^{\beta}([-1,1])} = \frac{Z_{n}^{\beta}(\gamma)}{Z_{n}^{\beta}([-1,1])} \E_{n,\gamma}^{\beta,1} [e^{\sum_\mu G(\theta_\mu)}]
\end{equation}
We will prove that
\begin{align}\label{partquot}
&\lim_{n\to\infty} \log \frac{Z_{n}^{\beta}(\gamma)}{Z_{n}^{\beta}([-1,1])} \\
&= -\frac{1}{2} \log \det(I+B) + \frac{1}{4\beta}\big(\frac{\beta}{2}-1\big)^2\Big(\dd^t(I+B)^{-1}\dd + 2\f(I+B)^{-1}\dd - \f^t(I+B)^{-1}B\f \Big)
\end{align}
provided $\gamma$ is a $C^{11+\alpha}$ Jordan arc. Combining this with \eqref{relSz2} with $s=1$ proves \eqref{Dnasy}. 

It follows from \eqref{Grunskydecay} that if $\gamma\in C^{m+\alpha}$, with $m\geq 5$, then
\[ \Big| \sum_{\max(k,l)>n} a_{kl} \big(\sum_\mu \cos(k\theta_\mu) \big) \big(\sum_\nu \cos(l\theta_\nu) \big) \Big| \leq 2n^2 \sum_{k>n} \sum_{l\geq1} |a_{kl}| \leq C(\gamma)n^{-\alpha/2} \]
for all $\theta$. Hence, using \eqref{Grunskyexp2:prel}, we see that without any consequences for our results, we can replace $F_n^s(\theta)$ in \eqref{Fns} with
\begin{equation}\label{Fns2}
F_n^s(\theta) = \sum_{\mu\neq\nu} \log |\cos\theta_\mu-\cos\theta_\nu|-2s \sum_{k,l=1}^n a_{kl} \big( \sum_\mu \cos(k\theta\mu) \big) \big( \sum_\nu \cos(l\theta_\nu) \big)
\end{equation}
Let \[ Z_{n}^{\beta,s}(\gamma) =  \int_{H_n} \exp \Big( \frac{\beta}{2} F_n^s(\theta) +\sum_\mu (1-\beta/2)sL(\theta_\mu)+\log(\sin\theta_\mu) \Big) d\theta \]
so that $Z_{n}^{\beta,1}(\gamma) = Z_{n}^{\beta}(\gamma)$ and $Z_{n}^{\beta,0}(\gamma) = Z_{n}^{\beta}([-1,1])$. Define the function

\begin{equation}\label{Bns}
B_n(s) = \log\ Z_{n}^{\beta,s}(\gamma)/ Z_{n}^{\beta,0}(\gamma), \quad 0\leq s\leq 1.
\end{equation}
To simplify notation we will often drop the dependence on $\gamma$ and $\beta$, e.g.\ write $\E_n^s$ instead of $\E_{n,\gamma}^{\beta,s}$. We turn now to the proof of \eqref{partquot}. By \eqref{Bns}, \eqref{L}, and \eqref{Fns2},
\begin{align*}
B_n'(s) &= \frac{1}{Z_{n}^{\beta,s}(\gamma)} \int_{H_n} \Big( \frac{\beta}{2}\frac{\d}{\d s}F_n^s(\theta) +(1-\beta/2)\sum_\mu L(\theta_\mu) \Big) \\
& \qquad \qquad \cdot \exp \Big( \frac{\beta}{2} F_n^s(\theta) +\sum_\mu (1-\beta/2)sL(\theta_\mu)+\log(\sin\theta_\mu) \Big) d\theta\\
&= \E_n^s \Big[ -\beta \sum_{k,l=1}^n a_{kl} \big( \sum_\mu \cos(k\theta_\mu) \big) \big( \sum_\nu \cos(l\theta_\nu) \big) - \big(1-\frac{\beta}{2} \big)\sum_{k\geq 1} d_k \big( \sum_\mu \cos(k\theta_\mu) \big) \Big].
\end{align*}
Write 
\[ X_k(\theta) = \sum_\mu \cos(k\theta_\mu).\]
Then, by Fubini's theorem,
\begin{equation}\label{Bnprime}
B_n'(s) = -\beta \sum_{k,l= 1}^n a_{kl} \E_n^s[X_k(\theta)X_l(\theta)] -(1-\beta/2) \sum_{k\geq 1} d_k \E_n^s[X_k(\theta)].
\end{equation}
Fix $s\in[0,1]$ and pick $J=J(n,s)$ such that
\begin{equation}\label{supj}
\sup_{j\geq 1} \E_n^s\big[ \frac{1}{j^{8+2\epsilon}} X_j(\theta)^2 \big] = \E_n^s \big[ \frac{1}{J^{8+2\epsilon}}X_J(\theta)^2 \big]
\end{equation}
for some $\epsilon>0$. Such a $J$ exists since the expectation on the left side goes to $0$ as $j\to\infty$ because $|X_j(\theta)|\leq n$, for all $j\geq 1$. Define
\[ G_J(\theta) = \frac{\cos(J\theta)}{J^{4+\epsilon}}. \]
Then $G_J\in C^{4+\epsilon}([-\pi,\pi])$ and $\| G_J \|_{4,\epsilon} \leq C$ for some constant $C$, which does not depend on $J$ (and hence on $n$ or $s$). It follows from Theorem \ref{thm:bound} that there is a constant $C(\gamma)$ such that
\[ \E_n^s[e^{\pm \sum_\mu G_J(\theta_\mu)}] \leq C(\gamma).  \]
If we let
\[ Q_n(z) = \E_n^s\big[ e^{z\sum_\mu G_J(\theta_\mu)} \big], \]
then, for $|z|\leq 1$,
\[ |Q_n(z)| \leq \E_n^s\big[e^{\Re z \sum_\mu G_J(\theta_\mu)} \big] \leq C(\gamma). \]
Consequently, by Cauchy's integral formula, $|Q_n''(0)| \leq C(\gamma)$, which gives
\[ \E_n^s\big[ \frac{1}{J^{8+2\epsilon}}X_J(\theta)^2 \big] = \E_n^s\big[ \big(\sum_\mu G_J(\theta_\mu)\big)^2\big] = Q_n''(0) \leq C(\gamma), \]
and we have proved
\begin{equation}\label{supjbound}
\sup_{j\geq 1} \E_n^s\big[ \frac{1}{j^{8+2\epsilon}}X_j(\theta)^2 \big] \leq C(\gamma).
\end{equation}
The Cauchy-Schwarz inequality gives
\begin{equation}\label{XkXlbound}
\big| \E_n^s[X_k(\theta)X_l(\theta)] \big| = (kl)^{4+\epsilon} \big| \E_n^s \big[ \frac{X_k(\theta)}{k^{4+\epsilon}}\frac{X_l(\theta)}{l^{4+\epsilon}} \big] \big| \leq C(\gamma) (kl)^{4+\epsilon},
\end{equation}
and
\begin{equation}\label{Xkbound}
\big| \E_n^s[X_k(\theta)] \big| = k^{4+\epsilon} \big| \E_n^s \big[ \frac{X_k(\theta)}{k^{4+\epsilon}} \big] \big| \leq C(\gamma) k^{4+\epsilon}.
\end{equation}
Since $\gamma\in C^{11+\alpha}$ it follows from \eqref{Grunskydecay} that for $p+q\leq 10$,
\[ |a_{kl}| \leq \frac{A}{k^{p+2\epsilon}l^{q+2\epsilon}} \]
if $\epsilon = \alpha/4$. This estimate shows that
\begin{equation}\label{aklest}
\sum_{k,l\geq 1} (kl)^{4+\epsilon} |a_{kl}| < \infty, \quad \sum_{k\geq 1} k^{4+\epsilon} |d_k| < \infty.
\end{equation}
It follows from the estimates \eqref{XkXlbound}, \eqref{Xkbound}, \eqref{aklest} that we can apply the dominated convergence theorem to the sums in \eqref{Bnprime} to see that
\begin{equation}\label{Bnprimelimit}
\lim_{n\to\infty} B_n'(s) = -\beta \sum_{k,l\geq 1} a_{kl}\lim_{n\to\infty} \E_n^s[X_k(\theta)X_l(\theta)] -(1-\beta/2) \sum_{k\geq 1} d_k \lim_{n\to\infty} \E_n^s [X_k(\theta)]
\end{equation}
Let $\mathbf{e}_k$ be the infinite column column vector with $1$ at position $k$ and $0$ otherwise and let
\[ \g_\lambda = \lambda_1\mathbf{e}_k+\lambda_2\mathbf{e}_l \]
where $\lambda_1,\lambda_2\in \C$. Theorem \ref{thm:bound} and a normal family argument shows that
\begin{align*}
&\lim_{n\to\infty} \E_n^s[X_k(\theta)X_l(\theta)] = \sqrt{kl} \lim_{n\to\infty} \frac{\partial^2}{\partial\lambda_1\partial\lambda_2} \Big|_{\lambda_1=\lambda_2=0} \E_n^s\big[e^{\frac{\lambda_1}{\sqrt{k}}X_k(\theta)+\frac{\lambda_2}{\sqrt{l}}X_l(\theta)} \big] \\
&= \sqrt{kl} \lim_{n\to\infty} \frac{\partial^2}{\partial\lambda_1\partial\lambda_2} \Big|_{\lambda_1=\lambda_2=0} \exp \Big(\frac{1}{4\beta} \g_\lambda^t(I+sB)^{-1}\g_\lambda + \frac{1}{4}\big(1-\frac{2}{\beta})(s\dd+\f)^t(I+sB)^{-1}\g_\lambda \Big).
\end{align*}
Similarly,
\begin{align*}
&\lim_{n\to\infty} \E_n^s[X_k(\theta)] = \sqrt{k} \lim_{n\to\infty} \frac{\partial}{\partial\lambda_1} \Big|_{\lambda_1=0} \E_n^s\big[e^{\frac{\lambda_1}{\sqrt{k}}X_k(\theta)} \big] \\
&= \sqrt{k} \lim_{n\to\infty} \frac{\partial}{\partial\lambda_1} \Big|_{\lambda_1=0} \exp \Big(\frac{\lambda_1^2}{4\beta} \mathbf{e}_k^t(I+sB)^{-1}\mathbf{e}_k + \frac{\lambda_1}{4}\big(1-\frac{2}{\beta})^2(s\dd+\f)^t(I+sB)^{-1}\mathbf{e}_k \Big).
\end{align*}
Computing the derivatives gives
\begin{multline}
\lim_{n\to\infty} \E_n^s[X_k(\theta)X_l(\theta)] = \frac{ \sqrt{kl}}{2\beta}(I+sB)^{-1}_{kl} \\
+\frac{\sqrt{kl}}{16}\big(1-\frac{2}{\beta}\big)^2 \big(\sum_{m_1\geq 1}(s\dd+\f)_{m_1}(I+sB)_{m_1k}^{-1} \big)  \big(\sum_{m_2\geq 1}(s\dd+\f)_{m_2}(I+sB)_{m_2l}^{-1} \big)  
\end{multline}
and 
\[\lim_{n\to\infty} \E_n^s[X_k(\theta)] = \frac{\sqrt{k}}{4}\big(1-\frac{2}{\beta} \big)\sum_{m\geq 1} (s\dd+\f)_m(I+sB)_{mk}^{-1}. \]
We now insert these limits into \eqref{Bnprimelimit} to obtain
\begin{align*}
\lim_{n\to\infty} B_n'(s) &= -\beta \sum_{k,l\geq 1} b_{kl} \Big( \frac{1}{2\beta} (I+sB)_{kl}^{-1} +\frac{1}{16}\big(1-\frac{2}{\beta}\big)^2\cdot  \\
&\sum_{m_1,m_2\geq 1}(s\dd+\f)_{m_1}(I+sB)_{m_1k}^{-1} (s\dd+\f)_{m_2}(I+sB)_{m_2l}^{-1} \Big) \\
&+\frac{1}{2\beta}\big(\frac{\beta}{2}-1\big)^2 \sum_{m,k\geq 1} \sqrt{k} d_k  (s\dd+\f)_m(I+sB)_{mk}^{-1}\\
&= - \frac{1}{2}\mathrm{Tr} \Big((I+sB)^{-1}B \Big) -\frac{1}{4\beta}\big(\frac{\beta}{2}-1\big)^2(s\dd+\f)^t(I+sB)^{-2}B(s\dd+\f)\\
&+\frac{1}{2\beta}\big(\frac{\beta}{2}-1\big)^2(s\dd+\f)^t(I+sB)^{-1}\dd.
\end{align*}
Note that
\[ \frac{\d}{\d s} \log \det (I+sB) = \mathrm{Tr} \Big((I+sB)^{-1}B \Big) \]
and
\begin{multline}
\frac{\d}{\d s}\Big( s^2\dd^t(I+sB)^{-1}\dd+2s\f^t(I+sB)^{-1}\dd - s\f^t(I+sB)^{-1}B\f \Big) \\= 2(s\dd+\f)^t(I+sB)^{-1}\dd-(s\dd+\f)^t(I+sB)^{-2}B(s\dd+\f)
\end{multline}
Consequently,

\begin{multline}\label{limBnprime}
\lim_{n\to\infty} B_n'(s) = \frac{\d}{\d s} \Big(-\frac{1}{2}\log \det (I+sB)+\frac{1}{4\beta}\big( \frac{\beta}{2}-1\big)^2\big(s^2\dd^t(I+sB)^{-1}\dd\\
+2s\f^t(I+sB)^{-1}\dd - s\f^t(I+sB)^{-1}B\f \big) \Big).
\end{multline}
It follows from \eqref{Bnprime} and the estimates \eqref{XkXlbound} and \eqref{Xkbound} that
\[ |B_n'(s)| \leq  C(\gamma) \Big( \sum_{k,l\geq 1} (kl)^{4+\epsilon} |a_{kl}| +\sum_{k\geq1} k^{7/2+\epsilon}|d_k| \Big) \leq C(\gamma) \]
by \eqref{aklest}, for all $s\in[0,1]$. We can thus apply the dominated convergence theorem to see that 
\[ \lim_{n\to\infty} B_n(1) = \lim_{n\to\infty} \int_0^1 B_n'(s) \d s = \int_0^1 \lim_{n\to\infty} B_n'(s) \d s. \]
Here we can insert the formula \eqref{limBnprime} and this gives \eqref{partquot}, by the definition \eqref{Bns} of the function $B_n(s)$.

\subsection{A smaller domain of integration}\label{sec:domain}

In this section we show that when computing the asymptotics of $\E_n^s[e^{X_n}]$ for any function $X_n$ growing at most linearly with $n$, we can replace the domain of integration $H_n$ with a smaller set $E_{n,s,K}$, see the definition \eqref{Ensk}. This is the content of Lemma \ref{lem:Ensk}. Moreover, for any sequence $\theta^{(n)} = (\theta_1^{(n)}, \cdots, \theta_n^{(n)}) \in E_{n,s,K}$ the normalized counting measure $\lambda_n = \frac{1}{n} \sum_{j=1}^{n} \delta_{\cos \theta_j^{(n)}}$ converges weakly to the equilibrium measure on $[-1,1]$. This is a consequence of Lemma \ref{lem:tmutozero}. For the proof of Theorem \ref{thm:relSz} we will also need a quantitative result, Lemma \ref{lem:sumtmu}, which gives a bound on the sum of the squares of the deviations of $\theta_\mu$ from 
\begin{equation}\label{alphamu}
\alpha_\mu = \frac{\pi (\mu-1)}{n-1},\quad  1\leq \mu\leq n
\end{equation}
where we assume $n\geq 2$ (as we will do from now on). Note that $\cos\alpha_\mu$, $1\leq \mu\leq n$ are good approximations to the Fekete points for the interval $[-1,1]$.
We begin by collecting some formulas that we will need in the proof of the lemmas.
\begin{lemma}\label{lem:formulas}
With $\alpha_\mu$ defined by \eqref{alphamu}, we have the formulas
\begin{equation}\label{sumcos}
\sum_{\mu=1}^n \cos(k\alpha_\mu) = \frac{1+(-1)^k}{2}, \quad k\in\N,\quad k/(2(n-1)) \notin \N,
\end{equation}
\begin{equation}\label{sumlogcos}
\sum_{1\leq \mu\neq\nu\leq n} \log |\cos \alpha_\mu-\cos\alpha_\nu| = n\log(n-1)-(n^2-2n-2)\log 2,
\end{equation}
\begin{equation}\label{Fnsalpha}
F_n^s(\alpha) = n\log n -(n^2-2n)\log 2+d_{n,s}
\end{equation}
where $|d_{n,s}|\leq C(\gamma)$ for all $n\geq 2$, $0\leq s\leq 1$. We also note the identity
\begin{equation}\label{sumcot}
\sum_{\mu=2}^{n-1} \cot \alpha_\mu =0.
\end{equation}
\end{lemma}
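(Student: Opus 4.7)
The first identity I will establish by a direct geometric sum: setting $z = e^{\i\pi k/(n-1)}$, the hypothesis $k/(2(n-1))\notin\N$ is exactly $z\ne 1$, and
\[
\sum_{\mu=1}^n \cos(k\alpha_\mu) = \Re \sum_{\mu=0}^{n-1} z^\mu = \Re \frac{z^n-1}{z-1}.
\]
Using $z^n = e^{\i\pi k} z = (-1)^k z$, this quotient simplifies to $1$ when $k$ is even and (after multiplying numerator and denominator by $\bar z - 1$) to a purely imaginary quantity when $k$ is odd, which gives $(1+(-1)^k)/2$. The cotangent identity \eqref{sumcot} will follow immediately from the symmetry $\alpha_{n+1-\mu} = \pi - \alpha_\mu$, which forces $\cot \alpha_{n+1-\mu} = -\cot\alpha_\mu$; the middle term $\cot(\pi/2) = 0$ handles the case of odd $n$.

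For \eqref{sumlogcos}, which is the core computation, the plan is to exploit the fact that $\{x_\mu := \cos \alpha_\mu\}_{\mu=2}^{n-1}$ are exactly the zeros of the Chebyshev polynomial of the second kind $U_{n-2} = T_{n-1}'/(n-1)$, while $x_1 = 1$, $x_n = -1$. Since $U_{n-2}$ has leading coefficient $2^{n-2}$, the monic polynomial with roots $\{x_\mu\}$ is
\[
P(x) = \prod_{\mu=1}^n (x-x_\mu) = \frac{(x^2-1)\, T_{n-1}'(x)}{(n-1)\,2^{n-2}},
\]
and the discriminant relation $\sum_{\mu \neq \nu} \log|x_\mu - x_\nu| = \sum_{\mu=1}^n \log |P'(x_\mu)|$ reduces the problem to evaluating $|P'(x_\mu)|$. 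At $x_\mu = \pm 1$ only the contribution from differentiating $(x^2-1)$ survives, and $T_{n-1}'(1) = (n-1)^2$, $T_{n-1}'(-1) = (-1)^n (n-1)^2$ yield $|P'(\pm 1)| = (n-1)/2^{n-3}$. At an interior node the first term vanishes because $T_{n-1}'(x_\mu) = 0$, and the Chebyshev ODE
\[
(1-x^2) T_{n-1}''(x) - x T_{n-1}'(x) + (n-1)^2 T_{n-1}(x) = 0
\]
combined with $T_{n-1}(x_\mu) = (-1)^{\mu-1}$ forces $T_{n-1}''(x_\mu) = (-1)^\mu (n-1)^2/(1-x_\mu^2)$, hence $|P'(x_\mu)| = (n-1)/2^{n-2}$. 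Multiplying over the two boundary and the $n-2$ interior values,
\[
\prod_{\mu=1}^n |P'(x_\mu)| = \Big(\frac{n-1}{2^{n-3}}\Big)^{\!2} \Big(\frac{n-1}{2^{n-2}}\Big)^{\!n-2} = \frac{(n-1)^n}{2^{n^2-2n-2}},
\]
and taking the logarithm yields \eqref{sumlogcos}.

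To establish \eqref{Fnsalpha}, I will substitute $\theta = \alpha$ into the simplified expression \eqref{Fns2} of $F_n^s$. The logarithmic part yields $n\log(n-1) - (n^2-2n-2)\log 2 = n\log n - (n^2-2n)\log 2 + O(1)$, since $n\log(1 - 1/n)$ is bounded uniformly in $n \geq 2$. The arc-Grunsky part equals $-2s\sum_{k,l=1}^n a_{kl} c_k c_l$ with $c_k := \sum_\mu \cos(k\alpha_\mu)$: for $n \geq 3$ every $1 \leq k \leq n$ satisfies the hypothesis of \eqref{sumcos}, so $|c_k| \leq 1$, while $n = 2$ contributes only a finite fixed sum. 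Under the regularity assumed, Lemma~\ref{lem:Grunskydecay} applied with $p = q = 2$ gives $\sum_{k,l\geq 1}|a_{kl}| \leq C(\gamma)$, bounding the arc-Grunsky contribution uniformly in $s \in [0,1]$ and $n \geq 2$. Combining these estimates produces $|d_{n,s}| \leq C(\gamma)$ as claimed. The only non-routine step is the Vandermonde evaluation; once $P$ is recognized as a rescaled $(x^2-1)T_{n-1}'(x)$, the Chebyshev ODE at its extrema does the rest.
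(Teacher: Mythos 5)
Your proposal is correct, and all four identities check out. For \eqref{sumcos}, \eqref{sumcot}, and the passage from \eqref{sumlogcos} to \eqref{Fnsalpha} (bounding the arc-Grunsky contribution via \eqref{sumcos} and Lemma~\ref{lem:Grunskydecay}, with the harmless $n=2$ case treated separately) you follow essentially the same route as the paper. The interesting difference is in the Vandermonde evaluation \eqref{sumlogcos}: the paper reindexes the double sum $\sum_{\mu\neq\nu}\log|\cos\alpha_\mu-\cos\alpha_\nu|$ into sums of $\log|1-e^{\i\pi j/(n-1)}|$ over roots of unity and closes it with the product identity $\prod_{k=1}^{m-1}(1-e^{2\pi \i k/m})=m$, whereas you recognize $\{\cos\alpha_\mu\}$ as the Chebyshev extrema, form the monic nodal polynomial $P(x)=(x^2-1)T_{n-1}'(x)/((n-1)2^{n-2})$, and evaluate $\prod_\mu|P'(x_\mu)|$ via $P'(x_\mu)=\prod_{\nu\neq\mu}(x_\mu-x_\nu)$ together with $T_{n-1}'(\pm1)=\pm(-1)^{n\mp 1\,\mathrm{mod}\,2}(n-1)^2$ (in absolute value $(n-1)^2$) and the Chebyshev ODE at interior nodes. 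I verified your node values $|P'(\pm1)|=(n-1)/2^{n-3}$, $|P'(x_\mu)|=(n-1)/2^{n-2}$ for $2\le\mu\le n-1$, and the exponent bookkeeping $2(n-3)+(n-2)^2=n^2-2n-2$, so your product reproduces \eqref{sumlogcos} exactly. Your route is a clean discriminant-style argument that makes the structure (Fekete-type points of $[-1,1]$) transparent and avoids the paper's somewhat delicate reindexing of the double sum; the paper's computation is more elementary in that it needs nothing about Chebyshev polynomials beyond the parametrization, only the cyclotomic product identity. Either argument is complete and correct.
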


\begin{proof}
If $k$ is not a multiple of $2(n-1)$,
\[ \sum_{\mu=1}^n \cos(k\alpha_\mu) = \Re \frac{1-e^{\i k\pi n/(n-1)}}{1-e^{\i k\pi/(n-1)}} = \cos(k\pi/2) \frac{\sin(k\pi n/(2(n-1)))}{\sin(k\pi/(2(n-1))} = \cos^2(k\pi/2) \]
which gives \eqref{sumcos}.
For the second equality, \eqref{sumlogcos}, observe that
\begin{align*}
&\sum_{\mu\neq\nu} \log |\cos \alpha_\mu-\cos\alpha_\nu| = \sum_{\mu=0}^{n-1}\sum_{\substack{\nu=0\\ \nu\neq\mu}}^{n-1} \log|1-e^{\i\pi(\mu-\nu)/(n-1)}|\cdot |1-e^{\i\pi(\mu+\nu)/(n-1)}| \\ & - (n^2-n)\log 2 
= \sum_{\mu=0}^{n-1} \sum_{\substack{\nu=-(n-1)\\ \nu\neq \mu}}^{n-2}\log |1-e^{\i\pi(\mu+\nu)/(n-1)}| + \sum_{\mu=1}^{n-1} \log|1-e^{\i\pi\mu/(n-1)}| \\
&+ \sum_{\mu=0}^{n-2} \log|1-e^{\i\pi(\mu+n-1)/(n-1)}|  - \sum_{\mu=1}^{n-2}\log|1-e^{\i 2\pi\mu/(n-1)}| -(n^2-n)\log 2 \\
&= (n+1) \sum_{\nu=1}^{2(n-1)-1}\log |1-e^{\i\pi\nu/(n-1)}| - \sum_{\mu=1}^{n-2}\log|1-e^{\i 2\pi\mu/(n-1)}| -(n^2-n-1)\log 2.
\end{align*}
Now we use that for any $m\in \N$,
\begin{equation}\label{prod}
\prod_{k=1}^{m-1} (1-e^{\i2\pi k/m}) = \lim_{z\to 1}\prod_{k=1}^{m-1} (z-e^{\i2\pi k/m})=\lim_{z\to 1} \frac{z^m-1}{z-1} =m.
\end{equation}
We obtain
\begin{align*}
\sum_{\mu\neq\nu} \log |\cos \alpha_\mu-\cos\alpha_\nu| &= (n+1)\log(2(n-1))-\log(n-1)-(n^2-n-1)\log 2 \\
&= n\log(n-1)-(n^2-2n-2)\log 2.
\end{align*}
Next, by defintion \eqref{Fns2} of $F_n^s$,
\begin{align*}
F_n^s(\alpha) &= \sum_{\mu\neq\nu} \log |\cos\alpha_\mu-\cos\alpha_\nu|-2s \sum_{k,l=1}^n a_{kl} \big( \sum_\mu \cos(k\alpha\mu) \big) \big( \sum_\nu \cos(l\alpha_\nu) \big) \\
&= n\log(n-1)-(n^2-2n-2)\log 2 -2s\sum_{k,l=1}^{\floor{n/2}} a_{2k,2l}
\end{align*}
where we used \eqref{sumcos} and \eqref{sumlogcos}. This gives \eqref{Fnsalpha} because the sum is bounded thanks to \eqref{Grunskydecay}.
The last identity, \eqref{sumcot}, follows from the fact that $\cot\frac{\pi k}{n-1} = -\cot\frac{\pi(n-1-k)}{n-1}$ and $\cot \pi/2=0$.
\end{proof}

Fix $\theta\in H_n = \{ \theta\in [0,\pi]^n: 0\leq \theta_1 < \cdots < \theta_n \leq \pi\}$ and $t = (t_1, \cdots, t_n)\in \R^n$ and define the function
\begin{equation}\label{fnthetatau}
f_{n,\theta}(\tau) = F_n^s(\theta+\tau t)
\end{equation}
for $\tau\in[0,1]$. We will compute the derivatives with respect to $\tau$. Write $\omega_\mu(\tau) = \theta_\mu+\tau t_\mu$ and note that $\omega_\mu' = t_\mu$, $\omega_\mu'' =0$. By \eqref{Fns2},
\begin{multline}
f_{n,\theta} (\tau) = n(n-1)\log 2 + \sum_{\mu\neq\nu} \Big( \log \sin \big| \frac{\omega_\mu-\omega_\nu}{2}\big| + \log \sin \big| \frac{\omega_\mu+\omega_\nu}{2} \big| \Big)  \\ -2s \sum_{k,l=1}^n a_{kl} \big(\sum_\mu \cos (k\omega_\mu) \big) \big( \sum_\nu \cos(l\omega_\nu) \big).
\end{multline}
Differentiating gives
\begin{multline}\label{fnprime}
f_{n,\theta}'(\tau) = \sum_{\mu\neq \nu} \Big( \cot\big(\frac{\omega_\mu-\omega_\nu}{2}\big)\frac{t_\mu-t_\nu}{2}+\cot\big(\frac{\omega_\mu+\omega_\nu}{2}\big)\frac{t_\mu+t_\nu}{2}  \Big) \\
+4s \sum_{k,l=1}^n ka_{kl} \big(\sum_\mu t_\mu\sin (k\omega_\mu) \big) \big( \sum_\nu \cos(l\omega_\nu) \big).
\end{multline}
where we used that $a_{kl}=a_{lk}$. Furthermore
\begin{multline}\label{fnbis}
f_{n,\theta}''(\tau) = -\sum_{\mu\neq\nu} \Big(\frac{(t_\mu-t_\nu)^2}{4\sin^2\frac{\omega_\mu-\omega_\nu}{2}}+\frac{(t_\mu+t_\nu)^2}{4\sin^2\frac{\omega_\mu+\omega_\nu}{2}} \Big) + 4s \sum_{k,l=1}^n k^2a_{kl} \big(\sum_\mu t_\mu^2 \cos(k\omega_\mu) \big) \times \\ \big( \sum_\nu \cos(l\omega_\nu) \big)
- 4s \sum_{k,l=1}^n kla_{kl} \big(\sum_\mu t_\mu \sin(k\omega_\mu) \big) \big( \sum_\nu t_\nu \sin(l\omega_\nu) \big).
\end{multline}
We want to define a domain of integration smaller than $H_n$ which still captures the essential contribution. This is provided by the next lemma.

\begin{lemma}\label{lem:Ensk}
Given a constant $K$, define
\begin{equation}\label{Ensk}
E_{n,s,K} = \{ \theta\in H_n: F_n^s(\theta) > F_n^s(\alpha) -Kn \}.
\end{equation}
Then, there is a constant $C(\gamma)$ so that
\begin{equation}\label{domainest}
\P_n^s[E_{n,s,K}^c] \leq e^{(C(\gamma)-K)n}.
\end{equation}
\end{lemma}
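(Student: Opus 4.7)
The lemma is a standard concentration-of-measure estimate for the Coulomb gas. I would bound the numerator and denominator of
\[
\P_n^s[E_{n,s,K}^c] = \frac{1}{Z_n^{\beta,s}(\gamma)}\int_{E_{n,s,K}^c}\exp\Bigl(\tfrac{\beta}{2}F_n^s(\theta) + \sum_\mu R(\theta_\mu)\Bigr)\,d\theta
\]
separately, where $R(\theta) := (1-\tfrac{\beta}{2})sL(\theta) + \log\sin\theta$. For the numerator, the defining inequality $F_n^s(\theta)\le F_n^s(\alpha)-Kn$ on $E_{n,s,K}^c$ lets us pull out $e^{\frac{\beta}{2}(F_n^s(\alpha)-Kn)}$. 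Since $|L|\le M$ uniformly by Lemma~\ref{lem:eqpar} and the change of variables $x_\mu=\cos\theta_\mu$ gives $\int_{H_n}\prod_\mu\sin\theta_\mu\,d\theta=2^n/n!$, Stirling's formula combined with \eqref{Fnsalpha} yields
\[
\int_{E_{n,s,K}^c}(\cdots)\,d\theta \le \exp\Bigl(\tfrac{\beta}{2}F_n^s(\alpha)-\tfrac{\beta Kn}{2}-n\log n+C_1 n\Bigr).
\]

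The matching lower bound $Z_n^{\beta,s}(\gamma)\ge\exp(\tfrac{\beta}{2}F_n^s(\alpha)-n\log n-C_2 n)$ is obtained by restricting the integral to a box $\Omega_n^*=\prod_\mu[\alpha_\mu^*-c/n,\alpha_\mu^*+c/n]$ centered at the shifted Chebyshev-node configuration $\alpha_\mu^*:=(2\mu-1)\pi/(2n)$, with $c>0$ small enough that the intervals are disjoint and strictly contained in $(0,\pi)$. On $\Omega_n^*$ we have $|L|\le M$ and $\sin\theta_\mu\ge c'/n$, so $\sum_\mu R(\theta_\mu) \ge -Cn$ (with the two boundary particles contributing $O(\log n)$ each). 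Using the Chebyshev product identity $\prod_\mu\sin\alpha_\mu^*=2^{1-n}$ together with the decay of the arc-Grunsky coefficients from Lemma~\ref{lem:Grunskydecay} one verifies that $F_n^s(\alpha^*)=F_n^s(\alpha)+O(1)$. The key pointwise estimate is $F_n^s(\theta)\ge F_n^s(\alpha^*)-Cn$ throughout $\Omega_n^*$, which follows from Taylor expansion around $\alpha^*$: the first-order term evaluates via \eqref{fnprime} and the Chebyshev identity $\sum_{\nu\ne\mu}(\cos\alpha_\mu^*-\cos\alpha_\nu^*)^{-1}=\tfrac{1}{2}T_n''(x_\mu^*)/T_n'(x_\mu^*)=\cos\alpha_\mu^*/(2\sin^2\alpha_\mu^*)$ to $-\sum_\mu(\theta_\mu-\alpha_\mu^*)\cot\alpha_\mu^*$, which is bounded by $O(\log n)$ using $|\theta_\mu-\alpha_\mu^*|\le c/n$; the second-order term is $O(n)$ because $\|\nabla^2 F_n^s(\alpha^*)\|_{\mathrm{op}}=O(n^2)$ while the $\ell^\infty$-box geometry gives $\|\theta-\alpha^*\|_2^2\le c^2/n$; and higher-order Taylor remainders are controlled analogously using \eqref{fnbis} and the regularity of $\gamma$. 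Multiplying by $|\Omega_n^*|=(2c/n)^n=e^{-n\log n+Cn}$ yields the claimed lower bound.

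Combining the two estimates gives $\P_n^s[E_{n,s,K}^c]\le \exp(-\tfrac{\beta Kn}{2}+(C_1+C_2)n)$, which takes the form $e^{(C(\gamma)-K)n}$ after absorbing the factor $\beta/2$ into $C(\gamma)$ and relabeling $K$. The main obstacle is controlling the quadratic form $(\theta-\alpha^*)^T\nabla^2 F_n^s(\alpha^*)(\theta-\alpha^*)$ on $\Omega_n^*$: entries corresponding to nearest-neighbor pairs at the Chebyshev configuration scale like $n^2$, so the $O(n)$ bound crucially exploits the gap between the $\ell^\infty$-radius $c/n$ and the $\ell^2$-radius $c/\sqrt{n}$ of the box; a naive coordinate-wise estimate would produce a fatal $O(n^2)$ error.
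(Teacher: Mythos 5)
Your proposal is correct and follows essentially the same route as the paper: bound the numerator by pulling out $e^{\frac{\beta}{2}(F_n^s(\alpha)-Kn)}$ on $E_{n,s,K}^c$, and prove the matching lower bound $Z_n^{\beta,s}(\gamma)\ge \exp\bigl(\tfrac{\beta}{2}F_n^s(\alpha)-n\log n-Cn\bigr)$ by localizing to an $\ell^\infty$-box of radius $O(1/n)$ around a discretized equilibrium configuration and Taylor expanding $F_n^s$ there, with the first-order term of size $O(\log n)$ and the remainder of size $O(n)$. The differences are in execution rather than substance: you center at the Chebyshev zeros $\alpha_\mu^*=(2\mu-1)\pi/(2n)$ and use the identity $\sum_{\nu\ne\mu}(x_\mu^*-x_\nu^*)^{-1}=T_n''(x_\mu^*)/(2T_n'(x_\mu^*))$, which costs you the extra (true and easy) comparison $F_n^s(\alpha^*)=F_n^s(\alpha)+O(1)$ but makes $\sum_\nu\cos(l\alpha_\nu^*)$ vanish for $1\le l\le n$, whereas the paper works directly with $\alpha_\mu=\pi(\mu-1)/(n-1)$ via the cotangent identity \eqref{cotid}; and you bound the quadratic term by operator norm times $\|\theta-\alpha^*\|_2^2$, whereas the paper uses $\|t\|_\infty^2$ times the total sum \eqref{invsinest}.

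One correction to your closing remark: a coordinate-wise estimate is not fatal here, and in fact it is what the paper uses. Bounding $(t_\mu\pm t_\nu)^2\le 4\|t\|_\infty^2\le C/n^2$ uniformly in \eqref{fnbis} and combining with $\sum_{\mu\ne\nu}\bigl(\sin^{-2}\tfrac{\omega_\mu-\omega_\nu}{2}+\sin^{-2}\tfrac{\omega_\mu+\omega_\nu}{2}\bigr)\le Cn^3$ already gives $O(n)$; the essential input is the off-diagonal decay of the Hessian (equivalently the $O(n^2)$ row sums behind your operator-norm bound, or the $O(n^3)$ total sum), not the gap between the $\ell^\infty$- and $\ell^2$-radii of the box. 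Only the cruder bound (worst entry $\sim n^2$ times $\sim n^2$ entries times $\|t\|_\infty^2$) produces the fatal $O(n^2)$.
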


\begin{proof}
In order to prove \eqref{domainest} we need an estimate of $Z_n^s$ from below. We will prove the following estimate
\begin{equation}\label{Znest}
Z_n^s \geq n^{-n} \exp \big( \frac{\beta}{2}F_n^s(\alpha)-C(\gamma)n \big)
\end{equation}
for some constant $C(\gamma)$. Assuming \eqref{Znest} we can prove \eqref{domainest}. In fact,
\begin{align*}
&\P_n^s[E_{n,s,K}^c] = \frac{1}{Z_n^s} \int_{E_{n,s,K}^c} \exp \big(\frac{\beta}{2}F_n^s(\theta) + (1-\frac{\beta}{2}) \sum_\mu L(\theta_\mu)  \big) \prod_\mu \sin\theta_\mu \d\theta_\mu \\
&\leq e^{C(\gamma)n} n^n \int_{E_{n,s,K}^c} \exp \big( \frac{\beta}{2}(F_n^s(\theta) -F_n^s(\alpha) ) \big) \prod_\mu \sin\theta_\mu \d\theta_\mu \\
&\leq e^{(C(\gamma)-K)n} n^n \int_{H_n}  \prod_\mu \sin\theta_\mu \d\theta_\mu \\
&= e^{(C(\gamma)-K)n} \frac{n^n}{n!} \big( \int_0^\pi  \sin\theta \d\theta \big)^n \leq  e^{(C(\gamma)-K)n}. \\
\end{align*}
We turn now to the proof of \eqref{Znest}. By \eqref{Grunskyexp}, \eqref{Grunskydecay}, and \eqref{L},
\[ L(\theta) = |\log z'(\cos\theta)| \leq 2 \sum_{k,l=1}^n |a_{kl}| \leq C(\gamma) \]
for all $\theta\in H_n$, since we assume that $\gamma\in C^{9+\alpha}$. Let
\[\Omega_n = \{t\in \R^n: 0\leq t_1 \leq \frac{1}{2n}, -\frac{1}{2n} \leq t_n \leq 0, | t_\mu | \leq \frac{1}{2n}, 1<\mu <n \}.\]
Then,
\begin{align}\label{Znsbelow}
Z_n^s &\geq \int_{H_n}  \exp \big( \frac{\beta}{2}F_n^s(\theta)-C(\gamma)n \big) \prod_\mu \sin\theta_\mu \d\theta_\mu \nonumber\\
&\geq \exp (-C(\gamma) n) \int_{\Omega_n} \exp \big( \frac{\beta}{2}F_n^s(\alpha + t) \big) \prod_\mu \sin(\alpha_\mu + t_\mu) \d\theta_\mu \nonumber \\
&=  \exp \big(\frac{\beta}{2}F_n^s(\alpha)-C(\gamma) n\big) \int_{\Omega_n} \exp \big( \frac{\beta}{2}(f_{n,\alpha}(1)-f_{n,\alpha}(0)) \big) \prod_\mu \sin(\alpha_\mu + t_\mu) \d\theta_\mu
\end{align}
where $f_{n,\alpha}(\tau) = F_n^s(\alpha+\tau t)$.
We write
\begin{equation}\label{taylor1}
f_{n,\alpha}(1)-f_{n,\alpha}(0) = f_{n,\alpha}'(0) + \int_0^1(1-\tau) f_{n,\alpha}''(\tau) \d\tau.
\end{equation}
It follows from \eqref{fnprime} that
\begin{multline}\label{fnprimezero}
f_{n,\alpha}'(0) =  \sum_{\mu\neq\nu} \cot\big(\frac{\alpha_\mu-\alpha_\nu}{2} \big)\frac{t_\mu-t_\nu}{2}+\cot\big(\frac{\alpha_\mu+\alpha_\nu}{2} \big)\frac{t_\mu+t_\nu}{2}\\ +4s\sum_{k,l=1}^n k a_{kl} \big(\sum_\mu t_\mu \sin(k\alpha_\mu) \big)\big(\sum_\mu \cos(k\alpha_\mu) \big).
\end{multline}
To estimate $f_{n,\alpha}'(0)$ we will use the following identity
\begin{equation}\label{cotid}
\sum_{\mu\neq\nu} \cot\big(\frac{\alpha_\mu-\alpha_\nu}{2} \big)\frac{t_\mu-t_\nu}{2}+\cot\big(\frac{\alpha_\mu+\alpha_\nu}{2} \big)\frac{t_\mu+t_\nu}{2} = \sum_{\mu=2}^{n-1} t_\mu \cot \frac{\pi(\mu-1)}{n-1}.
\end{equation}
To see this notice that we can write
\[
\sum_{\mu\neq\nu} \cot\big(\frac{\alpha_\mu-\alpha_\nu}{2} \big)\frac{t_\mu-t_\nu}{2}+\cot\big(\frac{\alpha_\mu+\alpha_\nu}{2} \big)\frac{t_\mu+t_\nu}{2}=\sum_{\mu=2}^{n-1}c_\mu t_\mu,
\]
where
\[
c_\mu=\sum_{\nu=1,\nu\neq\mu}^{n}\cot\frac{\pi(\mu-\nu)}N+\cot\frac{\pi(\mu+\nu-2)}N,
\]
and $N=2(n-1)$. If $1<\mu<n$ some manipulations of the sum and using the identity
\begin{equation}\label{sumcotN}
\sum_{k=1}^{N-1}\cot\frac{\pi k}N=0,
\end{equation}
gives
\[
c_\mu=\cot\Bigg(\frac{\pi(\mu-1)}N+\frac{\pi}2\Bigg)+\cot\frac{\pi(\mu-1)}N-\cot\frac{2\pi(\mu-1)}N=\cot\frac{\pi(\mu-1)}{n-1}.
\]
It is easy to see that $c_1=0$, and $c_n=0$ follows from \eqref{sumcotN}.
The right side of \eqref{cotid} can be estimated as follows.
If $t\in \Omega_n$, then
\begin{equation}\label{cottmuest}
 \Big| \sum_{\mu=2}^{n-1} t_\mu \cot \frac{\pi(\mu-1)}{n-1} \Big| \leq \frac{1}{2n} \sum_{\mu=2}^{n-1} \big| \cot\frac{\pi(\mu-1)}{n-1} \big| \leq C\log n.
\end{equation}
Also, if $t\in\Omega_n$, then using \eqref{sumcos}, we see that
\[ \big| 4s\sum_{k,l=1}^n k a_{kl} \big(\sum_\mu t_\mu \sin(k\alpha_\mu) \big)\big(\sum_\mu \cos(k\alpha_\mu) \big) \big| \leq 4 \big( \max_{1\leq \mu\leq n} |t_\mu| \big)n \sum_{k,l\geq 1} k|a_{kl}| \leq C(\gamma). \]
This estimate, together with \eqref{fnprimezero}, \eqref{cotid}, and \eqref{cottmuest}, shows that
\begin{equation}\label{fnprimezerotest}
|f_{n,\alpha}'(0)| \leq C(\gamma)\big( \max_{1\leq \mu\leq n} |t_\mu| \big)n\log n \leq C(\gamma) \log n.
\end{equation}
To bound the second derivative in \eqref{taylor1} we will use the following inequality. If $t\in \Omega_n$, then,
\begin{equation}\label{invsinest}
\frac{1}{n^2} \sum_{\mu\neq \nu} \Big( \frac{1}{\sin^2\frac{\omega_\mu-\omega_\nu}{2}}+\frac{1}{\sin^2\frac{\omega_\mu+\omega_\nu}{2}} \Big) \leq Cn,
\end{equation}
where $\omega_\mu = \alpha_\mu+\tau t_\mu$ and $\tau\in [0,1]$. To prove this inequality, note that if $t\in \Omega_n$, there is a numerical constant $C$ such that
\begin{equation}\label{invsinest2}
 \sum_{\mu\neq \nu} \Big( \frac{1}{\sin^2\frac{\omega_\mu-\omega_\nu}{2}}+\frac{1}{\sin^2\frac{\omega_\mu+\omega_\nu}{2}} \Big)\le
C\sum_{\mu\neq \nu} \Big( \frac{1}{\sin^2\frac{\alpha_\mu-\alpha_\nu}{2}}+\frac{1}{\sin^2\frac{\alpha_\mu+\alpha_\nu}{2}} \Big).
\end{equation}
Manipulations of the sum gives, with $N=2(n-1)$ as before,
\[
\sum_{\mu,\nu=1,\mu\neq\nu}^n\frac 1{\sin^2\frac{\alpha_\mu+\alpha_\nu}2}=\sum_{\mu=1}^{n-1}\sum_{\nu = n+1}^{2n-2}\frac 1{\sin^2\frac{\pi(\mu-\nu)}N},
\]
so the right side of \eqref{invsinest2} is bounded by
\begin{align*}
&C\Bigg(\sum_{\mu,\nu=1,\mu\neq\nu}^n+\sum_{\mu=1}^{n-1}\sum_{\nu = n+1}^{2n-2}\Bigg)\frac 1{\sin^2\frac{\pi(\mu-\nu)}N}\le 2C \sum_{\mu,\nu=1,\mu\neq\nu}^N
\frac 1{\sin^2\frac{\pi(\mu-\nu)}N}\\
&=2C(N-1)\sum_{\nu=1}^{N-1}\frac 1{\sin^2\frac{\pi\nu}N}\le 8Cn\sum_{\nu=1}^{n-1}\frac 1{\sin^2\frac{\pi\nu}{2(n-1)}}\le 8Cn\sum_{\nu=1}^{n-1}\frac{(n-1)^2}{\nu^2}\le C'n^3,
\end{align*}
for some numerical constant $C'$.
It follows from \eqref{invsinest}
\[ |f_{n,\alpha}''(\tau) | \leq \frac{1}{16n^2} \sum_{\mu\neq\nu}\Big( \frac{1}{\sin^2\frac{\omega_\mu-\omega_\nu}{2}}+\frac{1}{\sin^2\frac{\omega_\mu+\omega_\nu}{2}} \Big) + 4 \sum_{k,l\geq 1} (k^2+kl) |a_{kl}| \leq C(\gamma) n.  \]
Combining this estimate with \eqref{fnprimezerotest}, we see that \eqref{taylor1} gives
\[ |f_{n,\alpha}(1)-f_{n,\alpha}(0) |\leq C(\gamma) n. \]
It now follows from \eqref{Znsbelow} that
\[ Z_n^s \geq e^{\frac{\beta}{2}F_n^s(\alpha)-C(\gamma)n}\int_{\Omega_n} \prod_\mu\sin(\alpha_\nu+t_\mu) \d t_\mu. \]
By the definition of $\Omega_n$
\[\int_{\Omega_n} \prod_\mu \sin(\alpha_\mu+t_\mu) \d t_\mu = \big(1-\cos\frac{1}{2n}\big)^2 \big(\sin\frac{1}{2n}\big)^{n-2} \prod_{\mu=2}^{n-1} 2\sin\frac{\pi(\mu-1)}{n-1} \geq e^{-Cn}n^{-n} \]
since $\prod_{\mu=2}^{n-1} 2 \sin\frac{\pi (\mu-1)}{n-1} = \prod_{\mu=1}^{n-2} |1-e^{\i 2\pi\mu/(n-1)}| = n-1$ as in \eqref{prod}. We have proved \eqref{Znest}.
\end{proof}

Given $\theta \in H_n$ we define
\begin{equation}\label{tmudef}
t_\mu = t_\mu(\theta) = \theta_\mu-\alpha_\mu, \quad 1\leq \mu \leq n.
\end{equation}
Let \begin{equation}\label{epsilonn}
\epsilon_n = \sup_{0<s<1} \sup_{\theta \in E_{n,s,K}} \max_{1\leq \mu \leq n} |t_\mu(\theta)|.
\end{equation}
We want to show that if $\theta$ is in $E_{n,s,K}$ then the deviations $t_\mu$ are small. The next lemma is our first result in this direction.

\begin{lemma}\label{lem:tmutozero}
Fix $K$ independent of $s$ in the definition of $E_{n,s,K}$. Then $\epsilon_n\to 0$ as $n\to\infty$.
\end{lemma}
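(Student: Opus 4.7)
The strategy is to show that the condition $\theta\in E_{n,s,K}$ forces the $\theta$-side empirical measure $\mu_n := \frac{1}{n}\sum_\mu\delta_{\theta_\mu}$ to converge weakly to the uniform measure $d\theta/\pi$ on $[0,\pi]$, uniformly in $s\in[0,1]$ and $\theta\in E_{n,s,K}$, and then to convert this weak convergence into the uniform bound on the order statistics $\max_\mu|\theta_\mu-\alpha_\mu|\to 0$ via the ordering of the $\theta_\mu$ and P\'olya's theorem.

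First, using the arc-Grunsky expansion \eqref{Grunskyexp} together with the decay of the $a_{kl}$ from Lemma~\ref{lem:Grunskydecay} (with, e.g., $p=q=4$, available thanks to $\gamma\in C^{9+\alpha}$) to control the $\max(k,l)>n$ tail of the truncated Grunsky sum, and assuming $\operatorname{cap}(\gamma)=1/2$ without loss of generality, I would establish the decomposition
\[
F_n^s(\theta) = (1-s)\sum_{\mu\neq\nu}\log|\cos\theta_\mu-\cos\theta_\nu| + s\sum_{\mu\neq\nu}\log|z_e(\cos\theta_\mu)-z_e(\cos\theta_\nu)| + R_n(s,\theta),
\]
with $|R_n(s,\theta)|\le Cn$ uniformly in $s\in[0,1]$ and $\theta\in H_n$; the $O(n)$ remainder absorbs both the diagonal contribution $s\sum_\mu\log|z_e'(\cos\theta_\mu)|$ and the uniform tail estimate on the Grunsky sum. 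Each of the two double sums is bounded above by $-n^2\log 2+O(n\log n)$ because the logarithmic equilibrium energies of $[-1,1]$ and of $\gamma$ both equal $\log 2$ under our normalization, and each is near-maximal at $\theta=\alpha$ (as reflected in \eqref{sumlogcos} and the equilibrium-parametrization definition of $z_e$).

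Second, combining this decomposition with the explicit value of $F_n^s(\alpha)$ from \eqref{Fnsalpha}, the defining inequality $F_n^s(\theta)>F_n^s(\alpha)-Kn$ forces the weighted deficit
\[
(1-s)\Delta_1(\theta)+s\Delta_2(\theta)\le C(K+1)n,
\]
where $\Delta_i(\theta)\ge 0$ measures the gap of the $i$-th log-sum below its maximum. Along any sequence $\theta^{(n)}\in E_{n,s_n,K}$, if $s_n$ is bounded away from $1$ then $\Delta_1(\theta^{(n)})=o(n^2)$, while if $s_n$ is bounded away from $0$ then $\Delta_2(\theta^{(n)})=o(n^2)$; at least one of these holds along a further subsequence. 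By lower semicontinuity of the logarithmic energy and the uniqueness of the equilibrium measure, this forces either $\frac{1}{n}\sum_\mu\delta_{\cos\theta_\mu^{(n)}}\to\nu_e^{[-1,1]}$ or $\frac{1}{n}\sum_\mu\delta_{z_e(\cos\theta_\mu^{(n)})}\to\nu_e^\gamma$ weakly; by the very definition of the equilibrium parametrization, both conclusions are equivalent to $\mu_n\to d\theta/\pi$ weakly on $[0,\pi]$.

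Third, since the limit $d\theta/\pi$ has a continuous strictly increasing CDF on $[0,\pi]$, P\'olya's theorem converts weak convergence of $\mu_n$ into uniform convergence of the corresponding CDFs, and since $\alpha_\mu=\pi(\mu-1)/(n-1)$ is exactly the $(\mu-1)/(n-1)$-quantile of $d\theta/\pi$, this yields $\max_\mu|\theta_\mu-\alpha_\mu|\to 0$ along the subsequence. A contradiction argument then removes the suprema in the definition of $\epsilon_n$: if $\epsilon_n\not\to 0$, extract a subsequence $\theta^{(n_k)}\in E_{n_k,s_{n_k},K}$ with $\max_\mu|t_\mu(\theta^{(n_k)})|\ge\delta>0$ and apply the above to reach a contradiction. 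The main technical hurdle is the uniform $O(n)$ remainder in the first step, where one must simultaneously control the truncation tail of the Grunsky sum and the diagonal $\log|z_e'|$-term; once this convex-combination structure is in place, the potential-theoretic and P\'olya-type steps are classical.
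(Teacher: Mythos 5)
Your proposal is correct and is in substance the paper's own argument: both proofs run by contradiction along a subsequence, exploit the defining inequality of $E_{n,s,K}$ together with the explicit value \eqref{Fnsalpha} of $F_n^s(\alpha)$, absorb the diagonal $\log|z_e'|$ term and the Grunsky tail into an $o(n^2)$ error, pass to weak limits of the empirical measures using the truncated kernel $\log_M$ (your ``lower semicontinuity''), and invoke uniqueness of the equilibrium measures of $[-1,1]$ and $\gamma$ (both of energy $\log 2$) plus the fact that $z_e$ pushes one equilibrium measure to the other. The differences are only in the endgame: you compare each log-sum to its Fekete maximum and case-split on $\lim s_{n_k}$ to get one deficit of size $o(n^2)$, then go forward (weak convergence $\Rightarrow$ P\'olya/quantile comparison $\Rightarrow$ $\max_\mu|t_\mu|\to 0$), whereas the paper keeps the convex combination $(1-s)I[\lambda]+sI[\mu]\le\log 2$ and argues in the contrapositive direction (a persistent deviation forces $\lambda$, hence $\mu$, to differ from the equilibrium measure, so both energies exceed $\log 2$ strictly), which handles all $s$ at once without introducing the Fekete maxima at all. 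Two small caveats on your version: your claimed bound $(1-s)\Delta_1+s\Delta_2\le C(K+1)n$ overstates what the crude input gives --- the monotone convergence of the transfinite diameters only yields that the Fekete maxima are $-n^2\log 2+o(n^2)$, hence a deficit bound $o(n^2)$, but that is all your argument uses; and the quantile step should be recorded as $|\theta_\mu/\pi-\mu/n|\le\sup_\theta|\hat F_n(\theta)-\theta/\pi|$ together with $|\mu/n-(\mu-1)/(n-1)|\le 1/(n-1)$, which indeed gives $\max_\mu|\theta_\mu-\alpha_\mu|\to 0$ along the subsequence and the desired contradiction.
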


\begin{proof}
Suppose that $\limsup_{n\to\infty} \epsilon_n = \epsilon>0$, so that there is a subsequence $\{ n_k \}_{k\geq 1}$ such that $\epsilon_{n_k} \to\epsilon$ as $k\to \infty$. Then there exists $s_k\in[0,1]$ and $\theta^{(k)} \in E_{n_k,s_k,K}$ such that
\begin{equation}\label{maxtmu}
\max_{1\leq \mu \leq n} |t_\mu(\theta^{(k)})| \geq \epsilon/2
\end{equation}
for all sufficiently large $k$. Define the probability measures
\[ \lambda_k = \frac{1}{n_k} \sum_{j=1}^{n_k} \delta_{\cos \theta_j^{(k)}}, \quad \mu_k= \frac{1}{n_k} \sum_{j=1}^{n_k} \delta_{z(\cos(\theta_j^{(k)}))} \]
on $[-1,1]$ and $\gamma$, respectively. By picking a further subsequence we can assume that $\lambda_k$ converges weakly to $\lambda$ and $\mu_k$ converges weakly to $\mu$ as $k\to\infty$, where $\lambda$ and $\mu$ are two probability measures. It follows from \eqref{maxtmu} that $\lambda$ is not the equilibrium measure on $[-1,1]$, and consequently, $\mu$ is not the equilibrium measure on $\gamma$. By \eqref{Fns}, \eqref{Fnsalpha} and \eqref{Ensk},
\begin{multline*}
(1-s_k) \sum_{\mu\neq\nu} \log |\cos\theta_\mu-\cos\theta_\nu|^{-1} + s_k \sum_{\mu\neq\nu} \log |z(\cos\theta_\mu)-z(\cos\theta_\nu)|^{-1} \\
= -F_{n_k}^{s_k}(\theta^{(k)})+s_k\sum_{\mu} \log|z'(\cos(\theta_\mu)|
< -F_{n_k}^{s_k}(\alpha)+Kn_k +s_k\sum_{\mu} \log|z'(\cos(\theta_\mu)| \\
\leq -n_k \log n_k + n_k^2\log 2+C(\gamma)n_k.
\end{multline*}
Fix $M>0$ and let $\log_M(x) = \min(\log x, M)$. Then, the above estimate gives
\begin{multline*}
(1-s_k)\int_{-1}^1\int_{-1}^1 \log_M|x-y|^{-1} \d\lambda_k(x)\d\lambda_k(y) + s_k\int_\gamma\int_\gamma \log_M|z-w|^{-1} \d\mu_k(z)\d\mu_k(w) \\
\leq \frac{1}{n_k^2}(-n_k\log n_k + n_k^2\log 2+C(\gamma)n_k +M n_k).
\end{multline*}
If we let $k\to\infty$ we obtain the inequality
\begin{equation}
(1-s)\int_{-1}^1\int_{-1}^1 \log_M|x-y|^{-1} \d\lambda(x)\d\lambda(y) + s\int_\gamma\int_\gamma \log_M |z-w|^{-1} \d\mu(z)\d\mu(w) \leq \log2.
\end{equation}
We can now let $M\to\infty$ and see that
\begin{equation}\label{senergyest}
(1-s)I[\lambda]+sI[\mu] \leq \log 2.
\end{equation}
However, since $\mathrm{cap}([-1,1]) = \mathrm{cap}(\gamma) = \frac{1}{2}$ we have that the energies of the equilibrium measures on $[-1,1]$ and $\gamma$ are both $\log 2$, and since neither $\lambda$ nor $\mu$ equals the equilibrium measure, $I[\lambda]>\log 2$ and $I[\mu] >\log 2$. This contradicts \eqref{senergyest}. Hence, $\epsilon_n\to 0$ as $n\to\infty$ (and both $\lambda$ and $\mu$ are the equilibrium measures on $[-1,1]$ and $\gamma$).

\end{proof}

As a consequence of Lemma \ref{lem:tmutozero}, for any sequence $\theta^{(n)} = (\theta_1^{(n)}, \cdots, \theta_n^{(n)}) \in E_{n,s,K}$ the measure $\lambda_n = \frac{1}{n} \sum_{j=1}^{n} \delta_{\cos \theta_j^{(n)}}$ converges weakly to the equilibrium measure on $[-1,1]$. Indeed for any continuous function $f$ on the unit circle, and any $\epsilon>0$,
\[ \big| \frac{1}{n}\sum_\mu f(\cos \theta_\mu^{(n)}) -\frac{1}{n}\sum_\mu f(\cos \alpha_\mu) \big| \leq \sup_\mu |f(\cos \theta_\mu)-f(\cos \alpha_\mu)| < \epsilon \]
if $\sup_\mu |t_\mu|$ is small enough, by uniform continuity. By the above lemma, the latter condition holds for large enough $n$, so $\lambda_n - \frac{1}{n} \sum_{j=1}^{n} \delta_{\cos \alpha_j}$ converges weakly to zero. But $\frac{1}{n} \sum_{j=1}^{n} \delta_{\cos \alpha_j}$ converges weakly to the equilibrium measure on $[-1,1]$ so the same holds for $\lambda_n$.

The next lemma gives us further control of the size of the deviations $t_\mu$ for $\theta = (\theta_1,\cdots,\theta_n) \in  E_{n,s,K}$.

\begin{lemma}\label{lem:sumtmu}
There is a constant $C_K(\gamma)$ such that
\begin{equation}\label{sumtmu}
\sum_\mu t_\mu^2 \leq C_K(\gamma)
\end{equation}
for all $\theta\in E_{n,s,K}$. $C_K(\gamma)$ depends only on $K$ and $\gamma$ but not on $s$.
\end{lemma}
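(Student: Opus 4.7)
The plan is to perform a second-order Taylor expansion of $F_n^s$ around the configuration $\alpha$, and to turn the defining condition $F_n^s(\theta) > F_n^s(\alpha) - Kn$ of $E_{n,s,K}$ into a quadratic inequality for $S := \sum_\mu t_\mu^2$. With $f_{n,\alpha}(\tau) := F_n^s(\alpha+\tau t)$, I use the integral remainder form
\[
F_n^s(\theta) - F_n^s(\alpha) \;=\; f'_{n,\alpha}(0) + \int_0^1 (1-\tau)\, f''_{n,\alpha}(\tau)\, d\tau,
\]
and split $f''_{n,\alpha}(\tau) = -Q(\tau) + R(\tau)$, where $Q(\tau) \geq 0$ denotes the sum of $\sin^{-2}$ terms in \eqref{fnbis} (the strictly negative part of the Hessian) and $R(\tau)$ is the remainder coming from the arc-Grunsky sum.

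For the first-order term, identity \eqref{cotid} combined with Cauchy-Schwarz and the elementary bound $\sum_{\mu=2}^{n-1} \cot^2 \tfrac{\pi(\mu-1)}{n-1} \leq Cn^2$ gives $|f'_{n,\alpha}(0)| \leq C(\gamma)\, n S^{1/2}$, with the Grunsky contribution to $f'_{n,\alpha}(0)$ estimated analogously by $C(\gamma)\sqrt{nS}$ via \eqref{Grunskydecay} and \eqref{sumcos}. For the remainder $R(\tau)$ in the Hessian, I expand $\cos(l\omega_\nu) = \cos(l\alpha_\nu) + O(lt_\nu)$ to exploit the cancellation \eqref{sumcos}, yielding $|\sum_\nu \cos(l\omega_\nu)| \leq 1 + Cl\sqrt{nS} + Cl^2 S$, and combine this with the crude bounds $|\sum_\mu t_\mu^2 \cos(k\omega_\mu)| \leq S$ and $|\sum_\mu t_\mu \sin(k\omega_\mu)| \leq \sqrt{nS}$ together with the Grunsky decay \eqref{Grunskydecay} to obtain $|R(\tau)| \leq C(\gamma)(S + S^{3/2}\sqrt{n} + S^2)$.

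The main obstacle, and the key step, is to obtain the matching lower bound $\int_0^1 (1-\tau) Q(\tau)\, d\tau \geq c n S$. My approach is to first handle $\tau = 0$, where $Q(0)$ can be rewritten, by extending $t$ antisymmetrically to $2(n-1)$ equispaced points on the unit circle via $\tilde t(-\alpha_\mu) := -t_\mu$, as a discrete Dirichlet form on $\mathbb{Z}_{2(n-1)}$. Diagonalizing via discrete Fourier transform, the nonzero-mode spectral gap is of order $n$, which yields $Q(0) \geq c n S$. To transfer this estimate to arbitrary $\tau \in (0,1)$ I use Lemma~\ref{lem:tmutozero}, which ensures $\epsilon_n \to 0$: combined with the ordering $\theta_1 < \cdots < \theta_n$ (which forces $\omega_\mu - \omega_\nu$ and $\alpha_\mu - \alpha_\nu$ to have the same sign and prevents the dangerous nearby-pair terms from degenerating), this gives a uniform comparison of the quadratic forms $Q(\tau)$ and $Q(0)$ for large $n$.

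Combining all estimates with the hypothesis $F_n^s(\theta) > F_n^s(\alpha) - Kn$ and dividing by $n$ yields a quadratic inequality of the form
\[
c S \;\leq\; K \,+\, C(\gamma) S^{1/2} \,+\, C(\gamma)\, n^{-1}\bigl(S + S^{3/2}\sqrt{n} + S^2\bigr).
\]
A bootstrap argument handles the subcritical terms: the a priori bound $S \leq n \epsilon_n^2$ from Lemma~\ref{lem:tmutozero} reduces the $n^{-1}$ remainder to $o(1) + o(S)$, after which the quadratic inequality gives $S \leq C_K(\gamma)$. Since every bound used in Steps 1--4 is uniform in $s \in [0,1]$, the final constant $C_K(\gamma)$ depends only on $K$ and $\gamma$, as claimed.
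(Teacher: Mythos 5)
There is a genuine gap, and it sits exactly at the step the paper's proof is built around: the second-order arc-Grunsky term in the Hessian. In your decomposition $f''_{n,\alpha}(\tau)=-Q(\tau)+R(\tau)$, the dangerous piece of $R(\tau)$ is the quadratic form $4s\sum_{k,l}kl\,a_{kl}\big(\sum_\mu t_\mu\sin(k\omega_\mu)\big)\big(\sum_\nu t_\nu\sin(l\omega_\nu)\big)$ from \eqref{fnbis}. With the crude bounds you invoke, $\big|\sum_\mu t_\mu\sin(k\omega_\mu)\big|\le\sqrt{nS}$ and $\sum_{k,l}kl|a_{kl}|\le C(\gamma)$ from \eqref{Grunskydecay}, this term is of size $C(\gamma)\,nS$, not $C(\gamma)S^2$ as you claim (Cauchy--Schwarz gives $(\sqrt{nS})^2=nS$; nothing in your argument produces $S^2$). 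After dividing by $n$ this contributes $C(\gamma)S$ to the right-hand side of your quadratic inequality, i.e.\ it competes at exactly the same order as the term $cS$ on the left, with a constant $C(\gamma)=4\sum_{k,l}kl|a_{kl}|$ that has no reason to be smaller than $c$ (your $Q$ lower bound gives $c$ of order $1/4$ at best). No bootstrap via $S\le n\epsilon_n^2$ can rescue this, since the problematic term is already $O(S)$, not $o(S)$. This is precisely why the paper does not use a magnitude bound here: it splits the Grunsky form into low modes $k,l\le q$ and a tail, applies the strengthened arc-Grunsky inequality \eqref{stGrineq} ($B\ge-\kappa I$, $\kappa<1$) to the low modes, and then re-sums the resulting weighted form $4\sum_k k\rho^{2k}\big(\sum_\mu t_\mu\sin(k\omega_\mu)\big)^2$ with $\rho=\kappa^{1/(2q)}$ into a kernel that is pointwise dominated by $\big(a+4\sin^2\tfrac{\cdot}{2}\big)^{-1}$, so that the negative contribution is at most the fraction $\tfrac{1}{a+4}<\tfrac14$ of the \emph{same} $\sin^{-2}$ quadratic form $Q(\tau)$, plus terms of size $\epsilon_n\,nS$ and a tail of size $q^{-2-\epsilon}nS$. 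Without this spectral input and the comparison of quadratic forms (rather than of magnitudes), the approach collapses.

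Two smaller remarks. Your lower bound $\int_0^1(1-\tau)Q(\tau)\,d\tau\ge cnS$ is correct but the discrete-Fourier/spectral-gap argument at $\tau=0$ and the transfer to $\tau\in(0,1)$ are unnecessary: since $\sin^2\le 1$ and $(t_\mu-t_\nu)^2+(t_\mu+t_\nu)^2=2(t_\mu^2+t_\nu^2)$, one has $Q(\tau)\ge 2n\sum_\mu t_\mu^2$ pointwise in $\tau$, which is all the paper uses. Your first-order bound $|f'_{n,\alpha}(0)|\le C(\gamma)\,n\sqrt{S}$ via \eqref{cotid}, Cauchy--Schwarz and \eqref{sumcos} is fine and in fact avoids the paper's final $\epsilon_n\le C(\log n/n)^{1/3}$ self-improvement step; but that economy does not compensate for the missing treatment of the Grunsky quadratic form, which is the heart of the lemma.
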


\begin{proof}
Fix $\theta\in E_{n,s,K}$ and let $\epsilon_n = \max_\mu |t_\mu|$. It follows from \eqref{fnprimezerotest} that
\[ f_{n,\alpha}'(0) \leq C(\gamma) \epsilon_n n\log n. \]
Let 
\begin{align*}
&A(\tau) = 4s \sum_{k,l=1}^nk^2 a_{kl} \big(\sum_\mu t_\mu^2 \cos (k\omega_\mu) \big) \big( \sum_\nu \cos(l\omega_\nu) \big) \\
&B(\tau) = 4s \sum_{k,l=1}^nkl a_{kl} \big(\sum_\mu t_\mu \sin (k\omega_\mu) \big) \big( \sum_\nu t_\nu \sin(l\omega_\nu) \big)
\end{align*}
where $\omega_\mu = \omega_\mu (\tau) = \alpha_\mu + \tau t_\mu$ as above. Then by \eqref{fnbis},
\[ f_{n,\alpha}''(\tau) = -\frac{1}{4} \sum_{\mu\neq\nu} \Big( \frac{(t_\mu-t_\nu)^2}{\sin^2\frac{\omega_\mu-\omega_\nu}{2}} +  \frac{(t_\mu+t_\nu)^2}{\sin^2\frac{\omega_\mu+\omega_\nu}{2}}\Big) + A(\tau)-B(\tau). \]
Using this identity and
\[ F_n^s(\theta)-F_n^s(\alpha) = f_{n,\alpha}'(0)+\int_0^1 (1-\tau)f_{n,\alpha}''(\tau)\d\tau\]

we obtain, for $\theta\in E_{n,s,K}$,
\begin{align}\label{Est}
&\frac{1}{4} \int_0^1 \sum_{\mu\neq\nu} \Big( \frac{(t_\mu-t_\nu)^2}{\sin^2\frac{\omega_\mu-\omega_\nu}{2}} +  \frac{(t_\mu+t_\nu)^2}{\sin^2\frac{\omega_\mu+\omega_\nu}{2}} \Big) (1-\tau)\d\tau \nonumber \\
&= F_n^s(\alpha) -F_n^s(\theta) +f_{n,\alpha}'(0)+\int_0^1(A(\tau)-B(\tau))(1-\tau)\d\tau \nonumber\\
&\leq Kn + C(\gamma)\epsilon_n n\log n+ \int_0^1(A(\tau)-B(\tau))(1-\tau)\d\tau
\end{align}
We want to estimate $A(\tau)$ and $B(\tau)$. Using \eqref{sumcos} and the definition of $\epsilon_n$, we see that
\begin{equation}\label{sumcosest}
\big| \sum_\mu \cos(k\omega_\mu) \big| \leq \big| \sum_\mu (\cos(k\omega_\mu)-\cos(k\alpha_\mu)) \big| + \big| \sum_\mu \cos(k\alpha_\mu) \big| \leq kn\epsilon_n+1.
\end{equation}
For a $C^{6+\alpha}$ curve, \eqref{Grunskydecay} gives the estimate
\[ \sum_{k,l=1}^n lk^2 |a_{kl}| \leq \sum_{k,l\geq 1} lk^2\frac{A}{k^{3+\alpha/2}l^{2+\alpha/2}} \leq C(\gamma). \]
Hence,
\begin{equation}\label{Aest}
|A(\tau)| \leq 4 \sum_{k,l=1}^n k^2|a_{kl}| \big( \sum_\mu t_\mu^2 \big) (1+ln\epsilon_n) \leq C(\gamma)(1+n\epsilon_n) \sum_\mu t_\mu^2.
\end{equation}
Fix an integer $q$, which will be specified below, and let
\begin{align*}
&B_1(\tau) = 4 \sum_{k,l=1}^q kla_{kl} \big(\sum_\mu t_\mu \sin(k\omega_\mu) \big) \big( \sum_\nu t_\nu \sin(l\omega_\nu) \big) \\
&B_2(\tau) = 8 \sum_{k\geq1} \sum_{l\geq q+1} kl a_{kl} \big(\sum_\mu t_\mu \sin(k\omega_\mu) \big) \big(\sum_\nu t_\nu \sin(l\omega_\nu) \big).
\end{align*}
It follows from the strengthened arc-Grunsky inequality \eqref{stGrineq} that
\[ B_1(\tau)\geq -4\kappa \sum_{k=1}^q k \big( \sum_\mu t_\mu \sin(k\omega_\mu) \big)^2. \] 
Let $\rho = \kappa^{1/(2 q)}$ so that $\rho^{2k} = \kappa^{k/q}\geq \kappa$ for $1\leq k\leq q$. Then
\begin{align}\label{B1est}
&-B_1(\tau) \leq 4 \sum_{k\geq 1} k\rho^{2k} \big( \sum_\mu t_\mu \sin(k\omega_\mu) \big)^2 = 4\sum_{ \mu,\nu} t_\mu t_\nu \sum_{k\geq 1} k\rho^{2k} \sin(k\omega_\mu)\sin(k\omega_\nu)\nonumber \\
&= 2 \sum_{\mu,\nu} t_\mu t_\nu \sum_{k\geq 1} k\rho^{2k} (\cos(k(\omega_\mu-\omega_\nu))-\cos(k(\omega_\mu+\omega_\nu))) \nonumber \\
& = -\sum_{\mu,\nu} (t_\mu-t_\nu)^2 \sum_{k\geq 1} k \rho^{2k} \cos(k(\omega_\mu-\omega_\nu))\nonumber -\sum_{\mu,\nu} (t_\mu+t_\nu)^2 \sum_{k\geq 1} k \rho^{2k} \cos(k(\omega_\mu+\omega_\nu)) \\
&+4\sum_{k\geq 1} k \rho^{2k} \big( \sum_\mu t_\mu^2\cos(k\omega_\mu) \big) \big( \sum_\nu \cos(k\omega_\nu) \big).
\end{align}
Taking the real part of the identity 
\[ \sum_{k\geq 1} k\rho^{2k}e^{\i kx} = \frac{\rho^2e^{\i x}}{(1-\rho^2e^{\i x})^2} \]
gives, after some computations
\[ \sum_{k\geq 1} k\rho^{2k} \cos(kx) = \frac{a\cos x -4 \sin^2\frac{x}{2}}{(a+4\sin^2\frac{x}{2})^2}, \]
where $a=(1/\rho-\rho)^2$. Thus,
\begin{align*}
&\big| \sum_{\mu,\nu} (t_\mu-t_\nu)^2 \sum_{k\geq1} k\rho^{2k} \cos(k(\omega_\mu-\omega_\nu)) \big| \\
&\leq \sum_{\mu,\nu} (t_\mu-t_\nu)^2 \frac{a|\cos(\omega_\mu-\omega_\nu)|+4\sin^2\frac{\omega_\mu-\omega_\nu}{2}}{(a+4\sin^2\frac{\omega_\mu-\omega_\nu}{2})^2}\\
&\leq \sum_{\mu,\nu} \frac{(t_\mu-t_\nu)^2}{a+4\sin^2 \frac{\omega_\mu-\omega_\nu}{2}} \leq \frac{1}{a+4} \sum_{\mu\neq \nu} \frac{(t_\mu-t_\nu)^2}{\sin^2\frac{\omega_\mu-\omega_\nu}{2}}.
\end{align*}
Similarly,
\begin{multline*}
 \Big| \sum_{\mu,\nu}(t_\mu+t_\nu)^2 \sum_{k\geq1} k\rho^{2k} \cos(k(\omega_\mu+\omega_\nu)) \Big| \leq \sum_{\mu,\nu} \frac{(t_\mu+t_\nu)^2}{a+4\sin^2\frac{\omega_\mu+\omega_\nu}{2}} \\ \leq \frac{1}{a+4}\sum_{\mu\neq \nu} \frac{(t_\mu+t_\nu)^2}{\sin^2\frac{\omega_\mu+\omega_\nu}{2}} + \frac{4}{a}\sum_\mu t_\mu^2.    
\end{multline*}
Furthermore, by \eqref{sumcosest},
\begin{multline*}
\Big| \sum_{k\geq 1} k\rho^{2k} \big(\sum_\mu t_\mu^2\cos(k\omega_\mu) \big) \big( \sum_\nu \cos(k\omega_\nu) \big) \Big| \leq \big(\sum_{k\geq1} k\rho^{2k} \big) \big(\sum_\mu t_\mu^2 \big) (kn\epsilon_n+1) \\ \leq \frac{1}{a}\big(\frac{2n\epsilon_n}{1-\rho^2}+1 \big)\big(\sum_\mu t_\mu^2 \big).     
\end{multline*}
We can use these estimates in \eqref{B1est} to see that
\begin{equation}\label{B1est2}
-B_1(\tau) \leq \frac{1}{a+4} \sum_{\mu\neq \nu} \Big(\frac{(t_\mu-t_\nu)^2}{\sin^2\frac{\omega_\mu-\omega_\nu}{2}}+\frac{(t_\mu+t_\nu)^2}{\sin^2\frac{\omega_\mu+\omega_\nu}{2}} \Big) + \frac{1}{a}(\frac{2n\epsilon_n}{1-\rho^2}+5) \big(\sum_\mu t_\mu^2 \big).
\end{equation}
Consider now $B_2(\tau)$. It follows from the inequality
\[ |\sum_\mu t_\mu\sin(k\omega_\mu) | \leq \sum_\mu |t_\mu| \leq \sqrt{n} \big(\sum_\mu t_\mu^2 \big)^{1/2}, \]
that
\[ |B_2(\tau)| \leq 8  n \big(\sum_\mu t_\mu^2 \big) \sum_{k\geq 1}\sum_{l\geq q+1} kl |a_{kl}| \]
For a $C^{7+\alpha}$ curve, \eqref{Grunskydecay} decay gives the estimate
\[ |a_{kl}| \leq \frac{A}{k^{2+\alpha/2} l^{4+\alpha/2}}. \]
Hence,
\begin{equation}\label{B2est}
|B_2(\tau)| \leq \frac{C(\gamma)}{q^{2+\epsilon}} n \sum_\mu t_\mu^2.
\end{equation}

We can now use \eqref{Aest}, \eqref{B1est2}, and \eqref{B2est} in \eqref{Est} to get the inequality
\begin{align}\label{Est2}
\big( \frac{1}{4}-\frac{1}{4+a} \big) \int_0^1 \sum_{\mu\neq\nu} \Big(\frac{(t_\mu-t_\nu)^2}{\sin^2\frac{\omega_\mu-\omega_\nu}{2}}+\frac{(t_\mu+t_\nu)^2}{\sin^2\frac{\omega_\mu+\omega_\nu}{2}} \Big) (1-\tau)\d\tau \nonumber\\
\leq Kn + C(\gamma)\epsilon_n n\log n + C(\gamma)n\big( \frac{1}{q^{2+\epsilon}}+ \frac{\epsilon_n}{a(1-\rho^2)}+\frac{1}{an}\big)\big(\sum_\mu t_\mu^2 \big)
\end{align}
Note that
\[ \sum_{\mu\neq\nu} \Big(\frac{(t_\mu-t_\nu)^2}{\sin^2\frac{\omega_\mu-\omega_\nu}{2}}+\frac{(t_\mu+t_\nu)^2}{\sin^2\frac{\omega_\mu+\omega_\nu}{2}} \Big) \geq \sum_{\mu\neq \nu} 2(t_\mu^2+t_\nu^2) \geq 2n \sum_\mu t_\mu^2 \]
since $n\geq 2$. Thus \eqref{Est2} gives the inequality
\begin{equation}\label{Est3}
\Big( \frac{a}{4(4+a)}-\frac{C(\gamma)}{q^{2+\epsilon}} \Big) \big( \sum_\mu t_\mu^2 \big) \leq K+ C(\gamma)\epsilon_n\log n + C(\gamma) \big(  \frac{\epsilon_n}{a(1-\rho^2)}+\frac{1}{an}\big)\big(\sum_\mu t_\mu^2 \big).
\end{equation}
Now, since $a=(1/\rho-\rho)^2$ and $\rho=\kappa^{1/(2q)}<1$, we see that
\begin{multline*}
 \frac{a}{4(4+a)} = \frac{1}{4}\Big(\frac{1-\kappa^{1/q}}{1+\kappa^{1/q}} \Big)^2 \geq \frac{1}{16}(1-\kappa^{1/q})^2 = \frac{1}{16}\Big(\frac{1}{q}\log(\frac{1}{\kappa})+O(\frac{1}{q^2}) \Big)^2 \\ = \frac{1}{16q^2} \big( (\log\frac{1}{\kappa})^2+O(\frac{1}{q}) \big).   
\end{multline*}
Since $\kappa$ is fixed, we can pick $q$ so large that there is a constant $c_0(\gamma)>0$ such that
\[ \frac{a}{4(4+a)}-\frac{C(\gamma)}{q^{2+\epsilon}} \geq c_0(\gamma). \]
The inequality \eqref{Est3} then gives the estimate
\[ \Big(c_0(\gamma)-C(\gamma)\big( \frac{\epsilon_n}{a(1-\rho^2)}+\frac{1}{an}\big)\Big)\sum_\mu t_\mu^2 \leq K+C(\gamma)\epsilon_n\log n. \]
By Lemma \eqref{lem:tmutozero} $\epsilon_n\to 0$ as $n\to\infty$. Thus, since $\rho,a$ are now fixed, we can pick $n$ so large that
\[ C(\gamma) \Big(\frac{\epsilon_n}{a(1-\rho^2)}+\frac{1}{an} \Big) \leq \frac{1}{2}c_0(\gamma), \]
and we have proved the inequality
\begin{equation}\label{Est4}
\sum_\mu t_\mu^2 \leq \frac{2}{c_0(\gamma)}\big(K+C(\gamma)\epsilon_n\log n \big).
\end{equation}
The final step in the proof is to show that $\epsilon_n = \max_\mu |t_\mu| = |t_{\mu_0}|$ decays faster than $\log n$. Consider the case when $t_{\mu_0}>0$. If $\nu>\mu_0$, then $t_\nu-t_{\mu_0}\geq -\frac{\pi(\nu-\mu_0)}{n-1}$, and consequently
$t_\nu \geq t_{\mu_0}-\frac{\pi(\nu-\mu_0)}{n-1} \geq t_{\mu_0}/2$ if $\mu_0 < \nu \leq \mu_0 + \floor{\frac{t_{\mu_0}(n-1)}{2\pi}}$. Suppose $\mu_0+\lfloor \frac{t_{\mu_0}}{2\pi}(n-1) \rfloor >n$. Then, $\pi \geq \theta_{\mu_0} = \frac{\pi(\mu_0-1)}{n-1}+t_{\mu_0}$ gives
\[ t_{\mu_0} \leq \frac{\pi(n-\mu_0)}{n-1} < \frac{\pi}{n-1}\floor{\frac{t_{\mu_0}}{2\pi}(n-1)} \leq \frac{t_{\mu_0}}{2}, \]
which is impossible. Hence, for $\mu_0<\nu \leq \mu_0+ \floor{\frac{t_{\mu_0}}{2\pi}(n-1)} \leq n$ we have $t_{\nu} \geq t_{\mu_0}/2$ and thus
\[ \sum_\mu t_\mu^2 \geq \floor{\frac{t_{\mu_0}}{2\pi}(n-1)}\frac{t_{\mu_0}^2}{4} \geq cnt_{\mu_0}^3.\]
It follows from \eqref{Est4} that
\[ cnt_{\mu_0}^3 \leq C_k(\gamma) \log n \]
so $t_{\mu_0} \leq C_k(\gamma) \big(\frac{\log n}{n} \big)^{1/3}$. The case $t_{\mu_0}<0$ is treated analogously. We conclude that $\epsilon_n \leq C_k(\gamma) \big(\frac{\log n}{n} \big)^{1/3}$, which inserted in \eqref{Est4} gives $\sum_\mu t_\mu^2 \leq C_K(\gamma)$.
\end{proof}

\subsection{Proof of Theorem \ref{thm:bound}}\label{sec:proof}

In this section we prove Theorem \ref{thm:bound} which gives the asymptotic formula for the Laplace functional of the empirical measure. 
By \eqref{Dn3}, we have
\begin{equation}\label{EnexpG}
\E_n^s\big[e^{\sum_\mu G(\theta_\mu)}\big] = \frac{1}{Z_n^s} \int_{H_n} \exp \big( \frac{\beta}{2}F_n^s(\omega)+\sum_\mu(G(\omega_\mu)+(1-\beta/2)sL(\omega_\mu)+\log(\sin\omega_\mu)) \big) \d\omega.
\end{equation}
Let $H_s(\theta)$ be given by Lemma \ref{lem:Hreg}. In the integral in the right side of \eqref{EnexpG} we make the change of variables $\omega_\mu = \theta_\mu +\frac{1}{n}H_s(\theta_\mu)$. Note that, by definition, $H_s(0) = H_s(\pi) =0$. If $n$ is sufficiently large this maps the set $H_n$ to itself. For $0\leq \tau\leq 1$, we let $\omega_\mu(\tau) = \theta_\mu+\frac{\tau}{n}H_s(\theta_\mu)$. Let 
\[ M(\omega) = (1-\beta/2)sL(\omega)+\log(\sin\omega)+G(\omega). \]
Using the notation \eqref{fnthetatau} with $t_\mu = \frac{1}{n} H_s(\theta_\mu)$,
\begin{equation}\label{EnexpG2}
\E_n^s\big[e^{\sum_\mu G(\theta_\mu)}\big] = \frac{1}{Z_n^s}\int_{H_n} \exp\Big(\frac{\beta}{2}f_{n,\theta}(1)+\sum_\mu (M(\omega_\mu)+\log\big(1+\frac{1}{n}H_s'(\theta_\mu)\big) \Big)\d\theta.
\end{equation}
We make the Taylor expansions
\[f_{n,\theta}(1) = f_{n,\theta}(0) + f_{n,\theta}'(0)+\frac{1}{2}f_{n,\theta}''(0)+\frac{1}{2}\int_0^1(1-\tau)^2 f_{n,\theta}^{(3)}(\tau)\d\tau \]
and
\[ M(\omega_\mu) = M(\theta_\mu)+ \frac{1}{n}M'(\theta_\mu)H_s(\theta_\mu) + \frac{H_s(\theta_\mu)^2}{n^2} \int_0^1 (1-\tau)M''(\omega_\mu(\tau))\d\tau.\]
Recall the expansion
\[ H_s(\theta) = \sum_{k\geq 1} h_k^s \sin(k\theta)\]
of $H_s$. We now use \eqref{fnprime} and \eqref{fnbis} to compute the derivatives in the Taylor expansions. We write
\begin{align}\label{Rns}
R_n^s(\theta) = \frac{\beta}{4n}\sum_{\mu,\nu} \big( \cot \frac{\theta_\mu-\theta_\nu}{2}(H_s(\theta_\mu)-H_s(\theta_\nu))+\cot \frac{\theta_\mu+\theta_\nu}{2}(H_s(\theta_\mu)+H_s(\theta_\nu))\big)\nonumber \\
+ \beta s\sum_{k,l=1}^n k a_{kl} h_k \big(\sum_\nu\cos(l\theta_\nu) \big)+\sum_\mu G(\theta_\mu),
\end{align}
\begin{equation}\label{Tns}
T_n^s(\theta) = -\beta s \sum_{k,l=1}^n k a_{kl}\big( h_k -\frac{2}{n}\sum_\mu H_s(\theta_\mu) \sin(k\theta_\mu) \big) \big( \sum_\nu \cos(l\theta_\nu) \big),
\end{equation}
\begin{align}\label{Uns}
U_n^s(\theta) &= \frac{1}{n}\sum_\mu \big( G'(\theta_\mu)+(1-\beta/2)sL'(\theta_\mu) \big) H_s(\theta_\mu) \nonumber
\\ &- \frac{\beta}{16n^2}\sum_{\mu,\nu} \Big(\frac{(H_s(\theta_\mu)-H_s(\theta_\nu))^2}{\sin^2\frac{\theta_\mu-\theta_\nu}{2}}+\frac{(H_s(\theta_\mu)+H_s(\theta_\nu))^2}{\sin^2\frac{\theta_\mu+\theta_\nu}{2}} \Big) \nonumber\\
&+ \frac{\beta s}{n^2} \sum_{k,l=1}^n k^2a_{kl} \big(\sum_\mu H_s(\theta_\mu)^2\cos(k\theta_\mu) \big) \big( \sum_\nu \cos(l\theta_\nu) \big) \nonumber \\ 
&- \frac{\beta s}{n^2} \sum_{k,l=1}^n kla_{kl} \big(\sum_\mu H_s(\theta_\mu)\sin(k\theta_\mu) \big) \big( \sum_\nu H_s(\theta_\nu)\sin(l\theta_\nu) \big) \nonumber \\ 
&+(1-\beta/2)\frac{1}{n}\sum_\mu (H'(\theta_\mu)+H_s(\theta_\mu)\cot\theta_\mu), 
\end{align}
and
\begin{multline}\label{Vns}
V_n^s(\theta) = \frac{1}{n^2} \sum_\mu H_s(\theta_\mu)^2 \int_0^1 (1-\tau)\big(M''(\omega_\mu(\tau))+\frac{\d^2}{\d\tau^2}\log \big(1+\frac{\tau}{n}H'(\theta_\mu) \big) \big)\d\tau \\
+ \frac{1}{2}\int_0^1(1-\tau)^2f_{n,\theta}^{(3)}(\tau) \d\tau.
\end{multline}
With this notation \eqref{EnexpG2} can be written
\begin{equation}\label{EnexpG3}
\E_n^s\big[e^{\sum_\mu G(\theta_\mu)}\big] = \E_n^s\big[e^{R_n^s(\theta)+T_n^s(\theta)+U_n^s(\theta)+V_n^s(\theta)} \big]
\end{equation}
In order to prove \eqref{expbound} in Theorem \ref{thm:bound}, we have to bound the terms in the exponent in the right side of \eqref{EnexpG3}. We will first obtain bounds for any $\theta\in H_n$ in order to use Lemma \ref{Ensk} and restrict the domain of integration to $E_{n,s,K}$ with a suitable $K$, and then obtain better bounds for any $\theta\in E_{n,s,K}$. By \eqref{Grunskyexp} and \eqref{L},
\[ L(\theta) = -2\sum_{k,l\geq 1} a_{kl}\cos(k\theta)\cos(l\theta). \]
It follows from \eqref{Grunskydecay}, under our assumption on the regularity of $\gamma$, that
\[ \| L^{(r)} \|_\infty \leq C(\gamma), \]
for $0\leq r\leq 2$. Note that since $H_s(0) = H_s(\pi) =0$, 
\[ |H_s(\theta) \cot\theta | \leq C\| H_s' \|_\infty, \quad 0\leq \theta\leq \pi. \]
Furthermore, if we write $\theta_\nu' = 2\pi-\theta_\nu$, $\theta_\nu \in[0,\pi]$, then
\[ \frac{1}{n^2} \sum_{\mu,\nu} \frac{(H_s(\theta_\mu)+H_s(\theta_\nu))^2}{4\sin^2\frac{\theta_\mu+\theta_\nu}{2}} = \frac{1}{n^2} \sum_{\mu,\nu} \frac{(H_s(\theta_\mu)-H_s(\theta_\nu'))^2}{4\sin^2\frac{\theta_\mu-\theta_\nu'}{2}} \leq C \|H_s'\|_\infty^2 \]
since $H_s(2\pi-\theta)=-H_s(\theta)$, From these inequalities, \eqref{Hreg} and \eqref{Grunskydecay}, we see that
\begin{equation}\label{Unsbound}
\| U_n^s\|_\infty \leq C \big( \|G'\|_\infty+\|L'\|_\infty \big) \|H_s \|_\infty + C\|H'_s\|_\infty + C\|H_s'\|_\infty^2+ C(\gamma)\|H_s\|_\infty^2 \leq C(\gamma, G).
\end{equation}
Differentiating one more time in \eqref{fnbis} gives
\begin{align*}
 f_{n,\theta}^{(3)}(\tau) = &-\frac{1}{n^3} \sum_{\mu\neq\nu} \Big( \frac{\cos\frac{\omega_\mu-\omega_\nu}{2}}{4\sin^3\frac{\omega_\mu-\omega_\nu}{2}}(H_s(\theta_\mu)-H_s(\theta_\nu))^3+ \frac{\cos\frac{\omega_\mu+\omega_\nu}{2}}{4\sin^3\frac{\omega_\mu+\omega_\nu}{2}}(H_s(\theta_\mu)+H_s(\theta_\nu))^3 \Big) \\
& -\frac{4s}{n^3} \sum_{k,l=1}^n k^3a_{kl} \big(\sum_\mu H_s(\theta_\mu)^3\sin(k\omega_\mu) \big) \big( \sum_\nu \cos(l\omega_\nu) \big) \\
&-\frac{12s}{n^3} \sum_{k,l=1}^n k^2la_{kl} \big(\sum_\mu H_s(\theta_\mu)^2\cos(k\omega_\mu) \big) \big( \sum_\nu H_s(\theta_\nu) \sin(l\omega_\nu) \big).
\end{align*}
Also,
\[ M''(\omega) = \big(1-\frac{\beta}{2} \big)sL''(\omega)-\frac{1}{\sin^2\omega}+G''(\omega), \]
and
\[\frac{\d^2}{\d\tau^2}\log \big(1+\frac{\tau}{n}H'(\theta) \big) = -\frac{1}{n^2}\Big(\frac{H''(\theta)}{1+\frac{\tau}{n}H'(\theta)} \Big)^2. \]
We can use these formulas in \eqref{Vns} to see that we have a bound
\begin{equation}\label{Vnsbound}
\| V_n^s\|_\infty \leq \frac{C(\gamma,G)}{n}.
\end{equation}
Similarly, from \eqref{Rns} and \eqref{Tns} we get the estimates
\[\|R_n^s\|_\infty \leq C(\gamma,G) n, \quad \| T_n^s\|_\infty \leq C(\gamma,G) n. \]
Write
\[ X_n^s(\theta) = R_n^s(\theta)+T_n^s(\theta)+U_n^s(\theta)+V_n^s(\theta). \]
Then the bounds above show that $ \|X_n^s\|_\infty \leq C_1(\gamma,G)n $. Take $K=C(\gamma)+C_1(\gamma,G)+1$ in the definition \eqref{Ensk} of $E_{n,s,K}$, where $C(\gamma)$ is the constant in \eqref{domainest}. Then by \eqref{domainest} and the bound on $\|X_n^s\|$,
\begin{equation}\label{Endiff}
\Big| \E_n^s[e^{\sum_\mu G(\theta_\mu)}]-\E_n^s[e^{\sum_\mu G(\theta_\mu)}\mathbbm{1}_{E_{n,s,K}}] \Big| \leq e^{-n}. 
\end{equation}
Below, when we write $E_{n,s,K}$, we will always mean this choice of $K$. 
To prove \eqref{expbound} we need better estimates of $R_n^s$ and $T_n^s$ on $E_{n,s,K}$. If we define $t_\mu$ as in \eqref{tmudef} then by Lemma \ref{lem:sumtmu}, we have that
\begin{equation}\label{sumtmuest}
\sum_\mu t_\mu^2 \leq C(\gamma, G)
\end{equation} 
if $\theta\in E_{n,s,K}$. Using \eqref{sumcos} and \eqref{sumtmuest} we get the estimate
\begin{align}\label{sumcostheta}
\big| \sum_\mu \cos(k\theta_\mu) \big| &\leq 1 + \big| \sum_\mu \cos(k\theta_\mu)-\cos(k\alpha_\mu) \big| \leq 1+k\sum_\mu |t_\mu| \nonumber \\
&\leq 1+k\sqrt{n}\sum_\mu t_\mu^2 \leq C(\gamma,G) k \sqrt{n}.
\end{align}
We first estimate $T_n^s$ in $E_{n,s,K}$.

\begin{lemma}\label{lem:Tnsest}
There is a constant $C(\gamma,G)$ such that
\begin{equation}\label{Tnest2}
|T_n^s(\theta)| \leq C(\gamma,G),
\end{equation}
for all $n\geq 1$, $s\in[0,1]$ and $\theta \in E_{n,s,K}$.
\end{lemma}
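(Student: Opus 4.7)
The plan is to expand
\[
T_n^s(\theta) = -\beta s \sum_{k,l=1}^n k a_{kl}\, E_k\, L_l, \qquad E_k := h_k^s - \frac{2}{n}\sum_\mu H_s(\theta_\mu)\sin(k\theta_\mu),\quad L_l := \sum_\nu \cos(l\theta_\nu),
\]
and to bound each of the three factors $|a_{kl}|$, $|E_k|$, $|L_l|$ on $E_{n,s,K}$. The decay of the arc-Grunsky coefficients from Lemma~\ref{lem:Grunskydecay} takes care of $|a_{kl}|$ under the regularity assumption $\gamma \in C^{9+\alpha}$, and \eqref{sumcostheta} yields $|L_l| \leq C(\gamma,G)\, l\sqrt n$. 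The main work is to show that $|E_k|$ is at most $C(\gamma,G)(k/\sqrt n + k^2/n + 1/n)$, which is small enough that after multiplication by $L_l = O(l\sqrt n)$ the resulting double sum over $k,l$ is summable.

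To estimate $E_k$, I split $E_k = E_k^{(1)} + E_k^{(2)}$ with
\[
E_k^{(1)} := h_k^s - \frac{2}{n}\sum_\mu H_s(\alpha_\mu)\sin(k\alpha_\mu), \qquad E_k^{(2)} := -\frac{2}{n}\sum_\mu\bigl(F_k(\theta_\mu) - F_k(\alpha_\mu)\bigr),
\]
where $F_k(\theta) := H_s(\theta)\sin(k\theta)$. For $E_k^{(2)}$, a Taylor expansion about $\alpha_\mu$ produces a linear term $(2/n)\sum_\mu F_k'(\alpha_\mu)t_\mu$ with $t_\mu = \theta_\mu - \alpha_\mu$, which by Cauchy-Schwarz is bounded by $(2/n)\cdot C(\gamma,G)k\sqrt n\cdot (\sum_\mu t_\mu^2)^{1/2}$, using $\|F_k'\|_\infty \leq C(\gamma,G) k$ and Lemma~\ref{lem:sumtmu}; and a quadratic remainder bounded by $(Ck^2/n)\sum_\mu t_\mu^2 \leq C(\gamma,G)k^2/n$. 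For $E_k^{(1)}$, I expand $H_s$ in its sine Fourier series $H_s(\theta)=\sum_j h_j^s\sin(j\theta)$ and apply the discrete orthogonality identity \eqref{sumcos} to $\frac{2}{n}\sum_\mu \sin(j\alpha_\mu)\sin(k\alpha_\mu)$; this leaves the principal contribution $h_k^s$ together with an $O(1/n)$ correction and alias contributions from indices $j \equiv \pm k \pmod{2(n-1)}$. Since $|h_j^s| = O(j^{-(4+\alpha/2)})$ by Lemma~\ref{lem:Hreg}, the alias contributions are of order $n^{-(4+\alpha/2)}$, yielding $|E_k^{(1)}| \leq C(\gamma,G)/n$.

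Putting the estimates together,
\[
|T_n^s(\theta)| \leq C(\gamma,G) \sum_{k,l=1}^{n} k|a_{kl}|\Bigl(kl + \frac{k^2 l}{\sqrt n} + \frac{l}{\sqrt n}\Bigr),
\]
and by choosing the decay exponents $p,q$ in Lemma~\ref{lem:Grunskydecay} appropriately (e.g.\ $p=4$, $q=2$, valid since $p+q<9$), each of the three double sums converges absolutely, giving the claimed uniform bound. The main obstacle in the argument is precisely the bound on $E_k^{(2)}$: the na\"ive estimate $|F_k(\theta_\mu)-F_k(\alpha_\mu)| \leq C(\gamma,G)k|t_\mu|$ combined with $\max_\mu |t_\mu| \to 0$ (Lemma~\ref{lem:tmutozero}) only yields $|E_k^{(2)}| = o(k)$, which is insufficient once multiplied by $|L_l| = O(l\sqrt n)$. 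The $\ell^2$ control from Lemma~\ref{lem:sumtmu} is essential: it is exactly what converts the Cauchy-Schwarz estimate for the linear Taylor term into the $k/\sqrt n$ gain that closes the bound, and it is the step in which the assumption $\theta \in E_{n,s,K}$ is used.
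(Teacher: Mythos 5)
Your proof is correct and follows essentially the same route as the paper: the same split of $h_k^s-\frac{2}{n}\sum_\mu H_s(\theta_\mu)\sin(k\theta_\mu)$ into a quadrature error at the regular points $\alpha_\mu$ plus a deviation term controlled through Cauchy--Schwarz and the $\ell^2$ bound of Lemma~\ref{lem:sumtmu}, combined with \eqref{sumcostheta} and the Grunsky decay of Lemma~\ref{lem:Grunskydecay}. The only (harmless) deviations are that the paper bounds the quadrature error by the elementary Riemann-sum estimate $C(\gamma,G)k/n$ rather than by discrete orthogonality and aliasing, and uses a one-term Lipschitz bound instead of your second-order Taylor expansion for the deviation term.
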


\begin{proof}
A Riemann sum estimate gives
\begin{align*}
\big| h_k-\frac{2}{n}\sum_\mu H_s(\alpha_\mu) \sin(k\alpha_\mu) \big| &= \big| \frac{2}{\pi} \int_0^\pi H_s(\theta) \sin(k\theta)\d\theta-\frac{2}{n}\sum_\mu H_s(\alpha_\mu)\sin(k\alpha_\mu) \big| \\
&\leq \frac{C}{n} \big\| \frac{\d}{\d\theta}\big( H_s(\theta) \sin(k\theta) \big) \big\|_\infty \leq C(\gamma,G) \frac{k}{n}.
\end{align*}
Also, for $\theta\in E_{n,s,K}$,
\begin{align*}
\frac{2}{n} \big| \sum_\mu H_s(\theta_\mu) \sin(k\theta_\mu) - \sum_\mu H_s(\alpha_\mu)\sin(k\alpha_\mu) \big| &\leq \frac{C}{n}(\|H\|_\infty k +\|H'\|_\infty) \sum_\mu |t_\mu| \\
&\leq C(\gamma,G) \frac{k}{\sqrt{n}}.    
\end{align*}
Hence, by \eqref{Tns} and \eqref{sumcostheta},
\[ |T_n^s(\theta)| \leq C(\gamma,G) \sum_{k,l\geq 1} k |a_{kl}| \frac{k}{\sqrt{n}}l\sqrt{n} \leq C(\gamma,G) \]
where we used \eqref{Grunskydecay}.
\end{proof}

Next, we estimate $R_n^s$ in $E_{n,s,K}$.

\begin{lemma}\label{lem:Rnsest}
There is a constant $C(\gamma,G)$ such that
\begin{equation}\label{Rnest2}
|R_n^s(\theta)| \leq C(\gamma,G),
\end{equation}
for all $n\geq 1$, $s\in[0,1]$ and $\theta\in E_{n,s,K}$.
\end{lemma}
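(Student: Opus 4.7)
The plan is to use the integral equation \eqref{Intekv4} to cancel leading-order terms and then expand the remainder by Fourier mode. Substituting $\sum_\mu G(\theta_\mu) = \beta\sum_\mu \widetilde{H}_s(\theta_\mu) - \beta s\sum_{k,l\geq 1}ka_{kl}h_k^s\sum_\nu\cos(l\theta_\nu)$ into \eqref{Rns}, the middle term of $R_n^s$ cancels the $\max(k,l)\leq n$ part, leaving
\[
R_n^s(\theta) = \frac{\beta}{4n}\sum_{\mu,\nu}\phi_\nu(\theta_\mu) + \beta\sum_\nu \widetilde{H}_s(\theta_\nu) - \beta s\sum_{\substack{k,l\geq 1\\\max(k,l)>n}} ka_{kl}h_k^s \sum_\nu \cos(l\theta_\nu),
\]
where $\phi_\nu(\theta_\mu)$ denotes the bracketed cotangent expression in \eqref{Rns}. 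Lemma \ref{lem:Hreg} gives $|h_k^s|\leq C(\gamma,G)k^{-4-\alpha/2}$ (since $\gamma\in C^{9+\alpha}$, $G\in C^{4+\alpha}$), which combined with the Grunsky decay \eqref{Grunskydecay} and the pointwise bound \eqref{sumcostheta} implies the tail term is bounded by $C(\gamma,G)$.

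Next I would expand the main piece in Fourier modes using $H_s=\sum_k h_k^s\sin(k\theta)$ and $\widetilde{H}_s=-\sum_k h_k^s\cos(k\theta)$. Using the product-to-sum identities $\sin(k\theta_\mu)\pm\sin(k\theta_\nu)=2\cos\tfrac{k(\theta_\mu\mp\theta_\nu)}{2}\sin\tfrac{k(\theta_\mu\pm\theta_\nu)}{2}$, together with the Dirichlet-kernel identity expressing $\cot(y)\sin(ky)$ as a sum of cosines of integer multiples of $y$, and finally product-to-sum applied to the resulting $\cos(ay_+)\cos(by_-)$ terms, one obtains the explicit identity
\[
\sum_{\mu,\nu}\phi_\nu(\theta_\mu)\Big|_k = h_k^s\Big(4nX_k(\theta) + 4\sum_{\substack{j_1+j_2=k\\ j_1,j_2\geq 1}} X_{j_1}(\theta)X_{j_2}(\theta)\Big),
\]
where $X_j(\theta):=\sum_\mu \cos(j\theta_\mu)$. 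The $4nX_k$ contribution produces $\beta h_k^s X_k$ upon applying $\tfrac{\beta}{4n}$, which exactly cancels the $k$-th Fourier mode $-\beta h_k^s X_k$ of $\beta\sum_\nu \widetilde{H}_s$. Summing over $k$ yields
\[
\frac{\beta}{4n}\sum_{\mu,\nu}\phi_\nu(\theta_\mu) + \beta\sum_\nu \widetilde{H}_s(\theta_\nu) = \frac{\beta}{n}\sum_{j_1,j_2\geq 1} h_{j_1+j_2}^s\, X_{j_1}(\theta)X_{j_2}(\theta).
\]
The cases $k=1,2,3,4,5$ can be verified directly, revealing the general pattern.

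Finally, on $E_{n,s,K}$ the estimate \eqref{sumcostheta} gives $|X_j(\theta)|\leq C(\gamma,G)j\sqrt{n}$, so each $|X_{j_1}X_{j_2}|/n \leq C j_1 j_2$. Combined with $|h_{j_1+j_2}^s|\leq C(\gamma,G)(j_1+j_2)^{-4-\alpha/2}$, we obtain
\[
\Big|\frac{\beta}{n}\sum_{j_1,j_2\geq 1} h_{j_1+j_2}^s X_{j_1}(\theta)X_{j_2}(\theta)\Big| \leq C(\gamma,G)\sum_{j_1,j_2\geq 1}\frac{j_1 j_2}{(j_1+j_2)^{4+\alpha/2}} \leq C(\gamma,G)\sum_{k\geq 2} k^{-1-\alpha/2} \leq C(\gamma,G),
\]
which together with the tail estimate gives \eqref{Rnest2}. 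The main obstacle is the combinatorial identity for $\sum_{\mu,\nu}\phi_\nu(\theta_\mu)|_k$; it encodes the fact that the discrete cotangent sum furnishes an \emph{exact} Riemann-sum approximation of the Hilbert transform at each Fourier frequency, with only bilinear cross-terms remaining, and its proof rests on systematic trigonometric manipulation rather than any quadrature error estimate.
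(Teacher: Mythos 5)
Your proposal is correct and is essentially the paper's own argument: you cancel the $\max(k,l)\le n$ Grunsky term in \eqref{Rns} using the integral equation \eqref{Intekv4}, bound the $\max(k,l)>n$ tail by the Grunsky decay, and your mode-by-mode identity $\sum_{\mu,\nu}\phi_\nu(\theta_\mu)\big|_k = h_k^s\big(4nX_k+4\sum_{j_1+j_2=k}X_{j_1}X_{j_2}\big)$ is exactly the paper's cotangent expansion, after which the $X_k$ part cancels $\beta\sum_\mu\widetilde H_s(\theta_\mu)$ and the bilinear remainder is bounded on $E_{n,s,K}$ via \eqref{sumcostheta} and $|h_k^s|\le C(\gamma,G)k^{-4-\alpha}$, just as in the paper. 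The only caveat is that the key identity should be proved for all $k$ rather than extrapolated from the cases $k\le 5$; this is immediate from $\cot\frac{x-y}{2}\,(e^{\i kx}-e^{\i ky})=\i\big(e^{\i kx}+e^{\i ky}+2\sum_{j=1}^{k-1}e^{\i(jx+(k-j)y)}\big)$ together with the oddness of $H_s$, which is how the paper does it.
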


\begin{proof}
From \eqref{Intekv4} we have that
\[ \sum_\mu G(\theta_\mu) = -\beta s \sum_{k,l\geq 1} ka_{kl}h_k^s\big(\sum_\mu \cos(k\theta_\mu) \big)+\beta\sum_\mu \widetilde{H}_s(\theta_\mu). \]
Thus by \eqref{Rns}
\begin{multline}\label{Rnsformula}
R_n^s(\theta) = \frac{\beta}{4n} \sum_{\mu,\nu} \big(\cot\frac{\theta_\mu-\theta_\nu}{2}(H_s(\theta_\mu)-H_s(\theta_\nu))+\cot\frac{\theta_\mu+\theta_\nu}{2}(H_s(\theta_\mu)+H_s(\theta_\nu)) \big) \\
+\beta \sum_\mu \widetilde{H}_s(\theta_\mu)-\beta s\sum_{\max{k,l}> n} k a_{kl} h_k^s\sum_\mu \cos(l\theta_\mu).
\end{multline}
By \eqref{Grunskydecay},
\begin{equation}\label{Rnsextra}
\big| \beta s \sum_{\max{k,l}>n} k a_{kl} h_k^s\sum_\mu \cos(l\theta_\mu) \big| \leq C(\gamma) \| \|H\|_\infty n\sum_{k\geq 1}\sum_{l\geq n+1} \frac{1}{k^{1+\alpha/2}l^{2+\alpha/2}} \leq \frac{C(\gamma,G)}{n^{\alpha/2}}.
\end{equation}
We have that 
\begin{equation}\label{Hstilde}
\sum_\mu \widetilde{H}_s(\theta_\mu) = - \sum_{k\geq 1} h_k^s \sum_\mu \cos(k\theta_\mu),
\end{equation}
\[ \cot\frac{\theta_\mu-\theta_\nu}{2}(H_s(\theta_\mu)-H_s(\theta_\nu)) = Im \sum_{k\geq 1} h_k^s \cot\frac{\theta_\mu-\theta_\nu}{2} (e^{\i k\theta_\mu}-e^{\i k\theta_\nu}). \]
Now, for $k\geq 1$,
\[ \cot\frac{x-y}{2}(e^{\i kx}-e^{\i ky}) = i(e^{\i kx}+e^{\i k y}+2\sum_{j=1}^{k-1} e^{\i(jx+(k-j)y)}) \]
and hence
\[ \cot\frac{\theta_\mu-\theta_\nu}{2}(H_s(\theta_\mu)-H_s(\theta_\nu)) =\sum_{k\geq 1} h_k^s \big( \cos\theta_\mu +\cos\theta_\nu + 2 \sum_{j=1}^{k-1}\cos(j\theta_\mu+(k-j)\theta_\nu) \big). \]
Since $H_s$ is odd, we see that, by replacing $\theta_\nu$ with $-\theta_\nu$,
\[ \cot\frac{\theta_\mu+\theta_\nu}{2}(H_s(\theta_\mu)+H_s(\theta_\nu)) =\sum_{k\geq 1} h_k^s \big( \cos\theta_\mu +\cos\theta_\nu + 2 \sum_{j=1}^{k-1}\cos(j\theta_\mu-(k-j)\theta_\nu) \big). \]
Adding these two equations gives
\begin{align*}
&\frac{1}{4n} \sum_{\mu,\nu} \Big( \cot\frac{\theta_\mu-\theta_\nu}{2}(H_s(\theta_\mu)-H_s(\theta_\nu))+\cot\frac{\theta_\mu+\theta_\nu}{2}(H_s(\theta_\mu)+H_s(\theta_\nu)) \Big) \\
&= \sum_{k\geq 1} h_k^s\sum_\mu \cos(k\theta_\mu) + \frac{1}{n}\sum_{k\geq 1}h_k^s\sum_{j=1}^{k-1} \sum_\mu \cos(j\theta_\mu) \sum_\nu \cos((k-j)\theta_\nu).
\end{align*}
Thus, it follows from \eqref{sumcostheta}, \eqref{Rnsformula}, \eqref{Rnsextra} and \eqref{Hstilde} that
\begin{align}\label{Rnsbound}
|R_n^s(\theta)| &\leq \frac{\beta}{n} \sum_{k\geq1} h_k^s\sum_{j=1}^{k-1} \big|\sum_\mu \cos(j\theta_\mu) \big| \big| \sum_\nu \cos((k-j)\theta_\nu) \big| +\frac{C(\gamma,G)}{n^{\alpha/2}} \nonumber \\
&\leq C(\gamma,G) \sum_{k\geq 2} \sum_{j=1}^{k-1} |h_k^s| j(k-j) \leq C(\gamma,G)
\end{align}
where we used that $|h_k^s| \leq C(\gamma, G) k^{-4-\alpha}$ by Lemma \ref{lem:Hreg}. 
\end{proof}

Combining \eqref{Unsbound}, \eqref{Vnsbound}, \eqref{Tnest2} and \eqref{Rnest2} we get a bound
\begin{equation}\label{Xnsbound}
\big| X_n^s(\theta) \mathbbm{1}_{E_{n,s,K}}(\theta) \big| \leq C(\gamma,G).
\end{equation}
Combining this with \eqref{Endiff} we have proved \eqref{expbound} in Theorem \ref{thm:bound}, so it remains to prove \eqref{relSz2}. This will follow from the next lemma.

\begin{lemma}\label{lem:termstozero}
For each fixed $s\in[0,1]$ we have the limits
\begin{equation}\label{Unslim}
\lim_{n\to\infty} \E_n^s \big[ |U_n^s(\theta)-A_s[g]| \mathbbm{1}_{E_{n,s,K}} \big] =0,
\end{equation}
\begin{equation}\label{RnsTnslim}
\lim_{n\to\infty} \E_n^s \big[ |R_n^s(\theta)+T_n^s(\theta)| \mathbbm{1}_{E_{n,s,K}} \big] =0,
\end{equation}
\end{lemma}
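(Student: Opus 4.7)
The approach is to work on the good event $E_{n,s,K}$, where Lemmas~\ref{lem:tmutozero} and~\ref{lem:sumtmu} give $\max_\mu |t_\mu| \to 0$ and $\sum_\mu t_\mu^2 \leq C_K(\gamma)$; Cauchy--Schwarz then provides $\sum_\mu |t_\mu| \leq Cn^{1/2}$. This lets us replace any Lipschitz single sum $\frac{1}{n}\sum_\mu f(\theta_\mu)$ by the Riemann integral $\frac{1}{\pi}\int_0^\pi f(\theta)\,d\theta$ with error $O(n^{-1/2})$, and to do the analogous replacement for double sums with a bounded integrand. Smoothness of $G$, $L$, and $H_s$ (the latter from Lemma~\ref{lem:Hreg}) together with $H_s(0) = H_s(\pi) = 0$ make every term in $U_n^s$, $R_n^s$, and $T_n^s$ amenable to this scheme, while the uniform bound \eqref{Xnsbound} supplies the dominating function needed to pass from pointwise convergence on $E_{n,s,K}$ to convergence in $L^1(\mathbb{P}_n^s)$.

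For $U_n^s \to A_s[g]$ I would decompose $U_n^s$ into its five constituent terms and identify each limit via Chebyshev expansions. After an integration by parts (justified since $H_s$ vanishes at the endpoints) the $G'H_s$ piece converges to $-\frac{1}{2}\g^t\h^s = \frac{1}{2\beta}\g^t(I+sB)^{-1}\g$ by the integral equation \eqref{Intekv5}. The parallel computation for $L'H_s$ yields the $s\dd$ contribution to the asymptotic mean, and the identity $\frac{1}{\pi}\int_0^\pi \sin(k\theta)\cot\theta\,d\theta = \mathbbm{1}_{k\text{ even}}$ applied to $H_s(\theta_\mu)\cot\theta_\mu$ produces the $\f$ contribution, both with the correct prefactor $\frac{1}{4}(1-2/\beta)$. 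The line-three term of \eqref{Uns} is $O(n^{-1/2})$ by \eqref{sumcostheta} and vanishes. The decisive step is the remaining pair: extending $H_s$ to an odd function on $[-\pi,\pi]$ combines the two singular sums with $\sin^2((\theta_\mu\pm\theta_\nu)/2)$ in the denominator into a single Douglas integral on $[-\pi,\pi]^2$ that Parseval evaluates as $4\pi^2\|\h^s\|^2$, giving the limit $-\frac{\beta}{4}\|\h^s\|^2$ for line two; the Grunsky bilinear sum in line four converges to $-\frac{\beta s}{4}(\h^s)^tB\h^s$. Adding these and applying \eqref{Intekv5} once more yields $-\frac{\beta}{4}(\h^s)^t(I+sB)\h^s = \frac{1}{4}(\h^s)^t\g = -\frac{1}{4\beta}\g^t(I+sB)^{-1}\g$, so summing the contributions from all five pieces recovers $A_s[g]$ exactly.

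For $R_n^s + T_n^s \to 0$ the $O(1)$ bounds from Lemmas~\ref{lem:Tnsest} and~\ref{lem:Rnsest} must be refined. The key observation is that once the trigonometric identity used in the proof of Lemma~\ref{lem:Rnsest} is substituted into $R_n^s$ and the Chebyshev form \eqref{Intekv4} of the integral equation defining $H_s$ is invoked, the terms $\sum_\mu G(\theta_\mu)$ and $\beta s\sum_{k,l \le n} ka_{kl}h_k^s\sum_\nu\cos(l\theta_\nu)$ cancel exactly, leaving only the off-diagonal double sum $\frac{\beta}{n}\sum_{k\ge 2}h_k^s\sum_{j=1}^{k-1}\bigl(\sum_\mu\cos(j\theta_\mu)\bigr)\bigl(\sum_\nu\cos((k-j)\theta_\nu)\bigr)$ plus an $O(n^{-\alpha/2})$ truncation tail. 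A parallel calculation for $T_n^s$ reduces it to Riemann-approximation errors of the form $\bigl|h_k^s - \frac{2}{n}\sum_\mu H_s(\theta_\mu)\sin(k\theta_\mu)\bigr|$ weighted by $ka_{kl}\sum_\nu\cos(l\theta_\nu)$. Both remainders then vanish in $L^1(\mathbb{P}_n^s)$ when restricted to $E_{n,s,K}$, using the rapid decay $|h_k^s| \le Ck^{-4-\alpha}$ from Lemma~\ref{lem:Hreg} together with the Taylor expansion $\sum_\mu\cos(j\theta_\mu) = \mathbbm{1}_{j\text{ even}} - j\sum_\mu\sin(j\alpha_\mu)t_\mu + O\bigl(j^2\sum_\mu t_\mu^2\bigr)$ and the Cauchy--Schwarz control on $\sum_\mu t_\mu^2$. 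The main obstacle is the precise Parseval identification of the Douglas-type limit and its combination with the Grunsky bilinear form through the integral equation to produce exactly the $-\frac{1}{4\beta}\g^t(I+sB)^{-1}\g$ needed to cancel the excess $\frac{1}{4\beta}\g^t(I+sB)^{-1}\g$ in the $G'H_s$ piece; this algebraic identity is the essential structural feature making all pieces fit together.
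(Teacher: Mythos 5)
Your treatment of \eqref{Unslim} is essentially the paper's: identify the limit of each line of \eqref{Uns} via the uniform (in $E_{n,s,K}$) convergence of the empirical measure, integrate by parts the $G'H_s$ and $L'H_s$ pieces, use the $\cot\theta$ identity for the $\f$-term, and combine the Douglas-type quadratic term with the Grunsky bilinear term through \eqref{Intekv5}; your constants ($-\tfrac{\beta}{4}\|\h^s\|^2$, $-\tfrac{\beta s}{4}(\h^s)^tB\h^s$, the net $-\tfrac{1}{4\beta}\g^t(I+sB)^{-1}\g$) are correct, and your quantitative Riemann-sum errors via $\sum_\mu|t_\mu|\leq Cn^{1/2}$ are a legitimate (slightly more explicit) substitute for the paper's appeal to Lemma~\ref{lem:tmutozero}.

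The gap is in \eqref{RnsTnslim}. After the reduction to the off-diagonal double sum $\frac{\beta}{n}\sum_{k}h_k^s\sum_{j=1}^{k-1}\big(\sum_\mu\cos(j\theta_\mu)\big)\big(\sum_\nu\cos((k-j)\theta_\nu)\big)$ (and the analogous form of $T_n^s$), the tools you invoke — the expansion $\sum_\mu\cos(j\theta_\mu)=\mathbbm{1}_{j\text{ even}}-j\sum_\mu\sin(j\alpha_\mu)t_\mu+O(j^2\sum_\mu t_\mu^2)$ together with $\sum_\mu t_\mu^2\leq C_K(\gamma)$ from Lemma~\ref{lem:sumtmu} — only give $|\sum_\mu\cos(j\theta_\mu)|\leq C j\sqrt{n}$ on $E_{n,s,K}$, and Cauchy--Schwarz is sharp here (nothing on the good event forbids $t_\mu$ from aligning with $\sin(j\alpha_\mu)$). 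Feeding this into the double sum yields $\frac{1}{n}\cdot O(j\sqrt n)\cdot O((k-j)\sqrt n)=O(j(k-j))$ per term, i.e.\ exactly the $O(1)$ bounds of Lemmas~\ref{lem:Rnsest} and \ref{lem:Tnsest}, not $o(1)$; the same happens for $T_n^s$, where the Riemann error $O(k/\sqrt n)$ is multiplied by $O(l\sqrt n)$ from \eqref{sumcostheta}. Pointwise estimates on $E_{n,s,K}$ cannot close this step. The missing ingredient is the bootstrap the paper performs: the already-established uniform bound \eqref{expbound} applied to $G_J(\theta)=\cos(J\theta)/J^{4+\epsilon}$-type test functions, combined with a Cauchy-estimate/normal-family argument for $z\mapsto\E_n^s[e^{z\sum_\mu G_J(\theta_\mu)}]$, gives $n$-\emph{independent} second-moment bounds $\E_n^s\big[(\sum_\mu\cos(k\theta_\mu))^2\big]\leq f(k)^2$. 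One then splits the $k$-sum at a slowly growing cutoff $p=p(n)$: for $k\leq p$ the $n$-independent moments make the explicit $1/n$ (resp.\ $1/\sqrt n$) prefactor win, while for $k>p$ the crude pointwise bound is compensated by the decay $|h_k^s|\leq Ck^{-4-\alpha}$ (resp.\ the decay of $a_{kl}$). Without this distributional input your argument stalls at a bounded, but not vanishing, remainder, so \eqref{RnsTnslim} is not proved as proposed.
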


If we assume the lemma we can prove \eqref{relSz2}. It follows from \eqref{Vnsbound}, \eqref{Unslim}, and \eqref{RnsTnslim} that
\begin{equation}\label{Xnslim}
\lim_{n\to\infty} \E\big[|X_n^s(\theta)-A_s[g]|\mathbbm{1}_{E_n} \big] =0
\end{equation}
where $E_n= E_{n,s,K}$. Now, since $\P_n^s[E_n^c] \to 0$ as $n\to \infty$.
\[\lim_{n\to\infty} \big|\E_n^s\big[e^{X_n^s(\theta)-A_s[g]}\mathbbm{1}_{E_n} \big]-1 \big| = \lim_{n\to\infty} \big|\E_n^s\big[(e^{X_n^s(\theta)-A_s[g]}-1)\mathbbm{1}_{E_n} \big] \big|.\]
We can now use \eqref{Xnsbound} to see that
\[  \big|\E_n^s\big[(e^{X_n^s(\theta)-A_s[g]}-1)\mathbbm{1}_{E_n} \big] \big| \leq  \E_n^s\big[|X_n^s(\theta)-A_s[g]|e^{C(\gamma,G)+|A_s[g]|}\mathbbm{1}_{E_n} \big] \]
which converges to zero as $n\to\infty$ by \eqref{Xnslim}. Combining this with \eqref{Endiff} completes the proof of \eqref{relSz2}.

\begin{proof}[Proof of Lemma \ref{lem:termstozero}]
It follows from \eqref{expbound} that there is a positive, strictly increasing function $f(k)$, which does not depend on $n$, such that
\[ \E_n^s\big[ \big(\sum_\mu \cos(k\theta_\mu) \big)^2\big] \leq f(k)^2, \]
for $k\geq 1$. Also, we can take $G=0$ and then \eqref{sumcostheta} gives
\[ \E_n^s\big[ \big(\sum_\mu \cos(k\theta_\mu) \big)^2\big] \leq C(\gamma)k^2n. \]
Fix some $p\geq 2$. It follows from \eqref{Rnsbound} that
\begin{multline}\label{Rnsest3}
\E_n^s[|R_n^s(\theta)|\mathbbm{1}_{E_n}] \leq  \frac{\beta}{n} \sum_{k\geq 1} |h_k^s| \sum_{j=1}^{k-1} \E_n^s\big[ \big( \sum_\mu \cos(j\theta_\mu) \big)^2 \big]^{1/2}\E_n^s\big[ \big( \sum_\nu \cos((k-j)\theta_\nu) \big)^2 \big]^{1/2} \\ +\frac{C(\gamma,G)}{n^{\alpha/2}} 
\leq \frac{\beta}{n} \sum_{k=1}^p |h_k^s| \sum_{j=1}^{k-1} f(j)f(k-j) + C(\gamma)\sum_{k\geq p+1} 
k^3|h_k^s| + \frac{C(\gamma,G)}{n^{\alpha/2}}.
\end{multline}
We have that
\[ |h_k^s| \leq C \frac{\|H\|_{4,\alpha}}{k^{4+\alpha}} \leq C \frac{\| G \|_{4,\alpha}}{k^{4+\alpha}}, \]
and thus
\[ C(\gamma) \sum_{k\geq p+1} k^3 |h_k^s| \leq \frac{C(\gamma,G)}{p^{\alpha}}. \]
Let $F(k) = \sum_{j=1}^{k-1} f(j)f(k-j)$. Then $F$ is strictly increasing. Take $\delta \in (0,1)$ and let $p = \lfloor F^{-1}(\lfloor n^\delta \rfloor ) \rfloor$. If $k\leq p$ then $F(k) \leq F(p) \leq n^{\delta}$ and
\[ \frac{\beta}{n}\sum_{k=1}^p |h_k^s| \sum_{j=1}^{k-1} f(j)f(k-j) \leq \frac{C(\gamma,G)}{n^{1-\delta}}. \]
We can use these estimates in \eqref{Rnsest3} to see that $\lim_{n\to\infty} \E_n^s[R_n^s(\theta)\mathbbm{1}_{E_n}] =0$. The proof that $\lim_{n\to\infty} \E_n^s[T_n^s(\theta)\mathbbm{1}_{E_n}] =0$ is similar, and gives \eqref{RnsTnslim}.

We turn to the proof of \eqref{Unslim}. It follows from Lemma \eqref{lem:tmutozero} and \eqref{Uns} that 
\begin{align}\label{Unslimit}
&\lim_{n\to\infty} \E_n^s[U_n^s(\theta)] = \frac{1}{\pi} \int_0^\pi (G'(\theta)+(1-\beta/2)sL'(\theta))H_s(\theta) \d\theta \\
&+ (1-\beta/2)\frac{1}{\pi}\int_0^\pi (H_s'(\theta)+H_s(\theta) \cot\theta) \d\theta \nonumber \\
& -\frac{\beta}{16\pi^2} \int_0^\pi \int_0^\pi  \frac{(H_s(\theta)-H_s(\omega))^2}{\sin^2\frac{\theta-\omega}{2}}+\frac{(H_s(\theta)+H_s(\omega))^2}{\sin^2\frac{\theta+\omega}{2}} \d\theta\d\omega -\frac{\beta s}{4} \sum_{k,l\geq 1}kl a_{kl} h_k^sh_l^s.
\end{align}
We have to show that the expression in the right side of \eqref{Unslimit} agrees with $A_s[g]$ as defined by \eqref{Asg}. Note that since $H_s(0) = H_s(\pi) = 0$, the integral of $H_s'$ is equal to zero. If we use the integral
\begin{equation}
\frac{1}{\pi} \int_0^\pi \frac{\sin((k+1)\theta)}{\sin\theta} \d\theta = \begin{cases} 0\ \mathrm{if}\ k\ \mathrm{odd} \\ 1\ \mathrm{if}\ k\ \mathrm{even}.  \end{cases} 
\end{equation}
We obtain
\begin{multline*}
\frac{1}{\pi}\int_0^\pi H_s(\theta)\cot\theta \d\theta =  \sum_{k\geq 1} h_k\frac{1}{\pi}\int_0^\pi \frac{\sin(k\theta)\cos\theta}{\sin\theta} \d\theta \\
= \sum_{k\geq 1} h_k\frac{1}{\pi}\int_0^\pi \frac{\sin((k+1)\theta)-\sin\theta\cos(k\theta)}{\sin\theta} \d\theta = \sum_{n\geq 1} h_{2n}. 
\end{multline*}
Observe that
\begin{align}\label{hkhl}
-\frac{\beta s}{4} \sum_{k,l\geq 1} kl a_{kl} h_k^s h_l^s &= \frac{\beta s}{2\pi} \sum_{k,l\geq 1} k a_{kl} h_k^s \int_0^\pi H_s(\omega) \frac{\d}{\d \omega} \cos(l\omega) \d\omega \nonumber \\
&= -\frac{\beta s}{2\pi} \int_0^\pi \big( \sum_{k,l\geq 1} ka_{kl}h_k^s\cos(l\omega) \big) H_s'(\theta) \d\omega.
\end{align}
Next,
\begin{align}\label{Hquad}
&-\frac{\beta}{16\pi^2} \int_0^\pi \int_0^\pi \frac{(H_s(\theta)-H_s(\omega))^2}{\sin^2\frac{\theta-\omega}{2}}+\frac{(H_s(\theta)+H_s(\omega))^2}{\sin^2\frac{\theta+\omega}{2}} \d\theta\d\omega \nonumber \\
&= \frac{\beta}{8\pi^2} \int_0^\pi \d\omega \int_0^\pi \d\theta \Big( \big( H_s(\theta)-H_s(\omega)\big)^2 \frac{\partial}{\partial\theta} \cot \frac{\theta-\omega}{2}+ \big( H_s(\theta)+H_s(\omega)\big)^2 \frac{\partial}{\partial\theta} \cot \frac{\theta+\omega}{2} \Big) \nonumber \\
&= -\frac{\beta}{4\pi^2} \int_0^\pi \d\omega \int_0^\pi \d\theta \Big( \big( H_s(\theta)-H_s(\omega)\big)\cot \frac{\theta-\omega}{2}+\big( H_s(\theta)+H_s(\omega)\big)\cot \frac{\theta+\omega}{2} \Big)H'(\theta) \nonumber \\
&= -\frac{\beta}{4\pi^2} \int_0^\pi H_s'(\theta) \Big( \int_0^\pi H_s(\omega)\big( \cot\frac{\omega-\theta}{2}+\cot\frac{\omega+\theta}{2} \big) \d\omega \Big) \d\theta
\end{align}
by an integration by parts and the fact that $\int_{-\pi}^\pi \cot\frac{\theta-\omega}{2} \d\omega =0$. Combining \eqref{Intekv3:prel} and \eqref{Conjcomp} we see that
\[ G(\theta) = -\frac{\beta}{2\pi} \int_0^\pi H_s(\omega) \big(\cot\frac{\omega-\theta}{2}+\cot\frac{\omega+\theta}{2} \big) \d\omega - \beta s \sum_{k,l\geq 1} k a_{kl} h_k \cos(l\omega). \]
Hence, the sum of \eqref{hkhl} and \eqref{Hquad} equals
\[ \frac{1}{2\pi} \int_0^\pi H_s'(\theta)G(\theta) \d\theta. \]
From these computations we see that \eqref{Unslimit} can be written
\begin{equation}\label{Unslimit2}
\lim_{n\to\infty} \E_n^s[U_n^s(\theta)] = -\frac{1}{2\pi} \int_0^\pi G(\theta)H_s'(\theta) \d\theta - \big( 1-\frac{\beta}{2}\big) \frac{s}{\pi} \int_0^\pi L(\theta)H_s'(\theta) \d\theta + \big( 1-\frac{\beta}{2}\big)\sum_{n\geq 1} h_{2n}.
\end{equation}
Recall the notation \eqref{xtheta} which allow us to write
\[G(\theta) = \mathbf{x}_\theta^t\mathbf{g}, \quad H_s(\theta) = \mathbf{y}_\theta^t\mathbf{h}^s. \]
Also, by \eqref{logzprime}, \eqref{dvec} and \eqref{L},
\[ L(\theta) = -\mathbf{x}_\theta^t \mathbf{d}. \]
Let $J = (k\delta_{jk})_{j,k\geq 1}$. Then,
\[ H_s'(\theta) = \mathbf{x}_\theta^t J \mathbf{h}^s. \]
By \eqref{hsol},
\[ \mathbf{h}^s = -\frac{1}{\beta} (I+sB)^{-1}\mathbf{g}, \]
so
\[ H_s'(\theta) = -\frac{1}{\beta} \mathbf{x}_\theta^tJ(I+sB)^{-1}\mathbf{g}. \]

Thus,
\begin{align*}
&-\frac{1}{2\pi} \int_0^\pi G(\theta)H_s'(\theta) \d\theta - \big(1-\frac{\beta}{2}\big)\frac{s}{\pi} \int_0^\pi L(\theta)H_s'(\theta) \d\theta \\
&= \frac{1}{2\pi\beta} \int_0^\pi \mathbf{g}^t\mathbf{x}_\theta\mathbf{x}_\theta^tJ(I+sB)^{-1}\mathbf{g} \d\theta - \big(1-\frac{\beta}{2}\big) \frac{s}{\pi\beta} \int_0^\pi \mathbf{d}^t\mathbf{x}_\theta\mathbf{x}_\theta^tJ(I+sB)^{-1} \mathbf{g} \d\theta \\
&= \frac{1}{4\beta} \mathbf{g}^t(I+sB)^{-1}\mathbf{g}+\frac{1}{4}\big(1-\frac{2}{\beta}\big) s\mathbf{d}^t(I+sB)^{-1}\mathbf{g},
\end{align*}
since $\frac{2}{\pi}\int_0^\pi \mathbf{x}_\theta\mathbf{x}_\theta^t \d\theta = J^{-1}$.
With $\mathbf{f}$ defined as in \eqref{fvec}, we obtain
\[ (1-\frac{\beta}{2})\sum_{n\geq 1} h_{2n} = (1-\frac{\beta}{2})\frac{1}{2}\mathbf{f}^t\mathbf{h}^s = \frac{1}{4}\big(1-\frac{2}{\beta}\big) \mathbf{f}(I+sB)^{-1}\mathbf{g}. \]

Inserting these last two computations into \eqref{Unslimit2}, we see that the right side equals $A_s[g]$ and we proved \eqref{relSz2}.

\end{proof}

\appendix
\section{Proof of Proposition~\ref{prop:ellen}}\label{app:Ellen}
The result and proof is due to Ellen Krusell, see also \cite{krusell-PA}.
 
 \begin{proof}[Proof of Proposition~\ref{prop:ellen}]We start by changing coordinates using $m(z)=(z+1)/(z-1)$. We denote by $\hat \gamma=m(\gamma)$ and $\hat \eta=m(\eta)$. Then $\hat \eta$ is a Jordan curve through $\infty$ and it is the image of $\hat \gamma$ under (the two-valued function) $z\mapsto \sqrt{z}$. Indeed, we have that
    $$m^{-1}(\sqrt{m(z)})=z+\sqrt{z^2-1}.$$
    Moreover, if we let $\tilde h=m^{-1}\circ F \circ m$ then 
    $$|\tilde h'(0)|=|F'(-1)|\quad\text{and}\quad|\tilde h'(\infty)|=|F'(1)|^{-1}$$
    where we define $|\tilde h'(\infty)|$ by $\lim_{|z|\to\infty}|\tilde h'(z)|$.  We can now rephrase the statement we want to prove as
    $$2I^A(\hat \gamma)=I^L(\hat \eta)+6\log\bigg|\frac{\tilde h'(0)}{\tilde h'(\infty)}\bigg|,$$
    or equivalently
    \begin{equation}2\int_{\C\setminus\R^+}|\nabla\log|\hat{h}'||^2dz^2=\int_{\C\smallsetminus\R}|\nabla\log|\tilde {h}'||^2dz^2+6\pi\log\bigg|\frac{\tilde h'(0)}{\tilde h'(\infty)}\bigg|,\label{eq:identity}\end{equation}
    where $\hat h=(\tilde h(\sqrt{z}))^2$ is a conformal map from $\C\smallsetminus\R^+$ onto $\C\setminus\hat \gamma$. (Note that $\hat{h}$ maps $\mathbb{R}_-$ onto the hyperbolic geodesic connecting $0$ with $\infty$ in $\mathbb{C} \smallsetminus \hat \gamma$.) 

    We use the notation $\sigma_f(z):=\log|f'(z)|$ for holomorphic $f$. Set $s(z)=z^2$. Then since $\hat{h}'(z^2) = \tilde{h}(z) \cdot \tilde{h}'(z)/z$, we have
    $$\sigma_{\hat{h}}\circ s=\sigma_s \circ \tilde h +\sigma_{\tilde h}-\sigma_s$$
    on $\mathbb{H}$. Let $A_{r} = \{r<|z|<1/r\} \smallsetminus \R$ for $r\in(0,1)$. By conformal invariance of the Dirichlet inner product we have
    \begin{equation}\int_{(A_r)^2}|\nabla\sigma_h|^2dz^2 = \int_{A_r}|\nabla\sigma_{\tilde h}|^2dz^2+\int_{A_r}\nabla(\sigma_s \circ \tilde h-\sigma_s)\cdot\nabla(\sigma_s \circ \tilde h+2\sigma_{\tilde h}-\sigma_s) dz^2.\label{eq:annulusidentity}\end{equation}
    (Here we regard $(A_r)^2$ as two copies of $\{r^2<|z|<1/r^2\}\smallsetminus\R^+$.)
    Since $\sigma_s \circ \tilde h$, $\sigma_{\tilde h}$ and $\sigma_s$ are smooth on $A_{r}$ we may expand the integrand of the second integral on the right, and apply Stokes' theorem to each term separately. We also use the formula
    \begin{equation}
        k_sd\ell_s = (\partial_n\sigma_s+k)d\ell\label{eq:confchangecurv}
    \end{equation}
    where $k$, $\partial_n$, and $d\ell$ denote the geodesic curvature, outer unit normal derivative, and arc length measure respectively with respect to $dz^2$, and $k_s$ and $d\ell_s$ correspond to the metric $e^{2\sigma_s}dz^2$ (i.e., the pull-back metric $s^*dz^2$).

    Using Stokes' theorem and \eqref{eq:confchangecurv}, we obtain,
    \begin{align*}\int_{A_r}&\nabla(\sigma_s \circ \tilde h-\sigma_s)\cdot\nabla(\sigma_s \circ \tilde h+2\sigma_{\tilde h}-\sigma_s) dz^2\\ =& -2\int_{\partial A_r}\sigma_{\hat h}\circ s \, k_s \, d\ell_s + 2\int_{\partial A_r}\sigma_{\tilde h}kd\ell \\&+ \bigg(\int_{\tilde h(\partial A_r)}-\int_{\partial A_r}\bigg)\sigma_sk_sd\ell_s+\bigg(\int_{\tilde h(\partial A_r)}-\int_{\partial A_r}\bigg)\sigma_skd\ell.\end{align*}

   We now analyze the terms on the right in the last display, as $r\to 0+$. We claim that
    \begin{align}\label{eq:r-to-0}\lim_{r\to 0+}\int_{A_r}\nabla(\sigma_s \circ \tilde h-\sigma_s)\cdot\nabla(\sigma_s \circ \tilde h+2\sigma_{\tilde h}-\sigma_s) dz^2 = 6\pi(\sigma_{\tilde h}(0)-\sigma_{\tilde h}(\infty)).\end{align}
    Given this, by taking the limit $r\to 0+$ on both sides of~\eqref{eq:annulusidentity}, $$2\int_{\C\smallsetminus\R^+}|\nabla\sigma_h|^2dz^2=\int_{\C\smallsetminus\R}|\nabla\sigma_{\tilde h}|^2dz^2+6\pi(\sigma_{\tilde h}(0)-\sigma_{\tilde h}(\infty)),$$
    which is equivalent to~\eqref{eq:identity}. It remains to prove \eqref{eq:r-to-0}.
   We start by noting that 
    $$\lim_{|z|\to 0}\sigma_{\tilde h}(z)=\sigma_{\tilde h}(0),$$
    $$\lim_{|z|\to \infty}\sigma_{\tilde h}(z)=\sigma_{\tilde h}(\infty),$$
    by the smoothness assumption on $\hat \eta$. Next, the boundary of $A_r$ consists of $r\partial\D$ (clock-wise oriented), $\frac{1}{r}\partial\D$ (counter clock-wise oriented), and line segments. As $k=0$ on the line segments, we have that
    $$\int_{\partial A_r}\sigma_{\tilde h}kd\ell = \int_0^{2\pi}\sigma_{\tilde h}(r^{-1}e^{i\t})rr^{-1}d\t +\int_0^{2\pi}\sigma_{\tilde h}(re^{i\t})(-r^{-1})rd\t \to 2\pi(\sigma_{\tilde h}(\infty)-\sigma_{\tilde h}(0)). $$
  Similarly, 
    $$\lim_{|z|\to 0}\sigma_h(s(z))=\lim_{|z|\to 0}\log\bigg|\frac{\tilde h(z)}{z}\tilde h'(z)\bigg|=2\sigma_{\tilde h}(0)$$
    $$\lim_{|z|\to \infty}\sigma_h(s(z))=\lim_{|z|\to \infty}\log\bigg|\frac{\tilde h(z)}{z}\tilde h'(z)\bigg|=2\sigma_{\tilde h}(\infty).$$
    We have,
    $$\int_{\partial A_r}\sigma_{\hat h}\circ s k_sd\ell_s = \int_0^{4\pi} \sigma_h(r^{-2}e^{i\t})r^2r^{-2}d\t +\int_0^{4\pi}\sigma(r^2 e^{i\t})(-r^{-2})r^2d\t \to 8\pi(\sigma_{\tilde h}(\infty)-\sigma_{\tilde h}(0)),$$
    since in $(\partial A_r)^2$ the circle $r^2\partial \D$ is traversed twice clock-wise and $r^{-2}\partial\D$ is traversed twice counter clock-wise.
 Next, we consider 
    $$\lim_{r \to 0+}\bigg(\int_{\tilde h(\partial A_r)}-\int_{\partial A_r}\bigg)\sigma_s kd\ell.$$
    We first observe that we may restrict to integration along $r\partial \mathbb{D}$, $r^{-1}\partial \mathbb{D}$ and their images under $\tilde h$: Let $L_r=\partial A_r\cap \R$, interpreted as the union of four line segments, that is, $[r,r^{-1}]$ and $[-r^{-1},-r]$ traversed once from right to left (as a subset of the boundary of $\mathbb{H}$) and traversed once from left to right (as boundary of $\mathbb{H}^*$). Consider $\tilde h(L_r)$. From the parts of $\tilde h(L_r)$ where there is overlap from the upper and lower components (i.e., the parts of $\hat \eta$ which are traversed in both directions from both bottom and top) the contributions to the integrals cancel. We are left with the integral over the parts of $\hat \eta$ which are traversed only from one side (top/bottom). Close to $0$ these parts of $\hat \eta$ have a length of order $O(r)$, the curvature is of order $O(1)$, and $\sigma_s$ evaluated along these parts of $\hat \eta$ is of order $O(\log r)$. Similarly, close to $\infty$, the length of the part of $\hat \eta$ under consideration is of order $O(r^{-1})$, the curvature is of order $O(r^{-2})$, and $\sigma_s$ is of order $O(\log r)$ (this can be seen by changing coordinates by $z\mapsto 1/z$). This gives
    $$\int_{\tilde h(L_r)}\sigma_s k d\ell \to 0$$
    as $r\to 0+$. We have,
    $$\bigg(\int_{\tilde h(r\partial \D)}-\int_{r\partial \D}\bigg)\sigma_s kd\ell = \int_{r\partial \D}(\sigma_s \circ \tilde hk_{\tilde h}d\ell_{\tilde h}-\sigma_s k d\ell) = \int_{r\partial \D}(\sigma_s \circ \tilde h(\partial_n\sigma_{\tilde h}+k)-\sigma_s k)d\ell.$$
    Using that
    $$\sigma_s(\tilde h(re^{i\t}))=\log2|\tilde h(re^{i\t})|=\log2(|\tilde h'(0)|r+O(r^2))=\log2r +\sigma_{\tilde h}(0) +O(r),$$
    and
    $$|\nabla\sigma_{\tilde h}(re^{i\t})|=O(1),$$
    we obtain
    $$\bigg(\int_{\tilde h(r\partial \D)}-\int_{r\partial \D}\bigg)\sigma_s kd\ell\to 2\pi\sigma_{\tilde h}(0).$$
    Similarly, we find
    $$\bigg(\int_{\tilde h(r^{-1}\partial \D)}-\int_{r^{-1}\partial \D}\bigg)\sigma_s kd\ell\to 2\pi\sigma_{\tilde h}(\infty),$$
    using 
    $$\sigma_s(\tilde h(r^{-1}e^{i\t}))=\log2(|\tilde h'(\infty)|r^{-1}+O(1))=\log2r+\sigma_{\tilde h}(\infty)+O(r),$$
    $$|\nabla \sigma_{\tilde h}(r^{-1}e^{i\t})|=O(r^2).$$
    (For the last estimate we used the regularity of $1/\tilde h(1/z)$ at $0$). Combining the above shows
    $$\bigg(\int_{\tilde h(\partial A_r)}-\int_{\partial A_r}\bigg)\sigma_s kd\ell\to 2\pi(\sigma_{\tilde h}(\infty)-\sigma_{\tilde h}(0)).$$
    \item Finally, we consider 
    $$\bigg(\int_{\tilde h(\partial A_r)}-\int_{\partial A_r}\bigg)\sigma_s k_sd\ell_s$$
    as $r \to 0$,
    which is treated in a similar manner. We first note that, for the same reason as above, there is no contribution to the limit from integration along $\tilde h(L_r)$ and $L_r$ (here we again use $k_sd\ell_s = (\partial_n\sigma_s + k)d\ell$ and that $\partial_n\sigma_s(re^{i\t})=O(1)$ along $\hat \eta$ and similarly at $\infty$). Moreover, arguing as above,
    $$\bigg(\int_{\tilde h(r\partial \D)}-\int_{r\partial \D}\bigg)\sigma_s k_sd\ell_s\to 4\pi\sigma_{\tilde h}(0),$$
    $$\bigg(\int_{\tilde h(r^{-1}\partial \D)}-\int_{r^{-1}\partial \D}\bigg)\sigma_s k_sd\ell_s\to 4\pi\sigma_{\tilde h}(0),$$
    and this completes the proof.
\end{proof}

\bibliography{ref}

@book{AIM,
 ISBN = {9780691137773},
 URL = {http://www.jstor.org/stable/j.ctt7sjhd},
 author = {Astala, K. and Iwaniec, T. and Martin, G.},
 publisher = {Princeton University Press},
 title = {Elliptic Partial Differential Equations and Quasiconformal Mappings in the Plane (PMS-48)},
 urldate = {2024-11-27},
 year = {2009}
}

@article{AJKS,
author = {Astala, K. and Kupiainen, A. and Saksman, E. and Jones, P.},
title = {{Random conformal weldings}},
volume = {207},
journal = {Acta Mathematica},
number = {2},
publisher = {Institut Mittag-Leffler},
pages = {203 -- 254},
year = {2011},
doi = {10.1007/s11511-012-0069-3},
URL = {https://doi.org/10.1007/s11511-012-0069-3}
}

@article{krusell-PA,
  title={Polyakov-{A}lvarez Formula for Curvilinear Polygonal Domains with Slits},
  author={Krusell, E.},
  journal={arXiv preprint arXiv:2501.07682},
  year={2025},
  url={https://arxiv.org/abs/2501.07682}
}

@book{Serfaty2024,
  author    = {Serfaty, S.},
  title     = {Lectures on {C}oulomb and {R}iesz Gases},
  year      = {2024},
  note      = {Preliminary version},
  url       = {https://arxiv.org/abs/2407.21194},
  eprint    = {2407.21194},
  archivePrefix = {arXiv},
  primaryClass = {math-ph}
}

@book{KP92,
  author    = {S. G. Krantz and H. R. Parks},
  title     = {A Primer of Real Analytic Functions},
  series    = {Birkhäuser Advanced Texts Basler Lehrbücher},
  edition   = {2},
  publisher = {Birkhäuser Boston, MA},
  year      = {2002},
  doi       = {10.1007/978-0-8176-8134-0},
  isbn      = {978-0-8176-4264-8},
  eisbn     = {978-0-8176-8134-0},
  pages     = {XIII, 209},
  url       = {https://doi.org/10.1007/978-0-8176-8134-0}
}

@article{WangSurvey,
  author    = {Wang, Y.},
  title     = {Large deviations of {S}chramm-{L}oewner evolutions: {A} survey},
  journal   = {Probability Surveys},
  volume    = {19},
  year      = {2022},
  pages     = {351--403},
  issn      = {1549-5787},
  doi       = {10.1214/22-PS9},
  url       = {https://doi.org/10.1214/22-PS9},
}

@article{pom67,
  author    = {Pommerenke, Ch.},
  title     = {Über die Verteilung der {F}ekete-Punkte},
  journal   = {Mathematische Annalen},
  volume    = {168},
  pages     = {11--27},
  year      = {1967},
}

@article{pom69,
  author    = {Pommerenke, Ch.},
  title     = {Über die Verteilung der {F}ekete-Punkte {II}},
  journal   = {Mathematische Annalen},
  volume    = {179},
  pages     = {212--218},
  year      = {1969},
}

@book{Ahlfors,
title={Lectures on Quasiconformal Mappings},
author={Ahlfors, L.V.},
publisher = {AMS University Lecture Series},
Volume={38},
year={2006},
}

@article{Rodin,
author = {Rodin, B.},
title = {Behavior of the {R}iemann mapping function under complex analytic deformations of the domain},
journal = {Complex Variables, Theory and Application: An International Journal},
volume = {5},
number = {2-4},
pages = {189--195},
year = {1986},
publisher = {Taylor \& Francis},
doi = {10.1080/17476938608814139}}

@article{Schiffer-Hawley,
author = {M. Schiffer and N. S. Hawley},
title = {{Connections and conformal mapping}},
volume = {107},
journal = {Acta Mathematica},
number = {3-4},
publisher = {Institut Mittag-Leffler},
pages = {175 -- 274},
year = {1962},
doi = {10.1007/BF02545790},
URL = {https://doi.org/10.1007/BF02545790}
}

@article{Forrester-Selberg,
  title={The importance of the {S}elberg integral},
  author={Forrester, P. and Warnaar, S.O.},
  year={2008},
  journal={Bull. Amer. Math. Soc.},
  volume = {45},
}

@article{Bishop,
    author = {Bishop, C.},
    title = {Weil-{P}etersson curves, conformal energies, $\beta$-numbers, and minial surfaces},
    journal = {Ann. Math},
    year = {2025}
}

@article{CouJoh,
    author = {Courteaut, K. and Johansson, K.},
    title = {Partition function for the 2D {C}oulomb gas on a {J}ordan curve},
    journal = {arXiv:2304.09726},
    year = {2023}
}

@article{DuiKoz,
  title={Relative {S}zegö asymptotics for {T}oeplitz determinants},
  author={Duits, M. and Kozhan, R.},
  journal={International mathematics research notices},
  volume={2019},
  number={17},
  pages={5441--5496},
  year={2019},
  publisher={Oxford University Press}
}

@book{GarMar, 
	place={Cambridge}, 
	series={New Mathematical Monographs}, 
	title={Harmonic Measure}, 
	publisher={Cambridge University Press}, 
	author={Garnett, J. and Marshall, D.}, 
	year={2005}, collection={New Mathematical Monographs}
}

@article{Joh98,
author = {Johansson, K.},
title = {{On fluctuations of eigenvalues of random {H}ermitian matrices}},
volume = {91},
journal = {Duke Mathematical Journal},
number = {1},
publisher = {Duke University Press},
pages = {151 -- 204},
year = {1998}
}

@article{Joh1,
    author = {Johansson, K.},
    title = {On {S}zegö's asymptotic formula for {T}oeplitz determinants and generalizations},
    journal = {Bull. Sci. Math.},
    volume = {112},
    number = {3},
    year = {1988},
    pages = {257-304}
 }

@book{Joh2,
    AUTHOR = {Johansson, K.},
    TITLE = {Strong {S}zegö theorem on a {J}ordan curve. {In:} {T}oeplitz Operators and Random Matrices},
    year = {2022},
    series = {Operator Theory: Advances and Applications},
    volume = {289},
    publisher = {Birkhäuser}
}

@article{JohVik,
    author = {Johansson, K. and Viklund, F.},
    title = {Coulomb gas and the {G}runsky operator on a {J}ordan domain with corners},
    journal = {arXiv:2309.00308},
    year = {2023}
}

@article{Jones-Smirnov,
    author = {Jones, P. and Smirnov, S.},
    title = {Removability theorems for {S}obolev
functions and quasiconformal maps},
    journal = {Arkiv f\"or Matematik},
  number={38},
    year = {2000}
}

@book{Lan,
    author = {Landkof, N.S.},
    title = {Foundations of modern potential theory},
    year = {1972},
    series = {Die Grundlehren der mathematischen Wissenschaften},
    volume = {180},
    publisher = {Springer-Verlag},
    place = {New York-Heidelberg}
    }

@book{Pom,
    author = {Pommerenke, Ch.},
    title = {Univalent functions. {W}ith a chapter on quadratic differentials by {G}erd {J}ensen},
    year = {1975},
    series = {Studia Mathematica/Mathematische Lehrb\"{u}cher},
    volume = {XXV},
    publisher = {Vandenhoeck and Ruprecht},
    place = {G\"{o}ttingen}
}

@book{TakTeo,
  title={Weil-{P}etersson metric on the universal {T}eichmuller space},
  author={Takhtajan, L. and Teo, L.-P.},
  volume={13},
  year={2006},
  publisher={American Mathematical Soc.}
}

@article{VW-GAFA,
    author = {Viklund, F. and Wang, Y.},
    title = {Interplay between {L}oewner and {D}irichlet energies via conformal welding and flow-lines},
    journal = {Geometric and Functional Analysis (GAFA)},
    year = {2020}, 
volume={30}
}

@article{VW-PLMS,
    author = {Viklund, F. and Wang, Y.},
    title = {The {L}oewner-{K}ufarev Energy and Foliations by {W}eil-{P}etersson Quasicircles},
    journal = {Proceedings of London Mathematical Society},
    year = {2024}, 
volume={128}
}

@article{Wan,
    author = {Wang, Y.},
    title = {Equivalent Descriptions of the {L}oewner Energy},
    year = {2019},
    journal = {Invent. math.},
    volume = {218},
    pages = {573-621}
}

@article{Wid,
    author = {Widom, H.},
    journal = {Indiana University Mathematics Journal},
    number = {3},
    pages = {277--283},
    publisher = {Indiana University Mathematics Department},
    title = {The Strong {S}zegő Limit Theorem for Circular Arcs},
    volume = {21},
    year = {1971},
}

@article{Wid69,
 author = {Widom, H.},
title = {Extremal polynomials associated with a system of curves in the complex plane},
journal = {Advances in Mathematics},
volume = {3},
number = {2},
pages = {127-232},
year = {1969},
issn = {0001-8708},
}

@article{WieZab,
    AUTHOR = {Wiegmann, P. and Zabrodin, A.},
    TITLE = {Dyson gas on a curved contour},
    year = {2022},
    JOURNAL = {J. Phys. A: Math. Theor.},
    volume = {55},
    number = {16},
    pages = {165202}
}
\bibliographystyle{plain}

\end{document}